\newtheorem*{acknowledgements*}{Acknowledgements}
\newcommand{\s}{\mathbb{S}}
\newcommand{\z}{\mathbb{Z}}
\newcommand{\N}{\mathbb{N}}
\newcommand{\Z}{\mathbb{Z}}
\newcommand{\C}{\mathbb{C}}
\theoremstyle{definition}
\newtheorem{theorem}{Theorem}[section]
\newtheorem{prop}[theorem]{Proposition}
\newtheorem{lemma}[theorem]{Lemma}
\newtheorem{cor}[theorem]{Corollary}
\newtheorem{defn}[theorem]{Definition}
\newtheorem{rmk}[theorem]{Remark}
\newtheorem{fact}[theorem]{Fact}
\newtheorem{thm}[theorem]{Theorem}
\newtheorem*{theorem*}{Theorem}
\newtheorem*{corollary*}{Corollary}
\newtheorem*{prop*}{Proposition}
\newtheorem*{rmk*}{Remark}
\newenvironment{manualtheorem}[1]{%
	\manualtheoreminner
}{\endmanualtheoreminner}
\begin{document}
	
\title[Enumerating Smooth Structures on $\mathbb{C}P^3\times\mathbb{S}^k$]{Enumerating Smooth Structures on $\mathbb{C}P^3\times\mathbb{S}^k$}
	
	\author{Samik Basu}
	\address{Indian Statistical Institute, Theoretical Statistics and Mathematics Unit, Kolkata 700108, India}
	\email{samikbasu@isical.ac.in, samik.basu2@gmail.com}
	
	\author{Ramesh Kasilingam}
	\address{Indian Institute of Technology Madras, Department of Mathematics, Chennai 600036, India}
	\email{rameshkasilingam.iitb@gmail.com; rameshk@smail.iitm.ac.in}
	\author{Ankur Sarkar}
	\address{The Institute of Mathematical Sciences, A CI of Homi Bhabha National Institute, CIT Campus, Taramani, Chennai 600113, India.}
	\email{ankurimsc@gmail.com, ankurs@imsc.res.in}	
         \subjclass [2020] {Primary : 57R55, 55P42 {; Secondary : 57R65, 55Q45}}
	\keywords{Concordance inertia group, Inertia group, Normal invariant, Complex projective space}

\begin{abstract}
 In this paper, we compute the concordance inertia group of the product $M \times \mathbb{S}^k$, where $M$ is a simply connected, closed, smooth 6-manifold, for $1 \leq k \leq 10$, using known low-dimensional computations of the stable homotopy groups of spheres. Specifically, for $M = \mathbb{C}P^3$, we determine the inertia group of $\mathbb{C}P^3 \times \mathbb{S}^k$ for $2 \leq k \leq 7, k \neq 6$, and establish a diffeomorphism classification of all smooth manifolds homeomorphic to $\mathbb{C}P^3 \times \mathbb{S}^k$ for $1 \leq k \leq 7$.
\end{abstract}

        \maketitle

	\section{Introduction} 
The study of smooth structures on manifolds is a central topic in differential topology, particularly in understanding when homeomorphic manifolds are diffeomorphic. This question has led to extensive research on exotic smooth structures and their classification. In this paper, we build upon the computations from \cite{SBRKAS} and examine the smooth structures on product manifolds of the form \( M \times \mathbb{S}^k \), where \( M \) is a simply connected, closed, smooth 6-manifold. A key motivation for this study is the classification of diffeomorphism types of smooth manifolds homeomorphic to \( \mathbb{C}P^3 \times \mathbb{S}^k \) for various values of \( k \).

A fundamental tool in diffeomorphism classification is the concordance structure set \( \mathcal{C}(M\times\mathbb{S}^k) \), which captures the different smooth structures on \( M\times\mathbb{S}^k \) up to concordance. Following the framework established by Kirby and Siebenmann \cite[Theorem 10.1, Essay IV]{kirby}, the set \( \mathcal{C}(N) \) is in bijection with the homotopy classes of maps \( [N, Top/O] \) for any closed, oriented, smooth manifold $N$ of dimension $n\geq 5$, where \(Top/O \) is the homotopy fiber of the map \( BO \to BTop \). Using this correspondence along with the fiber sequence 
\[
PL/O \to Top/O \to Top/PL,
\]
we derive the following computation of \( \mathcal{C}(M \times \mathbb{S}^k) \) for any \( k \).

\begin{manualtheorem}{A} \textit{Let \( M \) be a simply connected, closed, smooth 6-manifold. Then, for any \( k \geq 1 \),}
\[
\mathcal{C}(M \times \mathbb{S}^k) \cong [M \times \mathbb{S}^k, PL/O] \oplus \tilde{H}^3(M \times \mathbb{S}^k; \mathbb{Z}/2).
\]
\textit{(Theorem A follows from Corollary \ref{cor3.10} and Proposition \ref{TopmodO}.)}
\end{manualtheorem}
For the specific cases \( k = 1 \) and \( k = 2 \), this result can also be obtained from \cite[Theorem A]{AKupers23}. In addition to the above theorem, we provide explicit computations of \( C(\mathbb{C}P^3 \times \mathbb{S}^k) \) for \( 1 \leq k \leq 10 \) (see Proposition \ref{CP3computation}).

To further understand exotic smooth structures, we investigate the inertia group \( I(N) \), which consists of homotopy \( n \)-spheres \( \Sigma \) such that \( N \# \Sigma \) is diffeomorphic to \( N \) via an orientation-preserving diffeomorphism. Since determining \( I(M\times\mathbb{S}^k) \) in full generality is difficult, we instead compute a subgroup of \( I(M \times \mathbb{S}^k) \), namely the \textit{concordance inertia group} \( I_c(M \times \mathbb{S}^k) \), for \( 1 \leq k \leq 10 \), by analyzing the stable top cell attaching map of a simply connected, closed, smooth 6-manifold \( M \) (see Theorem \ref{prop2.9}).

Additionally, for the manifold \( M = \mathbb{C}P^3 \), by utilizing the calculations of \( \mathcal{C}(\mathbb{C}P^3 \times \mathbb{S}^k) \), we achieve the computation of the inertia group \( I(\mathbb{C}P^3 \times \mathbb{S}^k) \) for \( 2 \leq k \leq 7 \), \( k \neq 6 \). 
Note that the case $k=1$ was previously determined by Brumfiel, who showed that $I(\mathbb{C}P^3\times\mathbb{S}^1)=\mathbb{Z}/7$ \cite{gw}.
\begin{manualtheorem}{B}\noindent
\begin{itemize}
\item[(1)] \( I(\mathbb{C}P^3 \times \mathbb{S}^k) = 0 \) for \( k = 2 \) and \( k = 3 \).
\item[(2)] \( I(\mathbb{C}P^3 \times \mathbb{S}^k) \cong \mathbb{Z}/3 \) for \( k = 4 \) and \( k = 7 \).
\item[(3)] \( I(\mathbb{C}P^3 \times \mathbb{S}^5) \cong \mathbb{Z}/62 \).
\end{itemize}
\textit{(See Theorems \ref{inertiagroupCP3S3}, \ref{inertiacp3s4}, \ref{inertiacp3s5} and Corollaries \ref{InertiagroupMS7}, \ref{inertiagroupcp3s2}.)}
\end{manualtheorem}
We use \( \mathcal{S}(M) \) to denote the set of orientation-preserving diffeomorphism classes of smooth manifolds that are homeomorphic to a given manifold \( M \). Using the above calculations of the inertia group, we determine \( \mathcal{S}(\mathbb{C}P^3 \times \mathbb{S}^k) \) in the following.

\begin{manualtheorem}{C}
    \noindent
    \begin{itemize}
        \item[(a)] $\mathcal{S}(\mathbb{C}P^3\times\mathbb{S}^1)= \{[(\mathbb{C}P^3\times\mathbb{S}^1)\# \Sigma^7] : [\Sigma^7]\in \mathbb{Z}/4\subset \Theta_7\}.$
        \item[(b)] $\mathcal{S}(\mathbb{C}P^3\times\mathbb{S}^2)=\{[\mathbb{C}P^3\times\mathbb{S}^2], [(\mathbb{C}P^3\times\mathbb{S}^2)\#\Sigma]\},$ where $\Sigma$ is the exotic $8$-sphere.
        \item[(c)] $\mathcal{S}(\mathbb{C}P^3\times\mathbb{S}^3)=\{[(\mathbb{C}P^3\times\mathbb{S}^3)\#\Sigma^9]: [\Sigma^9]\in \Theta_9\}.$
        \item[(d)] $\mathcal{S}(\mathbb{C}P^3\times\mathbb{S}^4)= \{[(\mathbb{C}P^3\times\mathbb{S}^4)\# \Sigma^{10}] : [\Sigma^{10}] \in \mathbb{Z}/2\in \Theta_{10}\}.$  
        \item[(e)] $\mathcal{S}(\mathbb{C}P^3\times\mathbb{S}^5)=\{[(\mathbb{C}P^3\times\mathbb{S}^5)\#\Sigma^{11}]: [\Sigma^{11}] \in \mathbb{Z}/{2^4}\subset \Theta_{11}\}.$
        \item[(f)] $\mathcal{S}(\mathbb{C}P^3\times\mathbb{S}^6)=\{[\mathbb{C}P^3\times\mathbb{S}^6],~ [\widetilde{N}]\},$ where the manifold $\widetilde{N}$ is determined by the normal invariant. 
        \item[(g)] $\mathcal{S}(\mathbb{C}P^3\times\mathbb{S}^7)=\{[\mathbb{C}P^3\times\mathbb{S}^7]\}.$ 
    \end{itemize}
   \textit{(Theorem C is a combination of Theorem \ref{classificationCP3S1} {(ii)}, \ref{classificationcp3s2}, \ref{ClassificationCP3S3}, \ref{classificationCP3S4}, \ref{classificationcp3s5}, \ref{classificationCP3S6} and \ref{classificationCP3S7}.)}
\end{manualtheorem}

	\subsection{Notation}
	\begin{itemize}

		\item Let $O_n$ be the orthogonal group, $PL_n$ is the simplicial group of piece-wise linear homeomorphisms of $\mathbb{R}^n$ fixing origin, $Top_n$ is the group of self homeomorphisms of $\mathbb{R}^n$ preserving origin, and $G_n$ be the set of homotopy equivalences. Denote by $O=\varinjlim~O_n$, $PL= \varinjlim PL_n$, $Top=\varinjlim Top_n,$ and $G=\varinjlim~ G_n$ \cite{kuiperlashof66,lashofrothenberg65,milgram}.
		\item Let $G/O$ be the homotopy fiber of the canonical map $BO\to BG$ between the classifying spaces for stable vector bundles and stable spherical fibrations \cite[\S2 and \S3]{milgram}, and $G/PL$ be the homotopy fiber of the canonical map $BPL\to BG$ between the classifying spaces for stable piecewise linear microbundles and stable spherical fibrations \cite{gw2, rudyak15}.
        
		\item  There are standard fiber sequences $\cdots\to\Omega G/Top\xrightarrow{w} Top/O\xrightarrow{\psi} G/O\xrightarrow{\phi} G/Top$ and $O\xrightarrow{\Omega J}G\xrightarrow{j}G/O\xrightarrow{i}BO\xrightarrow{J}BG$ \cite{gw} where $J:\pi_n(SO)\to\pi_n^s$ denotes the classical $J$-homomorphism.
		\item We write $X_{(p)}$ for localization of the space $X$ at prime $p.$ We use the notation $f_{(p)}$ when a map $f$ is localized at $p.$ 
		\item For an infinite loop space $X$ we use the small letter $x$ to denote the connective spectrum such that $\Omega^\infty (x) \simeq X$. We use this notation to define the spectra $g$, $o$, $pl$, $pl/o$, $g/o$, $g/pl$. 
            \item $H G$ denotes the Eilenberg-Maclane spctrum for the abelian group $G.$
		\item The notation $\{ -, - \}$ is used to denote the stable homotopy classes of maps between spectra. 
		\item The notation $\tau_{\leq m}$ is reserved for the $m^{th}$ Postnikov section. It satisfies $\pi_i \tau_{\leq m}(E) = \pi_i(E)$ for $i\leq m$ and $0$ if $i>m$. The notation $\tau^{>m}$ refers to the $m$-connected cover, which is also the fiber of $X \to \tau_{\leq m} X$. 
		\item  $\Sigma^n M(A)$ denotes the Moore spectrum.
		\item  The notations $I$, $d$ and $p$ are reserved for the indexing set $I=\{1,2,\dots,k\}$, the degree one map $f_{M^n}:M^n\to \s^n$ and  the pinch map $p:\s^n \to \bigvee\limits_{i\in I}\s^n_i$ respectively, where $n\in \N$.
        \item $f:X\twoheadrightarrow Y$ denotes onto map.
        \item $\mathcal{E}(X)$ denotes the group of self-homotopy equivalence of $X.$
        \item $h_{\Sigma}: X\#\Sigma \to X$ denotes the canonical homeomorphism corresponding to the exotic sphere $\Sigma.$ 
	\end{itemize}
	
	\subsection{Acknowledgements} The third author wishes to express sincere gratitude to Wolfgang Lück for kindly clarifying various questions related to surgery theory, and to Parameswaran Sankaran for his invaluable assistance in computing of Pontrjagin classes of virtual bundles.


    	\section{Concordance smooth structure on the product of \texorpdfstring{$6$}{}-manifold \texorpdfstring{$M$}{} with standard \texorpdfstring{$k$}{}-sphere} 
        
\subsection{Preliminaries} In this section, we first recall some preliminaries and we study the concordance smooth structure set of $M\times\mathbb{S}^k.$
	
	Our goal is to enumerate smooth structures via the study of the \textit{concordance structure set} of the manifolds $M \times \s^k.$ 	
	Given $\Sigma\in \Theta_{n},$ the canonical homeomorphism $h_{\Sigma}:M\#\Sigma \to M $ induces a class $[M\#\Sigma]$ in $\mathcal{C}(M).$ This allows an extension of the action of $\Theta_n$ from diffeomorphism classes of $M$ to concordance classes $M$. The stabilizer of this action is known as the \textit{concordance inertia group}. More precisely,
	\begin{defn} 
		Let $N$ be a closed, oriented, smooth $n$-manifold. The \textit{concordance inertia group} $I_c(N)$ of $N$ is defined to be the subgroup of $I(N)$ containing all exotic $n$-spheres $\Sigma^n$ such that $N\#\Sigma^n$ is concordant to $N.$
	\end{defn} 
	
	Hence, based on the result of Kirby and Siebenmann result, for $n\geq 5$,  the \textit{concordance inertia group} $I_c(N)$ can be identified with $\mathrm{ker}\left(\Theta_n\xrightarrow{(f_{N})^*}\mathcal{C}(N)\right),$ 
 where $\mathcal{C}(\s^n)\cong \Theta_n\cong [\s^n,Top/O]$, and $f_{N}: N \to \s^n$ is the degree-one map associated with the manifold $N.$

 Since $\Sigma (M\times N)\simeq \Sigma M\vee \Sigma N\vee \Sigma (M\land N),$  then along any $H$ space $Y,$ there is a splitting of the short exact sequence
           \begin{equation}\label{note2.3}
			0\to[ M\land N,Y]\xrightarrow{p^*}[M\times N,Y]\xrightarrow{i^*}[M\vee N,Y]\to0,
		\end{equation}
		induced from 
		$$M\vee N\xhookrightarrow{i}M\times N\xrightarrow{p} M\times N/M\vee N\simeq M\land N.$$ 
	
As a consequence of Kirby and Siebenmann's result and the split short exact sequence (\ref{note2.3}), we obtain the following corollary.
   \begin{cor}\label{concorMtimesSk}
       For any closed oriented smooth manifold $M,$
       $$\mathcal{C}(M\times\mathbb{S}^k)\cong [M,Top/O]\oplus\pi_k(Top/O)\oplus[\Sigma^k M,Top/O],$$ where $\mathit{dim}(M)+k\geq 5.$
   \end{cor}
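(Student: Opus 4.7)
The plan is to combine the two facts already set up in this subsection: the Kirby--Siebenmann identification $\mathcal{C}(N)\cong[N,Top/O]$ valid once $\dim N\geq 5$, and the split short exact sequence (\ref{note2.3}) that holds for any target $H$-space. Since $Top/O$ is an infinite loop space (hence in particular an $H$-space), both inputs apply.

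First, since $\dim(M\times\mathbb{S}^k)=\dim(M)+k\geq 5$, the Kirby--Siebenmann theorem gives
\[
\mathcal{C}(M\times\mathbb{S}^k)\cong[M\times\mathbb{S}^k,\,Top/O].
\]
Next, I take $N=\mathbb{S}^k$ and $Y=Top/O$ in (\ref{note2.3}), yielding the split short exact sequence
\[
0\to[M\wedge\mathbb{S}^k,\,Top/O]\xrightarrow{p^*}[M\times\mathbb{S}^k,\,Top/O]\xrightarrow{i^*}[M\vee\mathbb{S}^k,\,Top/O]\to 0.
\]
I then identify the outer terms: $M\wedge\mathbb{S}^k\simeq\Sigma^k M$, so the left term is $[\Sigma^k M,Top/O]$; and since $[-,Top/O]$ sends wedges to products, the right term is $[M,Top/O]\oplus[\mathbb{S}^k,Top/O]=[M,Top/O]\oplus\pi_k(Top/O)$.

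Combining the splitting with the Kirby--Siebenmann identification gives the claimed decomposition
\[
\mathcal{C}(M\times\mathbb{S}^k)\cong[M,Top/O]\oplus\pi_k(Top/O)\oplus[\Sigma^k M,Top/O].
\]
There is no real obstacle here: the only thing one has to be mindful of is that $Top/O$ really is an $H$-space so that (\ref{note2.3}) applies and the decomposition is a genuine direct sum of abelian groups rather than merely a set-theoretic bijection. This follows from the fact that $Top/O$ sits in the infinite loop space setup fixed in the notation section, so the sum structure on $[X,Top/O]$ is compatible with the splitting.
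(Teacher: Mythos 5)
Your proposal is correct and matches the paper's own derivation: the corollary is stated there precisely as a consequence of the Kirby--Siebenmann identification $\mathcal{C}(N)\cong[N,Top/O]$ for $\dim N\geq 5$ together with the split short exact sequence (\ref{note2.3}) applied with $N=\mathbb{S}^k$ and $Y=Top/O$. Your identifications of the outer terms ($M\wedge\mathbb{S}^k\simeq\Sigma^k M$ and $[M\vee\mathbb{S}^k,Top/O]\cong[M,Top/O]\oplus\pi_k(Top/O)$) are exactly the details the paper leaves implicit.
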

 
	We briefly recall \cite[Lemma 2.2]{SBRKAS}.

        \begin{lemma}\label{concordanceinertia}
	Let $M$ be a closed, oriented, smooth $n$-manifold and $\Sigma^k f_{M}: \Sigma^k M\to \s^{n+k}$ be the $k$-fold suspension of the degree one map $f_{M}:M\to \s^n.$ Then the concordance inertia group 
		$I_c\left(M\times\s^k\right)$ is equal to $\mathrm{ker}\left(\Theta_{n+k}\xrightarrow{(\Sigma^k f_{M})^*}[\Sigma^k M,Top/O]\right)$ for $n + k\geq 5.$
        \end{lemma}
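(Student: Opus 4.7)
The plan is to reduce the lemma to the Kirby--Siebenmann identification combined with the splitting recalled in \eqref{note2.3}. By the discussion preceding the lemma, $I_c(M \times \s^k)$ is by definition the kernel of
\begin{equation*}
(f_{M \times \s^k})^* : \Theta_{n+k} \cong [\s^{n+k}, Top/O] \longrightarrow [M \times \s^k, Top/O] \cong \mathcal{C}(M \times \s^k),
\end{equation*}
where $f_{M \times \s^k}$ is the degree one map collapsing the $(n+k-1)$-skeleton. The first step I would carry out is to identify this degree one map, up to homotopy, with the composite
\begin{equation*}
M \times \s^k \xrightarrow{\,p\,} (M \times \s^k)/(M \vee \s^k) \simeq M \wedge \s^k \simeq \Sigma^k M \xrightarrow{\Sigma^k f_M} \s^{n+k},
\end{equation*}
where $p$ is the pinch map and $\Sigma^k f_M$ is the suspension of the degree one map $f_M : M \to \s^n$. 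This factorization is forced cellularly: both maps annihilate the axes $M \vee \s^k$ and induce the same (degree one) map on the top cell.

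Granting the factorization, the induced map on $[-,Top/O]$ becomes the composition of $(\Sigma^k f_M)^*$ with $p^*$. Because $Top/O$ is an infinite loop space and hence an $H$-space, the sequence \eqref{note2.3} applied with $Y = Top/O$ splits, and in particular
\begin{equation*}
p^* : [\Sigma^k M, Top/O] \hookrightarrow [M \times \s^k, Top/O]
\end{equation*}
is the inclusion of a direct summand. Being injective, it does not enlarge the kernel, and therefore
\begin{equation*}
\ker\bigl((f_{M \times \s^k})^*\bigr) \;=\; \ker\bigl((\Sigma^k f_M)^*\bigr),
\end{equation*}
which is exactly the identification claimed.

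The only step that demands attention is the identification of $f_{M \times \s^k}$ with $(\Sigma^k f_M) \circ p$; one could verify it either by comparing skeletal filtrations or by recognizing that the top cell of $M \times \s^k$ is attached via the canonical map from $\Sigma^{k-1}(M \wedge \s^k)^{(n+k-1)}$ to the lower skeleta. Beyond this bookkeeping, there is no real obstacle: the lemma is essentially a direct consequence of Corollary \ref{concorMtimesSk} together with the fact that the degree one map on the product factors through the smash product.
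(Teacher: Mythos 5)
Your proposal is correct: identifying $f_{M\times\s^k}$ with $(\Sigma^k f_M)\circ p$ (legitimate since, by Hopf's classification theorem, any two degree one maps from the $(n+k)$-dimensional complex $M\times\s^k$ to $\s^{n+k}$ are homotopic) and then using the injectivity of $p^*$ coming from the split sequence \eqref{note2.3} with $Y=Top/O$ yields exactly $\ker\bigl((f_{M\times\s^k})^*\bigr)=\ker\bigl((\Sigma^k f_M)^*\bigr)$. The paper only recalls this lemma from \cite{SBRKAS} without reproving it, but your argument is precisely the one its framework (the Kirby--Siebenmann identification of $I_c$ with the kernel on $\mathcal{C}(M\times\s^k)$ together with the splitting \eqref{note2.3}) is built on, so there is nothing essentially different to compare.
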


In order to make explicit computation, we need to know some stable homotopy groups of mapping cones.
    $$\pi_5^s(\mathbb{C}P^2)=\mathbb{Z}/{12}\{\pi\} = \mathbb{Z}/4\{i' \circ 2\nu\}\oplus \mathbb{Z}/3\{i'\circ\alpha_1\},$$
	
    \[\pi_3^s(\Sigma^2 M(\Z/{p^r})) = \begin{cases}\Z/2\{\Sigma^2 i_{2^r} \circ\eta \} &\mbox{if $p=2$ }\\ 0 &\mbox{if $p$ is odd prime,}\end{cases}\]
    
\[\pi_5^s(\Sigma^2 M(\mathbb{Z}/{p^r}))= \begin{cases}
0&\mbox{if $p>3$ is odd prime}\\
\mathbb{Z}/3\{\Sigma^2 i_{3^r}\circ\alpha_1\}&\mbox{if $p=3$}\\
\mathbb{Z}/\gcd(8,2^r)\{\Sigma^2 i_{2^r}\circ \nu\}\oplus \mathbb{Z}/2\{\widetilde{\eta_{2^r}^2}\} &\mbox{if $p=2$},
\end{cases}\]

\[\pi_5^s(\Sigma^3 M(\mathbb{Z}/{p^r}))=\begin{cases}
0 &\mbox{if $p\geq 3$ is prime}\\
\Z/4\{\tilde{\eta}_{2}\} &\mbox{if $p=2$ and $r=1$}\\
\Z/2\{\tilde{\eta}_{2^r}\}\oplus \Z/2\{\Sigma^3 i_{2^r}\circ \eta^2\} &\mbox{if $p=2$ and $r\geq2$},
\end{cases}\]
In the above $i': \mathbb{S}^2\hookrightarrow \mathbb{C}P^2,~i_{p^r}:\mathbb{S}^0\hookrightarrow M(\mathbb{Z}/{p^r})$ are inclusions, $\tilde{\eta}_{2^r}$ (respectively, $\widetilde{\eta_{2^r}^2}$) refers to a homotopy class which gives $\eta$ (respectively, $\eta^2$) after composition onto $\mathbb{S}^3$ and $\nu$ denotes the generator of $(\pi_{3}^s)_{(2)}.$

Let $C_{i_{2^r}\circ\eta}$ be the cofiber of the composition $\mathbb{S}^3\xrightarrow{\eta}\mathbb{S}^2\xhookrightarrow{\Sigma^2 i_{2^r}}\Sigma^2 M(\mathbb{Z}/{2^r})$. Then 
\[\pi_5^s(C_{i_{2^r}\circ\eta})=\begin{cases}
\mathbb{Z}/2\{i^r\circ\widetilde{\eta_2^2}\}&\mbox{if $r=1$}\\
\mathbb{Z}/2\{i^r\circ \Sigma^2 i_{2^r}\circ 4\nu\}\oplus \mathbb{Z}/2\{i^r\circ\widetilde{\eta_{2^r}^2}\}&\mbox{if $r=2$}\\
\mathbb{Z}/4\{i^r\circ \Sigma^2 i_{2^r}\circ 2\nu\}\oplus \mathbb{Z}/2\{i^r\circ\widetilde{\eta_{2^r}^2}\}&\mbox{if $r\geq 3$}
\end{cases}\]
where $i^r: \Sigma^2 M(\mathbb{Z}/{2^r})\hookrightarrow C_{i_{2^r}\circ\eta}$ is the inclusion.

We also recall higher order Bockstein $\beta_r: H^n(X;\mathbb{Z}/2)\to H^{n+1}(X;\mathbb{Z}/2),$ used to detect $2$-torsion elements in $H_*(X;\mathbb{Z}/2).$   

$\beta_1: H^n(X;\mathbb{Z}/2)\to H^{n+1}(X;\mathbb{Z}/2)$ is the usual Bockstein homomorphism associated to the short exact sequence $0\to \mathbb{Z}/2\to \mathbb{Z}/4\to \mathbb{Z}/2\to 0.$ For $r\geq 2, ~\beta_r: H^n(X;\mathbb{Z}/2)\to H^{n+1}(X;\mathbb{Z}/2)$ is defined on $\cap_{i< r} \mathrm{ker}(\beta_i)$ \cite{harper}. Moreover, if $a\in H^*(M;\mathbb{Z}/2)$ comes from free integral homology class, then $a\in \mathrm{ker}(\beta_r)$ and $a\notin \mathrm{Im}(\beta_r)$ for each $r\geq 1$ \cite{tangora}.

\subsection{Stable homotopy type of \texorpdfstring{$M$}{}} Now, we compute the concordance inertia group and concordance smooth structure set of $M\times \s^k$, where $M$ is a simply connected, closed, smooth $6$-manifold and $ k\geq 1.$ By Poincar\'e\xspace duality and the Universal coefficient theorem, the homology of $M$ is given by
	\begin{equation} \label{eqn1}
	H_q(M;\z) =
	\begin{cases}
	\mathbb{Z}^{m}\oplus \bigoplus\limits_{t=1}^{s_1} \z/{p_{t}^{r_t}}\oplus\bigoplus\limits_{t=1}^{s_2}{\mathbb{Z}/{2^{r_t}}} & \text{{\rm if}~q=2}\\
	\mathbb{Z}^{2d}\oplus \bigoplus\limits_{t=1}^{s_1} \z/{p_{t}^{r_t}}\oplus\bigoplus\limits_{t=1}^{s_2}{\mathbb{Z}/{2^{r_t}}} & \text{{\rm if}~q=3}\\
	\mathbb{Z}^{m} & \text{{\rm if}~q=4}\\
	\mathbb{Z} & \text{{\rm if}~ q=0,6}\\
	0 & \text{otherwise}\\
	\end{cases}       
	\end{equation}
	where $m, d$ are non-negative integers, $p_t$ are odd prime numbers and $r_t\geq 1$ are integers. It follows from \cite[Theorem 1] {wall}that $M \cong  N \# \big (\#_{l=1}^{d} \s^3\times \s^3 \big )$ where $N$ is a closed, oriented and simply connected manifold satisfying $H_q(N;\z)= H_q(M;\z)$ for $q\neq 3$, $H_3(N;\z)= \bigoplus\limits_{t=1}^{s_1} \z/{p_{t}^{r_t}}\oplus\bigoplus\limits_{t=1}^{s_2}{\mathbb{Z}/{2^{r_t}}}$. Moreover, $N$ is unique up to oriented diffeomorphism. By fixing minimal CW-structure on $N$, it can easily be seen that $N \# \big (\#_{l=1}^{d} \s^3\times \s^3 \big )$ can be obtained by a cofiber sequence 
	\begin{equation*} 
	\s^5\xrightarrow{f} N^{(5)}\vee (\bigvee_{l=1}^{2d} \s^3)\xhookrightarrow{} N \# \big(\#_{l=1}^{d} \s^3\times \s^3 \big ),
	\end{equation*}
	where the attaching map $f:\s^5\to N^{(5)}\vee (\bigvee\limits_{l=1}^{2d} \s^3)$ is homotopic to the composition $(f'\vee \omega)\circ c : \s^5\to N^{(5)}\vee (\bigvee\limits_{l=1}^{2d} \s^3)$. Here, the map $c:\s^5\to \s^5\vee \s^5$ is the suspension comultiplication, $\omega : \s^5\to \bigvee\limits_{l=1}^{2d} \s^3$ and $f':\s^5\to N^{(5)}$ are the attaching maps of the top cell in $\#_{l=1}^{d}  (\s^3\vee \s^3)$ and $N$, respectively. Now by using the fact that $\Sigma \omega $ is null homotopic, we obtain that $M$ is stably homotopic to 
    \begin{equation}\label{stableM^6}
        M\simeq N \vee \bigvee\limits_{l=1}^{2d} \mathbb{S}^3.
    \end{equation}

\subsection{The top cell attaching map of \texorpdfstring{$1$}{}-connected \texorpdfstring{$6$}{}-manifolds}
Throughout this subsection, $M$ is defined as a simply connected, smooth, closed $6$-dimensional manifold with its third Betti number $b_3(M)=0$ and no $2$-torsion in its third homology group $H_3(M;\mathbb{Z})$. Based on the homology given in (\ref{eqn1}), the $3$-skeleton of $M$ can be described as follows:
\begin{equation}\label{3skeletonM^6}
M^{(3)}\simeq \bigvee_{i=1}^m \mathbb{S}^2 \vee \bigvee_{j=1}^{s_1}\Sigma^2 M(\mathbb{Z}/{p_j^{r_j}}) \vee \bigvee_{j=1}^{s_2}\Sigma^2 M(\mathbb{Z}/{2^{r_j}}) \vee \bigvee_{j=1}^{s_1+s_2}\mathbb{S}^3
\end{equation} 
The stable homotopy type of the $4$-skeleton $M^{(4)}$ of $M$ is determined by the attaching map $\phi_{4}:\bigvee\limits_{i=1}^{m+s_1+s_2}\s^3\to M^{(3)}.$ From \eqref{3skeletonM^6}, we note that the stable nontrivial components of $\phi_4$ may include:
\begin{itemize}
    \item A non-zero multiple of $\eta$ onto some of the copies of $\mathbb{S}^2.$
    \item A non-trivial multiple of $i_{2^r}\circ\eta$ onto one of the copies of $\Sigma^2 M(\mathbb{Z}/{2^{r_j}}).$
    \item A degree $p^{r_j}_j, 1\leq j\leq s_1+s_2$ onto each copy of $\mathbb{S}^3.$
\end{itemize}

Consequently, based on (\ref{eqn1}), the stable homotopy type of the $4$-skeleton $M^{(4)}$ can be expressed as
\begin{align}\label{4skeletonM^6}
M^{(4)}=  M^{(5)}\simeq \bigvee_{i=1}^c \mathbb{C}P^2\vee\bigvee_{i=1}^{m-c}\mathbb{S}^2\vee\bigvee_{j=\tilde{s}_1+1}^{s_1} \Sigma^2 M(\mathbb{Z}/{p_j^{r_j}})\vee \bigvee_{j=1}^{\tilde{s}_1}\Sigma^2 M(\mathbb{Z}/{3^{r_j}}) \vee \bigvee_{j=1}^{s_2-l}\Sigma^2 M(\mathbb{Z}/{2^{r_j}})\nonumber\\                
\vee \bigvee_{j=1}^{\tilde{s}_1}\Sigma^3 M(\mathbb{Z}/{3^{r_j}})\vee \bigvee_{j=\tilde{s}_1+1}^{s_1}\Sigma^3 M(\mathbb{Z}/{p_j^{r_j}})\vee \bigvee_{j=1}^{s_2}\Sigma^3 M(\mathbb{Z}/{2^{r_j}})\vee \bigvee_{w=1}^m \mathbb{S}^4\vee \bigvee_{j=1}^l C_{i_{2^{r_j}}\circ\eta}, 
\end{align}
where $0\leq c\leq m,~ 0\leq l\leq s_2,$ and $0\leq \tilde{s}_1\leq s_1$ are integers, $p_j> 3$ are odd primes, and $C_{i_{2^r}\circ\eta}$ is the cofiber of the composition $\mathbb{S}^3\xrightarrow{\eta}\mathbb{S}^2\xhookrightarrow{i_{2^r}}\Sigma^2 M(\mathbb{Z}/{2^r})$. We now examine each component of the attaching map  $\phi_{6}:\s^5\to M^{(5)}$ of the top cell. 

Since the steenrod operation $Sq^4: H^2(M;\mathbb{Z}/2)\to H^6(M;\mathbb{Z}/2)$ is trivial, stably the map $\phi_6:\mathbb{S}^5\to M^{(5)}$ can be written as follows:	
\begin{align}\label{phi_6}
\phi_6 = \sum_{i=1}^c \left(a_i (i' \circ 2\nu) +a_i' (i'\circ\alpha_1)\right) + \sum_{i=1}^{m-c} (2 b_i \nu + b_i' \alpha_1)+ \sum_{j=1}^{\tilde{s}_1} c_j (\Sigma^2 i_{3^{r_j}}\circ\alpha_1)  \nonumber \\
+\sum_{j=1}^{s_2-l}\left(2 x_j (\Sigma^2 i_{2^{r_j}}\circ 2^{\zeta(r_j)}\nu)+\tilde{x}_j \widetilde{\eta_{2^{r_j}}^2}\right)+\sum_{j=1}^{s_2} \left(y_j \tilde{\eta}_{2^{r_j}}+ y_j'(\Sigma^3 i_{2^{r_j}}\circ\eta^2)\right)+\sum_{w=1}^m z_{w} \eta \nonumber\\
+\sum_{j=1}^l \left(d_j (i^{r_j}\circ \widetilde{\eta_{2^{r_j}}^2})+ 2 d^{r_j} (i^{r_j}\circ \Sigma^2 i_{2^{r_j}}\circ 2^{1+\delta_{r_j}}\nu) \right),
\end{align}
where  $a_i, b_i\in \mathbb{Z}/4; a_i', b_i', c_j\in \mathbb{Z}/3;~d_j, \tilde{x}_j, z_{w}\in \mathbb{Z}/2,~d^{r_j}=0$ if $r_j=1$ and $d^{r_j}\in \mathbb{Z}/2$ if $r_j=2$ and $d^{r_j}\in \mathbb{Z}/4$ if $r_j\geq2,$ while $\delta_{r_j}=1$ if $r_j=2$ and equals to $0$ for all $r_j\neq 2;$ the function $\zeta(r_j)=2,1,0$ for $r_j=1,2$ and $\geq 3$ respectively, $x_j\in \mathbb{Z}/2, \mathbb{Z}/4$ and $\mathbb{Z}/8$ for $r_j=1,2$ and $r_j\geq 3$ respectively; $y_j\in \mathbb{Z}/4$ and $\mathbb{Z}/2$ for $r_j=1$ and $r_j\geq2$ respectively, $y_j'\in (1-\delta_{r_j})\mathbb{Z}/2 ~(r_j=1 ~\text{applies} ~y_j=0).$

We now introduce the following conditions on cohomology operations:

\textbf{Condition A:} For any $u, v\in H^3(M;\mathbb{Z}/2)$ satisfying $\Theta(u)\neq 0,~\Theta(v)=0,~u+v\notin \mathrm{Im}(\beta_s)$ for any $s\geq 1,$  while there exist $u',~v'\in H^3(M;\mathbb{Z}/2)$ with $\Theta(u')\neq 0,~\Theta(v')=0$ such that $\beta_r(u'+v')\neq 0$ for some $r.$

\textbf{Condition B:} There exist $u, v \in H^3(M;\mathbb{Z}/2)$ satisfying $\Theta(u)\neq 0,~\Theta(v)=0$ and $u+v\in \mathrm{Im}(\beta_r)$ for some $r.$

\textbf{Condition C:} For every $u\in H^4(M;\mathbb{Z}/2)$ with $Sq^2(u)\neq 0,$ any $v\in \mathrm{ker}(Sq^2)$ satisfies $u+v\notin \mathrm{Im}(\beta_r)$ for any $r\geq 1.$

\textbf{Condition D:} There exist $u,v\in H^4(M;\mathbb{Z}/2)$ such that $Sq^2(u)\neq 0,~ Sq^2(v)=0,~ u+v \in \mathrm{Im}(\beta_r)$ for some $r.$

\underline{\textbf{$M$ is a spin manifold}}

 If $Sq^2: H^4(M;\mathbb{Z}/2)\to H^6(M;\mathbb{Z}/2)$ is trivial, then $y_j, z_w$ in \eqref{phi_6} must be zero. Consequently, \eqref{phi_6} simplifies to
\begin{align}\label{spin}
\phi_6 = \sum_{i=1}^c \left(a_i (i' \circ 2\nu) +a_i' (i'\circ\alpha_1)\right) + \sum_{i=1}^{m-c} (2 b_i \nu + b_i' \alpha_1)+ \sum_{j=1}^{\tilde{s}_1} c_j (\Sigma^2 i_{3^{r_j}}\circ\alpha_1)\nonumber \\
+\sum_{j=1}^{s_2-l}\left(2 x_j (\Sigma^2 i_{2^{r_j}}\circ 2^{\zeta(r_j)}\nu)+\tilde{x}_j \widetilde{\eta_{2^{r_j}}^2}\right)+\sum_{j=1}^{s_2} y_j'(\Sigma^3 i_{2^{r_j}}\circ\eta^2)\nonumber\\
+\sum_{j=1}^l \left(d_j (i^{r_j}\circ \widetilde{\eta_{2^{r_j}}^2})+ 2 d^{r_j} (i^{r_j}\circ \Sigma^2 i_{2^{r_j}}\circ 2^{1+\delta_{r_j}}\nu) \right)  .  
\end{align}

\begin{itemize}
    \item If the secondary cohomology operation $\Theta$ acts trivially on $H^3(M;\mathbb{Z}/2),$ then, by \cite{harper} and \cite[Lemma 3.3]{LiZhu24}, all $d_j, \tilde{x}_j, y_j'$ in \eqref{spin} vanish, reducing \eqref{spin} to 
\begin{align}\label{spin_noeta2}
\phi_6 = \sum_{i=1}^c \left(a_i (i' \circ 2\nu) +a_i' (i'\circ\alpha_1)\right)  + \sum_{i=1}^{m-c} (2 b_i \nu + b_i' \alpha_1)+ \sum_{j=1}^{\tilde{s}_1} c_j (\Sigma^2 i_{3^{r_j}}\circ\alpha_1) \nonumber \\
+\sum_{j=1}^{s_2-l}2 x_j (\Sigma^2 i_{2^{r_j}}\circ 2^{\zeta(r_j)}\nu)+\sum_{j=1}^{l} 2 d^{r_j} (i^{r_j}\circ \Sigma^2 i_{2^{r_j}}\circ 2^{1+\delta_{r_j}}\nu)  . 
\end{align}
\item If the secondary cohomology operation $\Theta$ acts non-trivially on $H^3(M;\mathbb{Z}/2),$ then at least one of $d_j,\tilde{x}_j, y_j'$ in \eqref{spin} is non-zero.

\begin{itemize}
    \item[(a)] If $M$ satisfies Condition A, then $y_j'=1$ for some $1\leq j\leq s_2$, while $d_j=0$ for all $1\leq j\leq l$ and $\tilde{x}_j=0$ for all $1\leq j\leq s_2-l.$ Then by \cite[Lemma 4.1]{LiZhu24}, \eqref{spin} further reduces to 
\begin{align}\label{spin_conditionA}
\phi_6 = \sum_{i=1}^c \left(a_i (i' \circ 2\nu) +a_i' (i'\circ\alpha_1)\right)  + \sum_{i=1}^{m-c} (2 b_i \nu + b_i' \alpha_1) + \sum_{j=1}^{\tilde{s}_1} c_j (\Sigma^2 i_{3^{r_j}}\circ\alpha_1)  \nonumber \\
+\sum_{j=1}^{s_2-l} 2 x_j (\Sigma^2 i_{2^{r_j}}\circ 2^{\zeta(r_j)}\nu) + \Sigma^3 i_{2^{r_{j_0}}}\circ\eta^2+\sum_{j=1}^l 2 d^{r_j} (i^{r_j}\circ \Sigma^2 i_{2^{r_j}}\circ 2^{1+\delta_{r_j}}\nu),  
\end{align}
where $j_0$ is the index such that $r_{j_0}=\max\{r_j: y_{j}'=1,~1\leq j\leq s_2\} =\max\{r_j: \beta_{r_j}(u'+v')\neq0, ~1\leq j\leq s_2\}.$
\item[(b)] If $M$ satisfies Condition B, then either $d_{j}=1$ or $\tilde{x}_j=1$ for some $j.$ By \cite[Lemma 4.1]{LiZhu24}, $y_j'=0$ for all $1\leq j\leq s_2$. 
\begin{itemize}
    \item[(i)] If $d_{j}=1$  for some $j$ and $\tilde{x}_j=0$ for all $1\leq j\leq s_2-l,$ then using \cite[Lemma 4.2]{LiZhu24} we may assume that $d_{j_1}=1$ and $d_{j}=0$ for $j\neq j_1,$ where $j_1$ is the index such that $r_{j_1}=\mathrm{min}\{r_j: d_{j}=1 ~\mathrm{for} ~1\leq j\leq l\}= \mathrm{min}\{r_j: u+v \in \mathrm{Im}(\beta_{r_j}) ~\mathrm{for}~ 1\leq j\leq l\}.$
    \item[(ii)] If $\tilde{x}_j=1$ for some $j$ and $d_j=0$ for all $1\leq j\leq l,$  then by \cite[Lemma 4.1]{LiZhu24} we may assume $\tilde{x}_{j_2}=1,$ and $\tilde{x}_j=0$ for all $j\neq j_2,$ where $j_2$ is the index such that $r_{j_2}=\mathrm{min}\{r_j:\tilde{x}_j=1 ~\mathrm{for}~ 1\leq j\leq s_2-l \}= \mathrm{min}\{r_j: u+v \in \mathrm{Im}(\beta_{r_j}) ~\mathrm{for}~ 1\leq j\leq s_2-l\}.$
    \item[(iii)] Suppose $d_{j_3}=1$ and $\tilde{x}_{j_4}=1$ for some $j_3, j_4.$ If $\mathrm{min}(r_{j_3}, r_{j_4})=r_{j_3},$ then by \cite[Lemma 4.1]{LiZhu24}, we may assume  $d_{j_3}=1$ and $d_{j}=0$ for $j\neq j_3,$ and $\tilde{x}_{j}=0$ for all $1\leq j\leq s_2-l.$ If $\mathrm{min}(r_{j_3}, r_{j_4})=r_{j_4},$ then again \cite[Lemma 4.1]{LiZhu24} implies $\tilde{x}_{j_4}=1,$ and $\tilde{x}_j=0$ for all $j\neq j_4$ and $d_j=0$ for all $1\leq j\leq l.$   
\end{itemize}

Define \begin{equation}\label{spin_conditionB_coefficient}
  r=\begin{cases}
r_{j_1}&\mbox{if $\tilde{x}_j=0$ for all $1\leq j\leq s_2-l$}\\
r_{j_2}&\mbox{if $d_j=0$ for all $1\leq j\leq l$}\\
\mathrm{min}(r_{j_3}, r_{j_4})&\mbox{if $d_{j_3}=1$ and $\tilde{x}_{j_4}=1$}
\end{cases}  
\end{equation}
When $r=r_{j_1}$ or $r_{j_3},$ equation \eqref{spin} simplifies to
\begin{align}\label{spin_conditionB_dj}
\phi_6 = \sum_{i=1}^c \left(a_i (i' \circ 2\nu) +a_i' (i'\circ\alpha_1)\right) + \sum_{i=1}^{m-c} (2 b_i \nu + b_i' \alpha_1)+ (i^{r}\circ \widetilde{\eta_{2^{r}}^2}) + \sum_{j=1}^{\tilde{s}_1} c_j (\Sigma^2 i_{3^{r_j}}\circ\alpha_1) \nonumber \\
+\sum_{j=1}^{s_2-l}(2 x_j (\Sigma^2 i_{2^{r_j}}\circ 2^{\zeta(r_j)}\nu)+\sum_{j=1}^{l}  2 d^{r_j} (i^{r_j}\circ \Sigma^2 i_{2^{r_j}}\circ 2^{1+\delta_{r_j}}\nu).
\end{align}
When $r=r_{j_2}$ or $r_{j_4},$ equation \eqref{spin} can be written as
\begin{align}\label{spin_conditionB_xj}
\phi_6 = \sum_{i=1}^c \left(a_i (i' \circ 2\nu) +a_i' (i'\circ\alpha_1)\right) + \sum_{i=1}^{m-c} (2 b_i \nu + b_i' \alpha_1)+ \sum_{j=1}^{\tilde{s}_1} c_j (\Sigma^2 i_{3^{r_j}}\circ\alpha_1) \nonumber \\
+\sum_{j=1}^{s_2-l} 2 x_j (\Sigma^2 i_{2^{r_j}}\circ 2^{\zeta(r_j)}\nu)+ \widetilde{\eta_{2^{r}}^2}+\sum_{j=1}^{l} 2 d^{r_j} (i^{r_j}\circ \Sigma^2 i_{2^{r_j}}\circ 2^{1+\delta_{r_j}}\nu) . 
\end{align}
\end{itemize}
\end{itemize}

\underline{\textbf{$M$ is a non-spin manifold}}

If $Sq^2: H^4(M;\mathbb{Z}/2)\to H^6(M;\mathbb{Z}/2)$ is non-trivial, then at least one of $y_j, z_w$ in \eqref{phi_6} must be $1.$
\begin{itemize}
    \item If $M$ satisfies Condition C, then $y_j=0$ for all $1\leq j\leq s_2$ and $z_w=1$ for at least one $1\leq w\leq m.$ Using \cite[Lemma 4.1]{LiZhu24}, we may assume that $d_j, \tilde{x}_j, y_j'$ are zero for all $j.$ Furthermore, from \cite[\S 3]{rhuang}, we can set $z_{w_0}=1$ and $z_w=0$ for all $w\neq w_0.$ Consequently, \eqref{phi_6} reduces to the form:
    \begin{align}\label{nonspin_conditionC}
\phi_6 = \sum_{i=1}^c \left(a_i (i' \circ 2\nu) +a_i' (i'\circ\alpha_1)\right) + \sum_{i=1}^{m-c} (2 b_i \nu + b_i' \alpha_1) + \sum_{j=1}^{\tilde{s}_1} c_j (\Sigma^2 i_{3^{r_j}}\circ\alpha_1) \nonumber \\
+\sum_{j=1}^{s_2-l} 2 x_j (\Sigma^2 i_{2^{r_j}}\circ 2^{\zeta(r_j)}\nu) +\eta+\sum_{j=1}^l 2 d^{r_j} (i^{r_j}\circ \Sigma^2 i_{2^{r_j}}\circ 2^{1+\delta_{r_j}}\nu).  
\end{align}
\item If $M$ satisfies Condition D, then $y_j=1$ for some $1\leq j\leq s_2.$ Let $j_5$ be the index such that $r_{j_5}= \min\{r_j: y_j=1 ~\text{for}~1\leq j\leq s_2\}=\min\{r_j: u+v\in \mathrm{Im}(\beta_{r_j})~\text{for}~1\leq j\leq s_2\}.$ Then by \cite[Lemma 4.1]{LiZhu24}, we may assume that $y_{j_5}=1, y_j=0$ for all $j\neq j_5,$ and $d_j,\tilde{x}_j, y_j', z_w$ in \eqref{phi_6} are all zero. Under these assumptions, \eqref{phi_6} takes the form:
\begin{align}\label{nonspin_conditionD}
\phi_6 = \sum_{i=1}^c \left(a_i (i' \circ 2\nu) +a_i' (i'\circ\alpha_1)\right) + \sum_{i=1}^{m-c} (2 b_i \nu + b_i' \alpha_1)+ \sum_{j=1}^{\tilde{s}_1} c_j (\Sigma^2 i_{3^{r_j}}\circ\alpha_1) \nonumber \\
+\sum_{j=1}^{s_2-l} 2 x_j (\Sigma^2 i_{2^{r_j}}\circ 2^{\zeta(r_j)}\nu)+\tilde{\eta}_{2^{r_{j_5}}}+\sum_{j=1}^{l} 2 d^{r_j} (i^{r_j}\circ \Sigma^2 i_{2^{r_j}}\circ 2^{1+\delta_{r_j}}\nu).  
\end{align}
\end{itemize}

\subsection{Concordance inertia group of \texorpdfstring{$M\times\s^k$}{}}
We note from \eqref{stableM^6} and \eqref{phi_6} that there is a long exact sequence 
\begin{equation}\label{M_leq}
 \begin{tikzcd}[column sep=1em,font=\small]
	\cdots & {[\Sigma^{k+1}M^{(5)}, Top/O]} &{ \pi_{6+k}(Top/O)} & {[\Sigma^k M, Top/O]}\\
	{[\Sigma^k M^{(5)}, Top/O]} & {\pi_{5+k}(Top/O)} &{[ \Sigma^{k-1} M, Top/O]}& \cdots
	\arrow[from=1-1, to=1-2]
	\arrow["(\Sigma^{k+1}\phi_6)^*",from=1-2, to=1-3]
	\arrow["(\Sigma^k f_{M})^*",from=1-3, to=1-4]
	\arrow["(\Sigma^k \tilde{i})^*",from=1-4, to=2-1, overlay, out=-7, in=171]
	\arrow["(\Sigma^k \phi_6)^*",from=2-1, to=2-2]
	\arrow[from=2-2, to=2-3]
	\arrow[from=2-3, to=2-4] 
	\end{tikzcd}
    \end{equation}
    induced from the cofiber sequence $\mathbb{S}^5\xrightarrow{\phi_6}M^{(5)}\xhookrightarrow{\tilde{i}}M\xrightarrow{f_{M}}\mathbb{S}^6\cdots$ Now combining Lemma \ref{concordanceinertia} and \eqref{M_leq}, we get 
\begin{equation}\label{Concordance_ Inertia_expression}
    I_c(M\times\mathbb{S}^k)=\mathrm{Im}\left([\Sigma^{k+1}M^{(5)}, Top/O]\xrightarrow[]{(\Sigma^{k+1}\phi_6)^*} \pi_{6+k}(Top/O)\right).
\end{equation}

We now compute the image of each component of $\Sigma^{k+1}\phi_6$ given in \eqref{phi_6} along $Top/O.$ From \cite[Lemma 3.1]{SBRKAS}, we have 
\begin{lemma}\label{image_eta}
    The image of $\eta^*: \pi_{5+k}(Top/O)\to\pi_{6+k}(Top/O)$ is 
    \begin{itemize}
        \item[(a)] zero if $k=1,2,6,7,8,10.$
        \item[(b)] $\mathbb{Z}/2$ if $3,4,5,9.$ 
    \end{itemize}
\end{lemma}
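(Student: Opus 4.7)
The plan is to reduce $\pi_*(Top/O)$ to the Kervaire--Milnor groups of exotic spheres. The fiber sequence $PL/O \to Top/O \to Top/PL$, combined with the fact that $\pi_n(Top/PL)$ is $\mathbb{Z}/2$ for $n=3$ and zero otherwise, gives $\pi_n(Top/O) \cong \pi_n(PL/O)$ for all $n\geq 5$; by Kervaire--Milnor this is further identified with $\Theta_n$. Since every space in the fibration is an infinite loop space, these isomorphisms are $\pi_*^s$-linear, so $\eta^*\colon \pi_{5+k}(Top/O) \to \pi_{6+k}(Top/O)$ is identified with $\eta$-multiplication $\Theta_{5+k} \to \Theta_{6+k}$, where $\eta \in \pi_1^s$ is the stable Hopf map.

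I would then tabulate the relevant values $\Theta_6=0$, $\Theta_7=\mathbb{Z}/28$, $\Theta_8=\mathbb{Z}/2$, $\Theta_9=(\mathbb{Z}/2)^3$, $\Theta_{10}=\mathbb{Z}/6$, $\Theta_{11}=\mathbb{Z}/992$, $\Theta_{12}=0$, $\Theta_{13}=\mathbb{Z}/3$, $\Theta_{14}=\mathbb{Z}/2$, $\Theta_{15}=\mathbb{Z}/8128 \oplus \mathbb{Z}/2$, and $\Theta_{16}=\mathbb{Z}/2$. Four cases of part (a) follow immediately: $k=1$ and $k=7$ (vanishing source), $k=6$ (vanishing target), and $k=8$ (source $\mathbb{Z}/3$ and target $\mathbb{Z}/2$ have coprime order, so the only homomorphism is trivial). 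For the remaining cases $k \in \{2,3,4,5,9,10\}$, I would use the Kervaire--Milnor short exact sequence
\[ 0 \longrightarrow bP_{n+1} \longrightarrow \Theta_n \longrightarrow \pi_n^s/\mathrm{im}(J) \longrightarrow 0, \]
which is compatible with $\eta$-multiplication, together with Toda's tables for the stable stems and the standard description of $\mathrm{im}(J)$.

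The computation then splits into two checks: (i) $\eta$ annihilates each $bP$-subgroup in the relevant dimensions (in particular the Milnor generator of $\Theta_7$ and the $\mathbb{Z}/8128$-summand of $\Theta_{15}$), and (ii) the induced $\eta$-multiplication on $\pi_n^s/\mathrm{im}(J)$ is nonzero exactly for $n=8,9,10,14$ (the $k=3,4,5,9$ cases, with $\mathbb{Z}/2$-image realized by classical classes such as $\eta\varepsilon$ and $\eta\mu_9$) and vanishes for $n=15$ (the $k=10$ case). The main obstacle is verifying these vanishings, especially for $k=10$, where one must show that $\eta$ kills both summands of $\Theta_{15}$; this requires tracking framed-cobordism representatives through the Kervaire--Milnor sequence. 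These explicit computations are carried out in \cite[Lemma~3.1]{SBRKAS}, which the present lemma directly invokes.
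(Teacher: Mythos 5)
The paper does not actually prove this lemma: it is quoted verbatim from \cite[Lemma 3.1]{SBRKAS}, so your final fallback to that citation is exactly what the paper does. Your reduction is also the standard one and is correctly set up: $Top/PL\simeq K(\mathbb{Z}/2,3)$ gives $\pi_n(Top/O)\cong\pi_n(PL/O)\cong\Theta_n$ for $n\geq 5$, compatibly with the $\pi_*^s$-action, so $\eta^*$ becomes $\eta$-multiplication $\Theta_{5+k}\to\Theta_{6+k}$; your table of $\Theta_n$ is correct; and the cases $k=1,7$ (zero source), $k=6$ (zero target) and $k=8$ (odd-order source against $2$-torsion $\eta$) follow immediately, as does $k=2$ via $\eta\cdot bP_8\subseteq bP_9=0$.

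However, your two-step scheme --- (i) show $\eta$ kills the $bP$-subgroups, (ii) compute the induced map on $\pi_n^s/\mathrm{im}(J)$ --- cannot establish the $k=5$ case, and your claim in (ii) is internally inconsistent there. For $n=10$ the target is $\Theta_{11}=bP_{12}\cong\mathbb{Z}/992$ with $\pi_{11}^s=\mathrm{im}(J_{11})\cong\mathbb{Z}/504$, so $\pi_{11}^s/\mathrm{im}(J)=0$ and the induced map on cokernels of $J$ is necessarily zero, contrary to your assertion that it is nonzero for $n=10$; the asserted $\mathbb{Z}/2$ image lies entirely inside the $bP$-part of the target, which detection through $\mathrm{coker}(J)$ is structurally blind to (indeed $\eta\cdot\eta\mu=\eta^2\mu\in\mathrm{im}(J_{11})$). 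Showing that the corresponding homotopy sphere is nevertheless nontrivial in $bP_{12}$ requires a genuinely different argument (of the kind carried out in \cite{SBRKAS}), not just Toda's tables plus the Kervaire--Milnor sequence. A similar remark applies to $k=10$, where the whole content is that $\eta^2\kappa$ dies in $\mathrm{coker}(J_{16})$ (and that $\eta$ kills the $\mathbb{Z}/2$-summand of $\Theta_{15}$); you flag this but do not resolve it. So, as a self-contained proof the proposal has a real gap at $k=5$ (and outsources the hard verifications at $k=3,4,9,10$), while as a reading of the paper it coincides with the paper's own treatment, namely citing \cite[Lemma 3.1]{SBRKAS}.
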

From \cite[Lemma 3.1]{AS24}, we get
\begin{lemma}\label{imagenu}\noindent
\begin{itemize}
    \item[(a)]  The image of $(2\nu_{3+k})^*: \pi_{3+k}(Top/O)\to \pi_{6+k}(Top/O)$ is zero for all $1\leq k\leq 10.$
    \item[(b)] The map $\alpha_1^*: \pi_{3+k}(Top/O)\to\pi_{6+k}(Top/O)$ has trivial image for $1\leq k\leq 10$ with $ k\neq 7$ and image $\mathbb{Z}/3$ for $k=7.$ 
\end{itemize}
\end{lemma}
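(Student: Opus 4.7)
The plan is to identify $\pi_n(Top/O) \cong \Theta_n$ for $n \geq 5$, using the fiber sequence $PL/O \to Top/O \to Top/PL \simeq K(\mathbb{Z}/2, 3)$ combined with Kirby--Siebenmann's identification $\pi_n(PL/O) = \Theta_n$. Under this identification, both $(2\nu)^*$ and $\alpha_1^*$ become the composition action of the stable classes $2\nu, \alpha_1 \in \pi_3^s = \mathbb{Z}/24$ on groups of homotopy spheres, so the problem reduces to a module-over-$\pi_*^s$ computation on $\Theta_*$.

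Several cases are then trivial: since $\Theta_4 = \Theta_5 = \Theta_6 = \Theta_{12} = 0$, both maps vanish for $k \in \{1, 2, 3, 9\}$ (source trivial) and for $k = 6$ (target trivial). For the remaining cases $k \in \{4, 5, 7, 8, 10\}$, I would use the Kervaire--Milnor short exact sequence
\[
0 \to bP_{4+k} \to \Theta_{3+k} \to \mathrm{coker}(J_{3+k}) \to 0,
\]
which is natural with respect to composition by fixed stable classes, to reduce to composition products in $\pi_*^s$ that can be looked up in Toda's tables. For part (a), relations such as $2\nu \cdot \sigma = 0$, $2\nu \cdot \nu^2 = 0$, together with the triviality of $2\nu$ on odd-torsion generators (since $2\nu$ already has order $12$ and kills the 3-primary part on composition), force vanishing on every generator of $\mathrm{coker}(J_{3+k})$. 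For part (b), the only non-trivial instance is $k = 7$: the $\mathbb{Z}/3$ summand of $\Theta_{10} = \mathbb{Z}/6$, generated by the class lifting $\beta_1 \in (\pi_{10}^s)_{(3)}$, is carried onto $\Theta_{13} = \mathbb{Z}/3$ via the non-zero 3-primary composition $\alpha_1 \cdot \beta_1$; in every other case the 3-primary component of the relevant target is zero or unreached.

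The main obstacle will be handling the $bP_{4+k}$ summand of the source, which is not directly detected in $\pi_*^s / \mathrm{Im}(J)$ and therefore requires a separate argument. Here one uses naturality of the $J$-homomorphism and the framed-bordism interpretation of $bP$-generators to check that composition with $2\nu$ or $\alpha_1$ sends these classes into the image of $J$ in $\pi_{6+k}^s$, which vanishes in each relevant $\Theta_{6+k}$ (either by dimension, as when $bP_{7+k}$ lies in a trivial factor, or by direct Adams $e$-invariant computation on generators like the Milnor sphere generating $\Theta_7 = \mathbb{Z}/28$). Once this is verified, assembling the pieces gives the claimed images for each $k$.
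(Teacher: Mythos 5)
The paper offers no argument of its own here: Lemma \ref{imagenu} is quoted verbatim from \cite[Lemma 3.1]{AS24}, so your attempt to prove it directly via the identification $\pi_n(Top/O)\cong\Theta_n$ ($n\geq 5$), the Kervaire--Milnor sequence, and composition products is a reasonable independent route. However, it contains a genuine error at the one place where part (a) is delicate, namely $k=7$. You treat $2\nu$ as twice the integral Hopf class of order $24$ and assert that it ``kills the 3-primary part on composition''; that is false: the $3$-component of $2\nu$ would then be $2\alpha_1\neq 0$, and since the $\mathbb{Z}/3$ of $\Theta_{10}$ maps to $\beta_1$ in $\mathrm{coker}\,J_{10}$ while $\alpha_1\beta_1$ generates $(\pi_{13}^s)_{(3)}\cong\Theta_{13}$ (and $bP_{14}=0$), your reading would force the image of $(2\nu)^*\colon\Theta_{10}\to\Theta_{13}$ to be $\mathbb{Z}/3$, contradicting part (a). The statement is only true under the paper's convention (stated in Section 2.1) that $\nu$ denotes the generator of $(\pi_3^s)_{(2)}$, so that $2\nu$ is a purely $2$-primary class of order $4$; with that convention the $k=7$ case follows from biadditivity of the composition pairing: the $2$-torsion of $\Theta_{10}$ dies because $x\circ 2\nu=(2x)\circ\nu=0$, and the $3$-torsion part lands in $\Theta_{13}\cong\mathbb{Z}/3$ with order dividing $4$, hence vanishes. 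You must fix this convention explicitly, otherwise the ``proof'' of (a) proves a false statement.

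A second, smaller gap is your treatment of the $bP$ summands. The correct mechanism is that the framed class of $\Sigma\circ\alpha$ equals $[\Sigma]\cdot\alpha$ in $\mathrm{coker}\,J_{6+k}$, so when the source is $bP_{4+k}$ the composite lies in $bP_{7+k}$ --- not ``in the image of $J$'' as you write --- and this only finishes the argument when $bP_{7+k}=0$, i.e.\ when $7+k$ is odd ($k=4,8,10$). For $k=5$ the target $\Theta_{11}=bP_{12}\cong\mathbb{Z}/992$ is entirely $bP$, so this line of reasoning gives nothing; there you need the elementary exponent argument instead: $\Theta_8\cong\mathbb{Z}/2$, so $x\circ 2\nu=(2x)\circ\nu=0$, and $x\circ\alpha_1$ is annihilated by both $2$ and $3$, hence zero. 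With these two repairs (the $2$-local meaning of $\nu$, and replacing the $e$-invariant/``image of $J$'' step by the $\mathrm{coker}\,J$--$bP$ dichotomy supplemented by order arguments in the cases $k=5,7,10$), your outline does assemble into a complete proof of both parts, including the nontrivial image $\mathbb{Z}/3$ at $k=7$ in part (b) via $\alpha_1\beta_1\neq 0$, which you identified correctly.
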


\begin{lemma}\label{lemma2.16}
		The image of the map $(\Sigma^{k+1} \pi)^*:[\Sigma^{k+1}\mathbb{C}P^2,Top/O]\to \pi_{6+k}(Top/O)$ is zero for all $1\leq k\leq 10, k\neq 7$ and is $\z/3$ for $k=7.$
\end{lemma}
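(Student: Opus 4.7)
The plan is to factor $\pi$ through $i'\colon \mathbb{S}^2\hookrightarrow \mathbb{C}P^2$ and reduce the computation to the images of the stable classes $2\nu$ and $\alpha_1$ already handled in Lemma \ref{imagenu}. From the decomposition
\[
\pi_5^s(\mathbb{C}P^2) = \mathbb{Z}/4\{i'\circ 2\nu\}\oplus \mathbb{Z}/3\{i'\circ\alpha_1\}
\]
recalled in the preliminaries, the chosen generator may be written $\pi = i'\circ(2a\nu + b\alpha_1)$ for some odd integer $a$ and some $b$ coprime to $3$. Setting $\tilde{\pi}:=2a\nu+b\alpha_1\in\pi_3^s$, naturality gives a factorisation
\[
(\Sigma^{k+1}\pi)^* \;=\; (\Sigma^{k+1}\tilde{\pi})^*\circ (\Sigma^{k+1}i')^*\colon [\Sigma^{k+1}\mathbb{C}P^2,Top/O]\longrightarrow \pi_{3+k}(Top/O)\longrightarrow \pi_{6+k}(Top/O).
\]

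For the upper bound, on any $g\in\pi_{3+k}(Top/O)$ one has $(\Sigma^{k+1}\tilde{\pi})^*(g) = 2a\,\nu^*(g) + b\,\alpha_1^*(g)$. Lemma \ref{imagenu}(a) says that $(2\nu)^* = 2\nu^*$ is identically zero on $\pi_{3+k}(Top/O)$, so the first summand vanishes; Lemma \ref{imagenu}(b) gives the image of $\alpha_1^*$ equal to $\mathbb{Z}/3$ for $k=7$ and zero for all other $k\in\{1,\ldots,10\}$. This already proves the lemma for every $k\neq 7$.

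For $k=7$ it remains to realise this $\mathbb{Z}/3$. Applying $[-,Top/O]$ to the $(k+1)$-fold suspension of the Puppe sequence
\[
\mathbb{S}^{k+4}\xrightarrow{\Sigma^{k+1}\eta}\mathbb{S}^{k+3}\xrightarrow{\Sigma^{k+1}i'}\Sigma^{k+1}\mathbb{C}P^2\xrightarrow{\Sigma^{k+1}q}\mathbb{S}^{k+5}
\]
identifies the image of $(\Sigma^{k+1}i')^*$ with $\ker\bigl(\eta^*\colon\pi_{k+3}(Top/O)\to\pi_{k+4}(Top/O)\bigr)$. For $k=7$, $\pi_{10}(Top/O)\cong\Theta_{10}\cong \mathbb{Z}/6$ has $3$-primary summand $\mathbb{Z}/3$, while $\pi_{11}(Top/O)\cong\Theta_{11}\cong \mathbb{Z}/992$ is $3$-torsion-free; hence this $3$-primary $\mathbb{Z}/3$ lies in $\ker\eta^*$ and so in the image of $(\Sigma^{8}i')^*$. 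Because $\pi_{13}(Top/O)\cong\mathbb{Z}/3$ has no $2$-torsion, $\alpha_1^*$ must annihilate the $2$-primary summand of $\pi_{10}(Top/O)$, so the full image $\mathbb{Z}/3$ of $\alpha_1^*$ promised by Lemma \ref{imagenu}(b) already comes from the $3$-primary summand; multiplying by the unit $b\bmod 3$ preserves surjectivity, giving $\mathrm{Im}((\Sigma^{8}\pi)^*)=\mathbb{Z}/3$.

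The main obstacle is the realisation step at $k=7$: the factorisation by itself yields only an inclusion into $\mathbb{Z}/3$, so one must produce an actual class in $[\Sigma^{8}\mathbb{C}P^2,Top/O]$ whose bottom-cell restriction is a $3$-primary generator of $\pi_{10}(Top/O)$. The cofiber-sequence argument achieves this cleanly by isolating the obstruction to lifting (which is $2$-primary, given by $\eta^*$) from the $3$-primary target, so no obstruction survives and the lift exists.
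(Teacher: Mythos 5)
Your proof is correct and takes essentially the route the paper intends: the paper states Lemma \ref{lemma2.16} without an explicit proof, but its implicit derivation (the same one carried out in the proof of Corollary \ref{imagenu_extended}{(iv)}--{(v)}) is exactly yours — write $\pi = i'\circ(2a\nu+b\alpha_1)$, kill the $2\nu$-component and, for $k\neq 7$, the $\alpha_1$-component via Lemma \ref{imagenu}, and for $k=7$ use the long exact sequence of $\mathbb{S}^3\xrightarrow{\eta}\mathbb{S}^2\hookrightarrow\mathbb{C}P^2$ along $Top/O$ to see that $(\Sigma^{8}i')^*$ hits the $3$-torsion $\mathbb{Z}/3\subset\Theta_{10}$, on which $\alpha_1^*$ surjects onto $\pi_{13}(Top/O)\cong\mathbb{Z}/3$. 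Your realisation step at $k=7$ matches the paper's argument for Corollary \ref{imagenu_extended}{(v)}{(b)}, so there is no gap.
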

Using the Lemma \ref{imagenu}, we derive the image of the attaching map corresponding to one of the Moore space components.
\begin{cor}\label{imagenu_extended}
     Let $r$ be a positive integer. 
     \begin{itemize}
         \item[(i)] The map $(\Sigma^{k+1}i'\circ 2\nu)^*: [\Sigma^{k+1}\mathbb{C}P^2, Top/O]_{(2)}\to \pi_{6+k}(Top/O)_{(2)}$ is trivial for all $1\leq k\leq 10.$
         \item[(ii)] The map $(\Sigma^{k+3}i_{2^r}\circ 2\nu)^*:[\Sigma^{3+k}M(\mathbb{Z}/{2^r}), Top/O]\to \pi_{6+k}(Top/O)$ has image zero for all $1\leq k\leq 10.$
         \item[(iii)] The image of $(\Sigma^{k+1} i^r\circ \Sigma^{k+3}i_{2^r}\circ 2\nu)^*: [\Sigma^{k+1}C_{i_{2^r}\circ\eta}, Top/O]\to \pi_{6+k}(Top/O)$ is trivial for all $1\leq k\leq 10.$
         \item[(iv)]  The image of the map $(\Sigma^{k+3}i_{3^r}\circ\alpha_1)^*:[\Sigma^{3+k} M(\z/{3^r}),Top/O]\to \pi_{6+k}(Top/O)$ is given by:
             \begin{itemize}
                \item[(a)] Zero if $k\neq 7.$  
                \item[(b)] $\mathbb{Z}/3$ if $k=7.$
             \end{itemize} 
        \item[(v)] The image of $(\Sigma^{k+1}i'\circ \alpha_1)^*:[\Sigma^{k+1}\mathbb{C}P^2, Top/O]_{(3)}\to \pi_{6+k}(Top/O)_{(3)}$ is: 
     \begin{itemize}
         \item[(a)] Trivial for all $1\leq k\leq 10, k\neq 7.$
         \item[(b)] $\mathbb{Z}/3$ for $k=7.$
     \end{itemize}
     \end{itemize}
\end{cor}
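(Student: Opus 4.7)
The plan is to treat all five items uniformly. In each case the stable map being precomposed has the form $F\circ\phi$, where $\phi$ is the stable element $2\nu$ or $\alpha_1$ and $F$ is (a suitable suspension of) an inclusion into $\mathbb{C}P^2$, $M(\mathbb{Z}/{p^r})$ or $C_{i_{2^r}\circ\eta}$. Contravariance of $[-,Top/O]$ then gives a factorization
\[
(F\circ\phi)^* \;=\; \phi^*\circ F^*\colon [-,Top/O]\longrightarrow \pi_{k+3}(Top/O)\xrightarrow{\phi^*}\pi_{k+6}(Top/O),
\]
so the image of the composite is contained in $\mathrm{Im}(\phi^*)$, which is exactly what Lemma \ref{imagenu} records. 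This immediately yields every upper bound appearing in (i)--(v).

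Parts (i)--(iii) correspond to $\phi=2\nu$. Since Lemma \ref{imagenu}(a) asserts $(2\nu)^*=0$ in every relevant degree $1\leq k\leq 10$, the image of each composite is $0$. Parts (iv) and (v) correspond to $\phi=\alpha_1$: for $k\neq 7$, Lemma \ref{imagenu}(b) gives $\alpha_1^*=0$, and the image again vanishes. When $k=7$ the image sits inside $\mathrm{Im}(\alpha_1^*)=\mathbb{Z}/3\subset\pi_{13}(Top/O)$, so the remaining task is to produce one element in the domain whose image generates this $\mathbb{Z}/3$, i.e.\ to verify that $F^*$ lands in a part of $\pi_{10}(Top/O)$ that $\alpha_1^*$ does not annihilate.

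For (iv) at $k=7$, I would use the cofibre sequence $\mathbb{S}^{10}\xrightarrow{3^r}\mathbb{S}^{10}\xrightarrow{\Sigma^{10}i_{3^r}}\Sigma^{10}M(\mathbb{Z}/{3^r})$, whose induced long exact sequence identifies $\mathrm{Im}((\Sigma^{10}i_{3^r})^*)$ with $\ker\bigl(3^r\colon\pi_{10}(Top/O)\to\pi_{10}(Top/O)\bigr)$. Localizing at $3$ and using $(Top/O)_{(3)}\simeq(PL/O)_{(3)}$ (because $Top/PL\simeq K(\mathbb{Z}/2,3)$ is $2$-local) together with the classical identification $\pi_n(PL/O)\cong\Theta_n$ for $n\geq 5$, one obtains $\pi_{10}(Top/O)_{(3)}\cong\Theta_{10,(3)}=\mathbb{Z}/3$, so this kernel equals all of $\mathbb{Z}/3$ for every $r\geq 1$, and $\alpha_1^*$ sends it onto the $\mathbb{Z}/3$ in $\pi_{13}(Top/O)$. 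For (v) at $k=7$ the analogous cofibre sequence $\mathbb{S}^{k+4}\xrightarrow{\eta}\mathbb{S}^{k+3}\xrightarrow{\Sigma^{k+1}i'}\Sigma^{k+1}\mathbb{C}P^2$ yields $\mathrm{Im}((\Sigma^{k+1}i')^*)=\ker\bigl(\eta^*\colon\pi_{10}(Top/O)\to\pi_{11}(Top/O)\bigr)$; since $\eta$ is $2$-torsion, this kernel still contains the $3$-primary summand $\pi_{10}(Top/O)_{(3)}=\mathbb{Z}/3$, on which $\alpha_1^*$ again surjects onto $\mathbb{Z}/3$. The principal obstacle in both arguments is thus the $3$-local identification $\pi_{10}(Top/O)_{(3)}\cong\mathbb{Z}/3$ together with the non-triviality of $\alpha_1^*$ on this summand; the latter is forced by Lemma \ref{imagenu}(b), and the former follows from the standard calculation $\Theta_{10}\cong\mathbb{Z}/6$.
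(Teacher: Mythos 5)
Your proposal is correct and follows essentially the same route as the paper: the vanishing statements come from factoring each composite through $(2\nu)^*$ or $\alpha_1^*$ and invoking Lemma \ref{imagenu}, and at $k=7$ you use the long exact sequences of the cofibre sequences $\mathbb{S}^{10}\xrightarrow{3^r}\mathbb{S}^{10}\to\Sigma^{10}M(\mathbb{Z}/3^r)$ and $\mathbb{S}^{11}\xrightarrow{\eta}\mathbb{S}^{10}\to\Sigma^{8}\mathbb{C}P^2$ to see that the images of $(\Sigma^{10}i_{3^r})^*$ and $(\Sigma^{8}i')^*$ contain the $3$-torsion $\mathbb{Z}/3\subset\pi_{10}(Top/O)$, on which $\alpha_1^*$ realizes the $\mathbb{Z}/3$ of Lemma \ref{imagenu}(b), exactly as in the paper's proof.
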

\begin{proof}
The results {(i)}, {(ii)}, {(iii)} are consequence of Lemma \ref{imagenu}{(a)}.

The results {(iv)}(a) and {(v)}(a) are derived from the fact that the map $(\alpha_1)^*:\pi_{3+s}(Top/O)\to \pi_{6+s}(Top/O)$ is zero for all $1\leq s\leq 10$, except when $s=7$, as per Lemma \ref{imagenu}(b).

For $k=7,$ consider the long exact sequence induced from $\mathbb{S}^0\xrightarrow{\times 3^r}\mathbb{S}^0\xhookrightarrow{i_{3^r}}M(\mathbb{Z}/{3^r})$ and $\mathbb{S}^3\xrightarrow{\eta}\mathbb{S}^2\xhookrightarrow{i'}\mathbb{C}P^2$ along $Top/O.$ From this, it follows that the image of both maps $(\Sigma^8 i_{3^r})^*:[\Sigma^{10}M(\mathbb{Z}/{3^r},Top/O]\to \Theta_{10}$ and $(\Sigma^8 i')^*: [\Sigma^8\mathbb{C}P^2, Top/O]\to \Theta_{10}$ is $\mathbb{Z}/3\subset \Theta_{10}$. Consequently, Lemma \ref{imagenu}(b) establishes statements {(iv)}(b) and {(v)}(b). 
\end{proof}

\begin{lemma}\label{sigma4moore}
    Let $r$ be any positive integer.
    \begin{itemize}
        \item[(i)] The image of $(\Sigma^{k+4}i_{2^r}\circ \eta^2)^*:[\Sigma^{k+4}M(\mathbb{Z}/{2^r}), Top/O]\to \pi_{6+k}(Top/O)$ is 
        \begin{itemize}
            \item[(a)] zero for $1\leq k\leq 10, k\neq 5.$
            \item[(b)]  $\mathbb{Z}/2$ for $k=5.$
        \end{itemize}
        \item[(ii)] The map $(\tilde{\eta}_{2^r})^*: [\Sigma^{4+k}M(\mathbb{Z}/{2^r}), Top/O]\to \pi_{6+k}(Top/O)$ has image
        \begin{itemize}
            \item[(a)] zero for $k=1,2,6,7,8.$
            \item[(b)] $\mathbb{Z}/2$ for $k=4,9.$
        \end{itemize}
        \item [(iii)] The image of $(\tilde{\eta}_{2^r})^*:[\Sigma^7 M(\z/{2^r}),Top/O]\to \pi_9(Top/O)$ equals
				\begin{itemize}
					\item [(a)] $\z/2\oplus\z/2$ if $r=1$ and $2.$
					\item [(b)] $\z/2$ if $r\geq 3.$
			\end{itemize}
			\item[(iv)] The image of $(\tilde{\eta}_{2^r})^*:[\Sigma^9 M(\z/{2^r}),Top/O]\to \pi_{11}(Top/O)$ equals 
			\begin{itemize}
				\item[(a)] $\z/4$ if $r=1.$
				\item [(b)] $\z/2$ if $r\geq 2.$
			\end{itemize}
            \item[(v)] The image of $(\tilde{\eta}_{2^r})^*:[\Sigma^{14} M(\z/{2^r}),Top/O]\to \pi_{16}(Top/O)$ is 
            \begin{itemize}
                \item[(a)] $\mathbb{Z}/2$ for $r=1.$
                \item[(b)] zero for $r\geq 2.$
            \end{itemize}
    \end{itemize}
\end{lemma}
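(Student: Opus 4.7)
The proof of all five parts fits into a common framework. The cofiber sequence $\mathbb{S}^0\xrightarrow{\times 2^r}\mathbb{S}^0\xrightarrow{i_{2^r}}M(\mathbb{Z}/{2^r})\xrightarrow{j_{2^r}}\mathbb{S}^1$, after $(k+4)$-fold suspension and application of $[-,Top/O]$, yields a short exact sequence
\[
0\to \pi_{k+5}(Top/O)/2^r \to [\Sigma^{k+4}M(\mathbb{Z}/{2^r}),Top/O]\xrightarrow{(\Sigma^{k+4}i_{2^r})^*}\pi_{k+4}(Top/O)[2^r]\to 0,
\]
where $\pi_{k+4}(Top/O)[2^r]$ denotes the $2^r$-torsion subgroup. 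This decomposition reduces the computation of each image to $\eta$-type operations on $\pi_*(Top/O)$ together with certain Toda brackets.

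For part (i), the factorization $\Sigma^{k+4}(i_{2^r}\circ\eta^2)=\Sigma^{k+4}i_{2^r}\circ\eta^2$ yields $f\circ\Sigma^{k+4}(i_{2^r}\circ\eta^2)=\eta^2\cdot (f\circ\Sigma^{k+4}i_{2^r})$, so the image of $(\Sigma^{k+4}(i_{2^r}\circ\eta^2))^*$ equals $\eta^2\cdot\pi_{k+4}(Top/O)[2^r]$, which is contained in $\eta\cdot\pi_{k+5}(Top/O)=\mathrm{Im}(\eta^*)$. Lemma \ref{image_eta} then forces the image to be zero for $k\in\{1,2,6,7,8,10\}$, and for $k=9$ the group $\pi_{13}(Top/O)=\mathbb{Z}/3$ has no $2^r$-torsion. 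For the remaining cases $k=3,4,5$, the plan is to apply the identification $\pi_n(Top/O)\cong\Theta_n$ for $n\geq 5$ and use the Kervaire--Milnor description of $\Theta_n$ to compute $\eta^2\cdot x$ on each generator of the relevant $2$-torsion subgroup; this produces zero for $k=3,4$ and $\mathbb{Z}/2$ for $k=5$.

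For parts (ii)--(v) the image of $(\tilde\eta_{2^r})^*$ splits along the short exact sequence above. A class coming from $g\in\pi_{k+5}(Top/O)/2^r$ via $(\Sigma^{k+4}j_{2^r})^*$ pulls back under $(\tilde\eta_{2^r})^*$ to $g\circ(\Sigma^{k+4}j_{2^r}\circ\tilde\eta_{2^r})=\eta\cdot g$, contributing exactly $\mathrm{Im}(\eta^*)$ from Lemma \ref{image_eta}. For each $x\in\pi_{k+4}(Top/O)[2^r]$ one chooses a lift $\hat x:\Sigma^{k+4}M(\mathbb{Z}/{2^r})\to Top/O$; then $\hat x\circ\tilde\eta_{2^r}$ represents a coset of the Toda bracket $\langle \eta,2^r,x\rangle\subseteq\pi_{k+6}(Top/O)$ with indeterminacy $\eta\cdot\pi_{k+5}(Top/O)+2^r\cdot\pi_{k+6}(Top/O)$. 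In part (ii) the two contributions together recover the stated $0$ or $\mathbb{Z}/2$ for each listed $k$. In parts (iii)--(v), the Toda bracket contribution depends on $r$: since $\tilde\eta_{2^r}$ has order $4$ for $r=1$ and order $2$ for $r\geq 2$ (reflecting the non-split versus split Moore-space extension), the realizable bracket values and their indeterminacies jump accordingly, explaining the breakpoints $r\in\{1,2\}$ versus $r\geq 3$ in (iii) and $r=1$ versus $r\geq 2$ in (iv) and (v).

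The principal obstacle is the evaluation of the Toda brackets $\langle\eta,2^r,x\rangle$ in $\pi_*(Top/O)$ for explicit generators $x$ of $\Theta_{k+4}$, together with the bookkeeping of their indeterminacies. This requires tracking representatives through the canonical map $\pi_*^s\to\pi_*(Top/O)$, the use of standard Toda-bracket identities in the stable stems, and careful handling of how the indeterminacy piece $2^r\cdot\pi_{k+6}(Top/O)$ contracts as $r$ grows, which is what ultimately produces the distinct answers at the stated values of $r$ in (iii), (iv), and (v).
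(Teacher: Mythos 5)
Your scaffolding is the right one, and it is essentially the same scaffolding the paper relies on: the Moore-space cofibration splits the image of $(\tilde{\eta}_{2^r})^*$ into an $\eta^*$-contribution from the top cell and a Toda-bracket contribution $\langle x,2^r,\eta\rangle$ attached to the $2^r$-torsion of $\pi_{k+4}(Top/O)$, and your treatment of (i) for $k\in\{1,2,6,7,8,9,10\}$ (via $\eta$-divisibility, Lemma \ref{image_eta}, and $\pi_{13}(Top/O)\cong\mathbb{Z}/3$) is fine. The genuine gap is that everything that actually constitutes the content of the lemma is deferred rather than proved. For (i) at $k=3,4,5$ you only announce a ``plan'' to compute $\eta^2\cdot x$ on generators; for (ii)--(iv) you assert that ``the two contributions together recover the stated $0$ or $\mathbb{Z}/2$'' without evaluating a single bracket (the paper imports these from \cite[Lemma 4.2]{SBRKAS}, and (i) from \cite[Corollary 3.3]{SBRKAS}); and for (v) with $r\geq 2$ you give no argument for the vanishing at all. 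That last case is the one the paper proves by hand, and it needs a real mechanism: the top-cell contribution dies because $\eta^*:\pi_{15}(Top/O)\to\pi_{16}(Top/O)$ is zero, and the bottom-cell contribution is killed by factoring $\tilde{\eta}_{2^r}=\Sigma^{14}B(\chi^1_r)\circ\tilde{\eta}_{2}$ through the mod~$2$ Moore spectrum and chasing the resulting diagram, using that multiplication by $2^{r-1}$ annihilates $\pi_{14}(Top/O)\cong\mathbb{Z}/2$; for $r=1$ the nonvanishing is the nontriviality of $\langle\eta,2,\kappa\rangle$ and $\langle\eta,2,\kappa+\sigma^2\rangle$, read off from \cite[Table A3.3]{ravenel}. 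None of these inputs appears in your proposal.

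Moreover, the heuristic you offer for the $r$-dependence --- that $\tilde{\eta}_{2^r}$ has order $4$ for $r=1$ and order $2$ for $r\geq 2$, so the bracket values and indeterminacies ``jump accordingly'' --- cannot be the operative mechanism, since it would place the transition between $r=1$ and $r=2$, whereas in (iii) the image is $\mathbb{Z}/2\oplus\mathbb{Z}/2$ for both $r=1$ and $r=2$ and only drops to $\mathbb{Z}/2$ at $r\geq 3$. (Also, the indeterminacy you record, $\eta\cdot\pi_{k+5}(Top/O)+2^r\cdot\pi_{k+6}(Top/O)$, is not the indeterminacy of $\langle x,2^r,\eta\rangle$; the second summand should be a composition term of the form $x\circ\pi_2^s$.) So the reduction to $\eta$-multiplications and Toda brackets is a correct first step, but the decisive evaluations and the $r$-dependent arguments --- precisely where the lemma's difficulty lies --- are missing or misattributed, and the proof as proposed is incomplete.
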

\begin{proof}
By \cite[Corollary 3.3]{SBRKAS}, the map $(\eta^2)^*:\pi_{4+k}(Top/O)\to \pi_{6+k}(Top/O)$ is trivial for $1\leq k\leq 10, k\neq 5,$ which establishes {(i)}{(a)}. For $k=5,$ \cite[Corollary 3.3]{SBRKAS} states that the map $(\eta^2)^*:\pi_9(Top/O)\to \pi_{11}(Top/O)$ has image $\mathbb{Z}/2$. Additionally, the map $(\Sigma^9 i_{2^r})^*:[\Sigma^9 M(\mathbb{Z}/{2^r}), Top/O]\to \pi_9(Top/O)$ is onto, which implies that the image of $(\Sigma^{9}i_{2^r}\circ \eta^2)^*:[\Sigma^{9}M(\mathbb{Z}/{2^r}), Top/O]\to \pi_{11}(Top/O)$ is $\mathbb{Z}/2.$ 

  The results in {(ii)}-{(iv)} follow directly from \cite[Lemma 4.2]{SBRKAS}.

Consider the case $k=10$ and $r=1$. By combining \cite[Proposition 1.7]{toda} and \cite[Page 45]{kubo}, computing the image of $(\tilde{\eta}_{2})^*:[\Sigma^{14} M(\mathbb{Z}/{2}),Top/O]\to \Theta_{16}$ reduces to evaluating the Toda bracket $\langle\eta, 2, \kappa\rangle$ or $\langle\eta, 2, \kappa+\sigma^2\rangle,$ where $\pi_{14}^s=\mathbb{Z}/2\{\kappa\}\oplus\mathbb{Z}/2\{\sigma^2\}.$ From \cite[Table A3.3]{ravenel}, both Toda brackets are nontrivial, so the image of $(\tilde{\eta}_{2})^*$ is $\mathbb{Z}/2.$

Since $\eta^*: \pi_{15}(Top/O)\to\pi_{16}(Top/O)$ is trivial and $\eta= q_{15}\circ\tilde{\eta}_{2^r},$ the restriction of $(\tilde{\eta}_{2^r})^*: [\Sigma^{14}M(\mathbb{Z}/{2^r}), Top/O]\to \pi_{16}(Top/O)$ to $\pi_{15}(Top/O)$ is trivial. 

We note that for $r\geq 2,$ there exists a unique element $B(\chi_r^1):M(\mathbb{Z}/2)\to M(\mathbb{Z}/{2^r})$ such that the following diagram commutes:
\begin{equation*}
\begin{tikzcd}
{\s^{14}}\arrow[r,"\times 2^r"]\arrow[d,-,double equal sign distance,double]&{\s^{14}}\arrow[hookrightarrow]{r}{\Sigma^{14}i_{2^r}}&{\Sigma^{14} M(\mathbb{Z}/{2^r})}\arrow[r,"q_{15}"]&{\s^{15}}\arrow[r,"\times 2^r"]\arrow[d,-,double equal sign distance,double]&{\s^{15}}\\
{\mathbb{S}^{14}}\arrow[r,"\times 2"']&{\mathbb{S}^{14}} \arrow[hookrightarrow]{r}[swap]{\Sigma^{14}i_{2}}\arrow[u,"\times 2^{r-1}"']&{\Sigma^{14} M(\z/2)}\arrow[r,"q_{15}"']\arrow[u,dashed,"\Sigma^{14} B(\chi^1_{r})"']&{\mathbb{S}^{15}}\arrow[r,"\times 2"']&{\mathbb{S}^{15}}\arrow[u,"\times 2^{r-1}"']
\end{tikzcd}
\end{equation*} which induces the following commutative diagram of long exact sequences along $Top/O.$ 
\begin{center}
	\begin{tikzcd}[column sep=2.0em]
		\arrow[r,"\times 2^r"]&{\pi_{15}(Top/O)}\arrow[r,"(q_{15})^*"]\arrow[d,-,double equal sign distance,double]&{[\Sigma^{14} M(\z/{2^r}), Top/O]}\arrow[d,"(\Sigma^{14} B(\chi^1_{r}))^*"']\arrow[r,"(\Sigma^{14}i_{2^r})^*"]&{\pi_{14}(Top/O)}\arrow[d,"0"']\arrow[r,"0"]&{\pi_{14}(Top/O)}\arrow[d,-,double equal sign distance,double]\\
		\arrow[r,"\times 2"']&{\pi_{15}(Top/O)}\arrow[r,"(q_{15})^*"']&{[\Sigma^{14} M(\z/2), Top/O]}\arrow[r,"(\Sigma^{14}i_{2})^*"']&{\pi_{14}(Top/O)}\arrow[r,"0"']&{\pi_{14}(Top/O)}
	\end{tikzcd}
\end{center}
This diagram shows that if for any $a\in [\Sigma^{14}M(\mathbb{Z}/{2^r}), Top/O]$ satisfies $(\Sigma^{14}i_{2^r})^*(a)\neq 0,$ then $(\Sigma^{14}B(\chi_r^1))^*(a)=0.$ Combining this with the fact that $(q_{15}\circ \tilde{\eta}_{2^r})^*:\pi_{15}(Top/O)\to \pi_{16}(Top/O)$ is a trivial map, it follows from  $\tilde{\eta}_{2^r} = \Sigma^{14}B(\chi_r^1)\circ \tilde{\eta}_{2}$ that the map  $(\tilde{\eta}_{2^r})^*:[\Sigma^{14} M(\z/{2^r}),Top/O]\to \pi_{16}(Top/O)$ is trivial for all $r\geq 2.$ This completes the proof.
\end{proof}
 \begin{lemma}\label{imageeta^2tilde}
     The map $(\widetilde{\eta_{2^r}^2})^*:[\Sigma^{k+3} M(\mathbb{Z}/{2^r}), Top/O]\to \pi_{6+k}(Top/O)$ has the following image: 
\begin{itemize}
    \item[(i)] Trivial for all $1\leq k\leq 10, k\neq 4,5.$
    \item[(ii)] For $k=4:$
    \begin{enumerate}
        \item[(a)] $\mathbb{Z}/2$ if $r=1,2.$
        \item[(b)] Trivial if $r\geq 3.$ 
    \end{enumerate}
    \item[(iii)] $\mathbb{Z}/2$ for $k=5.$ 
\end{itemize}
for any integer $r\geq 1.$
 \end{lemma}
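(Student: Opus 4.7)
The plan is to use the stable cofiber sequence $\mathbb{S}^{k+3}\xrightarrow{\Sigma^{k+3}i_{2^r}}\Sigma^{k+3}M(\mathbb{Z}/{2^r})\xrightarrow{q_{k+4}}\mathbb{S}^{k+4}$, which, together with the defining identity $q_{k+4}\circ \widetilde{\eta_{2^r}^2}=\eta^2$, induces on $[-,Top/O]$ the short exact sequence
\[
0\to \pi_{k+4}(Top/O)/2^r\xrightarrow{(q_{k+4})^*}[\Sigma^{k+3}M(\mathbb{Z}/{2^r}),Top/O]\xrightarrow{(\Sigma^{k+3}i_{2^r})^*}{}_{2^r}\pi_{k+3}(Top/O)\to 0.
\]
From this I conclude that the image of $(\widetilde{\eta_{2^r}^2})^*$ equals the subgroup of $\pi_{k+6}(Top/O)$ generated by $(\eta^2)^*\pi_{k+4}(Top/O)$ (arising from $f$ in the image of $(q_{k+4})^*$) and the union of the Toda brackets $\langle \eta^2, 2^r, x\rangle$ as $x$ runs over ${}_{2^r}\pi_{k+3}(Top/O)$ (arising from $f$ with $(\Sigma^{k+3}i_{2^r})^*(f)=x$). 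The first summand is computed by \cite[Corollary 3.3]{SBRKAS}: it vanishes for $k\ne 5$ and equals $\mathbb{Z}/2$ for $k=5$, which already disposes of (iii) after one checks that the Toda-bracket contribution in that case lies in the same $\mathbb{Z}/2$.

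For part (i), I would invoke the juggling identity $\langle \eta^2, 2^r, x\rangle\subseteq \eta\cdot \langle \eta, 2^r, x\rangle$ modulo the standard indeterminacy, placing the Toda-bracket contribution inside the image of $\eta^*\colon\pi_{k+5}(Top/O)\to \pi_{k+6}(Top/O)$. By Lemma \ref{image_eta} this image vanishes for $k\in\{1,2,6,7,8,10\}$, handling those subcases at once. For the residual cases $k=3,9$, I would carry out a direct computation of $\langle \eta,2^r,x\rangle$ using Lemma \ref{sigma4moore}(ii),(iii) together with the known structure of $\Theta_{k+5}$, to verify that its $\eta$-multiple falls into the zero subgroup.

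For part (ii) with $k=4$, one has ${}_{2^r}\pi_7(Top/O)=\mathbb{Z}/2$ if $r=1$ and $\mathbb{Z}/4$ if $r\geq 2$, while $(\eta^2)^*\pi_8(Top/O)=0$. I would then evaluate the Toda bracket $\langle \eta^2, 2^r, x\rangle\subset \Theta_{10}=\mathbb{Z}/6$ directly: for $r=1,2$ it detects the nontrivial class in the $\mathbb{Z}/2$-summand (giving the stated image), while for $r\geq 3$ the generator $x$ can be written as $2y$ with $2^{r-1}y=0$, and a juggling rewrite expresses the bracket as $\langle \eta^2, 2, 2^{r-1}y\rangle$, whose indeterminacy absorbs the remaining $\mathbb{Z}/2$-class, forcing the bracket to vanish. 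The main obstacle will be these explicit Toda-bracket evaluations and the careful management of indeterminacies, especially the $r$-dependence in part (ii); I would handle these by lifting the brackets to the stable stems via the natural map $\pi_*^s\to \pi_*(Top/O)$ and applying Toda's classical bracket tables.
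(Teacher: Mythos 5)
Your route (the cofiber sequence of the Moore spectrum plus Toda brackets) is genuinely different from the paper's, which simply factors $\widetilde{\eta_{2^r}^2}=\tilde{\eta}_{2^r}\circ\eta$, so that $(\widetilde{\eta_{2^r}^2})^*=\eta^*\circ(\tilde{\eta}_{2^r})^*$, and then quotes the already-computed image of $(\tilde{\eta}_{2^r})^*$ from \cite[Lemma 4.2]{SBRKAS} (Lemma \ref{sigma4moore}) together with Lemma \ref{image_eta} and the relations $\eta\nu=0$, $\eta\mu\neq 0$. Unfortunately your plan has genuine gaps at exactly the points where it departs from that factorization. First, the ``juggling identity'' you invoke goes the wrong way: the standard shuffle gives $\eta\cdot\langle \eta,2^r,x\rangle\subseteq\langle \eta^2,2^r,x\rangle$, not the reverse inclusion, and even ``modulo indeterminacy'' the bracket $\langle x,2^r,\eta^2\rangle$ carries the extra terms $x\circ\pi_3^s$ (i.e.\ $x\circ\nu$), which are not visibly inside $\mathrm{Im}(\eta^*)$; for instance at $k=8$ one has $x$ ranging over ${}_{2^r}\Theta_{11}\neq 0$, so your blanket reduction of part (i) to Lemma \ref{image_eta} is not justified as written. (Also note the image of $(\widetilde{\eta_{2^r}^2})^*$ is only a union of cosets of $\mathrm{Im}((\eta^2)^*)$ sitting inside the brackets, not the full brackets, so ``equals the subgroup generated by'' needs both an upper and a lower bound.)

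Second, the decisive case $k=4$ is not carried out and the sketch for $r\geq 3$ is incorrect: ${}_{2^r}\pi_7(Top/O)$ is the $\mathbb{Z}/4$ inside $\Theta_7\cong\mathbb{Z}/28$, and its generator has order $4$, so it cannot be written as $2y$ with $2^{r-1}y=0$; the actual $r$-dependence comes from the fact that $[\mu]$ lies in the image of $(\tilde{\eta}_{2^r})^*$ on $[\Sigma^7M(\mathbb{Z}/2^r),Top/O]$ precisely when $r\leq 2$, after which $\eta\mu\neq 0$ but $\eta\nu^3=0$ finish the computation. Moreover, your proposed remedy of ``lifting the brackets to the stable stems and applying Toda's tables'' does not apply here: the elements $x$ live in $\Theta_7=bP_8$, which dies in $\mathrm{coker}(J)$, so these composites must be evaluated in the $\pi_*^s$-module $\Theta_*\cong\pi_*(Top/O)$ — this is exactly the input the paper imports from \cite[Lemma 4.2]{SBRKAS}, and it is not recoverable from classical bracket tables alone. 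The same issue affects your deferred upper-bound check in (iii) for $k=5$. To repair the argument, replace the bracket bookkeeping by the factorization $(\widetilde{\eta_{2^r}^2})^*=\eta^*\circ(\tilde{\eta}_{2^r})^*$ and cite Lemmas \ref{image_eta} and \ref{sigma4moore}.
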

\begin{proof}
From \cite[Lemma 4.2]{SBRKAS}, we note that the map $(\tilde{\eta}_{2^r})^*:[\Sigma^{k+3} M(\mathbb{Z}/{2^r}), Top/O]\to \pi_{5+k}(Top/O)$ is trivial for $k=1,2,3,7,8,9.$ Since $\widetilde{\eta_{2^r}^2}= \eta\circ \tilde{\eta}_{2^r},$ it follows that the map $(\widetilde{\eta_{2^r}^2})^*:[\Sigma^{k+3} M(\mathbb{Z}/{2^r}), Top/O]\to \pi_{6+k}(Top/O)$ has a trivial image for the same values of $k.$ 
 
  By Lemma \ref{image_eta}, the image of $\eta^*: \pi_{5+k}(Top/O)\to \pi_{6+k}(Top/O)$ is trivial for $k= 6,$ and $10$, which implies that the image of $(\widetilde{\eta_{2^r}^2})^*$ is also trivial for these values of $k.$  
    
Consider $k=4$ and assume $r=1,2.$ Then we note that $[\Sigma^7 M(\mathbb{Z}/{2^r}), Top/O]$ fits into the following short exact sequence 
\begin{equation}\label{moore_expression}
    0\to \mathbb{Z}/2\to [\Sigma^7 M(\mathbb{Z}/{2^r}), Top/O]\to \mathbb{Z}/{2^r}\to 0.
\end{equation} 
Using the computation $[\Sigma^7 M(\mathbb{Z}/{2^r}), G/O]=\mathbb{Z}/{2^r}\oplus\mathbb{Z}/2$ and the injectivity of $\psi_*:[\Sigma^7 M(\mathbb{Z}/{2^r}), Top/O]\to [\Sigma^7 M(\mathbb{Z}/{2^r}), G/O]$, it follows from \eqref{moore_expression} that 
\[[\Sigma^7 M(\mathbb{Z}/{2^r}), Top/O]\cong[\Sigma^7 M(\mathbb{Z}/{2^r}), G/O]=\begin{cases}
    \mathbb{Z}/2\oplus\mathbb{Z}/2 ~\text{if}~ r=1,\\
    \mathbb{Z}/4\oplus\mathbb{Z}/2~ \text{if}~ r=2.
\end{cases}\]
Since the map $\eta^*:\pi_8(G/O)\to\pi_9(G/O)$ is onto and $q_8\circ\tilde{\eta}_{2}=\eta,$ the map $(\tilde{\eta}_{2})^*:[\Sigma^7 M(\mathbb{Z}/2), G/O]\to \pi_9(G/O)$ is also onto. 
For $r=1$ and $2,$ consider the following commutative diagram
   \begin{center}
        \begin{tikzcd}
            {[\Sigma^7 M(\mathbb{Z}/{2^r}), Top/O]}\arrow[r,"(\tilde{\eta}_{2^r})^*"]\arrow[d,"\psi_*","\cong"']&{\pi_9(Top/O)}\arrow[d,two heads,"\psi_*"]\\
            {[\Sigma^7 M(\mathbb{Z}/{2^r}), G/O]}\arrow[r,"(\tilde{\eta}_{2^r})^*"']&{\pi_9(G/O)}
        \end{tikzcd}
    \end{center}

   Using \cite[Lemma 4.2 {(b)}]{SBRKAS} and the commutative diagram above, we conclude that the image of $(\tilde{\eta}_{2^r})^*:[\Sigma^7 M(\mathbb{Z}/{2^r}), Top/O]\to\pi_9(Top/O)$ is $\mathbb{Z}/2\{[\nu^3]\}\oplus\mathbb{Z}/2\{[\mu]\}.$ Since $\eta\circ\nu=0,$ it follows from $(\widetilde{\eta_{2^r}^2})^*=  \eta^*\circ(\tilde{\eta}_{2^r})^*$ that the image of $(\widetilde{\eta_{2^r}^2})^*:[\Sigma^{7} M(\mathbb{Z}/{2^r}), Top/O]\to \pi_{10}(Top/O)$ is $\mathbb{Z}/2\{[\eta\circ\mu]\}$ for $r=1$ and $2.$

   For $r\geq 3,$ \cite[Lemma 4.2 (b)]{SBRKAS} shows that the map $(\tilde{\eta}_{2^r})^*: [\Sigma^7 M(\mathbb{Z}/{2^r}), Top/O]\to \pi_8(Top/O)$ has image $\mathbb{Z}/2\{[\nu^3]\}$. Since $\eta\circ\nu=0$ \cite{toda} and $\widetilde{\eta_{2^r}^2}= \tilde{\eta}_{2^r}\circ \eta,$ the map $(\widetilde{\eta_{2^r}^2})^*:[\Sigma^{7} M(\mathbb{Z}/{2^r}), Top/O]\to \pi_{10}(Top/O)$  is trivial for all $r\geq 3.$

   For $k=5,$ Lemmas \ref{image_eta} and \ref{sigma4moore} show that both $(\tilde{\eta}_{2^r})^*: [\Sigma^8 M(\mathbb{Z}/{2^r}), Top/O]\to \pi_{10}(Top/O)$ and $\eta^*: \pi_{10}(Top/O)\to \pi_{11}(Top/O)$ have image $\mathbb{Z}/2.$ Since $(\widetilde{\eta_{2^r}^2})^*= \eta^*\circ (\tilde{\eta}_{2^r})^*,$ the image of $(\widetilde{\eta_{2^r}^2})^*: [\Sigma^8 M(\mathbb{Z}/{2^r}), Top/O]\to \pi_{11}(Top/O)$ is $\mathbb{Z}/2.$ 
\end{proof}
 Using the above lemma, we get 
 \begin{lemma}\label{image_i_eta^2tilde}
    Let $r$ be a positive integer. The image of the map $(\Sigma^{k+1}i^r\circ\widetilde{\eta_{2^r}^2})^*:[\Sigma^{k+1}C_{i_{2^r}\circ\eta}, Top/O]\to \pi_{6+k}(Top/O)$ is given as follows: 
     \begin{itemize}
    \item[(i)] Trivial for all $1\leq k\leq 10, k\neq 4,5.$
    \item[(ii)] For $k=4:$
    \begin{enumerate}
        \item[(a)] $\mathbb{Z}/2$ if $r=1,~2.$
        \item[(b)] Trivial if $r\geq 3.$ 
    \end{enumerate}
    \item[(iii)] $\mathbb{Z}/2$ for $k=5.$ 
\end{itemize}
 \end{lemma}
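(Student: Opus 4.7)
The plan is to reduce the computation to Lemma~\ref{imageeta^2tilde} by a direct factorization, and then handle the only two nontrivial cases (namely $k=4$ with $r\in\{1,2\}$ and $k=5$) by hand using the cofiber sequence defining $C_{i_{2^r}\circ\eta}$. By naturality of precomposition,
\[
(\Sigma^{k+1}i^r\circ\widetilde{\eta_{2^r}^2})^{*}=(\Sigma^{k+1}\widetilde{\eta_{2^r}^2})^{*}\circ(\Sigma^{k+1}i^r)^{*},
\]
so the image in question is contained in the image of $(\widetilde{\eta_{2^r}^2})^{*}\colon[\Sigma^{k+3}M(\mathbb{Z}/{2^r}),Top/O]\to\pi_{6+k}(Top/O)$ computed in Lemma~\ref{imageeta^2tilde}. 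This already gives the claimed trivial answer whenever $k\notin\{4,5\}$ (and also for $k=4$ with $r\geq 3$), and provides the upper bound $\mathbb{Z}/2$ in the remaining three cases.

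For the matching lower bounds, I would apply $[\Sigma^{k+1}(-),Top/O]$ to the cofiber sequence $\mathbb{S}^3\xrightarrow{\Sigma^2 i_{2^r}\circ\eta}\Sigma^2 M(\mathbb{Z}/{2^r})\xrightarrow{i^r}C_{i_{2^r}\circ\eta}$, obtaining the exact sequence
\[
[\Sigma^{k+1}C_{i_{2^r}\circ\eta},Top/O]\xrightarrow{(\Sigma^{k+1}i^r)^{*}}[\Sigma^{k+3}M(\mathbb{Z}/{2^r}),Top/O]\xrightarrow{\delta}\pi_{k+4}(Top/O),
\]
with $\delta(g)=g\circ\Sigma^{k+3}i_{2^r}\circ\Sigma^{k+1}\eta=(g|_{\mathbb{S}^{k+3}})\cdot\eta$. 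Since $\mathrm{Im}((\Sigma^{k+1}i^r)^{*})=\ker\delta$, the problem reduces to exhibiting an element of $\ker\delta$ whose image under $(\widetilde{\eta_{2^r}^2})^{*}$ is a generator of the $\mathbb{Z}/2$ produced by Lemma~\ref{imageeta^2tilde}.

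For $k=4$ this is immediate: $\delta$ factors through $\eta^{*}\colon\pi_7(Top/O)\to\pi_8(Top/O)$, which is trivial by Lemma~\ref{image_eta}, so $\delta=0$, $(\Sigma^5 i^r)^{*}$ is surjective, and the image of $(\Sigma^5 i^r\circ\widetilde{\eta_{2^r}^2})^{*}$ coincides with the full image of $(\widetilde{\eta_{2^r}^2})^{*}$, giving $\mathbb{Z}/2$ for $r\in\{1,2\}$ by Lemma~\ref{imageeta^2tilde}(ii). The main obstacle is $k=5$, where the analogous connecting map $\eta^{*}\colon\pi_8(Top/O)\to\pi_9(Top/O)$ is nonzero, so surjectivity of $(\Sigma^6 i^r)^{*}$ fails. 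Here I would instead exhibit an element of $\ker\delta$ directly: writing $q\colon\Sigma^8 M(\mathbb{Z}/{2^r})\to\mathbb{S}^9$ for the collapse onto the top cell and picking $\tilde g\in\pi_9(Top/O)$, the element $g=\tilde g\circ q$ automatically satisfies $g|_{\mathbb{S}^8}=0$ (since $q\circ\Sigma^8 i_{2^r}=0$) and hence lies in $\ker\delta$. Using the defining property $q\circ\widetilde{\eta_{2^r}^2}=\eta^{2}$ of $\widetilde{\eta_{2^r}^2}$ recorded in the preliminaries, one obtains $g\circ\widetilde{\eta_{2^r}^2}=\tilde g\cdot\eta^{2}$; since $(\eta^{2})^{*}\colon\pi_9(Top/O)\to\pi_{11}(Top/O)$ has image $\mathbb{Z}/2$ by \cite[Corollary 3.3]{SBRKAS}, choosing $\tilde g$ appropriately realises the generator. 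The only subtlety is to verify the stable identities $q\circ\Sigma^{k+3}i_{2^r}=0$ and $q\circ\widetilde{\eta_{2^r}^2}=\eta^{2}$, both of which follow directly from the definitions.
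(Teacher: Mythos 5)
Your proposal is correct and follows essentially the same route as the paper: reduce to Lemma \ref{imageeta^2tilde} via the factorization through $(\Sigma^{k+1}i^r)^*$, deduce surjectivity of $(\Sigma^5 i^r)^*$ from the cofiber sequence for $k=4$, and for $k=5$ realize the generator through elements factoring over the top-cell collapse $q$, using $q\circ\widetilde{\eta_{2^r}^2}=\eta^2$ and the image of $(\eta^2)^*$ on $\pi_9(Top/O)$. Your $k=5$ argument is just a slightly more explicit rendering of the paper's comparison of $\mathrm{Im}((\Sigma^6 i^r)^*)$ with $\mathrm{Im}((q_9)^*)$, and the lower/upper bound bookkeeping is sound.
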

 \begin{proof}
     Statement {(i)} follows directly from Lemma \ref{imageeta^2tilde} {(i)}.

    For $k=4,$ consider the long exact sequence induced from $\mathbb{S}^3\xrightarrow{\Sigma^2 i_{2^r}\circ\eta}\Sigma^2 M(\mathbb{Z}/{2^r})\xhookrightarrow{i^r} C_{i_{2^r}\circ\eta},$ which shows that $(\Sigma^5 i^r)^*: [\Sigma^5 C_{i_{2^r}\circ\eta}, Top/O]\to [\Sigma^7 M(\mathbb{Z}/{2^r}), Top/O]$ is onto. Together with Lemma \ref{imageeta^2tilde} {(ii)}, this establishes the result for $k=4$.

     For $k=5,$ the image of $(\Sigma^6 i^r)^*: [\Sigma^6 C_{i_{2^r}\circ\eta}, Top/O]\to [\Sigma^8 M(\mathbb{Z}/{2^r}), Top/O]$ lies within that of $(q_9)^*:\pi_9(Top/O)\to [\Sigma^8 M(\mathbb{Z}/{2^r}), Top/O].$ By Lemma \ref{imageeta^2tilde}{(iii)}, the image of $(\widetilde{\eta_{2^r}^2})^* : [\Sigma^8 M(\mathbb{Z}/{2^r}), Top/O]\to \pi_{11}(Top/O)$ coincides with that of the composition $(q_9\circ\widetilde{\eta_{2^r}^2} )^*=(\eta^2)^*:\pi_9(Top/O)\to \pi_{11}(Top/O)$. Combining these two observations we get the result for $k=5.$
 \end{proof}
The following theorem determines the concordance inertia group of $M\times\mathbb{S}^k$ for $1\leq k\leq 10.$ 
 \begin{theorem} \label{prop2.9}
Let $M$ be a simply connected, closed, smooth $6$-manifold with homology of the form \eqref{eqn1}. Then:
		\begin{itemize}
			\item[(i)] $I_c(M\times\s^k)=0$ for $k=1,2,6,8.$
            \item[(ii)] For $k=3:$
            \begin{itemize}
                \item[(a)] If $Sq^2$ acts trivially on $H^*(M;\mathbb{Z}/2),$ then $I_c(M\times\mathbb{S}^3)=0$  
               \item[(b)] If $Sq^2$ acts nontrivially on $H^*(M;\mathbb{Z}/2)$ and $M$ satisfies condition C, then $I_c(M\times\mathbb{S}^3)=\mathbb{Z}/2.$
                \item[(c)] Suppose $Sq^2$ acts nontrivially on $H^*(M;\mathbb{Z}/2)$ and $M$ satisfies Condition D. Then
                \begin{enumerate}
                    \item $I_c(M\times\mathbb{S}^3)=\mathbb{Z}/2\oplus\mathbb{Z}/2,$ if $r_{j_5}=1$ or $2$ in \eqref{nonspin_conditionD}. 
                    \item $I_c(M\times\mathbb{S}^3)=\mathbb{Z}/2,$ if $r_{j_5}\geq 3$ in \eqref{nonspin_conditionD}. 
                \end{enumerate}
            \end{itemize}
            \item[(iii)] For $k=4:$
            \begin{itemize}
                \item[(a)] If $Sq^2$ is trivial on $H^4(M;\mathbb{Z}/2)$ and $\Theta$ is trivial on $H^3(M;\mathbb{Z}/2),$ then $I_c(M\times\mathbb{S}^4)=0.$
                \item[(b)] Suppose $Sq^2$ be trivial on $H^4(M;\mathbb{Z}/2)$ and $\Theta$ is non-trivial on $H^3(M;\mathbb{Z}/2).$
                \begin{enumerate}
                    \item $I_c(M\times\mathbb{S}^4)=0$ if $M$ satisfies condition A.
                    \item If $M$ satisfies Condition B, then 
                    \begin{enumerate}
                        \item[(I)] $I_c(M\times\mathbb{S}^4)=0,$ if $r\geq 3$ in \eqref{spin_conditionB_coefficient}.
                        \item[(II)] $I_c(M\times\mathbb{S}^4)=\mathbb{Z}/2$ if $r$ is either $1$ or $2$ in \eqref{spin_conditionB_coefficient}. 
                    \end{enumerate}
                \end{enumerate}
                \item[(c)] If $Sq^2$ acts nontrivially on $H^4(M;\mathbb{Z}/2),$ then $I_c(M\times\mathbb{S}^4)=\mathbb{Z}/2.$
            \end{itemize}
           \item[(iv)] For $k=5:$
            \begin{itemize}
                \item[(a)] Suppose $Sq^2: H^4(M;\mathbb{Z}/2)\to H^6(M;\mathbb{Z}/2)$ be trivial.
                \begin{enumerate}
                    \item If $\Theta$ is trivial on $H^3(M;\mathbb{Z}/2),$ then $I_c(M\times\mathbb{S}^5)=0.$
                    \item If $\Theta$ acts nontrivially on $H^3(M;\mathbb{Z}/2)$, then $I_c(M\times\mathbb{S}^5)=\mathbb{Z}/2.$
                \end{enumerate}
                \item[(b)] If $Sq^2: H^4(M;\mathbb{Z}/2)\to H^6(M;\mathbb{Z}/2)$ is nontrivial and $M$ satisfies Condition C, then $I_c(M\times\mathbb{S}^5)=\mathbb{Z}/2.$
                \item[(c)] If $Sq^2: H^4(M;\mathbb{Z}/2)\to H^6(M;\mathbb{Z}/2)$ is nontrivial and $M$ satisfies Condition D, then 
                \begin{enumerate}
                    \item $I_c(M\times\mathbb{S}^5)=\mathbb{Z}/4$ if $r_{j_5}=1$ in \eqref{nonspin_conditionD}. 
                    \item $I_c(M\times\mathbb{S}^5)=\mathbb{Z}/2$ if $r_{j_5}\geq2$ in \eqref{nonspin_conditionD}. 
                \end{enumerate}
            \end{itemize}
            \item[(v)] For $k=7:$ 
			\begin{itemize}
				\item [(a)] If $3 \mid p_1(M),$ then $I_c(M\times\s^7)=0.$ 
				\item[(b)] If $3 \nmid p_1(M),$ then $I_c(M\times\s^7)=\z/3.$ 
			\end{itemize}
		 \item[(vi)] For $k=9:$
            \begin{itemize}
                \item[(a)] If $Sq^2$ acts trivially on $H^4(M;\mathbb{Z}/2),$ then $I_c(M\times\mathbb{S}^9)=0.$
                \item[(b)] If $Sq^2$ acts nontrivially on $H^4(M;\mathbb{Z}/2),$ then $I_c(M\times\mathbb{S}^9)=\mathbb{Z}/2.$
            \end{itemize}
          \item[(vii)] For $k=10:$
          \begin{itemize}
              \item[(a)] If $Sq^2: H^4(M;\mathbb{Z}/2)\to H^6(M;\mathbb{Z}/2)$ is trivial, then $I_c(M\times\mathbb{S}^{10})=0.$
              \item[(b)] If $Sq^2: H^4(M;\mathbb{Z}/2)\to H^6(M;\mathbb{Z}/2)$ is nontrivial and $M$ satisfies Condition C, then $I_c(M\times\mathbb{S}^{10})=0.$
              \item[(c)] If $Sq^2: H^4(M;\mathbb{Z}/2)\to H^6(M;\mathbb{Z}/2)$ is nontrivial and $M$ satisfies Condition D, then 
              \begin{enumerate}
                  \item $I_c(M\times\mathbb{S}^{10})=\mathbb{Z}/2$ if $r_{j_5}=1$ in \eqref{nonspin_conditionD}. 
                  \item $I_c(M\times\mathbb{S}^{10})= 0$ if $r_{j_5}\geq 2$ in \eqref{nonspin_conditionD}. 
              \end{enumerate}
          \end{itemize}
		\end{itemize}
	\end{theorem}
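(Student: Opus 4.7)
The plan is to apply the identification
\[
I_c(M\times\mathbb{S}^k)=\mathrm{Im}\left([\Sigma^{k+1}M^{(5)},Top/O]\xrightarrow{(\Sigma^{k+1}\phi_6)^*}\pi_{6+k}(Top/O)\right)
\]
from \eqref{Concordance_ Inertia_expression}, and to reduce the computation to analyzing the contributions of the individual summands of the stable decomposition of $\phi_6$. The explicit form of $\phi_6$ is given in \eqref{phi_6}, with the structure of the sum constrained further by \eqref{spin}, \eqref{spin_noeta2}, \eqref{spin_conditionA}, \eqref{spin_conditionB_dj}, \eqref{spin_conditionB_xj}, \eqref{nonspin_conditionC} and \eqref{nonspin_conditionD} depending on whether $M$ is spin or non-spin, on the action of the secondary operation $\Theta$ on $H^3(M;\mathbb{Z}/2)$, and on Conditions A, B, C, D. Since the image of $(\Sigma^{k+1}\phi_6)^*$ is the sum of the images of its summands, the problem reduces to the componentwise images computed in Lemmas \ref{image_eta}, \ref{imagenu}, \ref{lemma2.16}, Corollary \ref{imagenu_extended}, Lemma \ref{sigma4moore}, Lemma \ref{imageeta^2tilde}, and Lemma \ref{image_i_eta^2tilde}.

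The case analysis I would then carry out proceeds by reading off, for each $k\in\{1,\dots,10\}$, which components in \eqref{phi_6} can contribute nontrivially. By Corollary \ref{imagenu_extended}(i)--(iii), every $2\nu$-type summand has zero image for all $k$; by Corollary \ref{imagenu_extended}(iv)--(v) and Lemma \ref{lemma2.16}, $\alpha_1$-type summands contribute only at $k=7$; by Lemma \ref{image_eta}, Lemma \ref{sigma4moore}, Lemma \ref{imageeta^2tilde}, Lemma \ref{image_i_eta^2tilde}, the various $\eta$-, $\eta^2$-, $\tilde{\eta}_{2^r}$-, $\widetilde{\eta_{2^r}^2}$- and $i^r\circ\widetilde{\eta_{2^r}^2}$-type summands contribute only for $k\in\{3,4,5,9,10\}$, and the contribution depends on the index $r$. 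For $k\in\{1,2,6,8\}$ every relevant summand has trivial image, yielding (i). For $k=3$ the only active summands are $z_w\eta$ and $y_j\tilde{\eta}_{2^{r_j}}$, whose presence is controlled by $Sq^2$ on $H^4$ and by Condition C/D; combining Lemma \ref{image_eta} with Lemma \ref{sigma4moore}(ii)(b), (iii) yields (ii). For $k=4,5$ the summands $\widetilde{\eta_{2^{r_j}}^2}$, $i^r\circ\widetilde{\eta_{2^r}^2}$ and $\Sigma^3 i_{2^r}\circ\eta^2$ also become active, giving the $r$-dependent answers of (iii) and (iv) via Lemmas \ref{sigma4moore}(i)--(ii), \ref{imageeta^2tilde} and \ref{image_i_eta^2tilde}. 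Cases (vi), (vii) at $k=9,10$ are organized by the same componentwise images, now appealing to Lemma \ref{sigma4moore}(iv), (v).

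The case $k=7$ requires separate treatment, since it is the unique $k$ at which $\alpha_1$-summands are detected. By Corollary \ref{imagenu_extended}(iv)(b), (v)(b) and Lemma \ref{lemma2.16}, every $\alpha_1$-summand in \eqref{phi_6} contributes the mod-$3$ copy $\mathbb{Z}/3\subset\Theta_{10}$, and no other summand is visible, so $I_c(M\times\mathbb{S}^7)$ is either $0$ or $\mathbb{Z}/3$ depending on whether the collective $\alpha_1$ contribution vanishes. The key step here is to translate the vanishing of this contribution into an intrinsic cohomological invariant of $M$: namely, since $\alpha_1\in (\pi_3^s)_{(3)}$ is detected by the mod-$3$ reduction of $p_1$, the sum of $\alpha_1$-coefficients in the decomposition of $\phi_6$ is cohomologically equivalent to $p_1(M)\bmod 3$. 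This identification is what distinguishes the two cases in (v). The main obstacle is precisely this last identification, i.e.~relating the $\alpha_1$-decomposition data of the top-cell attaching map directly to $p_1(M)\bmod 3$; the remaining items are bookkeeping built from the already-established componentwise image lemmas and the structural descriptions \eqref{spin}--\eqref{nonspin_conditionD} of $\phi_6$.
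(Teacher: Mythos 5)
Your strategy is the same as the paper's: identify $I_c(M\times\mathbb{S}^k)$ with the image of $(\Sigma^{k+1}\phi_6)^*$ via \eqref{Concordance_ Inertia_expression}, use the normal forms \eqref{spin}--\eqref{nonspin_conditionD} of $\phi_6$ dictated by $Sq^2$, $\Theta$ and Conditions A--D, and then assemble the answer from the componentwise image computations in Lemmas \ref{image_eta}, \ref{imagenu}, \ref{lemma2.16}, Corollary \ref{imagenu_extended}, and Lemmas \ref{sigma4moore}, \ref{imageeta^2tilde}, \ref{image_i_eta^2tilde}. For $k\in\{1,2,6,8\}$ and for the case analyses at $k=3,4,5,9,10$ your bookkeeping matches the paper's proof, and since the image of the total map is the subgroup generated by the images of the summands with nonzero (hence unit, in the relevant cyclic groups) coefficients, this part is complete.

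The one genuine gap is exactly the point you flag in case (v), and flagging it is not the same as closing it: you assert that the collective $\alpha_1$-contribution in \eqref{phi_6} vanishes if and only if $3\mid p_1(M)$, on the grounds that ``$\alpha_1$ is detected by the mod-$3$ reduction of $p_1$,'' but you give no argument for this equivalence, and it does not follow from anything you have set up. Note also that for the image computation one only needs ``at least one of $a_i',b_i',c_j$ is nonzero,'' so the real content is the cohomological characterization of when all these coefficients vanish; this requires knowing that the $\alpha_1$-components of the top-cell attaching map are detected by the mod-$3$ Steenrod power $\mathcal{P}^1$ on $H^2(M;\mathbb{Z}/3)$ together with the Wu-type relation identifying $\mathcal{P}^1$ with multiplication by $p_1(M)\bmod 3$ on a closed $6$-manifold. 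The paper closes precisely this step by invoking \cite[Lemma 4.4]{LiZhu24} together with \cite[\S 3]{rhuang} (which also justify normalizing to a single nonzero $\alpha_1$-coefficient when $3\nmid p_1(M)$). Without supplying this detection argument or an equivalent reference, statement (v) --- and in particular the dichotomy $I_c(M\times\mathbb{S}^7)=0$ versus $\mathbb{Z}/3$ in terms of $p_1(M)$ --- remains unproved in your proposal; the rest is correct.
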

\begin{proof} 
Using \eqref{phi_6} and \eqref{Concordance_ Inertia_expression}, it follows from Lemma \ref{imagenu} {(a)}, Corollary \ref{imagenu_extended} {(i)}, {(ii)} that computing $I_c(M\times\mathbb{S}^k)$ for $1\leq k\leq 10$ reduces to determining the image of  
\begin{align}\label{attach}
(\Sigma^{k+1}\phi_6)^* = \sum_{i=1}^{c} a_i' (\Sigma^{k+1}i'\circ\alpha_1)^* + \sum_{i=1}^{m-c}  b_i' \alpha_1^*+ \sum_{j=1}^{\tilde{s}_1} c_j (\Sigma^{k+3}i_{3^{r_j}}\circ\alpha_1)^*+\sum_{j=1}^{s_2-l} \tilde{x}_j \widetilde{\eta_{2^{r_j}}^2}^* \nonumber \\
+\sum_{j=1}^{s_2} \left(y_j \tilde{\eta}_{2^{r_j}}^*+ y_j'(\Sigma^{k+4}i_{2^{r_j}}\circ\eta^2)^*\right)+\sum_{w=1}^m z_{w} \eta^*+\sum_{j=1}^{l} d_j (\Sigma^{k+1}i^{r_j}\circ \widetilde{\eta_{2^{r_j}}^2})^*,  
\end{align} induced  along $Top/O$-level.

For $k=7,$ by combining Lemmas \ref{image_eta}, \ref{sigma4moore}, \ref{imageeta^2tilde}, and \ref{image_i_eta^2tilde}, we see that $(\Sigma^8 \phi_6)^*$ in \eqref{attach} further reduces to 
 \begin{align*}
(\Sigma^{8}\phi_6)^* = \sum_{i=1}^{c} a_i' (\Sigma^{k+1}i'\circ\alpha_1)^* + \sum_{i=1}^{m-c}  b_i' \alpha_1^*+\sum_{j=1}^{\tilde{s}_1} c_j (\Sigma^{k+3}i_{3^{r_j}}\circ\alpha_1)^*.   
\end{align*} 
\begin{itemize}
    \item If $3\mid p_1(M),$ then $a_i',b_i', c_j$ all vanish identically, which implies that $I_c(M\times\mathbb{S}^7)=0$ in this case. 
    \item If $3\nmid p_1(M),$ then by \cite[Lemma 4.4]{LiZhu24} and \cite[\S 3]{rhuang}, exactly one of $a_i', b_i',c_j$ is non-trivial. Consequently, it follows from Lemma \ref{imagenu}{(b)} and Lemma \ref{imagenu_extended} {(iv)}, {(v)} that $I_c(M\times\mathbb{S}^7)=\mathbb{Z}/3.$ 
\end{itemize}

For all $1\leq k\leq 10$ with $k\neq 7,$ applying Lemma \ref{imagenu}{(b)} and Lemma \ref{imagenu_extended} {(iv)}, {(v)}, the map \eqref{attach} simplifies to 
\begin{align}\label{attach2}
(\Sigma^{k+1}\phi_6)^* = \sum_{j=1}^{s_2-l} \tilde{x}_j (\widetilde{\eta_{2^{r_j}}^2})^*+\sum_{j=1}^{s_2} \left(y_j \tilde{\eta}_{2^{r_j}}^*+ y_j'(\Sigma^{k+4}i_{2^{r_j}}\circ\eta^2)^*\right)+\sum_{w=1}^m z_{w} \eta^* \nonumber \\
+\sum_{j=1}^{l} d_j (\Sigma^{k+1}i^{r_j}\circ \widetilde{\eta_{2^{r_j}}^2})^*.  
\end{align} 

For $k=1,2,6,$ and $8$ the result follows from \eqref{attach2} by applying Lemmas \ref{image_eta}, \ref{sigma4moore}, \ref{imageeta^2tilde}, and \ref{image_i_eta^2tilde}.
 
If $Sq^2: H^4(M;\mathbb{Z}/2)\to H^6(M;\mathbb{Z}/2)$ is trivial, then combining \eqref{spin} and \eqref{attach2}, we obtain
\begin{align}\label{attach_spin}
(\Sigma^{k+1}\phi_6)^* = \sum_{j=1}^{s_2-l} \tilde{x}_j (\widetilde{\eta_{2^{r_j}}^2})^*+\sum_{j=1}^{s_2} y_j'(\Sigma^{k+4}i_{2^{r_j}}\circ\eta^2)^*+\sum_{j=1}^{l} d_j (\Sigma^{k+1}i^{r_j}\circ \widetilde{\eta_{2^{r_j}}^2})^*.  
\end{align} 
\begin{itemize}
    \item Since all components in \eqref{attach_spin} have trivial images for $k=3, 9,$ and $10$ by Lemmas \ref{sigma4moore}{(i)}, \ref{imageeta^2tilde}{(i)}, and \ref{image_i_eta^2tilde}{(i)}, respectively, the concordance inertia group of $M\times \mathbb{S}^k$ is zero for these values of $k.$  
    \item Moreover, if the secondary cohomology operation $\Theta$ on $H^3(M;\mathbb{Z}/2)$ is trivial, then from \eqref{spin_noeta2}, we observe that \eqref{attach_spin} further reduces to $(\Sigma^{k+1}\phi_6)^*=0.$ Consequently, $I_c(M\times\mathbb{S}^k)$ is trivial for all $1\leq k\leq 10.$ 
    \item Suppose the secondary cohomology operation $\Theta$ on $H^3(M;\mathbb{Z}/2)$ is nontrivial and $M$ satisfies Condition A. Then, combining \eqref{spin_conditionA} and \eqref{attach_spin}, we obtain $$I_c(M\times\mathbb{S}^k)=\mathrm{Im}\left([\Sigma^{4+k}M(\mathbb{Z}/{2^{r_{j_0}}}), Top/O]\xrightarrow{(\Sigma^{k+4}i_{2^{r_{j_0}}}\circ\eta^2)^*}\pi_{6+k}(Top/O)\right).$$ By Lemma \ref{sigma4moore} {(i)}, it follows that $I_c(M\times\mathbb{S}^k)$ is trivial for $k=4$ and isomorphic to $\mathbb{Z}/2$ for $k=5.$
    \item If $\Theta$ acts nontrivially on $H^3(M;\mathbb{Z}/2)$ and $M$ satisfies Condition B, then combining \eqref{spin_conditionB_dj}, \eqref{spin_conditionB_xj} and \eqref{attach_spin}, we see that $I_c(M\times\mathbb{S}^k)$ is either
$$\mathrm{Im}\left([\Sigma^{k+1}C_{i_{2^{r}\circ\eta}}, Top/O]\xrightarrow{(\Sigma^{k+1}i^r\circ\widetilde{\eta_{2^r}^2})^*}\pi_{6+k}(Top/O)\right), ~\text{if}~r=r_{j_1}~\text{or}~ r_{j_3} $$
or
$$    \mathrm{Im}\left([\Sigma^{k+3}M(\mathbb{Z}/{2^r}), Top/O]\xrightarrow{(\widetilde{\eta_{2^r}^2})^*}\pi_{6+k}(Top/O)\right),~\text{if}~r=r_{j_2}~\text{or} r_{j_4}.$$
Now the result for $k=4$ and $5$ follows from  Lemma \ref{imageeta^2tilde} and Lemma \ref{image_i_eta^2tilde}.
\end{itemize}

Suppose $Sq^2$ acts nontrivially on $H^4(M;\mathbb{Z}/2).$ Then at least one of $y_j$ and $z_w$ in \eqref{attach2} must be $1.$ Furthermore,
\begin{itemize}
    \item If $M$ satisfies Condition C, then the combination of \eqref{nonspin_conditionC} and \eqref{attach2} gives $$I_c(M\times\mathbb{S}^k)=\mathrm{Im}\left(\pi_{5+k}(Top/O)\xrightarrow{\eta^*}\pi_{6+k}(Top/O)\right).$$ Thus, by Lemma \ref{image_eta}, we obtain $I_c(M\times\mathbb{S}^k)=\mathbb{Z}/2$ for $k=3,4,5,9$ and $I_c(M\times\mathbb{S}^{10})=0.$
    \item If $M$ satisfies condition D, then \eqref{nonspin_conditionD} and \eqref{attach2} yield $$I_c(M\times\mathbb{S}^k)=\mathrm{Im}\left([\Sigma^{4+k}M(\mathbb{Z}/{2^{r_{j_5}}}), Top/O]\xrightarrow{(\widetilde{\eta}_{2^{r_{j_5}}})^*}\pi_{6+k}(Top/O)\right).$$
    Now, the computation for $k=3,4,5,9,10$ follows from Lemma \ref{sigma4moore}.
\end{itemize}
\end{proof}
Since $I_c(M\times\mathbb{S}^k)\subseteq I(M\times\mathbb{S}^k)\subseteq \Theta_{6+k},$ it follows from the above theorem that:
 \begin{cor}\label{InertiagroupMS7}
     For any simply connected, closed, smooth $6$-dimensional manifold with $3\nmid p_1(M),$ the inertia group of $M\times \mathbb{S}^7$ is $\Theta_{13}.$ In particular, $I(\mathbb{C}P^3\times\mathbb{S}^7)=\Theta_{13}.$  
 \end{cor}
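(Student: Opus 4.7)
The plan is to leverage Theorem \ref{prop2.9}(v)(b), which already computes the \emph{concordance} inertia group $I_c(M\times\mathbb{S}^7)$ under the hypothesis $3\nmid p_1(M)$, together with the elementary chain of inclusions $I_c(M\times\mathbb{S}^7)\subseteq I(M\times\mathbb{S}^7)\subseteq \Theta_{13}$ noted immediately before the corollary. The strategy is essentially a sandwich argument: if the outer two groups in this chain agree, the middle one is forced.

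First I would recall from the classical Kervaire--Milnor computation of groups of homotopy spheres that $\Theta_{13}\cong\mathbb{Z}/3$. Next, by Theorem \ref{prop2.9}(v)(b), the assumption $3\nmid p_1(M)$ gives $I_c(M\times\mathbb{S}^7)=\mathbb{Z}/3$. Therefore the inclusions $\mathbb{Z}/3=I_c(M\times\mathbb{S}^7)\subseteq I(M\times\mathbb{S}^7)\subseteq \Theta_{13}=\mathbb{Z}/3$ force equality throughout, and in particular $I(M\times\mathbb{S}^7)=\Theta_{13}$. This handles the general statement.

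For the specialization to $M=\mathbb{C}P^3$, I would verify the hypothesis by computing the first Pontryagin class. Writing $x\in H^2(\mathbb{C}P^3;\mathbb{Z})$ for the standard generator, the total Pontryagin class of the tangent bundle is $(1+x^2)^4$, so $p_1(\mathbb{C}P^3)=4x^2$. Since $4$ is not divisible by $3$, the hypothesis of the general statement applies, yielding $I(\mathbb{C}P^3\times\mathbb{S}^7)=\Theta_{13}$.

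There is no real obstacle here; the corollary is a direct bookkeeping consequence of the preceding theorem combined with the two-dimensional fact that $\Theta_{13}$ has order $3$. The only thing to double-check is the identification $\Theta_{13}\cong\mathbb{Z}/3$ (it is cyclic of order $3$, generated by an element detected by $\alpha_1$ in the cokernel of $J$), and the Pontryagin class computation for $\mathbb{C}P^3$, both of which are standard and appear in \cite{gw} and related references cited in the paper.
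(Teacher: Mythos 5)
Your proposal is correct and matches the paper's argument: the corollary is deduced exactly as you do, from Theorem \ref{prop2.9}(v)(b) together with the inclusions $I_c(M\times\mathbb{S}^7)\subseteq I(M\times\mathbb{S}^7)\subseteq\Theta_{13}$ and the identification $\Theta_{13}\cong\mathbb{Z}/3$. Your added check that $p_1(\mathbb{C}P^3)=4x^2$, hence $3\nmid p_1(\mathbb{C}P^3)$, is the (implicit) verification needed for the special case and is also correct.
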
    
We observe that for $k=17,23,33,39,43,49,59,69,78,79,87,88,89,$ and $98$ there is an element $x$ in $(\mathit{coker}(J_{3+k}))_{(3)}$ such that $\alpha_1\circ x\neq 0$ in $(\mathit{coker}(J_{6+k}))_{(3)}$ \cite{ravenel}. Thus, for these specified values of $k,$ the image of $(\alpha_1)^*:\Theta_{6+k}\to \Theta_{3+k}$ is $\z/3.$ Now applying \cite[Lemma 4.4]{LiZhu24} and \cite[\S 3]{rhuang} in \eqref{phi_6}, we obtain from \eqref{Concordance_ Inertia_expression} that 
	\begin{cor}\label{extrainerM^6}
		Let $M$ be a $1$-connected, closed, smooth $6$-manifold such that $3\nmid p_1(M).$ Then $\z/3 \subseteq I(M\times\s^k)$ for $k=17,23,33,39,43,49,59,69,78,79,87, 88,89,$ and $98.$
	\end{cor}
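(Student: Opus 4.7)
The plan is to show that the image of the map $(\Sigma^{k+1}\phi_{6})^{*}$ in \eqref{Concordance_ Inertia_expression} contains a $\mathbb{Z}/3$ summand for each of the listed values of $k$. Since $I_c(M\times\mathbb{S}^k)\subseteq I(M\times\mathbb{S}^k)\subseteq\Theta_{6+k}$, producing this copy of $\mathbb{Z}/3$ in $I_c$ is enough.

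First, I would extract the 3-primary part of the attaching map $\phi_{6}$ given in \eqref{phi_6}. The only terms that are nonzero at the prime $3$ are $a_i'(i'\circ\alpha_1)$, $b_i'\alpha_1$, and $c_j(\Sigma^2 i_{3^{r_j}}\circ\alpha_1)$; all remaining terms are 2-primary and will contribute nothing at the prime $3$. The hypothesis $3\nmid p_1(M)$ is the key point here: invoking \cite[Lemma 4.4]{LiZhu24} together with \cite[\S 3]{rhuang} (this is the same input already used in the proof of Theorem \ref{prop2.9}(v) for the case $k=7$), one may change basis in $H_*(M;\mathbb{Z})$ so that exactly one of the coefficients $a_i',b_i',c_j$ equals $1$ and all the others vanish. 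Consequently $\phi_{6}$ has, up to a stable equivalence of $M^{(5)}$, a distinguished $\alpha_1$-type summand of one of the three forms above.

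Second, I would pass to the $Top/O$-level and produce $\mathbb{Z}/3$ in the image. For each $k\in\{17,23,33,39,43,49,59,69,78,79,87,88,89,98\}$, Ravenel's tables \cite{ravenel} supply an element $x\in(\operatorname{coker} J_{3+k})_{(3)}$ with $\alpha_1\circ x\neq 0$ in $(\operatorname{coker} J_{6+k})_{(3)}$. Because the cokernel of $J$ injects into $\Theta_{*}=\pi_{*}(Top/O)$ at odd primes, this implies that
\[
\alpha_1^{*}\colon \pi_{3+k}(Top/O)_{(3)} \longrightarrow \pi_{6+k}(Top/O)_{(3)}
\]
has image containing $\mathbb{Z}/3$, which is exactly the statement parenthetically noted just above the corollary. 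The same conclusion holds for the remaining two candidate summands of $\phi_6$ by precisely the argument of Corollary \ref{imagenu_extended}(iv)(b), (v)(b): the long exact sequences associated to $\mathbb{S}^3\xrightarrow{\eta}\mathbb{S}^2\hookrightarrow\mathbb{C}P^2$ and $\mathbb{S}^0\xrightarrow{\times 3^{r_j}}\mathbb{S}^0\hookrightarrow M(\mathbb{Z}/{3^{r_j}})$ show that $(\Sigma^{k+1}i')^{*}$ and $(\Sigma^{k+3}i_{3^{r_j}})^{*}$ are surjective at the prime $3$, so the corresponding $\alpha_1$-compositions again hit the full $\mathbb{Z}/3$.

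Combining the two steps, whichever of the three 3-primary summands survives in $\phi_6$ contributes a $\mathbb{Z}/3$ to the image of $(\Sigma^{k+1}\phi_6)^{*}$, giving $\mathbb{Z}/3\subseteq I_c(M\times\mathbb{S}^k)\subseteq I(M\times\mathbb{S}^k)$. There is no serious obstacle here beyond bookkeeping: the real content is already carried by \cite[Lemma 4.4]{LiZhu24} and \cite[\S 3]{rhuang} (translating $3\nmid p_1(M)$ into a statement about $\phi_6$) and by Ravenel's table (providing the list of $k$ where $\alpha_1$-multiplication is nontrivial on $(\operatorname{coker} J)_{(3)}$); the argument structurally mirrors the $k=7$ case in Theorem \ref{prop2.9}(v)(b).
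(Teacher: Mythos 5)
Your proposal is correct and follows essentially the same route as the paper: the paper's argument is exactly the paragraph preceding the corollary, namely Ravenel's tables giving $\alpha_1\circ x\neq 0$ in $(\mathit{coker}\,J_{6+k})_{(3)}$ for the listed $k$, combined with \cite[Lemma 4.4]{LiZhu24} and \cite[\S 3]{rhuang} to reduce $\phi_6$ (via $3\nmid p_1(M)$) to a single $\alpha_1$-type summand, and then \eqref{Concordance_ Inertia_expression} to conclude $\mathbb{Z}/3\subseteq I_c(M\times\mathbb{S}^k)\subseteq I(M\times\mathbb{S}^k)$. Your additional verification that the $\mathbb{C}P^2$ and Moore-space components also surject onto the relevant $3$-torsion is just the $k=7$ argument of Corollary \ref{imagenu_extended} transplanted, so nothing genuinely new is needed.
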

\subsection{Computation of concordance structure set}
Recall from the theorems of \cite[Theorem IV.10.1, Section IV.10.12]{kirby} and \cite{hirsch} that for any closed, oriented, smooth manifold $M$ of dimension $\leq 6$, there exists an isomorphism $[M,Top/O]\cong H^3(M,\mathbb{Z}/2).$ Hence, to calculate the concordance structure set $[M\times \mathbb{S}^k, Top/O]$, it suffices to determine $[\Sigma^k M,Top/O]$, as per Corollary \ref{concorMtimesSk}.
\begin{thm} \label{maintheo}
		Let $M$ be a closed, smooth manifold of dimension $6$ with its homology given by \eqref{eqn1}. Then there exists a split short exact sequence
        $$0\to\Theta_7\to[\Sigma M,Top/O]\to[\Sigma M^{(5)},Top/O]\cong \bigoplus\limits_{i=1}^{s_2+m} \z/2\to0.$$
		
        Furthermore, $\mathcal{C}(M \times \mathbb{S}^1) \cong \z/{28} \oplus \bigoplus\limits_{i=1}^{s_2+m} \z/2.$ 
	\end{thm}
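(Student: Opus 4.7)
The plan is to apply $[-, Top/O]$ to the Puppe sequence arising from the top-cell attaching map $\phi_6 \colon \mathbb{S}^5 \to M^{(5)}$, reduce the resulting long exact sequence to a short exact sequence, identify its outer terms via Postnikov truncation, and then split it using Proposition \ref{TopmodO}. The main obstacle is establishing the splitting at the prime $2$; the short exact sequence is abstractly consistent with nontrivial extensions (e.g.\ a $\mathbb{Z}/8$-factor in $[\Sigma M, Top/O]_{(2)}$), and ruling these out requires the structural decomposition of $Top/O$.

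First, I consider the suspended Puppe sequence $\mathbb{S}^6 \xrightarrow{\Sigma \phi_6} \Sigma M^{(5)} \xrightarrow{\Sigma \tilde{i}} \Sigma M \xrightarrow{\Sigma f_M} \mathbb{S}^7$, which after applying $[-, Top/O]$ yields
\begin{equation*}
[\Sigma^2 M^{(5)}, Top/O] \xrightarrow{(\Sigma^2 \phi_6)^*} \pi_7(Top/O) \xrightarrow{(\Sigma f_M)^*} [\Sigma M, Top/O] \xrightarrow{(\Sigma \tilde{i})^*} [\Sigma M^{(5)}, Top/O] \xrightarrow{(\Sigma \phi_6)^*} \pi_6(Top/O).
\end{equation*}
Since $\pi_6(Top/O) = \Theta_6 = 0$, the map $(\Sigma \tilde{i})^*$ is surjective; and by \eqref{Concordance_ Inertia_expression}, $\mathrm{Im}((\Sigma^2 \phi_6)^*) = I_c(M \times \mathbb{S}^1)$, which vanishes by Theorem \ref{prop2.9}(i). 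Thus $(\Sigma f_M)^* \colon \Theta_7 \hookrightarrow [\Sigma M, Top/O]$ is injective and the asserted short exact sequence follows.

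Next, I compute $[\Sigma M^{(5)}, Top/O]$ by Postnikov truncation. Since $\pi_i(Top/O) = 0$ for $i \in \{1,2,4,5,6\}$ while $\pi_3(Top/O) = \mathbb{Z}/2$, one has $\tau_{\leq 6}(Top/O) \simeq K(\mathbb{Z}/2, 3)$; as $\dim \Sigma M^{(5)} \leq 6$, this yields $[\Sigma M^{(5)}, Top/O] \cong H^3(\Sigma M^{(5)}; \mathbb{Z}/2) \cong H^2(M; \mathbb{Z}/2)$. Applying the universal coefficient theorem to the $H_2$-part of \eqref{eqn1}, together with $H_1(M) = 0$, identifies this with $\bigoplus_{i=1}^{s_2+m} \mathbb{Z}/2$ (the $m$ free $\mathbb{Z}$-summands and the $s_2$ summands $\mathbb{Z}/2^{r_t}$ of $H_2(M)$ each contribute a single $\mathbb{Z}/2$, while odd-primary torsion contributes nothing).

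Finally, the splitting arises from Proposition \ref{TopmodO}, which provides the natural decomposition $[X, Top/O] \cong [X, PL/O] \oplus \tilde{H}^3(X; \mathbb{Z}/2)$. Applied to $X = \Sigma M$, and using that $PL/O$ is $6$-connected with $\pi_7(PL/O) = \Theta_7 = \mathbb{Z}/28$, Postnikov truncation identifies $[\Sigma M, PL/O] \cong H^7(\Sigma M; \mathbb{Z}/28) \cong H^6(M; \mathbb{Z}/28) = \mathbb{Z}/28$. The statement for $\mathcal{C}(M \times \mathbb{S}^1)$ then follows from Corollary \ref{concorMtimesSk} (using $\pi_1(Top/O) = 0$ together with the vanishing of $[M, Top/O] = H^3(M; \mathbb{Z}/2)$ under the standing subsection hypotheses $b_3(M) = 0$ and no $2$-torsion in $H_3$), which combined with the split form of $[\Sigma M, Top/O]$ yields $\mathcal{C}(M \times \mathbb{S}^1) \cong \mathbb{Z}/28 \oplus \bigoplus_{i=1}^{s_2+m} \mathbb{Z}/2$.
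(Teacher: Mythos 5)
Your derivation of the exact sequence and of the outer terms is fine and matches the paper: $\pi_6(Top/O)=0$ gives surjectivity onto $[\Sigma M^{(5)},Top/O]$, the image of $(\Sigma^2\phi_6)^*$ is $I_c(M\times\mathbb{S}^1)=0$, and the identification $[\Sigma M^{(5)},Top/O]\cong H^2(M;\mathbb{Z}/2)\cong\bigoplus_{i=1}^{s_2+m}\mathbb{Z}/2$ via the Postnikov truncation $\tau_{\leq 6}(Top/O)\simeq K(\mathbb{Z}/2,3)$ is correct. The genuine gap is exactly at the point you yourself flag as the main obstacle: the splitting. Proposition \ref{TopmodO} does not provide a natural decomposition $[X,Top/O]\cong[X,PL/O]\oplus\tilde H^3(X;\mathbb{Z}/2)$ for arbitrary $X$; it is a statement about $M\times\mathbb{S}^k$ for $k\geq 2$ only, and its proof hinges on the vanishing $[\Sigma^kM,Top/PL]\cong\tilde H^{3-k}(M;\mathbb{Z}/2)=0$ for $k\geq 2$ — precisely the feature that fails for $X=\Sigma M$, where $[\Sigma M,Top/PL]\cong H^2(M;\mathbb{Z}/2)$ is the $\bigoplus\mathbb{Z}/2$ term whose extension by $\Theta_7$ is in question. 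The $k=1$ statement you are implicitly invoking is Corollary \ref{cor3.10}, which the paper deduces \emph{from} Theorem \ref{maintheo}, so your argument is circular. Moreover, the fiber sequence $PL/O\to Top/O\to Top/PL$ is not known to split off the shelf (its first $k$-invariant is the subject of the cited work of Kupers), so no "structural decomposition of $Top/O$" can simply be quoted here.

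What the paper actually does at this point is the real content of the theorem: working $2$-locally with the spectrum-level Postnikov section $to_{\leq 7}=(\tau_{\leq 7}top/o)_{(2)}$, it reduces the question to whether $\{\Sigma M,to_{\leq 7}\}$ contains a $\mathbb{Z}/8$, analyzes the $k$-invariant $\delta_{to}\colon\Sigma^3H\mathbb{Z}/2\to\Sigma^8H\mathbb{Z}/4$, shows its mod $2$ reduction is either $0$ or $Sq^5=Sq^1Sq^4$, and then uses that $Sq^4$ acts trivially on $H^3(\Sigma M;\mathbb{Z}/2)$ to rule out the nonsplit extension (Lemmas \ref{lemma3.6} and the following lemma). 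Some argument of this type — controlling the $k$-invariant of $top/o$ through dimension $7$ and using the vanishing of $Sq^4$ on $\Sigma M$ — is what your proposal is missing. A secondary caveat: your "Furthermore" step assumes $[M,Top/O]\cong H^3(M;\mathbb{Z}/2)=0$ by importing the hypotheses $b_3(M)=0$ and no $2$-torsion in $H_3$ from an earlier subsection; these are not part of the theorem's stated hypothesis (homology as in \eqref{eqn1}), so you should either state them explicitly or account for the $[M,Top/O]$ summand in Corollary \ref{concorMtimesSk}.
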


From \eqref{stableM^6}, \eqref{4skeletonM^6}, and \eqref{M_leq}, we have the following short exact sequence 
\begin{equation}\label{eqn5}
		0 \to \Theta_7 \to [\Sigma M,Top/O] \to [\Sigma M^{(5)},Top/O] \cong \bigoplus\limits_{i=1}^{s_2+m} \mathbb{Z}/2 \to 0
	\end{equation}
    
	In order to prove the above theorem, we need to show that this short exact sequence splits. Clearly, it is sufficient to work 2-locally. Also $Top/O$ is $\Omega ^{\infty}(top/o)$ where $top/o$ is a spectrum. Let $\mathcal{P}_7(Top/O)$ be $7$-th postnikov section of $Top/O$ i.e.,
	\begin{equation*} 
		\pi_k(\mathcal{P}_7(Top/O))\cong
		\begin{cases}
			\pi_k(Top/O) &\text{if $k\leq 7$}\\
			0 & \text{otherwise}
		\end{cases}
	\end{equation*} Also $ \Sigma M$ being $7$-dimensional, $[ \Sigma M,Top/O]\cong [ \Sigma M,\mathcal{P}_7(Top/O)]$ and $\mathcal{P}_7(Top/O)\simeq\Omega^{\infty}(\tau_{\leq 7}top/o)$ where $\tau_{\leq 7}top/o$ is the 7-th  postnikov section of the spectrum $top/o.$ So 
	\begin{equation*}
		\pi_k(to_{\leq 7}) \cong
		\begin{cases}
			\z /2 & \text{if $k=3$}\\
			\z /4 & \text{if $k=7$}\\
			0 & \text{otherwise}
		\end{cases}
	\end{equation*} where $to_{\leq 7} = \displaystyle (\tau_{\leq 7}top/o)_{(2)}.$\\
	We prove that $\displaystyle \{{ \Sigma M,to_{\leq 7}}\} \cong \mathbb{Z}/4 \oplus \bigoplus\limits_{i=1}^{s_2+m}\mathbb{Z}/2.$ Otherwise, from the short exact sequence \eqref{eqn5}, we get $\displaystyle {\{\Sigma M,to_{\leq 7}\}} \cong \mathbb{Z}/8 \oplus \bigoplus\limits_{i=2}^{s_2+m}\mathbb{Z}/2.$
	As a spectrum, we have a following cofiber sequence 
	\begin{equation*} 
		to_{\leq 7} \to \Sigma ^3 H \z/2 \xrightarrow{\delta_{to}} \Sigma^8 H \z/4.
	\end{equation*} \\
	Define the following
	\begin{center}
		$\delta_{\Tilde{t}}: \Sigma^3 H \z/2 \xrightarrow{\delta_{to}} \Sigma^8 H \z/4 \to \Sigma^8 H \z/2$ and $\Tilde{t}=$ homotopy fiber of $\delta_{\Tilde{t}}.$ 
	\end{center}
	\begin{lemma}\label{lemma3.6}
		\begin{enumerate}[label=(\alph*)]
			\item  ${\{ \Sigma M,to_{\leq 7}\}} = \z/8 \oplus \bigoplus\limits_{i=1}^{s_2+m-1}\z/2$ if and only if ${\{ \Sigma M,\Tilde{t}\}} \cong \z/4 \oplus \bigoplus\limits_{i=1}^{s_2+m-1}\z/2.$
			\item  ${\{\Sigma M,to_{\leq 7}\}} = \z/4 \oplus \bigoplus\limits_{i=1}^{s_2+m}\z/2$ if and only if ${\{\Sigma M,\Tilde{t}\}} \cong \z/2 \oplus \bigoplus\limits_{i=1}^{s_2+m}\z/2.$
		\end{enumerate}
	\end{lemma}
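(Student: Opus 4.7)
The plan is to link the spectra $to_{\leq 7}$ and $\tilde{t}$ by an explicit fibre sequence and then analyse the induced short exact sequence on $\{\Sigma M, -\}$. Since $\delta_{\tilde{t}} = \rho \circ \delta_{to}$, where $\rho \colon \Sigma^8 H\z/4 \to \Sigma^8 H\z/2$ is mod-$2$ reduction with fibre $\Sigma^8 H\z/2$ (via the Bockstein cofibre sequence $H\z/2 \to H\z/4 \to H\z/2$), the octahedral axiom applied to this composition yields a fibre sequence
\begin{equation*}
\Sigma^7 H\z/2 \xrightarrow{j} to_{\leq 7} \to \tilde{t}.
\end{equation*}
Unwinding the octahedral diagram, $j$ factors as $\Sigma^7 H\z/2 \xrightarrow{\iota} \Sigma^7 H\z/4 \xrightarrow{\alpha} to_{\leq 7}$, where $\iota$ is the fibre inclusion for $\rho$ (which on $\pi_7$ is the inclusion of the order-$2$ subgroup $\z/2 \hookrightarrow \z/4$) and $\alpha$ is the fibre inclusion in the defining cofibre sequence $to_{\leq 7} \to \Sigma^3 H\z/2 \to \Sigma^8 H\z/4$.

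Applying $\{\Sigma M, -\}$ to this fibre sequence, I would compute the surrounding cohomology using universal coefficients and \eqref{eqn1}: $\tilde{H}^5(M;\z/2) = 0$ (as $H_5(M;\z) = 0$ and $H_4(M;\z) = \z^m$ is free), $\tilde{H}^6(M;\z/2) = \z/2$, and $\tilde{H}^7(M;\z/2) = 0$ since $\dim M = 6$. Thus the long exact sequence collapses to the short exact sequence
\begin{equation*}
0 \to \z/2 \xrightarrow{j_*} A \to B \to 0,
\end{equation*}
with $A := \{\Sigma M, to_{\leq 7}\}$ and $B := \{\Sigma M, \tilde{t}\}$.

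The key step is to pin down $\mathrm{Im}(j_*)$ inside $A$. The $2$-localised form of \eqref{eqn5} presents $A$ as an extension $0 \to \z/4 \xrightarrow{\alpha_*} A \to (\z/2)^{s_2+m} \to 0$, and the factorisation of $j$ gives $j_* = \alpha_* \circ \iota_*$, where $\iota_* \colon \z/2 \hookrightarrow \z/4$ is the inclusion of the order-$2$ subgroup. Hence $\mathrm{Im}(j_*)$ equals the order-$2$ subgroup of $\alpha_*(\z/4) \subset A$. Because $(\z/2)^{s_2+m}$ has exponent $2$, every element of $2A$ already lies in $\alpha_*(\z/4)$, so $\mathrm{Im}(j_*)$ is the unique nontrivial element of $2A \cap A[2]$, that is, the unique order-$2$ element of $A$ divisible by $2$.

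Finally I would classify $A$. Any abelian extension $0 \to \z/4 \to A \to (\z/2)^{s_2+m} \to 0$ has exponent dividing $8$ and order $2^{s_2+m+2}$, and a change of basis of the quotient absorbs any nontrivial Ext class into a single summand, so $A$ is isomorphic either to $\z/4 \oplus (\z/2)^{s_2+m}$ or to $\z/8 \oplus (\z/2)^{s_2+m-1}$. In the first case, the distinguished $2$-divisible order-$2$ element is $(2,0)$, whence $B \cong A/\langle(2,0)\rangle \cong \z/2 \oplus (\z/2)^{s_2+m}$, establishing (b); in the second it is $(4,0)$, so $B \cong \z/4 \oplus (\z/2)^{s_2+m-1}$, establishing (a). The converse implications are automatic since the two candidate structures for $B$ are non-isomorphic. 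The main obstacle is justifying the factorisation of $j$, so that $\mathrm{Im}(j_*)$ lands on this distinguished $2$-divisible element; this is handled by the octahedral construction combined with the Bockstein cofibre sequence for $\rho$.
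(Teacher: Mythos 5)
Your proposal is correct and takes essentially the same route as the paper: the paper's map $\phi\colon to_{\leq 7}\to\Tilde{t}$, built from the square of cofibrations, has cofibre $\Sigma^8 H\z/2$, so its comparison is exactly your octahedral fibre sequence, and both arguments identify $\{\Sigma M,\Tilde{t}\}$ as the quotient of $\{\Sigma M,to_{\leq 7}\}$ by the image of $\bar{2}\in\{\Sigma M,\Sigma^7 H\z/4\}\cong\z/4$, which in either possible extension type is the unique $2$-divisible element of order $2$. The only difference is cosmetic: you make explicit the injectivity of $j_*$ and the dichotomy of possible isomorphism types for $\{\Sigma M,to_{\leq 7}\}$, which the paper handles by the analogous diagram chase on the two short exact sequences.
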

	\begin{proof}
		We have the following commutative diagram of homotopy cofibrations
		\begin{equation*}
			\begin{tikzcd}
				{to_{\leq 7}} \arrow[d,dashed,"\phi"'] \arrow[r] & {\Sigma^3 H \z/2} \arrow[d,-,double equal sign distance,double] \arrow[r,"\delta_{to}"] & {\Sigma^8 H \z/4} \arrow[d]\\
				{\Tilde{t}} \arrow[r] & {\Sigma^3 H \z/2} \arrow[r,"\delta_{\Tilde{t}}"'] & {\Sigma^8 H \z/2}
			\end{tikzcd}
		\end{equation*}
		The above diagram gives the spectrum map $\phi.$ Once again, consider the diagram
		\begin{equation*}
			\begin{tikzcd}[column sep=1em]
				{0}  \arrow[r] & {\{ \Sigma M,\Sigma^7 H \z/4\} \cong \z/4} \arrow[d] \arrow[r] & {\{ \Sigma M,to_{\leq 7}\}} \arrow[d,"\phi^*"] \arrow[r] & {\{\Sigma M, \Sigma^3 H \z/2\} \cong \bigoplus\limits_{i=1}^{s_2+m}\z/2 } \arrow[d,"\cong"] \arrow[r] & 0\\
				0 \arrow[r] & {\{ \Sigma M,\Sigma^7 H \z/2\} \cong \z/2} \arrow[r] & {\{\Sigma M, \Tilde{t}\}} \arrow[r] & {\{ \Sigma M, \Sigma^3 H \z/2\} \cong \bigoplus\limits_{i=1}^{s_2+m}\z/2} \arrow[r] & 0
			\end{tikzcd}
		\end{equation*}
		This diagram shows that $\phi^*$ is surjective and $\{\Sigma M, \Tilde{t}\} \cong \{ \Sigma M, to_{\leq 7}\} /A,$ where $A$ is generated by the image of $\bar{2}$ under the map $\{\Sigma M, \Sigma^7 H \z/4\} \to \{\Sigma M,to_{\leq 7}\}.$
		\begin{enumerate}[label=(\alph*)]
			\item If $\{\Sigma M,to_{\leq 7}\} \cong \z/8 \oplus \bigoplus\limits_{i=1}^{s_2+m-1}\z/2,$ then the image of $\bar{2}\in \{\Sigma M, \Sigma^7 H\z/4\}$ must be the class $(\bar{4},0,0,....,0)$ in $\{\Sigma M, to_{\leq 7}\}.$ Hence $\{\Sigma M,\Tilde{t}\} \cong \z/4 \oplus \bigoplus\limits_{i=1}^{s_2+m-1}\z/2.$
			\item If $\{\Sigma M,to_{\leq 7}\} \cong \z/4 \oplus \bigoplus\limits_{i=1}^{s_2+m}\z/2,$ then $\bar{2} \in \z/4 \cong \{\Sigma M,\Sigma^7 H \z/4\}$ maps to $(\bar{2},0,0,....,0) \in \{\Sigma M, to_{\leq 7}\}.$ Hence $\{\Sigma M,\Tilde{t}\} \cong \z/2 \oplus \bigoplus\limits_{i=1}^{s_2+m}\z/2.$
		\end{enumerate}
	\end{proof}
	\begin{lemma} 
		The map $\delta_{\Tilde{t}}: \Sigma^3 H \z/2 \to \Sigma^8 H \z/2$ is either equal to $Sq^5$ or $0$ up to homotopy.
	\end{lemma}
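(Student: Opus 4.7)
The plan is to identify the space of possible candidates for $\delta_{\tilde{t}}$ as stable operations, and then show that factoring through the mod-$2$ reduction from $\mathbb{Z}/4$-coefficients forces $\delta_{\tilde{t}}$ into a very small subgroup.

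First I would observe that $\delta_{\tilde{t}}$, viewed as a map of spectra $\Sigma^3H\mathbb{Z}/2\to \Sigma^8H\mathbb{Z}/2$, represents an element of the degree-$5$ part of the mod $2$ Steenrod algebra:
\[
[\Sigma^3H\mathbb{Z}/2,\Sigma^8H\mathbb{Z}/2] \cong \mathcal{A}_5 = \mathbb{Z}/2\{Sq^5,\; Sq^4Sq^1\},
\]
where the admissible basis in degree $5$ consists exactly of $Sq^5$ and $Sq^4 Sq^1$ (the only Serre--Cartan admissibles of total degree $5$).

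Next I would exploit the definition $\delta_{\tilde{t}} = \rho\circ\delta_{to}$, where $\rho\colon \Sigma^8H\mathbb{Z}/4\to\Sigma^8H\mathbb{Z}/2$ is reduction mod $2$. The short exact sequence $0\to\mathbb{Z}/2\to\mathbb{Z}/4\to\mathbb{Z}/2\to 0$ yields a cofiber sequence $H\mathbb{Z}/2\to H\mathbb{Z}/4\xrightarrow{\rho} H\mathbb{Z}/2\xrightarrow{Sq^1}\Sigma H\mathbb{Z}/2$ of spectra. Applying $[\Sigma^3H\mathbb{Z}/2,\Sigma^8(-)]$ gives the exact sequence
\[
[\Sigma^3H\mathbb{Z}/2,\Sigma^8H\mathbb{Z}/4] \xrightarrow{\rho_*} \mathcal{A}_5 \xrightarrow{Sq^1\cdot} \mathcal{A}_6.
\]
Therefore $\delta_{\tilde{t}}=\rho_*(\delta_{to})$ lies in $\ker(Sq^1\cdot\colon\mathcal{A}_5\to\mathcal{A}_6)$.

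Now I would compute that kernel using Adem relations. On the two basis elements of $\mathcal{A}_5$, the Adem relation $Sq^1Sq^{2k+1}=0$ gives $Sq^1\cdot Sq^5 = 0$, while $Sq^1\cdot(Sq^4Sq^1) = (Sq^1Sq^4)Sq^1 = Sq^5 Sq^1 \neq 0$ in $\mathcal{A}_6$ (this is an admissible monomial, hence nonzero). So the kernel is exactly $\mathbb{Z}/2\{Sq^5\}$, and consequently $\delta_{\tilde{t}}\in\{0,\,Sq^5\}$, as claimed.

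The only real obstacle is bookkeeping: keeping track of the direction in which $Sq^1$ acts (as a left or right multiplier), making sure the connecting map in the $\mathbb{Z}/2$-$\mathbb{Z}/4$-$\mathbb{Z}/2$ sequence is indeed identified with $Sq^1$, and verifying nondegeneracy of $Sq^5Sq^1$ in $\mathcal{A}_6$. Once these are settled, the conclusion is immediate from the exact sequence and the two Adem computations.
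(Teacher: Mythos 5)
Your argument is correct and is essentially the paper's proof: both identify $\{\Sigma^3 H\mathbb{Z}/2,\Sigma^8 H\mathbb{Z}/2\}$ with the degree-$5$ part of $\mathcal{A}_2$ spanned by $Sq^5$ and $Sq^4Sq^1$, note that $\delta_{\tilde{t}}$ factors through $\Sigma^8 H\mathbb{Z}/4$ so that $Sq^1\circ\delta_{\tilde{t}}=0$, and then use $Sq^1Sq^5=0$ while $Sq^1Sq^4Sq^1=Sq^5Sq^1\neq 0$ to pin down the kernel. Your version merely spells out the Bockstein cofiber sequence and the resulting exactness a bit more explicitly than the paper does.
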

	\begin{proof}
		From the definition of $\delta_{\Tilde{t}}$ we have $\Sigma^3 H \z/2 \xrightarrow{\delta_{\Tilde{t}}} \Sigma^8 H \z/2 \xrightarrow{Sq^1} \Sigma^9 H \z/2$ zero.
		
		Now $\{\Sigma^3 H \z/2,\Sigma^8 H \z/2\} \cong \mathcal{A}_{2}^{(5)},$ the degree $5$ homogeneous part of the mod $2$ Steenrod algebra $\mathcal{A}_2.$ As a basis of $\mathcal{A}_2$ is given by admissible monomials, 
		\begin{center}
			$\mathcal{A}_2^{(5)}= \z/2$\{admissible monomials in degree 5\}
		\end{center}
		Note that $Sq^4 Sq^1$ and $Sq^5$ are only 2 admissible monomials in degree 5 and $Sq^1 Sq^4 Sq^1 = Sq^5 Sq^1 \neq 0$ and $Sq^1 Sq^5 = 0.$ This proves the lemma.
	\end{proof}
	\begin{proof}[Proof of Theorem \ref{maintheo} ]
		Our goal is to prove $\{\Sigma M,\Tilde{t}\} \cong \z/2 \oplus \bigoplus\limits_{i=1}^{s_2+m}\z/2$ by using the fact that $Sq^4$ acts trivially on $H^*(\Sigma M; \mathbb{Z}/2).$
		
		If $\delta_{\Tilde{t}}=0,$ then there is nothing to prove as $\Tilde{t} \simeq \Sigma^3 H \z/2 \vee \Sigma^7 H \z/2$ implies that $\{\Sigma M, \Tilde{t}\} \cong H^3(\Sigma M;\z/2) \oplus H^7(\Sigma M;\z/2).$
		
		If $\delta_{\Tilde{t}}= Sq^5= Sq^1 Sq^4,$ then we consider the following diagram
		\begin{equation}\label{commutative_diagram}
			\begin{tikzcd}
				{\Tilde{t}} \arrow[r] \arrow[d,"\psi"'] & {\Sigma^3 H \z/2} \arrow[r,"\delta_{\Tilde{t}}"] \arrow[d,"Sq^4"] & {\Sigma^8 H \z/2} \arrow[d,-,double equal sign distance,double]\\
				{\Sigma^7 H \z/4} \arrow[r] & {\Sigma^7 H \z/2} \arrow[r,"Sq^1"'] & {\Sigma^8 H \z/2}
			\end{tikzcd}
		\end{equation}
		Note that $Sq^1=$Bockstein homomorphism $\beta,$ so we get a cofiber sequence 
		\begin{center}
			$H \z/2\to H \z/4 \to H \z/2 \xrightarrow{Sq^1} \Sigma H \z/2 \to ....$
		\end{center}
		\eqref{commutative_diagram} induces the following diagram
		\begin{equation*}
			\begin{tikzcd}[column sep=tiny]
				0\arrow[r]&{\{\Sigma M,\Sigma^7 H \z/2\}\cong\z/2}\arrow[r]\arrow[d,"\cong"']&{\{\Sigma M,\Tilde{t}\}}\arrow[r]\arrow[d,"\psi^*"]&{\{\Sigma M,\Sigma^3 H \z/2\}\cong\bigoplus\limits_{i=1}^{s_2+m}\z/2}\arrow[r]\arrow[d,"0"]&0\\
				0\arrow[r]&{\{\Sigma M,\Sigma^7 H \z/2\}\cong\z/2}\arrow[r]&{\{\Sigma M,\Sigma^7 H \z/4\}\cong\z/4}\arrow[r]&{\{\Sigma M,\Sigma^7 H \z/2\}\cong\z/2}\arrow[r]&0
			\end{tikzcd}
		\end{equation*}
		The map $\{\Sigma M,\Sigma^3 H \z/2\} \to \{\Sigma M,\Sigma^7 H \z/2\}$ is zero as $Sq^4$ operates trivially on $M.$
  
		Suppose $\{\Sigma M,\Tilde{t}\} \cong \z/4 \oplus \bigoplus\limits_{i=1}^{s_2+m-1}\z/2.$ Then the top left arrow sends $\bar{1}$ to the class $(\bar{2},0,0,....,0)$ and the bottom left arrow sends $\bar{1}$ to $\bar{2}.$ Hence $\psi^*(\bar{2},0,0,....,0)=\bar{2}$ and so $\psi^*(\bar{1},0,...,0)=\bar{1}.$ But the top right arrow carries $(\bar{1},0,0,....,0)$ to a non-zero class and the bottom right arrow carries $\bar{1}$ to $\bar{1}.$ This implies that the right most vertical arrow cannot be zero. Thus, we conclude $\{\Sigma M,\Tilde{t}\} \cong \z/2 \oplus \bigoplus\limits_{i=1}^{s_2+m}\z/2.$ The theorem is now derived from Lemma \ref{lemma3.6}.
	\end{proof}

    \begin{cor} \label{cor3.10}
		If $M$ be a simply connected, closed, smooth $6$-dimensional manifold, then the following short exact sequence 
       $$0\to [M\times\mathbb{S}^1,PL/O]\to [M\times\mathbb{S}^1,Top/O]\to [M\times\mathbb{S}^1,Top/PL]\to 0$$
       splits.
  
  Hence, $[M \times \s^1,Top/O] \cong \Theta_7 \oplus H^2(M; \z/2).$
	\end{cor}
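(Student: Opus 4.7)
The plan is to identify all three terms of the short exact sequence, compute the middle term abstractly using results already in hand, and then argue that the extension must split by a group-theoretic observation.

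First, I would pin down the two outer terms. By the Kirby–Siebenmann identification $Top/PL \simeq K(\mathbb{Z}/2,3)$, we have
\[
[M\times\mathbb{S}^1, Top/PL] \cong H^3(M\times\mathbb{S}^1;\mathbb{Z}/2).
\]
Since $PL/O$ is $6$-connected with $\pi_7(PL/O)=\Theta_7$ and $M\times\mathbb{S}^1$ is $7$-dimensional, standard obstruction theory on its Postnikov tower gives
\[
[M\times\mathbb{S}^1, PL/O] \cong H^7(M\times\mathbb{S}^1;\Theta_7) \cong \Theta_7.
\]
The three-term short exactness then follows from the fiber sequence $PL/O\to Top/O\to Top/PL$: the preceding boundary map comes from $H^2(M\times\mathbb{S}^1;\mathbb{Z}/2)$ into $\Theta_7$, which one verifies is zero, and surjectivity at $H^3$ follows because the following obstruction lives in a group that vanishes for degree reasons.

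Second, I would compute the middle term as an abstract abelian group. Since $M$ is $6$-dimensional and $Top/O$ has only $\pi_3=\mathbb{Z}/2$ in degrees $\leq 6$, obstruction theory gives $[M,Top/O]\cong H^3(M;\mathbb{Z}/2)$. Combining Corollary \ref{concorMtimesSk} (using $\pi_1(Top/O)=0$) with Theorem \ref{maintheo} yields
\[
[M\times\mathbb{S}^1, Top/O]\cong [M,Top/O]\oplus[\Sigma M,Top/O]\cong \Theta_7 \oplus V
\]
for a $\mathbb{Z}/2$-vector space $V$ (with $V = H^3(M;\mathbb{Z}/2)\oplus H^2(M;\mathbb{Z}/2) \cong H^3(M\times\mathbb{S}^1;\mathbb{Z}/2)$ via Künneth).

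Third, I would deduce the splitting by a purely algebraic observation. Our short exact sequence has the form $0\to \Theta_7\to G\to V\to 0$ with $V$ a $\mathbb{Z}/2$-vector space and $G\cong \Theta_7\oplus V$ as abstract abelian groups. Any such extension must be split: the $2$-primary part of $\Theta_7$ is $\mathbb{Z}/4$, so a nonsplit extension would produce an element of order $8$ in the $2$-primary part of $G$, contradicting the fact that the exponent of the $2$-primary part $\mathbb{Z}/4\oplus V$ of $\Theta_7\oplus V$ is $4$. The splitting gives the claimed direct sum decomposition.

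The main obstacle is the careful bookkeeping in the second step: one must match the summand structure coming from Theorem \ref{maintheo} and from $[M,Top/O]$ with the components of $H^3(M\times\mathbb{S}^1;\mathbb{Z}/2)$ via Künneth to see that the middle group is abstractly of the correct form; once this and exactness are established, the splitting is an immediate consequence of the exponent argument.
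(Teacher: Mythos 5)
Your argument is correct and rests on the same two pillars as the paper's own proof, namely the stable product splitting of Corollary \ref{concorMtimesSk} and Theorem \ref{maintheo}, but you finish differently. The paper reduces the long exact sequence of $PL/O\to Top/O\to Top/PL$ to its $[\Sigma M,-]$ component via Corollary \ref{concorMtimesSk} and invokes the split sequence of Theorem \ref{maintheo} there; you instead compute the abstract isomorphism type of the middle group, $\Theta_7\oplus(\text{elementary abelian }2\text{-group})$, and observe that an extension of a finite elementary abelian $2$-group $V$ by $\Theta_7$ must split once there is no element of order $8$, because the odd part splits off and a nonzero class in $\mathrm{Ext}^1(V,\mathbb{Z}/4)$ pulls back nontrivially to some $\mathbb{Z}/2\subset V$, forcing a $\mathbb{Z}/8$ inside the total group. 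This algebraic finish is valid (it uses crucially that the $2$-primary part of $\Theta_7$ is cyclic) and is slightly more robust than the paper's, since it does not require matching the copy of $\Theta_7$ coming from $[\Sigma M,PL/O]$ with the one produced by Theorem \ref{maintheo}. The genuinely hard content, ruling out a $\mathbb{Z}/8$ in $[\Sigma M,Top/O]$, is in both cases carried entirely by Theorem \ref{maintheo}, so you are not duplicating or bypassing that work.

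Two points should be made explicit. First, your assertion that the boundary map $H^2(M\times\mathbb{S}^1;\mathbb{Z}/2)\to[M\times\mathbb{S}^1,PL/O]$ is zero is not automatic, since $\Theta_7$ has $2$-torsion; it follows because $H^1(M;\mathbb{Z}/2)=0$ implies every class in $H^2(M\times\mathbb{S}^1;\mathbb{Z}/2)$ is pulled back along the projection to $M$, and $[M,PL/O]=0$ as $PL/O$ is $6$-connected and $\dim M=6$ — this is exactly the component-wise reduction the paper carries out, so state it. Second, your bookkeeping yields $[M\times\mathbb{S}^1,Top/O]\cong\Theta_7\oplus H^2(M;\mathbb{Z}/2)\oplus H^3(M;\mathbb{Z}/2)$, the last summand being $[M,Top/O]$; this agrees with Theorem A of the paper, whereas the displayed conclusion of the corollary omits $H^3(M;\mathbb{Z}/2)$ (which vanishes for $M=\mathbb{C}P^3$ but not for a general simply connected $6$-manifold). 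That discrepancy lies in the printed statement rather than in your argument.
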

\begin{proof}
       We note that the fiber sequence $PL/O\to Top/O\to Top/PL$ induces the following long exact sequence

        \begin{tikzcd}[column sep=1em,font=\small]
	\cdots & {[M\times\mathbb{S}^1, \Omega(Top/PL)]} &{[M\times\mathbb{S}^1,PL/O]} & {[M\times\mathbb{S}^1,Top/O]}\\
	{[M\times\mathbb{S}^1,Top/PL]} & {[M\times\mathbb{S}^1,B(PL/O)]} &{[ M\times \mathbb{S}^1,B(Top/O)]}& \cdots
	\arrow[from=1-1, to=1-2]
	\arrow[from=1-2, to=1-3]
	\arrow[from=1-3, to=1-4]
	\arrow[from=1-4, to=2-1, overlay, out=-7, in=171]
	\arrow[from=2-1, to=2-2]
	\arrow[from=2-2, to=2-3]
	\arrow[from=2-3, to=2-4] 
	\end{tikzcd}
      
To show the above sequence splits at $[M\times\mathbb{S}^1,Top/O],$ we need to establish its splitting for each component of $[M\times\mathbb{S}^1,Top/O],$ as indicated by Corollary \ref{concorMtimesSk}. Since $M$ is a $6$-dimensional manifold, it suffices to show the following long exact sequence
\begin{center}
    \begin{tikzcd}[column sep=1em,font=\small]
	 {0\cong H^1(M;\z/2)} &{[\Sigma M,PL/O]\cong \Theta_7} & {[\Sigma M,Top/O]}\\
	{[\Sigma M,Top/PL]\cong H^2(M;\z/2)} & {[M,\Omega B(PL/O)]\cong [M,PL/O]} &{[M,Top/O]}
	\arrow[from=1-1, to=1-2]
	\arrow[from=1-2, to=1-3]
	\arrow[from=1-3, to=2-1, overlay, out=-5, in=172]
	\arrow[from=2-1, to=2-2]
         \arrow[from=2-2, to=2-3]
	\end{tikzcd}
    \end{center}induced from the fiber sequence $PL/O\to Top/O\to Top/PL$ admits splitting at $[\Sigma M,Top/O].$ However, its splitting follows from Theorem \ref{maintheo}. 
	\end{proof}
     \begin{prop}\label{TopmodO}
    Let $M$ be a simply connected, closed, smooth manifold of dimension $6.$ Then, for any $k\geq 2,$ the short exact sequence $$0\to [M\times\mathbb{S}^k,PL/O]\to [M\times\mathbb{S}^k,Top/O]\to [M\times\mathbb{S}^k,Top/PL]\to 0$$ splits. 
\end{prop}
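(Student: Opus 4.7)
The plan is to exploit Corollary~\ref{concorMtimesSk}, applied in parallel to the three $H$-spaces $PL/O$, $Top/O$, $Top/PL$ fitting into the fibration $PL/O\to Top/O\to Top/PL$. The key observation is that the splitting in~(\ref{note2.3}) originates from the stable splitting $\Sigma(M\times\mathbb{S}^k)\simeq \Sigma M\vee \mathbb{S}^{k+1}\vee \Sigma^{k+1}M$, which is purely source-side; consequently the decomposition
\[
[M\times\mathbb{S}^k,X]\ \cong\ [M,X]\,\oplus\,\pi_k(X)\,\oplus\,[\Sigma^k M,X]
\]
is natural in the $H$-space target $X$. Thus the sequence in the statement decomposes as a direct sum of three short exact sequences, and it suffices to show that each summand sequence splits.

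The next step is to dispose of each of the three component sequences by a connectivity/vanishing check. Since $PL/O$ is $6$-connected and $\dim M=6$, obstruction theory gives $[M,PL/O]=0$, and the Kirby--Siebenmann--Hirsch identifications recalled at the start of Section~2.5 force $[M,Top/O]\cong H^3(M;\mathbb{Z}/2)\cong[M,Top/PL]$, so the sequence for the $[M,-]$ summand is trivially split (an isomorphism in the middle). For the $\pi_k(-)$ summand, the identification $Top/PL\simeq K(\mathbb{Z}/2,3)$ makes $\pi_k(Top/PL)$ zero for $k\neq 3$, while $\pi_3(PL/O)=0$, so one outer term always vanishes and the sequence splits trivially. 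For the $[\Sigma^k M,-]$ summand, the same description of $Top/PL$ gives $[\Sigma^k M,Top/PL]\cong \tilde{H}^{3-k}(M;\mathbb{Z}/2)$, which vanishes for all $k\geq 2$ (using simple connectedness of $M$ for $k=2$, connectedness for $k=3$, and negative degree for $k\geq 4$); hence this sequence also splits trivially.

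The only point I would want to nail down carefully is the naturality of the Corollary~\ref{concorMtimesSk} decomposition in the $H$-space target, as this is what allows the fibration $PL/O\to Top/O\to Top/PL$ to induce a splitting of short exact sequences rather than just a commuting diagram. I do not expect this to be a serious obstacle, since it is a formal consequence of the stable splitting of $\Sigma(M\times\mathbb{S}^k)$ together with the infinite-loop-space structure on each of $PL/O$, $Top/O$, $Top/PL$. Once this naturality is in place, the three component splittings above are each trivial, so the full short exact sequence splits as required.
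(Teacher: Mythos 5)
Your proposal is correct and uses essentially the same ingredients as the paper's proof: the naturality of the splitting \eqref{note2.3} with respect to the $H$-maps in the fibration $PL/O \to Top/O \to Top/PL$, the $6$-connectivity of $PL/O$, the identification $Top/PL \simeq K(\mathbb{Z}/2,3)$, and the dimension and simple connectivity of $M$. The only difference is presentational — you split the sequence summand-by-summand, noting that in each component one outer term vanishes, whereas the paper organizes the same facts into a commutative diagram, applies the four lemma, and quotes the splittings of two auxiliary sequences — so this is a streamlining of the same argument rather than a different method.
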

\begin{proof} 
Upon incorporating the split short exact sequence \eqref{note2.3} for $Y= PL/O, Top/O$ and $Top/PL,$ we obtain the following commutative diagram
    \begin{center}
        \begin{tikzcd}[sep=.6em, font=\scriptsize]
            &{\widetilde{H}^{2-k}(M;\z/2)\cong[\Sigma^k M,\Omega (Top/PL)]}\arrow[r]\arrow[d,"0"']&{[M\times\mathbb{S}^k,\Omega (Top/PL)]}\arrow[r]\arrow[d]&{[M,\Omega (Top/PL)]\oplus\pi_{k+1}(Top/PL)}\arrow[d,"0"]\\
            {0}\arrow[r]&{[\Sigma^k M,PL/O]}\arrow[d]\arrow[r]&{[M\times\mathbb{S}^k,PL/O]}\arrow[d]\arrow[r]&{[M,PL/O]\oplus\pi_k(PL/O)}\arrow[d]\arrow[r]&{0}\\
            {0}\arrow[r]&{[\Sigma^k M,Top/O]}\arrow[d]\arrow[r]&{[M\times\mathbb{S}^k,Top/O]}\arrow[d]\arrow[r]&{[M,Top/O]\oplus\pi_k(Top/O)}\arrow[d]\arrow[r]&{0}\\
             {0}\arrow[r]&{[\Sigma^k M,Top/PL]\cong \widetilde{H}^{3-k}(M;\mathbb{Z}/2)}\arrow[d]\arrow[r]&{[M\times\mathbb{S}^k,Top/PL]}\arrow[d]\arrow[r]&{[M,Top/PL]\oplus\pi_k(Top/PL)}\arrow[d]\arrow[r]&{0}\\
             &{[\Sigma^k M,B(PL/O)]}\arrow[r]&{[M\times\mathbb{S}^k,B(PL/O)]}\arrow[r]&{[M,B(PL/O)]\oplus\pi_{k-1}(PL/O)}
        \end{tikzcd}
    \end{center}  
  where each column is induced from the fiber sequence $\cdots\to\Omega (Top/PL)\to PL/O\to Top/O\to Top/PL.$ 

   Since $Top/PL= \Sigma^3 H\mathbb{Z}/2$ and $PL/O$ is $6$-connected, $[\Sigma^k M,PL/O]\cong [\Sigma^k M,Top/O]$ and $\pi_k(Top/O)\cong \pi_k(PL/O)$ for $k\geq 2.$ Consequently, $[M, Top/O]\cong [M, Top/PL].$

 Applying four lemma in the above diagram, we see that $[M\times \mathbb{S}^k,Top/O]$ fits into the following short exact sequence
 \begin{equation}\label{Mtimes Sk}
        0\to [M\times\mathbb{S}^k,PL/O]\to  [M\times\mathbb{S}^k,Top/O]\to  [M\times\mathbb{S}^k,Top/PL]\to 0.
    \end{equation}
    By considering the splitting of the short exact sequences 
    $$0\to [M\vee\mathbb{S}^k,PL/O] \to [M\vee\mathbb{S}^k,Top/O]\to  [M\vee\mathbb{S}^k,Top/PL]\to 0$$ and $$0\to [\Sigma^k M,PL/O]\to [M\times\mathbb{S}^k,PL/O]\to [M,PL/O]\oplus\pi_{k}(PL/O)\to 0$$ in the above diagram, we get \eqref{Mtimes Sk} splits. 
\end{proof}

  Now instead of finding $\mathcal{C}(M\times\mathbb{S}^k)$ for any simply connected, closed, smooth $6$-manifold, we focus on finding $\mathcal{C}(\mathbb{C}P^3\times\mathbb{S}^k)$ and thereby calculating $[\Sigma^k \mathbb{C}P^3,Top/O].$ To do so we use the following long exact sequence 
  \begin{equation}\label{longexactCP3}
      \begin{tikzcd}[column sep=1em,font=\small]
	\cdots & {[\Sigma^{k+1}\mathbb{C}P^2, Top/O]} &{\pi_{6+k}(Top/O)} & {[\Sigma^k\mathbb{C}P^3,Top/O]}\\
	{[\Sigma^k\mathbb{C}P^2,Top/O]} & {\pi_{5+k}(Top/O)} &{[\Sigma^{k-1}\mathbb{C}P^3, Top/O]}& \cdots
	\arrow[from=1-1, to=1-2]
	\arrow["(\Sigma^{k+1}\pi)^*",from=1-2, to=1-3]
	\arrow["(\Sigma^k f_{\mathbb{C}P^3})^*",from=1-3, to=1-4]
	\arrow["(\Sigma^k i')^*",from=1-4, to=2-1, overlay, out=-7, in=171]
	\arrow["(\Sigma^k \pi)^*"',from=2-1, to=2-2]
	\arrow["(\Sigma^{k-1}f_{\mathbb{C}P^3})^*"',from=2-2, to=2-3]
	\arrow[from=2-3, to=2-4] 
	\end{tikzcd}
\end{equation} induced from $\mathbb{S}^5\xrightarrow{\pi}\mathbb{C}P^2\xhookrightarrow{i'} \mathbb{C}P^3.$

 \begin{prop}\label{CP3computation}
 	\noindent
 	\begin{itemize}
 		\item [(i)] $[\Sigma^k \mathbb{C}P^3,Top/O]\cong \Theta_{6+k},$ for $k=2$ and $8.$
             \item[(ii)] $[\Sigma^k \mathbb{C}P^3,Top/O]\cong [\Sigma^k \mathbb{C}P^2,Top/O],$ if $k=6$ and $7.$
 		\item [(iii)] $[\Sigma^k \mathbb{C}P^3,Top/O]\cong \Theta_{6+k}\oplus[\Sigma^k\mathbb{C}P^2,Top/O]$ for $k=3,4,5,9$ and $10.$
 	\end{itemize}
 \end{prop}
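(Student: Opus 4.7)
The plan is to apply the long exact sequence \eqref{longexactCP3} induced by the cofiber sequence $\mathbb{S}^5 \xrightarrow{\pi} \mathbb{C}P^2 \xhookrightarrow{i'} \mathbb{C}P^3$ and use Lemma \ref{lemma2.16} to control the connecting maps. Within the range $1\le k\le 10$, that lemma shows $(\Sigma^{k+1}\pi)^*$ is zero except at $k=7$ (where its image is $\mathbb{Z}/3=\Theta_{13}$); reindexing with $k$ replaced by $k-1$, the map $(\Sigma^k\pi)^*$ is zero except at $k=8$ (where its image is $\mathbb{Z}/3\subset\Theta_{13}$). Thus \eqref{longexactCP3} always collapses to
\begin{equation*}
0\to \mathrm{coker}\,(\Sigma^{k+1}\pi)^*\to [\Sigma^k\mathbb{C}P^3,Top/O]\to \ker(\Sigma^k\pi)^*\to 0,
\end{equation*}
and the proof reduces to identifying the two end terms case by case.

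For parts (i) and (ii) I proceed directly. At $k=2$ both outer maps vanish, so the cokernel equals $\Theta_8$, and I verify $[\Sigma^2\mathbb{C}P^2,Top/O]=0$ using the $\eta$-cofiber sequence $\mathbb{S}^5\xrightarrow{\eta}\mathbb{S}^4\to\Sigma^2\mathbb{C}P^2$ together with $\Theta_4=\Theta_5=\Theta_6=0$. At $k=8$ the cokernel equals $\Theta_{14}$, and I compute $[\Sigma^8\mathbb{C}P^2,Top/O]$ from the analogous $\eta$-cofiber sequence, using $\Theta_{10}=\mathbb{Z}/6$, $\Theta_{11}=\mathbb{Z}/992$, $\Theta_{12}=0$, and the nontrivial $\eta$-action on the $2$-primary part of $\Theta_{10}$, to obtain a group of order $3$; since the image of $(\Sigma^8\pi)^*$ is also $\mathbb{Z}/3$, the map is an isomorphism and the kernel vanishes. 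For $k=6$ the claim follows at once from $\Theta_{12}=0$, and for $k=7$ the image of $(\Sigma^8\pi)^*$ exhausts $\Theta_{13}\cong\mathbb{Z}/3$, so the cokernel vanishes.

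For (iii), the remaining task is to split the short exact sequence
\begin{equation*}
0\to \Theta_{6+k}\to [\Sigma^k\mathbb{C}P^3,Top/O]\to [\Sigma^k\mathbb{C}P^2,Top/O]\to 0
\end{equation*}
as abelian groups for $k=3,4,5,9,10$. This is the main technical obstacle, since a priori the relevant $\mathrm{Ext}^1$ group of the two end terms can be nontrivial. My approach is prime-by-prime. At primes $p\notin\{2,3\}$ the attaching class $\pi\in\pi_5^s(\mathbb{C}P^2)\cong \mathbb{Z}/12$ vanishes, giving a geometric splitting $\mathbb{C}P^3_{(p)}\simeq \mathbb{C}P^2_{(p)}\vee \mathbb{S}^6_{(p)}$ and hence a $p$-local direct-sum decomposition. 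At $p=3$ the only component of $\pi$ is $i'\circ\alpha_1$, and I would combine Corollary \ref{imagenu_extended} and Lemma \ref{imagenu} to lift the $3$-primary summand of $\Theta_{6+k}$ explicitly. At $p=2$ the component of $\pi$ is $i'\circ 2\nu$, and I would combine Lemma \ref{image_eta} and Lemma \ref{imagenu} with a direct computation of the $2$-primary part of $[\Sigma^k\mathbb{C}P^2,Top/O]$ via the $\eta$-cofiber sequence to exhibit a section. The hardest cases are expected to be $k=9$ and $k=10$, where the $2$-primary torsion in both end groups is substantial and no coprimality argument applies; there I would rely on the $\eta$- and $\nu$-action data already recorded in the earlier lemmas to rule out nontrivial extensions.
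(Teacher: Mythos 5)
Your derivation of the short exact sequence
\begin{equation*}
0\to \Theta_{6+k}\to [\Sigma^k\mathbb{C}P^3,Top/O]\to [\Sigma^k\mathbb{C}P^2,Top/O]\to 0
\end{equation*}
from \eqref{longexactCP3} and Lemma \ref{lemma2.16} is fine, and your treatment of parts (i) and (ii) is essentially the paper's argument (you recompute $[\Sigma^k\mathbb{C}P^2,Top/O]$ from the $\eta$-cofibration instead of citing \cite[Proposition 3.7]{SBRKAS}, which is harmless). The genuine gap is in part (iii): the splitting of this extension at the prime $2$ is exactly the hard content of the proposition, and your proposal does not actually prove it. The lemmas you invoke (Lemmas \ref{image_eta}, \ref{imagenu}, \ref{lemma2.16}, Corollary \ref{imagenu_extended}) only record images of the induced maps $(\eta)^*$, $(2\nu)^*$, $(\alpha_1)^*$ on $\pi_*(Top/O)$; they are precisely what produces the short exact sequence in the first place and carry no information about the extension class, i.e.\ about whether a generator of the $2$-primary part of $[\Sigma^k\mathbb{C}P^2,Top/O]$ lifts to an element of the same order in $[\Sigma^k\mathbb{C}P^3,Top/O]$. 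For instance at $k=3$ one must exclude an extension of $\mathbb{Z}/4\subset\Theta_7$ by $(\mathbb{Z}/2)^3=\Theta_9$, at $k=10$ one must exclude $\mathbb{Z}/4$ versus $\mathbb{Z}/2\oplus\mathbb{Z}/2$, and at $k=5$ and $k=9$ the possible $2$-groups are genuinely ambiguous; "relying on the $\eta$- and $\nu$-action data already recorded" cannot distinguish these possibilities.

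For comparison, the paper resolves these extensions by inputs of a different nature: for $k=3$ and $10$ it switches to the second cell structure $\mathbb{S}^2\hookrightarrow\mathbb{C}P^3\to\mathbb{S}^4\vee\mathbb{S}^6$ of \eqref{longexactCP3new}, where $\Theta_{12}=0$ makes the splitting automatic; for $k=4$ it combines \eqref{longexactCP3new} with the fibration $\Omega(G/Top)\to Top/O\to G/O$; for $k=5$ it plays the two short exact sequences coming from the two cell structures against each other to pin down the $2$-primary group among three candidates; and for $k=9$ it needs a delicate ladder comparing $[\Sigma^{10}\mathbb{C}P^3,G/O]$, $[\Sigma^{10}\mathbb{C}P^3,G/Top]$ and $[\Sigma^{9}\mathbb{C}P^3,Top/O]$, using surjectivity of $\omega_*\circ(\Sigma^{10}\tilde{\tilde{i}})^*$ onto $\Theta_{11}$, to rule out the nontrivial extension $\mathbb{Z}/2^6\oplus\mathbb{Z}/2^6$. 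Your odd-primary and $3$-local reductions are unobjectionable (the $3$-torsion is too sparse to create extension problems), but without an argument of the above kind, or some other computation of the relevant secondary (Toda-bracket/extension) data, the $2$-local splitting in (iii) remains unproved.
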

\begin{proof}
  We note the result for $k=2,6,7$ and $8$ can be deduced from the long exact sequence (\ref{longexactCP3}) using \cite[Proposition 3.7]{SBRKAS} and Lemma \ref{lemma2.16}. 
  
	Since $\mathbb{C}P^3/{\mathbb{S}^2}\simeq \mathbb{S}^4\vee\mathbb{S}^6,$ we have another cofiber sequence involving the complex $\mathbb{C}P^3,$
	\begin{equation}\label{longexactCP3new}
		\s^2\xhookrightarrow{\tilde{\tilde{i}}} \mathbb{C}P^3\xrightarrow{q}\s^4\vee\s^6\xrightarrow{\eta+ \phi}\s^3\hookrightarrow\cdots,
	\end{equation}
 where $(\phi)_{(2)}$ is stably $2\nu$ and $(\phi)_{(3)}$ is stably $\alpha_1.$
 
 Now for $k=3$ and $10,$ utilizing the long exact sequence induced from this cofiber sequence, we have $[\Sigma^k \mathbb{C}P^3,Top/O]\cong \Theta_{4+k}\oplus\Theta_{6+k}$ which is again isomorphic to $[\Sigma^k\mathbb{C}P^2,Top/O]\oplus\Theta_{6+k}$ by \cite[Proposition 3.7 {(2)} and {(9)}]{SBRKAS}. 
 
 For $k=4,$ there is a ladder of exact sequence 
 \begin{center}
 	\begin{tikzcd}[row sep=1 em]
 		&{0}\arrow[d]\arrow[r]&{[\Sigma^5 \mathbb{C}P^3,G/Top]}\arrow[d,"\omega_*"]\arrow[r]&{0}\arrow[d]\\
 		{\Theta_7}\arrow[r]\arrow[d,"\psi_*"']&{\Theta_8\oplus\Theta_{10}}\arrow[r,"(\Sigma^4 q)^*"]\arrow[d,"\psi_*"']&{[\Sigma^4 \mathbb{C}P^3,Top/O]}\arrow[r]\arrow[d,"\psi_*"]&{\Theta_6\cong 0}\\
 		{0\cong\pi_7(G/O)}\arrow[r]&{\pi_8(G/O)\oplus\pi_{10}(G/O)}\arrow[r,"(\Sigma^4 q)^*"']&{[\Sigma^4 \mathbb{C}P^3,G/O]}
 	\end{tikzcd}
 \end{center} where the rows are induced from the cofiber sequence (\ref{longexactCP3new}) and columns are induced from the fiber sequence $\Omega(G/Top)\xrightarrow{\omega}Top/O\xrightarrow{\psi}G/O.$ Now clearly the above diagram and \cite[Proposition 3.7 {(3)}]{SBRKAS} gives $[\Sigma^4 \mathbb{C}P^3,TOP/O]\cong\Theta_{10}\oplus\Theta_8\cong\Theta_{10}\oplus [\Sigma^4\mathbb{C}P^2,Top/O].$

 For $k=5,$ we have the following short exact sequence induced from cofiber sequence (\ref{longexactCP3new}) along $Top/O$ 
 \begin{equation}\label{cp3s5}
 0\to\z/2\oplus\z/2\oplus\z/{992}\to[\Sigma^5 \mathbb{C}P^3,Top/O]\to\z/{28}\to0    
 \end{equation}
  by using Lemma \ref{image_eta} and Lemma \ref{imagenu}. Examining \eqref{cp3s5}, it suffices to compute $2$-primary component of $[\Sigma^5 \mathbb{C}P^3,Top/O].$ Now the above short exact sequence narrows down the possibilities for $[\Sigma^5 \mathbb{C}P^3,Top/O]_{(2)}$ to three abelian groups: $\z/2\oplus\z/2\oplus\z/{2^5}\oplus\z/4, \z/2\oplus\z/8\oplus\z/{2^5}$ or $\z/2\oplus\z/2\oplus\z/{2^7}.$ 
 
 Again by \cite[Proposition 3.7 (4)]{SBRKAS}, and Theorem \ref{prop2.9} in \eqref{longexactCP3}, we have the following short exact sequence
 $$0\to \Theta_{11}\to [\Sigma^5 \mathbb{C}P^3,Top/O]\to \z/{56}\oplus\z/2\to 0.$$
 This short exact sequence constrains $[\Sigma^5 \mathbb{C}P^3,Top/O]_{(2)}$ to one of the three possible forms $\z/{2^5}\oplus\z/{2^3}\oplus\z/2, \z/{2^8}\oplus\z/{2}$ or $\z/{2^3}\oplus\z/{2^6}.$
 Thus combining possibilities from both short exact sequences, we conclude that $[\Sigma^5 \mathbb{C}P^3,Top/O]\cong \Theta_{11}\oplus [\Sigma^5\mathbb{C}P^2,Top/O].$
 
 	For $k=9,$ we have the following short exact sequence induced from the cofiber sequence \eqref{longexactCP3new} along $Top/O$ 
$$0\to\Theta_{15}\oplus\Theta_{13}\xrightarrow{(\Sigma^9 q)^*}[\Sigma^9 \mathbb{C}P^3,Top/O]\xrightarrow{(\Sigma^9 \tilde{\tilde{i}})^*}\Theta_{11}\to0,$$
  as $\Theta_{12}=0$ and any map $\Theta_{11}\to\Theta_{14}$ is zero. Hence it is evident that $[\Sigma^9 \mathbb{C}P^3,Top/O]_{(2)}$ can be of the form $\z/{2^6}\oplus\z/2\oplus\z/{2^5}, \z/{2^{11}}\oplus\z/2$ or $\z/{2^6}\oplus\z/{2^6}.$
 
 Since the image of the map $\pi_{12}(G/O)\to\pi_{15}(G/O)$ is zero, $[\Sigma^9 \mathbb{C}P^3,G/O]_{(2)}\cong \z/2,$ and $[\Sigma^{10} \mathbb{C}P^3,G/O]\cong\pi_{16}(G/O)\oplus\pi_{14}(G/O)\oplus\pi_{12}(G/O)$ from long exact sequence induced from cofiber sequence (\ref{longexactCP3new}) along $G/O.$ The same cofiber sequence over $G/Top$ gives $[\Sigma^{10} \mathbb{C}P^3,G/Top]\cong\pi_{16}(G/Top)\oplus\pi_{14}(G/Top)\oplus\pi_{12}(G/Top).$ Hence, from the splitting, we obtain the image of  $(\phi_*):[\Sigma^{10} \mathbb{C}P^3, G/O]\to[\Sigma^{10} \mathbb{C}P^3, G/Top]$ is $8128\z\oplus\z/2\oplus 992\z.$ Therefore the map $(\omega_*)_{(2)}: [\Sigma^{10}\mathbb{C}P^3, G/Top]_{(2)}\to [\Sigma^{9}\mathbb{C}P^3, Top/O]_{(2)}$ has image $\mathbb{Z}/{2^6}\oplus\mathbb{Z}/{2^5}.$ Hence the possibility of $[\Sigma^9 \mathbb{C}P^3,Top/O]_{(2)}$ being  $\z/{2^{11}}\oplus\z/2$ is ruled out. Suppose $[\Sigma^9 \mathbb{C}P^3, Top/O]_{(2)}$ be $\mathbb{Z}/{2^6}\oplus\mathbb{Z}/{2^6}.$
 Now, to show this cannot occur, consider the following ladder of exact sequences induced from the cofiber sequence \eqref{longexactCP3new} and the fiber sequence $\cdots\to \Omega(G/Top)\xrightarrow{\omega}Top/O\xrightarrow{\psi}G/O\xrightarrow{\phi} G/Top.$ 
 
 	\begin{tikzcd}[column sep=1em, font= \small]
 		0\arrow[r]\arrow[d]&{\pi_{16}(G/O)_{(2)}\oplus\pi_{14}(G/O)_{(2)}}\arrow[r,"(\Sigma^{10}q)_{(2)}^*"]\arrow[d,"(\phi_*)_{(2)}"']&{[\Sigma^{10}\mathbb{C}P^3,G/O]_{(2)}}\arrow[r,"(\Sigma^{10}\tilde{\tilde{i}})_{(2)}^*"]\arrow[d,"(\phi_*)_{(2)}"]&{\pi_{12}(G/O)_{(2)}}\arrow[r,"0"]\arrow[d,"(\phi_*)_{(2)}"]&{\pi_{15}(G/O)_{(2)}}\arrow[d]\\
 		0\arrow[r]\arrow[d]&{\z_{(2)}\oplus\z/2}\arrow[r,"(\Sigma^{10}q)_{(2)}^*"]\arrow[d,"(\omega_*)_{(2)}"']&{[\Sigma^{10} \mathbb{C}P^3,G/Top]_{(2)}}\arrow[r,"(\Sigma^{10}\tilde{\tilde{i}})_{(2)}^*"]\arrow[d,"(\omega_*)_{(2)}"]&{\z_{(2)}}\arrow[r]\arrow[d,"(\omega_*)_{(2)}"]&{0}\arrow[d]\\
 		0\arrow[r]\arrow[d]&{\z/{2^6}\oplus\z/2}\arrow[r,"(\Sigma^{9}q)_{(2)}^*"]\arrow[d,"(\psi_*)_{(2)}"',two heads]&{\z/{2^6}\oplus\z/{2^6}}\arrow[r,"(\Sigma^{9}\tilde{\tilde{i}})_{(2)}^*"]\arrow[d,"(\psi_*)_{(2)}", two heads]&{\z/{2^5}}\arrow[r,"0"]\arrow[d]&{\z/2}\\
 		{\z_{(2)}}\arrow[r,"0"']&{\z/2}\arrow[r,"\cong"']&{[\Sigma^9 \mathbb{C}P^3,G/O]_{(2)}}\arrow[r]&{0} \\
 	\end{tikzcd}
 
From the above diagram, we observe $(\Sigma^9 \tilde{\tilde{i}})_{(2)}^*\circ (\omega_*)_{(2)}:[\Sigma^{10}\mathbb{C}P^3, G/Top]_{(2)}\to (\Theta_{11})_{(2)}$  has image of order $2^4.$ But this contradicts that the map $\omega_*\circ(\Sigma^{10} \tilde{\tilde{i}})^*:[\Sigma^{10} \mathbb{C}P^3, G/Top]\to \Theta_{11}$ is an onto map. Therefore $[\Sigma^{9} \mathbb{C}P^3, Top/O]_{(2)} $ cannot be $\z/{2^6}\oplus\z/{2^6}.$
 
 Thus $[\Sigma^9 \mathbb{C}P^3, Top/O]\cong \Theta_{15}\oplus\Theta_{13}\oplus\Theta_{11}$ which is same as $\Theta_{15}\oplus [\Sigma^9\mathbb{C}P^2,Top/O]$ by \cite[Proposition 3.7 {(8)}]{SBRKAS}. 
\end{proof}


\section{Diffeomorphic Classification of \texorpdfstring{$\mathbb{C}P^3\times\mathbb{S}^k$}{}}
In this section, we calculate $\mathcal{S}(\mathbb{C}P^3\times\mathbb{S}^k)$ for all $1\leq k\leq 7,$ using the information obtained from Proposition \ref{CP3computation}. We use the notations and Facts of \cite[Section 5]{SBRKAS} in this section.

Additionally, we briefly recall the following facts.

\begin{fact}{\cite[Proposition 10.2]{bel}}\label{normal invariant of SG}
    Let $M$ be a closed, connected, smooth $n$-dimensional manifold. Suppose $k\geq 2$ and $n+k\geq 5.$ Then the normal invariant of any self-homotopy equivalence $f: M\times\mathbb{S}^k\to M\times\mathbb{S}^k$ arising from an element of $[M, SG_{k+1}]$ lies in the image of $[M, G/O] \subset [M\times\mathbb{S}^k, G/O].$ 
\end{fact}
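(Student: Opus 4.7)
The strategy is to use the $H$-space splitting of Note 2.3 applied to $Y = G/O$ to decompose
\[
[M\times\mathbb{S}^k, G/O] \cong [M, G/O] \oplus \pi_k(G/O) \oplus [\Sigma^k M, G/O],
\]
in which the first summand is precisely $p_1^{*}[M, G/O]$. Writing the given self-homotopy equivalence as $f_\phi(m,x)=(m,\phi(m)(x))$ for $\phi : M \to SG_{k+1}$, it suffices to show that the projections of the normal invariant $\eta(f_\phi)$ to the last two summands vanish.

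The vanishing on the $\pi_k(G/O)$-summand is detected by restriction along the inclusion $\iota_{m_0} : \{m_0\}\times\mathbb{S}^k \hookrightarrow M\times\mathbb{S}^k$. Under this restriction, $\iota_{m_0}^{*}\eta(f_\phi)$ classifies the stable virtual bundle $T\mathbb{S}^k - \phi(m_0)^{*} T\mathbb{S}^k$, after canceling the trivial contribution from $T_{m_0}M$. Since $SG_{k+1}$ is path-connected for $k\geq 1$, $\phi(m_0)\simeq\mathrm{id}_{\mathbb{S}^k}$, so this virtual bundle is trivial together with its canonical $G/O$-reduction, and the $\pi_k(G/O)$-component is zero. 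For the $[\Sigma^k M, G/O]$-summand, write $f_\phi = (p_1, \bar\phi)$ with adjoint $\bar\phi(m,x)=\phi(m)(x)$; then $\eta(f_\phi)$ classifies the virtual bundle difference $p_2^{*} T\mathbb{S}^k - \bar\phi^{*} T\mathbb{S}^k$ together with its $G/O$-reduction. The plan is to exhibit, using naturality of the normal invariant and the fact that $f_\phi$ is a fiberwise equivalence of the trivial $\mathbb{S}^k$-bundle over $M$, a universal class $\xi \in [SG_{k+1}, G/O]$ such that $\eta(f_\phi) = p_1^{*}(\phi^{*}\xi)$, displaying $\eta(f_\phi)$ as pulled back from $M$ via $p_1$.

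The main obstacle is rigorously establishing this $[\Sigma^k M, G/O]$-vanishing: while the heuristic ``$f_\phi$ depends only on data over $M$'' is correct, producing an explicit factorization requires genuine work. One route is an obstruction-theoretic induction on a cell filtration $M^{(0)}\subset M^{(1)}\subset\cdots\subset M$, with the base case handled by path-connectedness of $SG_{k+1}$ (so the restricted map is homotopic to the identity on a $0$-cell, killing the bottom obstruction) and the inductive step exploiting that $\bar\phi|_{\{m\}\times\mathbb{S}^k} \simeq p_2|_{\{m\}\times\mathbb{S}^k}$ fiberwise, so the obstructions to homotoping $\bar\phi$ to $p_2$ are concentrated in the $M$-direction and pull back from $M$. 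Alternatively, one can invoke directly the universal construction underlying Belegradek's Proposition 10.2, which packages this into a natural transformation $[-,SG_{k+1}]\Rightarrow [-\times\mathbb{S}^k,G/O]$ factoring through $p_1^{*}$.
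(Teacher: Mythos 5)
First, a point of comparison: the paper does not prove this statement at all --- it is stated as a Fact with a citation to Belegradek--Kwasik--Schultz \cite[Proposition 10.2]{bel} --- so your job was to supply a complete self-contained argument, and the proposal does not yet do that. Your reduction via the splitting $[M\times\mathbb{S}^k,G/O]\cong[M,G/O]\oplus\pi_k(G/O)\oplus[\Sigma^k M,G/O]$, with $p_1^*[M,G/O]$ as the first summand, is the right frame, but neither vanishing statement is actually established. For the $\pi_k(G/O)$-component: the normal invariant is not ``the virtual bundle $T\mathbb{S}^k-\phi(m_0)^*T\mathbb{S}^k$''. Since $p_1\circ f_\phi=p_1$ and $T(M\times\mathbb{S}^k)$ is stably $p_1^*TM\oplus\epsilon^k$, the map $f_\phi$ is tangential, so the underlying virtual bundle of $\eta^{Diff}(f_\phi)$ vanishes globally; all of the content sits in the fiber-homotopy trivialization, i.e.\ in the image of $j_*:[M\times\mathbb{S}^k,G]\to[M\times\mathbb{S}^k,G/O]$. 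Your argument therefore only places the $\pi_k$-component in $j_*(\pi_k^s)$, not at zero --- the phrase ``together with its canonical $G/O$-reduction'' is where the actual work is hidden. (This step is repairable, e.g.\ by homotoping $\phi$ so that $\phi(m_0)=\mathrm{id}_{\mathbb{S}^k}$ and using the restriction of normal invariants to the transverse, invariant submanifold $\{m_0\}\times\mathbb{S}^k$.)

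The serious gap is the $[\Sigma^k M,G/O]$-component, which you yourself flag as ``the main obstacle.'' The proposed obstruction-theoretic induction conflates homotoping $\bar\phi$ to $p_2$ --- which is generally impossible when $\phi$ is essential, and which in any case is not what the normal invariant measures --- with computing $\eta^{Diff}(f_\phi)$; and the fallback of ``invoking the universal construction underlying Belegradek's Proposition 10.2'' is circular, since that construction is precisely the statement to be proved. What is actually needed is the identification $\eta^{Diff}(f_\phi)=p_1^*(c(\phi))$, where $c(\phi)$ is the image of $\phi$ under $[M,SG_{k+1}]\to[M,SG]\to[M,G]\xrightarrow{\;j_*\;}[M,G/O]$: because $f_\phi$ is a fiber homotopy self-equivalence of the trivial $\mathbb{S}^k$-bundle covering $\mathrm{id}_M$, the Sullivan--Wall description of the normal invariant (stable bundle difference together with the Spivak identification) shows that the invariant records exactly this fiberwise datum over $M$, pulled back along $p_1$. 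Some version of this bundle-theoretic argument (or the Becker--Schultz transfer formula that the paper uses for the $\pi_k(F_{\mathbb{S}^1})$-family) has to be carried out explicitly; the proposal acknowledges the need but does not supply it, so as it stands the proof is incomplete at its central step.
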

\begin{fact}{\cite[Page-144]{schultz}}\label{identity element description}
    Let $M$ be a closed, oriented, smooth manifold of dimension $n\geq 5.$ Let $\Theta^{Top}: [\Sigma M, G/Top]\to L_{n+1}(\pi_1(M))$ be the topological surgery obstruction map, and $\omega : \Omega (G/Top)\to Top/O$ be the canonical map in the extended fiber sequence $Top/O\to G/O \to G/Top.$ Then the concordance classes of smoothing on $M$ that are equivalent to $(M, id)$ in $\mathcal{S}^{Diff}(M)$ are the elements of the set $\omega_* (\ker(\Theta^{Top}))\subset [M, Top/O],$ where $\mathcal{S}^{Diff}(M)$ is the \textit{simple structure set} \cite{Luck_surgery}. 
\end{fact}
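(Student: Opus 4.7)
\textit{Proof proposal.} The plan is to compare the smooth and topological surgery exact sequences via the Puppe sequence
\begin{equation*}
[\Sigma M, G/Top] \xrightarrow{\omega_*} [M, Top/O] \xrightarrow{\psi_*} [M, G/O] \xrightarrow{\phi_*} [M, G/Top]
\end{equation*}
induced from $\Omega(G/Top) \xrightarrow{\omega} Top/O \xrightarrow{\psi} G/O \xrightarrow{\phi} G/Top$, and to interpret the forgetful map $\Phi \colon \mathcal{C}(M) \cong [M, Top/O] \to \mathcal{S}^{Diff}(M)$, $x \mapsto (M_x, \mathrm{id}_M)$, via the Kirby--Siebenmann identification. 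The starting compatibility is that the smooth normal invariant satisfies $\eta^{Diff}(\Phi(x)) = \psi_*(x) \in [M, G/O]$. Hence if $\Phi(x)$ is the basepoint $(M,\mathrm{id})$ of $\mathcal{S}^{Diff}(M)$, then $\psi_*(x) = 0$, and Puppe exactness forces $x = \omega_*(y)$ for some $y \in [\Sigma M, G/Top]$, giving $\Phi^{-1}((M,\mathrm{id})) \subseteq \mathrm{Im}(\omega_*)$.

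The core step is to establish the commutative square
\begin{equation*}
\begin{tikzcd}
{[\Sigma M, G/Top]} \arrow[r, "\Theta^{Top}"] \arrow[d, "\omega_*"'] & L_{n+1}(\pi_1(M)) \arrow[d, "\partial^{Diff}"] \\
{[M, Top/O]} \arrow[r, "\Phi"'] & \mathcal{S}^{Diff}(M)
\end{tikzcd}
\end{equation*}
where $\partial^{Diff}$ is the Wall realization map in the smooth surgery exact sequence. The geometric content is that a normal invariant $y$ can be realized by a topological $h$-cobordism exactly when $\Theta^{Top}(y) = 0$, and the Puppe connecting map $\omega$ records the smooth structure on one boundary component obtained from a smoothing of this cobordism. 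Consequently, $\Phi(\omega_*(y))$ is the class $\partial^{Diff}(\Theta^{Top}(y)) \in \mathcal{S}^{Diff}(M)$ obtained by acting on $(M,\mathrm{id})$ through the Wall element $\Theta^{Top}(y)$.

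Given this commutativity, the inclusion $\omega_*(\ker \Theta^{Top}) \subseteq \Phi^{-1}((M,\mathrm{id}))$ is immediate since $y \in \ker \Theta^{Top}$ yields $\Phi(\omega_*(y)) = \partial^{Diff}(0) = (M,\mathrm{id})$. For the reverse inclusion, if $\Phi(\omega_*(y)) = (M,\mathrm{id})$, then $\Theta^{Top}(y) \in \ker \partial^{Diff} = \mathrm{Im}\bigl(\Theta^{Diff} \colon [\Sigma M, G/O] \to L_{n+1}(\pi_1(M))\bigr)$ by exactness of the smooth surgery sequence. Using the naturality identity $\Theta^{Diff} = \Theta^{Top} \circ \phi_*$, write $\Theta^{Top}(y) = \Theta^{Top}(\phi_*(z))$ for some $z \in [\Sigma M, G/O]$ and set $y' = y - \phi_*(z)$. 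Then $y' \in \ker \Theta^{Top}$, and Puppe exactness ($\omega_* \phi_* = 0$) gives $\omega_*(y') = \omega_*(y) = x$, so $x \in \omega_*(\ker \Theta^{Top})$.

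The main obstacle is justifying the commutative square relating $\omega_*$ and $\partial^{Diff}$, which requires identifying the Puppe connecting map $\omega$ with the geometric operation of producing a smoothing by smoothing one end of a topological surgery cobordism, and checking this matches Wall's $L$-theoretic realization at the level of smooth structure sets. This compatibility is the substance of Schultz's argument and ultimately depends on the naturality of normal invariants with respect to the infinite loop space structures on $G/O$, $G/Top$, and $Top/O$.
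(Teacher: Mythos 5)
The paper does not prove this Fact --- it is cited from Schultz --- so there is no internal argument to compare against; you are in effect reproving the cited result. Your formal reduction is correct: the identity $\eta^{Diff}\circ\Phi=\psi_*$ together with Puppe exactness gives $\Phi^{-1}((M,\mathrm{id}))\subseteq\mathrm{Im}(\omega_*)$, and the naturality $\Theta^{Diff}=\Theta^{Top}\circ\phi_*$ together with $\omega_*\circ\phi_*=0$ correctly converts commutativity of your square into both inclusions. But everything hinges on the commutativity of
\begin{equation*}
\begin{tikzcd}
{[\Sigma M, G/Top]} \arrow[r, "\Theta^{Top}"] \arrow[d, "\omega_*"'] & L_{n+1}(\pi_1 M) \arrow[d, "\partial^{Diff}"] \\
{[M, Top/O]} \arrow[r, "\Phi"'] & \mathcal{S}^{Diff}(M),
\end{tikzcd}
\end{equation*}
and this is asserted, not proven.

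The heuristic you give in its support does not actually supply what the argument needs. You say the geometric content is that "a normal invariant $y$ can be realized by a topological $h$-cobordism exactly when $\Theta^{Top}(y)=0$," with $\omega$ recording the smoothed structure on one end. That interpretation only makes sense after the surgery has already been completed, i.e.\ precisely when $\Theta^{Top}(y)=0$; it says nothing about $\Phi(\omega_*(y))$ when $\Theta^{Top}(y)\neq 0$. Yet the reverse inclusion is exactly where you need the square for general $y$: one must know that a nonzero surgery obstruction forces $\Phi(\omega_*(y))$ to equal the Wall realization $\partial^{Diff}(\Theta^{Top}(y))$, hence that it is the basepoint only when $\Theta^{Top}(y)\in\mathrm{Im}(\Theta^{Diff})$. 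So the "Consequently" in your second paragraph does not follow from the stated geometric content. This compatibility of the Puppe connecting map with Wall realization across the smooth/topological ladder of surgery sequences is the actual substance of Schultz's argument and would have to be established, not gestured at; until it is, the proof has a gap at its central step, as you yourself flag.
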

The following lemma plays a key role in proving classification results.
\begin{lemma}\label{psicp3sk}
    \noindent
    \begin{itemize}
        \item[(i)] The map $\psi_*:[\mathbb{C}P^3\times\mathbb{S}^1, Top/O]\to [\mathbb{C}P^3\times\mathbb{S}^1, G/O]$ is trivial.
        \item[(ii)]  The image of $\psi_*:[\mathbb{C}P^3\times\mathbb{S}^2, Top/O]\to[\mathbb{C}P^3\times\mathbb{S}^2, G/O]$ is $\mathbb{Z}/2\{(f_{\mathbb{C}P^3\times\mathbb{S}^2})^*([\epsilon])\}.$  
        \item[(iii)] The image of $\psi_* :[\mathbb{C}P^3\times\mathbb{S}^3, Top/O] \to [\mathbb{C}P^3\times\mathbb{S}^3,G/O]$ is $\mathbb{Z}/2\oplus\mathbb{Z}/2.$ Moreover, the image is generated by $(f_{\mathbb{C}P^3\times\mathbb{S}^3})^*([\epsilon\eta])$ and $(f_{\mathbb{C}P^3\times\mathbb{S}^3})^*([\mu]).$ 
        \item[(iv)] The image of $\psi_*: [\mathbb{C}P^3\times \mathbb{S}^4, Top/O]\to [\mathbb{C}P^3\times\mathbb{S}^4, G/O]$ is $\mathbb{Z}/2\{(\Sigma^4 Pr\circ \Sigma^4 q)^*[\epsilon]\}\oplus\mathbb{Z}/2\{(f_{\mathbb{C}P^3\times\mathbb{S}^4})^*[\mu]\}\oplus\mathbb{Z}/3\{(f_{\mathbb{C}P^3\times\mathbb{S}^4})^*[\beta_1]\}.$ 
        \item[(v)] $\psi_*:[\mathbb{C}P^3\times\mathbb{S}^5, Top/O]\to[\mathbb{C}P^3\times\mathbb{S}^5, G/O]$ is a trivial map. 
        \item[(vi)] The image of $\psi_*: [\mathbb{C}P^3\times\mathbb{S}^6, Top/O]\to [\mathbb{C}P^3\times\mathbb{S}^6, G/O]$ is $\mathbb{Z}/3,$ generated by $(\Sigma^6 Pr \circ \Sigma^6 q)^*[\beta_1].$ 
        \item[(vii)] The image of $\psi_* : [\mathbb{C}P^3\times\mathbb{S}^7, Top/O]\to [\mathbb{C}P^3\times\mathbb{S}^7, G/O]$ is $\mathbb{Z}/2$ and is generated by $[\nu^3].$  
    \end{itemize}
    where $Pr: \mathbb{C}P^3/{\mathbb{C}P^1}\simeq\mathbb{S}^4\vee \mathbb{S}^6\to \mathbb{S}^4$ denotes the projection map.
\end{lemma}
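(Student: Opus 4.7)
The plan is to exploit the natural splitting
\[
[\mathbb{C}P^3\times\mathbb{S}^k,Y]\;\cong\;[\mathbb{C}P^3,Y]\oplus\pi_k(Y)\oplus[\Sigma^k\mathbb{C}P^3,Y]
\]
furnished by Corollary \ref{concorMtimesSk}, applied to both $Y=Top/O$ and $Y=G/O$. Since $\psi$ is a natural transformation, $\psi_*$ respects this decomposition, and each of the three summands can be analysed separately. The first summand vanishes: $[\mathbb{C}P^3,Top/O]=0$ because $\mathbb{C}P^3$ is $6$-dimensional with cohomology concentrated in even degrees, while the $6$-type of $Top/O$ is $\Omega^\infty \Sigma^3H\mathbb{Z}/2$ (as $PL/O$ is $6$-connected and $Top/PL\simeq \Sigma^3H\mathbb{Z}/2$), so $[\mathbb{C}P^3,Top/O]\cong H^3(\mathbb{C}P^3;\mathbb{Z}/2)=0$.

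The middle summand also contributes nothing for $1\leq k\leq 7$. Using $\pi_k(Top/O)\cong \Theta_k$ for $k\neq 3$ together with $\pi_3(Top/O)\cong\mathbb{Z}/2$, I would check directly from the Kervaire--Milnor exact sequence that $\pi_k(Top/O)\to\pi_k(G/O)$ is zero in each of these dimensions: for $k=1,2,4,5,6$ the source already vanishes, while for $k=3,7$ one has $\pi_k(G/O)=0$. Hence the entire image of $\psi_*$ is concentrated in the third summand $[\Sigma^k\mathbb{C}P^3,Top/O]\to [\Sigma^k\mathbb{C}P^3,G/O]$.

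For this last piece, I would apply $\psi_*$ to the long exact sequences induced by the cofibrations $\mathbb{S}^5\xrightarrow{\pi}\mathbb{C}P^2\xhookrightarrow{i'}\mathbb{C}P^3$ and $\mathbb{S}^2\xhookrightarrow{\tilde{\tilde{i}}}\mathbb{C}P^3\xrightarrow{q}\mathbb{S}^4\vee\mathbb{S}^6$ (already used in \eqref{longexactCP3} and \eqref{longexactCP3new}), producing a ladder linking $[\Sigma^k\mathbb{C}P^3,Top/O]$ to $[\Sigma^k\mathbb{C}P^3,G/O]$. Naturality then pins down the image of $\psi_*$ in terms of the classical maps $\pi_{k+m}(Top/O)\to\pi_{k+m}(G/O)$ for $m=2,4,6$, which contribute precisely the generators $\epsilon,\mu,\epsilon\eta,\beta_1,\nu^3$ appearing in the statement, pulled back along $f_{\mathbb{C}P^3\times\mathbb{S}^k}$ (capturing the $\mathbb{S}^{k+6}$-top-cell contribution) and along $\Sigma^k Pr\circ \Sigma^k q$ (capturing classes factoring through the $\mathbb{S}^{k+4}$-cell of $\mathbb{C}P^3$). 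The case-by-case degree count matches the images listed in (i)--(vii).

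The main obstacle is case (v) ($k=5$), since the proof of Proposition \ref{CP3computation} for $[\Sigma^5\mathbb{C}P^3,Top/O]$ required eliminating two candidate abelian groups, and here I must verify that \emph{every} class dies under $\psi_*$. The expected mechanism is that the $\Theta_{11}$-summand lies in the kernel of $\pi_{11}(Top/O)\to \pi_{11}(G/O)$ and that the $[\Sigma^5\mathbb{C}P^2,Top/O]$-summand maps trivially by the parallel $\mathbb{C}P^2$ computation from \cite{SBRKAS}, but ruling out an exceptional class requires comparing against $[\Sigma^5\mathbb{C}P^3,G/Top]$ through the extended fiber sequence $Top/O\xrightarrow{\psi}G/O\xrightarrow{\phi}G/Top$. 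A secondary difficulty is case (iv), where three independent generators $\epsilon,\mu,\beta_1$ have to be identified in the image and shown to be linearly independent modulo the relations imposed by the attaching map of the top cell of $\Sigma^4\mathbb{C}P^3$.
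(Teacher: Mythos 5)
Your overall reduction is the same as the paper's: split $[\mathbb{C}P^3\times\mathbb{S}^k,Y]$ via \eqref{note2.3} for $Y=Top/O$ and $Y=G/O$, kill the $[\mathbb{C}P^3,Top/O]$ and $\pi_k$ summands for $1\leq k\leq 7$, and analyse $\psi_*:[\Sigma^k\mathbb{C}P^3,Top/O]\to[\Sigma^k\mathbb{C}P^3,G/O]$ through the cofibrations $\mathbb{S}^5\to\mathbb{C}P^2\to\mathbb{C}P^3$ and $\mathbb{S}^2\to\mathbb{C}P^3\to\mathbb{S}^4\vee\mathbb{S}^6$. That part is fine. The problem is that the two cases you yourself flag as obstacles are exactly where your plan is not a proof, and in case (v) the intended mechanism is aimed at the wrong object. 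You propose to show that every class in the source $[\Sigma^5\mathbb{C}P^3,Top/O]$ dies, tracking the $\Theta_{11}$-summand and the $[\Sigma^5\mathbb{C}P^2,Top/O]$-summand and then "ruling out an exceptional class" by comparison with $[\Sigma^5\mathbb{C}P^3,G/Top]$; none of this is carried out, and none of it is needed. The decisive point is that the \emph{target} vanishes: applying $[-,G/O]$ to the suspended cofibration $\mathbb{S}^7\to\Sigma^5\mathbb{C}P^3\to\mathbb{S}^9\vee\mathbb{S}^{11}$ and using that $\eta^*:\pi_8(G/O)\to\pi_9(G/O)$ is onto, together with $\pi_{11}(G/O)=0$ and $\pi_7(G/O)=0$, gives $[\Sigma^5\mathbb{C}P^3,G/O]=0$, so $\psi_*$ is trivial with no analysis of the source whatsoever. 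As written, your treatment of (v) is a gap, and the route you sketch (showing triviality class-by-class on a group whose 2-primary structure was itself only pinned down by a delicate argument in Proposition \ref{CP3computation}) would be substantially harder than the statement requires.

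Case (iv) is also left as an acknowledged difficulty rather than an argument. The missing inputs are concrete: the splitting $[\Sigma^4\mathbb{C}P^3,Top/O]\cong\Theta_{10}\oplus[\Sigma^4\mathbb{C}P^2,Top/O]$ from Proposition \ref{CP3computation}, the computation (from \cite[Theorem 5.4]{SBRKAS}) that $\psi_*:[\Sigma^4\mathbb{C}P^2,Top/O]\to[\Sigma^4\mathbb{C}P^2,G/O]$ has image $\mathbb{Z}/2\{(\Sigma^4 f_{\mathbb{C}P^2})^*[\epsilon]\}$, and the fact that $\psi_*$ is injective on $\Theta_{10}\cong\mathbb{Z}/2\oplus\mathbb{Z}/3$, producing the classes $(f_{\mathbb{C}P^3\times\mathbb{S}^4})^*[\mu]$ and $(f_{\mathbb{C}P^3\times\mathbb{S}^4})^*[\beta_1]$; independence of the three generators then comes from the splitting, not from a "degree count modulo the attaching map." Similar citations do the work in (ii), (iii), (vi), (vii) (e.g.\ in (iii) you must actually verify that $(\Sigma^3 f_{\mathbb{C}P^3})^*:\pi_9(G/O)\to[\Sigma^3\mathbb{C}P^3,G/O]$ is an isomorphism, which the long exact sequence gives since $[\Sigma^3\mathbb{C}P^2,G/O]=0$ and $(\Sigma^4\pi)^*$ vanishes into $\pi_9(G/O)$). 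So the architecture of your proposal matches the paper, but the case analysis that constitutes the actual content of the lemma is not completed, and for $k=5$ the plan should be replaced by the one-line vanishing of $[\Sigma^5\mathbb{C}P^3,G/O]$.
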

\begin{proof}
    Since the short exact sequence \eqref{note2.3} splits along both $Top/O$ and $G/O,$ there is a commutative diagram
    \begin{center}
        \begin{tikzcd}
             {0}\arrow[r]&{[\Sigma^k \mathbb{C}P^3, Top/O]}\arrow[r,"p^*"]\arrow[d,"\psi_*"']&{[\mathbb{C}P^3\times\mathbb{S}^k,Top/O]}\arrow[r,"i^*"]\arrow[d,"\psi_*"]&{[\mathbb{C}P^3\vee \s^k, Top/O]}\arrow[r]\arrow[d,"\psi_*"]&{0}\\
           {0}\arrow[r]&{[\Sigma^k\mathbb{C}P^3, G/O]}\arrow[r,"p^*"']&{[\mathbb{C}P^3\times\s^k, G/O]}\arrow[r,"i^*"']&{[\mathbb{C}P^3\vee\s^k, G/O]}\arrow[r]&{0}
        \end{tikzcd}
    \end{center}
    As $[\mathbb{C}P^3, Top/O], \pi_l(Top/O), l=1,2,4,6$ and $\pi_j(G/O), j=3,5,7$ are all vanish, the image of $\psi_*:[\mathbb{C}P^3\times\mathbb{S}^k, Top/O]\to [\mathbb{C}P^3\times\mathbb{S}^k, G/O]$ coincides with that of $\psi_* : [\Sigma^k \mathbb{C}P^3, Top/O]\to [\Sigma^k \mathbb{C}P^3, G/O]$ for $1\leq k\leq 7.$ 

{(i)} As $[\Sigma \mathbb{C}P^3, G/O]$ is zero, $\psi_*:[\mathbb{C}P^3\times\mathbb{S}^1, Top/O]\to [\mathbb{C}P^3\times\mathbb{S}^1, G/O]$ is a zero map.

{(ii)} From Proposition \ref{CP3computation} {(i)}, we have $[\Sigma^2\mathbb{C}P^3, Top/O]=\Theta_8.$ Hence the image of $\psi_* : [\Sigma^2 \mathbb{C}P^3, Top/O]\to [\Sigma^2 \mathbb{C}P^3, G/O]$ is equal to the image of $(\Sigma^2f_{\mathbb{C}P^3})^*\circ\psi_*:\Theta_8\to [\Sigma^2\mathbb{C}P^3, G/O],$ which is $\mathbb{Z}/2\{(\Sigma^2 f_{\mathbb{C}P^3})^*([\epsilon])\}.$ Since $(f_{\mathbb{C}P^3\times\mathbb{S}^2})^*=p^*\circ (\Sigma^k f_{\mathbb{C}P^3})^*,$ the result follows.

{(iii)} We note from the long exact sequence \eqref{longexactCP3} with $X=G/O$ that $(\Sigma^3 f_{\mathbb{C}P^3})^*: \pi_9(G/O)\to [\Sigma^3 \mathbb{C}P^3, G/O]$ is an isomorphism, as  $(\Sigma^4\pi)^*: [\Sigma^4\mathbb{C}P^2, G/O] \to \pi_9(G/O)$ is a zero map and $[\Sigma^3\mathbb{C}P^2, G/O]=0$. Therefore Proposition \ref{CP3computation} {(iii)} for $k=3$ implies that the image of $\psi_* : [\Sigma^3 \mathbb{C}P^3, Top/O]\to [\Sigma^3 \mathbb{C}P^3, G/O]$ is equal to the image of $(f_{\mathbb{C}P^3\times\mathbb{S}^3})^*\circ \psi_*: \Theta_9\to [\Sigma^3\mathbb{C}P^3, Top/O],$ which is $\mathbb{Z}/2\{f_{\mathbb{C}P^3\times\mathbb{S}^3}^*([\epsilon\eta])\}\oplus \mathbb{Z}/2\{f_{\mathbb{C}P^3\times\mathbb{S}^3}^*([\mu])\}.$ 

{(iv)} We observe from \cite[Theorem 5.4]{SBRKAS} that $\psi_*:[\Sigma^4\mathbb{C}P^2, Top/O]\to [\Sigma^4 \mathbb{C}P^2, G/O]$ has image $(\Sigma^4 f_{\mathbb{C}P^2})^*[\epsilon].$ combining this with the splitting of $[\Sigma^4\mathbb{C}P^3, Top/O]$ given in Proposition \ref{CP3computation}, the result follows.

{(v)} Since $\eta^*: \pi_8(G/O)\to \pi_9(G/O)$ is onto, the long exact sequence induced from \eqref{longexactCP3new} implies that $[\Sigma^5\mathbb{C}P^3, G/O]$ is trivial, establishing the result.

{(vi)} Since $\psi_*:\Theta_{10}\to \pi_{10}(G/O)$ is an isomorphism and $[\Sigma^6\mathbb{C}P^3, Top/O]\cong \mathbb{Z}/3 \subset \Theta_{10}$ by Proposition \ref{CP3computation} {(ii)}, the result follows. 

{(vii)} Using Proposition \ref{CP3computation} {(ii)}, we need to find the image $\psi_*: [\Sigma^7\mathbb{C}P^2, Top/O]\to [\Sigma^7\mathbb{C}P^2, G/O].$ But $[\Sigma^7\mathbb{C}P^2, G/O]$ coincides with $\mathbb{Z}/2\{[\nu^3]\}.$ Therefore from the splitting \cite[Proposition 3.7 {(6)}]{SBRKAS}, we have the result.

\end{proof}

We now proceed to classify all manifolds homeomorphic to $\mathbb{C}P^3 \times \mathbb{S}^1,$ up to diffeomorphism.
\begin{lemma}\label{identity element}
The image of the restricted map $\omega_* : \ker (\Theta^{Top})\to \mathcal{C}(\mathbb{C}P^3\times\mathbb{S}^1)$ is $\mathbb{Z}/7\oplus\mathbb{Z}/2.$ 
    
\end{lemma}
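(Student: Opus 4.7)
The plan is to exploit the fiber sequence $\Omega(G/Top) \xrightarrow{\omega} Top/O \xrightarrow{\psi} G/O \xrightarrow{\phi} G/Top$. The induced exact sequence
\[
[\Sigma(\mathbb{C}P^3 \times \mathbb{S}^1), G/O] \xrightarrow{\phi_*} [\Sigma(\mathbb{C}P^3 \times \mathbb{S}^1), G/Top] \xrightarrow{\omega_*} \mathcal{C}(\mathbb{C}P^3 \times \mathbb{S}^1) \xrightarrow{\psi_*} [\mathbb{C}P^3 \times \mathbb{S}^1, G/O],
\]
together with the vanishing $\psi_* = 0$ coming from Lemma \ref{psicp3sk}{\rm (i)}, shows that $\omega_*$ is surjective onto $\mathcal{C}(\mathbb{C}P^3 \times \mathbb{S}^1) \cong \mathbb{Z}/28 \oplus \mathbb{Z}/2$. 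A direct diagram chase then identifies
\[
\omega_*(\ker \Theta^{Top}) \;=\; \ker\!\Bigl(\bar\Theta: \mathcal{C}(\mathbb{C}P^3 \times \mathbb{S}^1) \longrightarrow L_8(\mathbb{Z})/\mathrm{Im}(\Theta^{Top}\circ\phi_*)\Bigr),
\]
exhibiting it as a subgroup of $\mathbb{Z}/28 \oplus \mathbb{Z}/2$; the task thus reduces to computing this kernel.

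To establish the containment $\mathbb{Z}/7 \oplus \mathbb{Z}/2 \subseteq \omega_*(\ker \Theta^{Top})$, I would produce explicit lifts. For the $\mathbb{Z}/7$ summand, Brumfiel's theorem provides, for each $\Sigma \in \mathbb{Z}/7 \subset \Theta_7$, an orientation-preserving diffeomorphism $g_\Sigma: (\mathbb{C}P^3 \times \mathbb{S}^1)\#\Sigma \xrightarrow{\cong} \mathbb{C}P^3 \times \mathbb{S}^1$. The self-homeomorphism $f_\Sigma := g_\Sigma \circ h_\Sigma^{-1}$ determines a normal bordism class $[f_\Sigma] \in [\Sigma(\mathbb{C}P^3 \times \mathbb{S}^1), G/Top]$ with $\Theta^{Top}([f_\Sigma]) = 0$ (because $f_\Sigma$ is a homeomorphism) and $\omega_*([f_\Sigma]) = [(\mathbb{C}P^3 \times \mathbb{S}^1) \# \Sigma]$; as $\Sigma$ varies, these exhaust $\mathbb{Z}/7 \subset \Theta_7 \subset \mathcal{C}$. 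For the $\mathbb{Z}/2$ summand, corresponding to $H^2(\mathbb{C}P^3; \mathbb{Z}/2)$ in the decomposition of Corollary \ref{cor3.10}, its generator lifts via the connecting map $\pi_4(G/Top) \twoheadrightarrow \pi_3(Top/O) = \mathbb{Z}/2$ to the bottom ($4$-dimensional) cell of the $\Sigma^2 \mathbb{C}P^3$ wedge summand in $\Sigma(\mathbb{C}P^3 \times \mathbb{S}^1)$. A direct Hirzebruch $L$-polynomial calculation on $\mathbb{C}P^3 \times \mathbb{S}^1$ verifies that the surgery obstruction of this lift lies inside $\mathrm{Im}(\Theta^{Top}\circ\phi_*)$, so $\bar\Theta$ annihilates the $\mathbb{Z}/2$ factor.

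The main obstacle is the reverse inclusion, namely, showing that $\bar\Theta$ is nontrivial on every generator of $\mathbb{Z}/4 \subset \Theta_7 \subset \mathcal{C}$. I plan to localize at $2$. The top-cell inclusion $\pi_8(G/Top)_{(2)} \cong \mathbb{Z}_{(2)} \hookrightarrow [\Sigma(\mathbb{C}P^3 \times \mathbb{S}^1), G/Top]_{(2)}$ maps under $\omega_*$ as the canonical surjection onto $\pi_7(Top/O)_{(2)} = \mathbb{Z}/4$, while $\Theta^{Top}: \pi_8(G/Top)_{(2)} \to L_8(\mathbb{Z})_{(2)}$ is an isomorphism by Sullivan's signature characterization. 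From the exact sequence $\pi_8(G/O) \to \pi_8(G/Top) \to \pi_7(Top/O) \to 0$ one identifies $\phi_*(\pi_8(G/O))_{(2)} = 4\mathbb{Z}_{(2)}$ inside $\pi_8(G/Top)_{(2)}$. The delicate step is controlling the contributions from the lower-dimensional cells of $\Sigma(\mathbb{C}P^3 \times \mathbb{S}^1)$ to $\mathrm{Im}(\Theta^{Top}\circ\phi_*)_{(2)}$: a Pontrjagin-class and Wu-formula calculation will show these contributions remain inside $4\mathbb{Z}_{(2)}$, so $\mathrm{Im}(\Theta^{Top}\circ\phi_*)_{(2)} = 4\mathbb{Z}_{(2)}$. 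Consequently, any preimage of a $\mathbb{Z}/4$-generator has $\Theta^{Top}$-value a unit in $\mathbb{Z}_{(2)}$ modulo $4\mathbb{Z}_{(2)}$, so $\bar\Theta$ is nonzero on $\mathbb{Z}/4$, completing the proof that $\omega_*(\ker\Theta^{Top}) = \mathbb{Z}/7 \oplus \mathbb{Z}/2$.
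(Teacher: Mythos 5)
Your reduction of the lemma to computing $\ker\bigl(\bar\Theta:\mathcal{C}(\mathbb{C}P^3\times\mathbb{S}^1)\to L_8(\mathbb{Z})/\mathrm{Im}(\Theta^{Top}\circ\phi_*)\bigr)$ is legitimate (it uses only exactness of the fibration sequence, $\psi_*=0$ from Lemma \ref{psicp3sk}(i), and additivity of $\Theta^{Top}$ on $[\Sigma(\mathbb{C}P^3\times\mathbb{S}^1),G/Top]$), and your $2$-local exclusion of $\mathbb{Z}/4\subset\Theta_7$ is the right kind of argument: it amounts to showing $\mathrm{Im}(\Theta^{Top}\circ\phi_*)=\mathrm{Im}(\Theta^{Diff})$ is $4\mathbb{Z}_{(2)}$ two-locally, which is exactly what the evaluations $\Theta^{Top}_{(2)}(m,0,0,0)=m$, $\Theta^{Top}_{(2)}(0,0,n,0)=\frac{8}{3}n$ and the vanishing on the Kervaire coordinates give (the paper instead computes $\ker\Theta^{Top}$ directly in Sullivan coordinates and pushes it forward through $(\tau_{\leq 7}top/o)_{(2)}\simeq\Sigma^7H\mathbb{Z}/4\vee\Sigma^3H\mathbb{Z}/2$, obtaining the generator $(\bar{8},\bar{3})$). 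However, that step is only sketched, and more importantly you carry it out only at the prime $2$, whereas membership in $\mathrm{Im}(\Theta^{Top}\circ\phi_*)\subset L_8(\mathbb{Z})$ must be checked integrally (in particular at $7$), both for your $\mathbb{Z}/2$-lift and for the $\mathbb{Z}/7$ part.

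The genuine gap is your containment $\mathbb{Z}/7\subseteq\omega_*(\ker\Theta^{Top})$. By Fact \ref{identity element description}, $[(\mathbb{C}P^3\times\mathbb{S}^1)\#\Sigma]\in\omega_*(\ker\Theta^{Top})$ means precisely that $((\mathbb{C}P^3\times\mathbb{S}^1)\#\Sigma,h_\Sigma)=((\mathbb{C}P^3\times\mathbb{S}^1),\mathrm{id})$ in $\mathcal{S}^{Diff}$, i.e.\ that $h_\Sigma$ is homotopic to a diffeomorphism. Brumfiel's result $I(\mathbb{C}P^3\times\mathbb{S}^1)=\mathbb{Z}/7$ only provides \emph{some} orientation-preserving diffeomorphism $g_\Sigma$, not one homotopic to $h_\Sigma$; all one gets is $[((\mathbb{C}P^3\times\mathbb{S}^1)\#\Sigma,h_\Sigma)]=[f_\Sigma^{-1}]\cdot[(\mathbb{C}P^3\times\mathbb{S}^1,\mathrm{id})]$ under the action of the self-equivalence $f_\Sigma=g_\Sigma\circ h_\Sigma^{-1}$, which need not be the base point. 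Your intermediate device --- that the self-homeomorphism $f_\Sigma$ ``determines'' a class $[f_\Sigma]\in[\Sigma(\mathbb{C}P^3\times\mathbb{S}^1),G/Top]$ with $\Theta^{Top}([f_\Sigma])=0$ and $\omega_*([f_\Sigma])=[(\mathbb{C}P^3\times\mathbb{S}^1)\#\Sigma]$ --- is not constructed anywhere, and asserting those two properties is essentially asserting the conclusion; so this direction does not go through as written (it would require $\mathbb{Z}/7\subseteq I_h$, which is not what you cite). The natural repair is to drop Brumfiel entirely and run your characteristic-class computation of $\mathrm{Im}(\Theta^{Top}\circ\phi_*)$ at odd primes as well: since the degree-$4$ coordinate contributes $\frac{8}{3}n$ and $\frac{8}{3}$ is a unit in $\mathbb{Z}_{(7)}$, the image is everything away from $2$, so $\bar\Theta$ annihilates the whole $\mathbb{Z}/7$; combined with your $2$-local analysis and the $\mathbb{Z}/2$ lift this recovers the lemma, and is in substance the paper's own calculation of $\ker\Theta^{Top}$ (generated by the coordinate vector $(8,-3)$, mapping to $(\bar{8},\bar{3})$ of order $14$).
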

\begin{proof}
  Using Sullivan's identification, we obtain
  {\tiny
    \begin{align*}
        [\mathbb{C}P^3\times\mathbb{S}^1, \Omega (G/Top)]_{(2)}&\xrightarrow {\cong} H^7(\mathbb{C}P^3\times\mathbb{S}^1;\mathbb{Z}_{(2)})\oplus H^5(\mathbb{C}P^3\times\mathbb{S}^1;\mathbb{Z}/2)\oplus H^3(\mathbb{C}P^3\times\mathbb{S}^1;\mathbb{Z}_{(2)})\oplus H^1(\mathbb{C}P^3\times\mathbb{S}^1;\mathbb{Z}/2) \\
           f&\mapsto(f^*(l_8), f^*(\kappa_6), f^*(l_4), f^*(\kappa_2))
    \end{align*}
    }
 From \cite{wallsurgerybook}, the surgery obstruction is given by
   \[
    \Theta_{(2)}^{Top}(0,0,n,0)= \langle (1+\frac{4}{3}z^2)2nz\sigma, [\mathbb{C}P^3\times\mathbb{S}^1\times\mathbb{S}^1]\rangle= \frac{8}{3}n,\]
\[\Theta_{(2)}^{Top}(m,0,0,0)= m,~ \Theta_{(2)}^{Diff}(0,\bar{1},0,0)=0 ~\text{and}~ \Theta_{(2)}^{Diff}(0,0,0,\bar{1})=0,\]
where $z\in H^2(\mathbb{C}P^3;\mathbb{Z})$ and $\sigma\in H^1(\mathbb{S}^1;\mathbb{Z})$ are the generators. Therefore the kernel of the surgery obstruction $\Theta_{(2)}^{Top}$ is clearly $\frac{8}{3}\mathbb{Z}_{(2)}\oplus\mathbb{Z}/2\oplus\mathbb{Z}_{(2)}\oplus\mathbb{Z}/2.$ Now consider the following commutative diagram
\begin{center}
    \begin{tikzcd}
        &{[\mathbb{C}P^3\times\mathbb{S}^1,\Omega (G/Top)]}\arrow[r,"(P_{(2)}^{G/Top})_*"]\arrow[d,"\Theta^{Top}"']&{[\mathbb{C}P^3\times\mathbb{S}^1,\Omega (G/Top)]_{(2)}}\arrow[d,"\Theta_{(2)}^{Top}"]\\
       {0}\arrow[r] &{L_8(\mathbb{Z})\cong \mathbb{Z}}\arrow[r]&{L_8(\mathbb{Z})_{(2)}\cong\mathbb{Z}_{(2)}}
    \end{tikzcd}
\end{center}
Since $\mathbb{C}P^3\times\mathbb{S}^1$ has no odd torsion in homology, $(P_{(2)}^{G/Top})_*:[\mathbb{C}P^3\times\mathbb{S}^1,\Omega (G/Top)]\to [\mathbb{C}P^3\times\mathbb{S}^1,\Omega (G/Top)]_{(2)}$ is a monomorphism \cite{rudyak15} and $[\mathbb{C}P^3\times\mathbb{S}^1, \Omega (G/Top)]\cong \bigoplus\limits_{i=1}^2 H^{4i-1}(\mathbb{C}P^3\times\mathbb{S}^1;\mathbb{Z})\oplus \bigoplus\limits_{j=0}^1 H^{4j+1}(\mathbb{C}P^3\times\mathbb{S}^1;\mathbb{Z}/2).$ As the kernel of $\Theta_{(2)}^{Top}:[\mathbb{C}P^3\times\mathbb{S}^1,\Omega G/Top]_{(2)}\to L_8(\mathbb{Z})_{(2)}$ is $\frac{8}{3}\mathbb{Z}_{(2)}\oplus\mathbb{Z}/2\oplus\mathbb{Z}_{(2)}\oplus\mathbb{Z}/2,$ and $(P_{(2)}^{G/Top})_*$ is injective, the kernel of $\Theta_{(2)}^{Top}\circ (P_{(2)}^{G/Top})_*:[\mathbb{C}P^3\times\mathbb{S}^1,\Omega (G/Top)]\to L_8(\mathbb{Z})_{(2)}$ is $8\mathbb{Z}\oplus\mathbb{Z}/2\oplus 3\mathbb{Z}\oplus\mathbb{Z}/2.$ Noting that $L_8(\mathbb{Z})\to L_8(\mathbb{Z})_{(2)}$ is injective, it follows from the commutative diagram that the kernel of $\Theta^{Top}:[\mathbb{C}P^3\times\mathbb{S}^1, \Omega(G/Top)]\to L_8(\mathbb{Z})$ coincides with the kernel of the composition $\Theta_{(2)}^{Top}\circ (P_{(2)}^{G/Top})_*$.

Since $[\mathbb{C}P^3\times\mathbb{S}^1, \Omega (G/Top)]\cong \bigoplus\limits_{i=1}^2 H^{4i-1}(\mathbb{C}P^3\times\mathbb{S}^1;\mathbb{Z})\oplus \bigoplus\limits_{j=0}^1 H^{4j+1}(\mathbb{C}P^3\times\mathbb{S}^1;\mathbb{Z}/2), $ and $\displaystyle (\tau_{\leq 7}(top/o))_{(2)}\simeq \Sigma^7 H\mathbb{Z}/4\vee\Sigma^3 H\mathbb{Z}/2,$ together with $\{\Sigma^3 H\mathbb{Z}, \Sigma^7 H\mathbb{Z}/{28}\}=0,$ the image of $\omega_*(\ker(\Theta^{Top}))$ is generated by $\{(\bar{8},\bar{3})\}\in H^7(\mathbb{C}P^3\times\mathbb{S}^1;\mathbb{Z}/{28})\oplus H^3(\mathbb{C}P^3\times\mathbb{S}^1;\mathbb{Z}/2).$

    This completes the proof.
\end{proof}

\begin{thm}\label{classificationCP3S1}
\noindent
\begin{itemize}
    \item[(i)] $\mathbb{Z}/4 \subset \Theta_7$ is mapped nontrivially under the forgetful map $\mathcal{F} : \mathcal{C}(\mathbb{C}P^3\times\mathbb{S}^1)\to \mathcal{S}^{Diff}(\mathbb{C}P^3\times\mathbb{S}^1).$ 
    \item[(ii)] Any closed, oriented manifold homeomorphic to $\mathbb{C}P^3\times\mathbb{S}^1$ is diffeomorphic to exactly one of the manifolds $\mathbb{C}P^3\times\mathbb{S}^1$ or $(\mathbb{C}P^3\times\mathbb{S}^1)\# \Sigma,$ where $\Sigma$ is a nontrivial element of $\mathbb{Z}/4\subset \Theta_7.$ 
\end{itemize}
\end{thm}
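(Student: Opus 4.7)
The plan is to combine the computation $\mathcal{C}(\mathbb{C}P^3\times\mathbb{S}^1)\cong\mathbb{Z}/28\oplus\mathbb{Z}/2$ from Theorem~\ref{maintheo}, the identification of $\omega_*(\ker\Theta^{Top})$ in Lemma~\ref{identity element}, Fact~\ref{identity element description}, and Brumfiel's result $I(\mathbb{C}P^3\times\mathbb{S}^1)=\mathbb{Z}/7$. Throughout, I will invoke the Chinese Remainder isomorphism $\mathbb{Z}/28\cong\mathbb{Z}/4\oplus\mathbb{Z}/7$ so that $\mathcal{C}(M)\cong\mathbb{Z}/4\oplus\mathbb{Z}/7\oplus\mathbb{Z}/2$, with $\mathbb{Z}/4\oplus\mathbb{Z}/7$ identified as $\Theta_7$.

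For part (i), I first pin down $\omega_*(\ker\Theta^{Top})$ as the subgroup $\{0\}\oplus\mathbb{Z}/7\oplus\mathbb{Z}/2$ of $\mathcal{C}(M)$. Lemma~\ref{identity element} exhibits it as the cyclic group generated by $(\bar{8},\bar{3})\in\mathbb{Z}/28\oplus\mathbb{Z}/2$, and writing $\bar{8}$ in CRT coordinates as $(0,1)\in\mathbb{Z}/4\oplus\mathbb{Z}/7$ shows that this cyclic subgroup consists precisely of the triples with trivial first coordinate. By Fact~\ref{identity element description}, this is exactly $\mathcal{F}^{-1}([\mathrm{id}])$. Since the $\mathbb{Z}/4$ summand inside $\Theta_7$ intersects it trivially, every nonzero $[\Sigma]\in\mathbb{Z}/4$ satisfies $\mathcal{F}([\Sigma])\neq[\mathrm{id}]$, establishing (i).

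For part (ii), the goal is to show that $\mathcal{S}(\mathbb{C}P^3\times\mathbb{S}^1)$ has exactly four elements, each of the form $(\mathbb{C}P^3\times\mathbb{S}^1)\#\Sigma$ for a unique $[\Sigma]\in\mathbb{Z}/4\subset\Theta_7$. For surjectivity, any smooth $N$ homeomorphic to $M$ determines a Kirby--Siebenmann class $(a,b,c)\in\mathbb{Z}/4\oplus\mathbb{Z}/7\oplus\mathbb{Z}/2$. Since $(0,b,c)\in\omega_*(\ker\Theta^{Top})$, the classes $(a,b,c)$ and $(a,0,0)$ differ by an element that kills the topological surgery obstruction, so Wall realization in the surgery exact sequence implies the two smooth structures on $M$ are s-cobordant; the s-cobordism theorem in dimension $7$ then gives $N\cong M\#\Sigma_a$, where $[\Sigma_a]\in\Theta_7$ represents $(a,0)$. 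For distinctness, Brumfiel's computation $I(M)=\mathbb{Z}/7$ yields $M\#\Sigma_1\cong M\#\Sigma_2$ if and only if $\Sigma_1-\Sigma_2\in\mathbb{Z}/7\subset\mathbb{Z}/28$, so unique representatives are provided by the $\mathbb{Z}/4$ summand.

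The main obstacle is the surjectivity step in (ii): upgrading Fact~\ref{identity element description}, which only identifies $\mathcal{F}^{-1}([\mathrm{id}])$, to the statement that translating any class by an element of $\omega_*(\ker\Theta^{Top})$ preserves the $\mathcal{S}^{Diff}$-class. This requires the group-theoretic compatibility of the Wall-realization action with the forgetful map; alternatively one can use Fact~\ref{normal invariant of SG} to realize the needed translations as pullbacks along self-homotopy equivalences, in parallel with the treatment of the higher-dimensional $\mathbb{C}P^3\times\mathbb{S}^k$ cases later in the paper. Once this compatibility is secured, the remainder of the argument is elementary counting using $|\Theta_7/I(M)|=4$.
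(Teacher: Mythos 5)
Your part (i) is correct and is exactly the paper's argument: combining Lemma \ref{identity element} with Fact \ref{identity element description}, the subgroup of $\mathcal{C}(\mathbb{C}P^3\times\mathbb{S}^1)\cong\mathbb{Z}/28\oplus\mathbb{Z}/2$ mapping to $[\mathrm{id}]$ is the order-$14$ subgroup generated by $(\bar{8},\bar{3})$, and your CRT bookkeeping showing that it meets $\mathbb{Z}/4\subset\Theta_7$ only in $0$ is right.

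Part (ii), however, has a genuine gap, and it is the one you flag yourself without resolving. Fact \ref{identity element description} only identifies $\mathcal{F}^{-1}([\mathrm{id}])$ with $\omega_*(\ker\Theta^{Top})$; it does not say that translating an arbitrary concordance class by an element of this subgroup preserves the underlying diffeomorphism type, and your surjectivity step rests entirely on that unproved assertion. The sentence claiming that since $(a,b,c)$ and $(a,0,0)$ ``differ by an element that kills the topological surgery obstruction,'' Wall realization makes the two structures s-cobordant is not a valid deduction: Wall realization acts on a given structure through normal cobordisms, and nothing you cite relates translation in the group $[\mathbb{C}P^3\times\mathbb{S}^1,Top/O]$ to the fibers of $\mathcal{F}$ away from the identity. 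Making the translation claim honest requires the composition (addition) formula for concordance classes of smoothings together with naturality of Schultz's fact applied to the exotic manifold $N$ and transported back along the homeomorphism $h\colon N\to \mathbb{C}P^3\times\mathbb{S}^1$; you only gesture at this. A cleaner route, using the paper's own toolkit, avoids the issue entirely: by Lemma \ref{psicp3sk} (i) the map $\psi_*\colon[\mathbb{C}P^3\times\mathbb{S}^1,Top/O]\to[\mathbb{C}P^3\times\mathbb{S}^1,G/O]$ is zero, so every homeomorphism $h\colon N\to\mathbb{C}P^3\times\mathbb{S}^1$ has trivial smooth normal invariant; the smooth surgery exact sequence, with $L_8(\mathbb{Z}[\mathbb{Z}])\cong L_8(e)$ by \cite{JS69}, then places $(N,h)$ in the orbit of the identity under the $L_8$-action, whose elements are precisely the classes $\left((\mathbb{C}P^3\times\mathbb{S}^1)\#\Sigma,\,h_{\Sigma}\right)$ with $\Sigma\in bP_8=\Theta_7$. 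Once some such statement is in place, your use of Brumfiel's $I(\mathbb{C}P^3\times\mathbb{S}^1)=\mathbb{Z}/7$ both to push representatives into the $\mathbb{Z}/4$ summand and to get pairwise distinctness is correct; note that part (i) alone only controls the homotopy inertia group, so the inertia-group input is genuinely needed for distinctness (the paper's two-line deduction of (ii) from (i) is itself leaning on this). As written, though, your surjectivity step is asserted rather than proved, so the proposal is incomplete.
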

\begin{proof}
    Combining Lemma \ref{identity element} with Fact \ref{identity element description}, we obtain statement {(i)}.

    Statement {(ii)} follows from {(i)}.
\end{proof}

We recall that a self-homotopy equivalence $f: X \times Y \to X \times Y$ is diagonalizable if the compositions $p_X \circ f \circ i_X: X \to X$ and $p_Y \circ f \circ i_Y: Y \to Y$ are self-homotopy equivalences of $X$ and $Y$, respectively. Here, $p_X$ and $p_Y$ denote the projections of $X \times Y$ onto $X$ and $Y$, respectively, while $i_X$ and $i_Y$ denote the inclusions of $X$ and $Y$ into $X \times Y$. Here are some important facts about self-homotopy equivalences of the product $X\times Y.$
\begin{fact}\label{self homotopy equivalence of product}
    \noindent
    \begin{itemize}
        \item[(a)] If every element of $\mathcal{E}(X\times Y)$ is diagonalizable, then any $f\in \mathcal{E}(X\times Y)$ can be written as $f_1.f_2,$ where $f_1\in \mathcal{E}_{X}(X\times Y)$ and $f_2\in \mathcal{E}_{Y}(X\times Y)$ \cite[Theorem 2.5]{PP99}. Here, $\mathcal{E}_X(X\times Y)$ (respectively, $\mathcal{E}_Y(X\times Y)$) consists of all $g\in \mathcal{E}(X\times Y)$ such that $g=(p_X, p_Y\circ g)$ (respectively, $g=(p_{X}\circ g, p_{Y})$).

      \item[(b)] If $X$ is a connected CW complex, then $\mathcal{E}_X(X\times Y)$ fits into the following short exact sequence
       
       $$ 0\to [X, E_1(Y)]\to \mathcal{E}_X(X\times Y) \to \mathcal{E}(Y)\to 0,$$ where $E_1(Y)$ denotes the identity component of the self-maps of 
$Y$ \cite[Proposition 2.3 {(d)}]{PP99}.
    \end{itemize}
\end{fact}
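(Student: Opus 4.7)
The plan is to establish both parts by direct constructions in the homotopy category, essentially reconstructing the arguments of \cite{PP99}. Part (a) decomposes a diagonalizable self-equivalence by ``correcting'' one coordinate at a time, while part (b) is obtained by restricting fiberwise self-equivalences to a single fiber and invoking the mapping-space adjunction.

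For part (a), given $f = (f^X, f^Y) \in \mathcal{E}(X\times Y)$ that is diagonalizable, I would let $\alpha = f^X \circ i_X \in \mathcal{E}(X)$ and define $f_2(x,y) = (\alpha^{-1}(x), y)$, which plainly lies in $\mathcal{E}_Y(X\times Y)$ since $p_Y \circ f_2 = p_Y$. Setting $f_1 = f \circ f_2^{-1}$ gives $f = f_1 \cdot f_2$ on the nose, and to conclude I need $p_X \circ f_1 \simeq p_X$. Restricted to the slice $X \times \{y_0\}$ this composition is $\alpha \circ \alpha^{-1} = \mathrm{id}_X$; the full homotopy on $X \times Y$ then follows by applying diagonalizability to $f_1$ itself, which forces its $X$-component to be a map $X \times Y \to X$ whose restriction to the $X$-slice is the identity and whose restriction to the $Y$-slice is determined by $\mathcal{E}(X)$, so it is homotopic to $p_X$.

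For part (b), the map $\rho: \mathcal{E}_X(X\times Y) \to \mathcal{E}(Y)$ defined by $\rho(g) = (p_Y \circ g) \circ i_Y$ is surjective, because any $\phi \in \mathcal{E}(Y)$ extends to $\tilde\phi(x,y) = (x,\phi(y)) \in \mathcal{E}_X(X\times Y)$. To identify the kernel, I would use the exponential adjunction to rewrite a map $h: X \times Y \to Y$ as $\tilde h: X \to Y^Y$; the condition $\rho((p_X, h)) \simeq \mathrm{id}_Y$ translates to $\tilde h(x_0) \simeq \mathrm{id}_Y$, and since $X$ is connected this forces $\tilde h$ to take values in the identity component $E_1(Y)$. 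Conversely any $\tilde h: X \to E_1(Y)$ yields a self-equivalence $(x,y) \mapsto (x, \tilde h(x)(y))$ in $\ker(\rho)$, and exactness at the middle term is a direct check.

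The main obstacle is the final step of part (a): establishing that the $X$-component of $f_1$ is globally homotopic to $p_X$, not merely on a single fiber. This is precisely where the hypothesis that \emph{every} element of $\mathcal{E}(X\times Y)$ is diagonalizable enters decisively, since without this global hypothesis the $X$-component of $f_1$ could carry essential $Y$-dependence invisible on a single slice. Once this straightening is in place, verifying that $f_1$ is a self-equivalence (using that $f$ and $f_2$ are) and checking compatibility with the group structure on $\mathcal{E}(X\times Y)$ are formal.
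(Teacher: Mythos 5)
The paper itself offers no proof of this Fact, which is quoted directly from Pave\v{s}i\'c (\cite[Theorem 2.5]{PP99} and \cite[Proposition 2.3 (d)]{PP99}), so your proposal must stand on its own. Your part (b) is essentially the standard argument and is fine in outline: $\rho(g)=(p_Y\circ g)\circ i_Y$ is a surjective homomorphism, and the exponential law together with connectedness of $X$ identifies the kernel with maps into the identity component $E_1(Y)$ (one still has to be a little careful that this identification is injective, i.e.\ that a free homotopy between two fibrewise maps $(p_X,h_0)\simeq(p_X,h_1)$ really yields $\tilde h_0\simeq\tilde h_1$ in $[X,E_1(Y)]$, which is where the precise definition of $\mathcal{E}_X(X\times Y)$ in \cite{PP99} matters).

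Part (a), however, has a genuine gap at its central step, beyond the small bookkeeping slip (with $f_2=\alpha^{-1}\times\mathrm{id}_Y$ and $f_1=f\circ f_2^{-1}$, the restriction of $p_X\circ f_1$ to the $X$-slice is $\alpha\circ\alpha$, not $\alpha\circ\alpha^{-1}$; you would need $f_2=\alpha\times\mathrm{id}_Y$). The real problem is the final straightening: knowing that $p_X\circ f_1\circ i_X\simeq\mathrm{id}_X$ and that $f_1$ is diagonalizable does \emph{not} imply $p_X\circ f_1\simeq p_X$. A map $X\times Y\to X$ is not determined by its restrictions to the slices; it can carry essential $[Y,X]$- and $[X\wedge Y,X]$-components which diagonalizability never sees. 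For instance the shear $g(x,y)=(x\cdot y,y)$ on $S^1\times S^1$ is a diagonalizable self-equivalence whose first coordinate restricts to the identity on the $X$-slice, yet $p_X\circ g\not\simeq p_X$. Since your correcting factor $f_2$ has the special form $\alpha^{\pm1}\times\mathrm{id}_Y$, it can never absorb any genuine $Y$-dependence of $f^X$ — but pushing exactly that dependence into the $\mathcal{E}_Y$-factor is the whole content of the statement. The argument that does work (and is the one behind \cite[Theorem 2.5]{PP99}) is to take the second factor to be $f_2:=(f^X,p_Y)$ itself: diagonalizability of $f$ makes the induced maps on homotopy groups triangular with invertible diagonal entries, so $f_2$ is a homotopy equivalence by Whitehead's theorem, it lies in $\mathcal{E}_Y(X\times Y)$ by construction, and then $p_X\circ f\circ f_2^{-1}=(p_X\circ f_2)\circ f_2^{-1}\simeq p_X$ holds automatically, so $f_1=f\circ f_2^{-1}\in\mathcal{E}_X(X\times Y)$ and $f=f_1\cdot f_2$. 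As written, your proof of (a) would fail at the step you yourself flagged, and the appeal to ``diagonalizability of $f_1$'' does not repair it.
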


Now to classify smooth manifolds homeomorphic to $\mathbb{C}P^3\times\mathbb{S}^2,$ we first analyze the self-homotopy equivalences of $\mathbb{C}P^3\times\mathbb{S}^2.$

\begin{lemma}\label{diagonalizability}
    Any self-homotopy equivalence of $\mathbb{C}P^3\times\mathbb{S}^2$ is diagonalizable.
\end{lemma}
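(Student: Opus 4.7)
The plan is to exploit the cohomology ring of $\mathbb{C}P^3\times\mathbb{S}^2$, namely $H^*(\mathbb{C}P^3\times\mathbb{S}^2;\mathbb{Z})\cong \mathbb{Z}[x,y]/(x^4,y^2)$ with $|x|=|y|=2$ (where $x=p_{\mathbb{C}P^3}^*(\bar x)$ and $y=p_{\mathbb{S}^2}^*(\bar y)$), to pin down any self-homotopy equivalence $f$ up to signs on the two degree-$2$ generators, and then translate this into diagonalizability via the definition. Writing $f^*(x)=ax+by$ and $f^*(y)=cx+dy$ with $a,b,c,d\in\mathbb{Z}$, the argument will proceed by applying $f^*$ to the two truncation relations in sequence.

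First, applying $f^*$ to $y^2=0$ gives $(cx+dy)^2=c^2x^2+2cd\,xy=0$, which forces $c=0$ since $x^2$ and $xy$ are $\mathbb{Z}$-linearly independent in $H^4$. Next, applying $f^*$ to $x^4=0$ and reducing modulo $x^4$ and $y^2$ yields $(ax+by)^4=4a^3b\,x^3y$, so $a^3b=0$, i.e.\ $a=0$ or $b=0$. I would then exclude $a=0$: if this held, $f^*(x^3)=b^3y^3=0$, but $x^3$ is part of the $\mathbb{Z}$-basis $\{x^3,x^2y\}$ of the rank-two free group $H^6(\mathbb{C}P^3\times\mathbb{S}^2)$, contradicting injectivity of $f^*$. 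Hence $b=0$, so the matrix of $f^*$ restricted to $H^2\cong\mathbb{Z}\{x\}\oplus\mathbb{Z}\{y\}$ is the diagonal matrix $\mathrm{diag}(a,d)$, and invertibility (necessary for $f$ to be a homotopy equivalence) forces $a,d\in\{\pm 1\}$.

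To finish, I would unpack the definition. The component map $p_{\mathbb{C}P^3}\circ f\circ i_{\mathbb{C}P^3}\colon\mathbb{C}P^3\to\mathbb{C}P^3$ induces multiplication by $a=\pm 1$ on $H^2(\mathbb{C}P^3;\mathbb{Z})$; since $H^*(\mathbb{C}P^3;\mathbb{Z})$ is generated by $\bar x$ as a ring, this map is an isomorphism on all cohomology and hence a homotopy equivalence by Whitehead's theorem (using that $\mathbb{C}P^3$ is a simply connected CW complex). Similarly $p_{\mathbb{S}^2}\circ f\circ i_{\mathbb{S}^2}\colon \mathbb{S}^2\to\mathbb{S}^2$ has degree $d=\pm 1$ and is therefore a homotopy equivalence. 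Thus $f$ is diagonalizable.

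I do not expect a serious obstacle: the entire content is the ring-theoretic observation that the truncated polynomial relations $x^4=0$ and $y^2=0$ force the off-diagonal entries of $f^*|_{H^2}$ to vanish, after which diagonalizability is automatic from Whitehead's theorem for the two factors.
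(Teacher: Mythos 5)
Your proof is correct, and while it runs along the same general lines as the paper's (compute $f^*$ on $H^2(\mathbb{C}P^3\times\mathbb{S}^2;\mathbb{Z})$, show the off-diagonal term hitting the $\mathbb{C}P^3$-class vanishes, then read off the two component maps), the key step is obtained by a genuinely different and more elementary mechanism. The paper gets $c=0$ from a homotopy-theoretic input — that every map $\mathbb{C}P^3\to\mathbb{S}^2$ factors through the degree-one map $f_{\mathbb{C}P^3}$, hence is trivial on $H^2$ — combined with the constraint $f^*(\alpha^3\beta)=\pm\alpha^3\beta$ on the top class. You instead extract everything from the ring structure alone: $f^*(y)^2=f^*(y^2)=0$ forces $c=0$, $f^*(x)^4=f^*(x^4)=0$ forces $a^3b=0$, injectivity of $f^*$ (in degree $4$ or $6$) rules out $a=0$, and invertibility on $H^2$ gives $a,d=\pm1$; Whitehead's theorem (in its homology form, legitimate here since both factors are simply connected finite complexes with free homology) then upgrades the two component maps to homotopy equivalences. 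Your route buys two things: it avoids having to justify the statement about $[\mathbb{C}P^3,\mathbb{S}^2]$, and it actually proves the stronger conclusion that $f^*|_{H^2}$ is diagonal ($b=0$), whereas the paper only needs and only establishes upper-triangularity — which is all that diagonalizability of $f$ requires, since the $b$-entry is killed by restricting along $i_{\mathbb{C}P^3}$ anyway.
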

\begin{proof}
Let \( f: \mathbb{CP}^3 \times S^2 \to \mathbb{CP}^3 \times S^2 \) be a self-homotopy equivalence. To prove that \( f \) is diagonalizable, we examine the induced map on cohomology.

The cohomology ring of \( \mathbb{CP}^3 \times S^2 \) is:
\[ H^*(\mathbb{CP}^3 \times S^2; \mathbb{Z}) \cong \mathbb{Z}[x]/(x^4) \otimes \mathbb{Z}[y]/(y^2), \]
where \( x \) generates \( H^2(\mathbb{CP}^3; \mathbb{Z}) \) and \( y \) generates \( H^2(S^2; \mathbb{Z}) \). Thus:
\[ H^2(\mathbb{CP}^3 \times S^2; \mathbb{Z}) \cong \mathbb{Z} \oplus \mathbb{Z} \]
with generators \( \alpha \) corresponding to \( x \) and \( \beta \) corresponding to \( y \).

The induced map \( f^* \) on cohomology is determined by its action on these generators:
\[ f^*(\alpha) = a \alpha + b \beta \]
\[ f^*(\beta) = c \alpha + d \beta \]
which can be represented by the matrix:
\[ f^* = \begin{pmatrix}
    a & b \\
    c & d
\end{pmatrix}. \]

To ensure \( f \) is a self-homotopy equivalence, \( f^* \) must preserve the cup product structure. Specifically, we examine the image of \( \alpha^3 \beta \) under \( f^* \):
\[
f^*(\alpha^3 \beta) = f^*(\alpha)^3 f^*(\beta).
\]

Since $\alpha^4=0$ and $\beta^2=0$, it follows that:
\begin{align*}
    &f^*(\alpha)^3 = (a \alpha + b \beta)^3 = a^3 \alpha^3 + 3 a^2 b\alpha^2\beta
\end{align*}

\begin{align*} &f^*(\alpha^3\beta) =f^*(\alpha)^3 f^*(\beta)= (a^3 \alpha^3 + 3a^2 b \alpha^2 \beta)(c \alpha + d \beta)= (a^3 d+ 3a^2bc)\alpha^3\beta
\end{align*}

For \( f^* \) to be a self-homotopy equivalence, \( f^*(\alpha^3 \beta) \) must be \( \pm \alpha^3 \beta \), leading to two cases :
\begin{itemize}
\item If \( f^*(\alpha^3 \beta) = \alpha^3 \beta \), then:
\[
a^2(3 b c + a d) = 1.
\]
   This yields: either
\[
(a, b) = (1, k_1) \text{ and } (c, d) = (k_2, 1 - 3k_1 k_2).
\]
or
\[
(a,b)=(-1, k_1) \text{and} (c,d)=(k_2,-1+ 3k_1 k_2)
\]
\item  If \( f^*(\alpha^3 \beta) = -\alpha^3 \beta \), then:
\[
a^2(3 b c + a d) = -1.
\]
This yields: either 
\[
(a,b)=(1,k_1) \text{and} (c,d)=(k_2, -1-3k_1k_2)
\]
or
\[
(a, b) = (-1, k_1) \text{ and } (c, d) = (k_2, 1 + 3k_1 k_2).
\]

\end{itemize}
Since any map $\mathbb{C}P^3\to \mathbb{S}^2$ can be written as $g\circ f_{\mathbb{C}P^3},$ where $g\in \pi_6(\mathbb{S}^2),$ it follows that $c=0.$
Therefore,
\[
d = \pm 1.
\]
With \( c = 0 \), the matrix \( f^* \) simplifies to:
\[
\begin{pmatrix}
    \pm 1 & b \\
    0 & \pm 1
\end{pmatrix}.
\]
Consequently, both compositions \( p_{\mathbb{CP}^3} \circ f \circ i_{\mathbb{CP}^3}: \mathbb{CP}^3 \to \mathbb{CP}^3 \) and \( p_{S^2} \circ f \circ i_{S^2}: S^2 \to S^2 \) are self-homotopy equivalences.

Hence, \( f \) is diagonalizable, completing the proof.
\end{proof}

\begin{prop}\label{selfhomotopycp3s2}
    There is no self-homotopy equivalence $f$ of $\mathbb{C}P^3\times\mathbb{S}^2$ whose normal invariant $\eta^{Diff}(f)= (f_{\mathbb{C}P^3\times\mathbb{S}^2})^*([\epsilon]).$
\end{prop}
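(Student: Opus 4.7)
The strategy is to exploit the product structure: decompose any $f\in \mathcal{E}(\mathbb{C}P^3\times\mathbb{S}^2)$ using diagonalizability, analyze each factor's normal invariant separately, and show that the combined normal invariant is forced to lie in the ``lower cells'' of the splitting of $[M,G/O]$, whereas $(f_M)^*([\epsilon])$ sits in the ``top cell.'' First I would pin down where $(f_M)^*([\epsilon])$ lives: under the splitting $[M,G/O]\cong [\mathbb{C}P^3,G/O]\oplus \pi_2(G/O)\oplus [\Sigma^2\mathbb{C}P^3,G/O]$ induced from \eqref{note2.3} and Corollary \ref{concorMtimesSk}, it belongs entirely to the third summand, because $f_M\colon M\to \mathbb{S}^8$ collapses $\mathbb{C}P^3\vee\mathbb{S}^2$ to a point. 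Equivalently, $(f_M)^*([\epsilon])$ restricts trivially to $\mathbb{C}P^3\vee\mathbb{S}^2$. So it is enough to prove that the normal invariant of every self-homotopy equivalence lies in the image of $[\mathbb{C}P^3\vee\mathbb{S}^2, G/O]\to [M,G/O]$.

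Assuming for contradiction $f$ exists, Lemma \ref{diagonalizability} and Fact \ref{self homotopy equivalence of product}(a) give a decomposition $f=f_1\cdot f_2$ with $f_1\in \mathcal{E}_{\mathbb{C}P^3}(M)$ and $f_2\in \mathcal{E}_{\mathbb{S}^2}(M)$. Fact \ref{self homotopy equivalence of product}(b) places $f_1$ in the short exact sequence with $[\mathbb{C}P^3, E_1(\mathbb{S}^2)]=[\mathbb{C}P^3, SG_3]$ and $\mathcal{E}(\mathbb{S}^2)$, and places $f_2$ analogously with $[\mathbb{S}^2, E_1(\mathbb{C}P^3)]$ and $\mathcal{E}(\mathbb{C}P^3)$. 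Fact \ref{normal invariant of SG} applied to $f_1$ shows that its $[\mathbb{C}P^3,SG_3]$-component has normal invariant in the summand $[\mathbb{C}P^3,G/O]\subset [M,G/O]$. The $\mathcal{E}(\mathbb{S}^2)$ and $\mathcal{E}(\mathbb{C}P^3)$ quotient pieces contribute nothing: orientation-preserving self-equivalences of $\mathbb{S}^2$ are homotopic to the identity, and since complex conjugation on $\mathbb{C}P^3$ has degree $(-1)^3=-1$, the orientation-preserving part $\mathcal{E}^+(\mathbb{C}P^3)$ is also trivial.

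The main obstacle is the $[\mathbb{S}^2, E_1(\mathbb{C}P^3)]$ contribution to $f_2$, since Fact \ref{normal invariant of SG} is stated only for the case where the spherical factor is the parameter-carrier. To treat it, I would work directly with the adjoint: an element $\alpha\in [\mathbb{S}^2, E_1(\mathbb{C}P^3)]$ corresponds to a map $\bar\alpha\colon \mathbb{S}^2\times \mathbb{C}P^3\to \mathbb{C}P^3$, and the induced self-equivalence $F_\alpha(x,y)=(\bar\alpha(y,x),y)$ preserves the $\mathbb{S}^2$ coordinate. Computing $F_\alpha^*\tau_M$ via the splitting $\tau_M=p_1^*\tau_{\mathbb{C}P^3}\oplus p_2^*\tau_{\mathbb{S}^2}$ shows that $\eta^{Diff}(F_\alpha)$ is the pullback of the classifying map of $\bar\alpha^*\tau_{\mathbb{C}P^3}-p_2^*\tau_{\mathbb{C}P^3}$, which by construction restricts trivially to the top cell $\Sigma^2\mathbb{C}P^3$ (since the $\mathbb{S}^2$ factor is left untouched on the tangent bundle up to smash‐trivial data). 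Thus $\eta^{Diff}(f_2)$ also lies in $[\mathbb{C}P^3,G/O]\oplus \pi_2(G/O)=[\mathbb{C}P^3,G/O]$ using $\pi_2(G/O)=0$.

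Finally, I would combine these facts using the standard composition formula $\eta^{Diff}(f_1\cdot f_2)=\eta^{Diff}(f_1)+f_1^*\,\eta^{Diff}(f_2)$. Since $f_1$ fixes the $\mathbb{C}P^3$ coordinate, $f_1^*$ preserves the subgroup $[\mathbb{C}P^3,G/O]$ of $[M,G/O]$, so $\eta^{Diff}(f)$ lies in the image of $[\mathbb{C}P^3\vee\mathbb{S}^2, G/O]\to [M,G/O]$ and hence has vanishing component in $[\Sigma^2\mathbb{C}P^3,G/O]$. This contradicts the assumption that $\eta^{Diff}(f)=(f_M)^*([\epsilon])$, whose component in $[\Sigma^2\mathbb{C}P^3,G/O]$ is nontrivial by Lemma \ref{psicp3sk}(ii). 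The step requiring the most care is the normal invariant analysis for the $[\mathbb{S}^2,E_1(\mathbb{C}P^3)]$ contribution, as this is exactly the place where Fact \ref{normal invariant of SG} is not directly applicable.
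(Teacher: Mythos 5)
Your overall skeleton (diagonalizability, the two short exact sequences from Fact \ref{self homotopy equivalence of product}, disposing of the $\mathcal{E}(\mathbb{S}^2)$, $\mathcal{E}(\mathbb{C}P^3)$ and $[\mathbb{C}P^3,SG_3]$ pieces, and the composition formula) agrees with the paper, but the step you yourself flag as delicate — the $[\mathbb{S}^2,E_1(\mathbb{C}P^3)]\cong\pi_2(F_{\mathbb{S}^1})\cong\mathbb{Z}/2$ contribution — is where the argument genuinely fails. Your tangent-bundle computation shows at most that the induced self-equivalence is \emph{tangential}; tangentiality does not force the normal invariant to vanish on the suspended part $\Sigma^2\mathbb{C}P^3$, it only forces it to lie in the image of $j_*\colon[\Sigma^2\mathbb{C}P^3_+,G]\to[\Sigma^2\mathbb{C}P^3_+,G/O]$. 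In fact your claim is false: by Becker--Schultz and \cite{RS87} (as the paper uses), the normal invariant of the self-equivalence $f_\eta$ induced by the generator $\eta\in\pi_2(F_{\mathbb{S}^1})$ is $j_*$ of the composite $\Sigma^2\mathbb{C}P^3_+\xrightarrow{\Sigma t}\mathbb{S}^1\xrightarrow{\eta}\mathbb{S}^0$, where $t$ is the $S^1$-transfer (Umkehr) map, and its restriction to $\Sigma^2\mathbb{C}P^2$ is the Toda bracket $\langle\eta,\nu,\eta\rangle=\nu^2\neq0$. So $\eta^{Diff}(f_\eta)$ does \emph{not} lie in the image of $[\mathbb{C}P^3\vee\mathbb{S}^2,G/O]$, and the contradiction you aim for ("all self-equivalence normal invariants live in the lower cells") cannot be reached this way.

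The situation is also more delicate than your strategy allows: the top-cell component of $\eta^{Diff}(f_\eta)$ is controlled by $\langle 2\nu,\nu,\eta\rangle=\{\bar\nu,\epsilon\}$, which can map to $[\epsilon]\in\pi_8(G/O)$, so one cannot exclude $[\epsilon]$ from appearing in the top-cell slot of a self-equivalence's normal invariant. The paper's proof works not by a vanishing statement but by an explicit comparison on $\Sigma^2\mathbb{C}P^2$: every possible value of $\eta^{Diff}(f_\eta)$ (namely $j_*$ of $\nu^2$, $\bar\nu+\nu^2$, or $\epsilon+\nu^2$ pulled back along $\Sigma^2q$) carries the nonzero component $(\Sigma^2 f_{\mathbb{C}P^2})^*([\nu^2])$ on $\Sigma^2\mathbb{C}P^2$, whereas $(f_{\mathbb{C}P^3\times\mathbb{S}^2})^*([\epsilon])$ restricts to zero there (consistent with your correct observation in the first paragraph and with Lemma \ref{psicp3sk}(ii)). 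That Toda-bracket computation of $\eta\circ\Sigma t$ is the essential missing ingredient; without it, your proposed proof has a genuine gap.
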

\begin{proof}
     We note from Fact \ref{self homotopy equivalence of product}, Lemma \ref{diagonalizability} and \cite[Fact 5.2 {(d)} and {(e)}]{SBRKAS} that in order to compute the normal invariant of a self-homotopy equivalence of $\mathbb{C}P^3\times\mathbb{S}^2,$ it suffices to compute the normal invariant of the self-homotopy equivalences coming from each part of the following two split short exact sequences 
    \begin{equation*} 
        0\to [\mathbb{C}P^3, SG_3]\to \mathcal{E}_{\mathbb{C}P^3}(\mathbb{C}P^3\times\mathbb{S}^2) \to \mathcal{E}(\s^2)\to 0,
    \end{equation*}
     and 
    \begin{equation*} 
        0\to \pi_2(F_{\mathbb{S}^1})\cong \z/2 \to \mathcal{E}_{\mathbb{S}^2}(\mathbb{C}P^3\times\mathbb{S}^2) \to \mathcal{E}(\mathbb{C}P^3)\to 0.
    \end{equation*}
  
 Since the self-homotopy equivalences of $\mathbb{C}P^3\times\mathbb{S}^2$ arising from $\mathcal{E}(\mathbb{S}^2)$ and $\mathcal{E}(\mathbb{C}P^3)$ are homotopic to diffeomorphisms, their normal invariants are zero. Furthermore, by Fact \ref{normal invariant of SG}, the normal invariant of self-homotopy equivalences coming from $[\mathbb{C}P^3, SG_2]$ does not lie in $\pi_8(G/O)\subset [\mathbb{C}P^3\times\mathbb{S}^2, G/O].$ Therefore, considering the composition formula of the normal invariant, it is enough to determine the normal invariant of self-homotopy equivalences coming from $\pi_2(F_{\mathbb{S}^1}).$

     Now according to \cite{RS87}, the normal invariant of $f_{\eta}\in \mathcal{E}(\mathbb{C}P^3\times\mathbb{S}^2)$ induced by the element $\eta\in \pi_2(F_{\mathbb{S}^1})$ corresponds to the image of the composition $\Sigma^2 \mathbb{C}P_{+}^3\xrightarrow{\Sigma t} \mathbb{S}^1\xrightarrow{\eta}\mathbb{S}^0$ in $[\mathbb{C}P^3\times\mathbb{S}^2,G/O]$ under the canonical map $j:G\to G/O$ where $t$ is the Umkehr map \cite{JBRS74}. Since $\{\Sigma^2 \mathbb{C}P^3_{+}, \mathbb{S}^0\}= \pi_6^s\oplus\pi_8^s\oplus \pi-2^s,$ we work locally at the prime $2.$ Since $\eta\circ\nu=0$ \cite{toda}, it follows that $\eta\circ \Sigma t\vert_{\mathbb{S}^4}=0$. Therefore, calculating $\eta\circ\Sigma t\vert_{\Sigma^2 \mathbb{C}P^2}$ reduces to determining the toda bracket $\langle\eta,\nu,\eta\rangle,$ which equals to $\nu^2$ by \cite[Lemma 5.12]{toda}. Hence, $(\Sigma^2 i')^*(\eta^{Diff}(f_{\eta}))$ is non-trivial and is equal to $(\Sigma^2 f_{\mathbb{C}P^2})^*([\nu^2]),$ where $ i': \mathbb{C}P^2\hookrightarrow  \mathbb{C}P^3$ is the inclusion. Now since $\eta\circ\Sigma t\vert_{\mathbb{S}^4}=0,$ there exists a map $\mathcal{T}: \mathbb{S}^6\vee \mathbb{S}^8\to \mathbb{S}^0$ such that the following diagram commutes: 
     \begin{center}
         \begin{tikzcd}
             {\s^5\vee\s^7} \arrow[r]& {\s^4} \arrow[dr,"\nu"']\arrow[hookrightarrow,"\Sigma^2 \tilde{\tilde{i}}"]{r} & {\Sigma^2\mathbb{C}P^3} \arrow[r,"\Sigma^2 q"] \arrow[d,"\Sigma t"'] & \s^6\vee\s^8\arrow[r,"{(\eta, 2\nu)}"]\arrow[ddl,dashed,"\mathcal{T}"]& {\s^5}\\
             && {\s^1}\arrow[d,"\eta"']\\
             && {\s^0}
         \end{tikzcd}
     \end{center}
     where $\mathcal{T}\vert_{\mathbb{S}^6}:\mathbb{S}^6\to \mathbb{S}^0$ is given by $\nu^2,$ while $\mathcal{T}\vert_{\mathbb{S}^8}$ is determined by the Toda bracket $\langle 2\nu,\nu,\eta\rangle.$ We note that $\langle2\nu,\nu,\eta \rangle = \{\bar{\nu},\epsilon\}$ \cite{toda} and this maps nontrivially to $[\epsilon]\in \pi_8(G/O)$. Thus the required map $\eta\circ\Sigma t : \Sigma^2\mathbb{C}P^3\to \mathbb{S}^0 $ is the image of one of the elements $\nu^2, \bar{\nu}+\nu^2, \epsilon + \nu^2$ under the map $(\Sigma^2 q)^*: \pi_6^s\oplus\pi_8^s\to [\Sigma^2 \mathbb{C}P^3, G].$ Thus, $j_*(\eta\circ\Sigma t)\neq (f_{\mathbb{C}P^3\times\mathbb{S}^2})^*([\epsilon]).$ This completes the proof.
\end{proof}
\begin{thm}\label{classificationcp3s2}
  Let $N$ be a closed, oriented, smooth manifold homeomorphic to $\mathbb{C}P^3\times\mathbb{S}^2.$ Then, it is (oriented) diffeomorphic to exactly one of the manifolds $\mathbb{C}P^3\times\mathbb{S}^2$ or $(\mathbb{C}P^3\times\mathbb{S}^2)\# \Sigma,$ where $\Sigma$ is the $8$-dimensional exotic sphere.
\end{thm}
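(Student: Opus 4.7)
The plan is to derive the classification from the computation $|\mathcal{C}(\mathbb{C}P^3\times\mathbb{S}^2)|=2$ together with Proposition \ref{selfhomotopycp3s2}. First, Corollary \ref{concorMtimesSk} decomposes
$$\mathcal{C}(\mathbb{C}P^3\times\mathbb{S}^2)\cong [\mathbb{C}P^3,Top/O]\oplus\pi_2(Top/O)\oplus[\Sigma^2\mathbb{C}P^3,Top/O].$$
The first two summands vanish, since $[\mathbb{C}P^3,Top/O]\cong H^3(\mathbb{C}P^3;\mathbb{Z}/2)=0$ and $\pi_2(Top/O)=0$, while Proposition \ref{CP3computation}(i) identifies the third with $\Theta_8\cong\mathbb{Z}/2$. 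The two resulting concordance classes are represented by $\mathbb{C}P^3\times\mathbb{S}^2$ and $(\mathbb{C}P^3\times\mathbb{S}^2)\#\Sigma$, where $\Sigma$ generates $\Theta_8$. Because any smooth manifold $N$ homeomorphic to $\mathbb{C}P^3\times\mathbb{S}^2$ produces, via its given homeomorphism, a class in $\mathcal{C}(\mathbb{C}P^3\times\mathbb{S}^2)$, the forgetful map $\mathcal{C}(\mathbb{C}P^3\times\mathbb{S}^2)\to \mathcal{S}(\mathbb{C}P^3\times\mathbb{S}^2)$ is surjective, so the diffeomorphism set has at most these two members.

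The remaining step is to show that $\Sigma\notin I(\mathbb{C}P^3\times\mathbb{S}^2)$, so that the two candidates are genuinely distinct. I would argue by contradiction: suppose an orientation-preserving diffeomorphism $\varphi:(\mathbb{C}P^3\times\mathbb{S}^2)\#\Sigma\to \mathbb{C}P^3\times\mathbb{S}^2$ exists. Then $f:=h_\Sigma\circ\varphi^{-1}$ is a self-homotopy equivalence of $\mathbb{C}P^3\times\mathbb{S}^2$ whose smooth normal invariant agrees with that of the homeomorphism $h_\Sigma$. Via the standard formula for the normal invariant of a connected sum with an exotic sphere, this equals $f_{\mathbb{C}P^3\times\mathbb{S}^2}^*\bigl(\psi_\ast[\Sigma]\bigr)$, which by Lemma \ref{psicp3sk}(ii) is precisely $(f_{\mathbb{C}P^3\times\mathbb{S}^2})^*([\epsilon])$. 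But Proposition \ref{selfhomotopycp3s2} asserts that no self-homotopy equivalence of $\mathbb{C}P^3\times\mathbb{S}^2$ realizes this normal invariant, yielding the contradiction.

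The main obstacle in the whole classification has effectively been dispatched in Proposition \ref{selfhomotopycp3s2}, whose proof rested on the diagonalizability of self-equivalences of $\mathbb{C}P^3\times\mathbb{S}^2$ (Lemma \ref{diagonalizability}) and a Toda-bracket computation identifying the candidate normal invariant produced by $\pi_2(F_{\mathbb{S}^1})$ as lying outside the class $(f_{\mathbb{C}P^3\times\mathbb{S}^2})^*([\epsilon])$. Given that result together with the concordance count, the present theorem is a short synthesis: the upper bound from $|\mathcal{C}|=2$ and the lower bound from $I(\mathbb{C}P^3\times\mathbb{S}^2)=0$ pin down $\mathcal{S}(\mathbb{C}P^3\times\mathbb{S}^2)$ exactly.
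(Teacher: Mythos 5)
Your proof is correct and follows essentially the same route as the paper: both reduce the classification to the computation of the possible normal invariants (Lemma \ref{psicp3sk} (ii), resting on Proposition \ref{CP3computation} (i)) and to Proposition \ref{selfhomotopycp3s2} as the decisive input. The only difference is organizational: you get the upper bound from the concordance count and prove $\Sigma\notin I(\mathbb{C}P^3\times\mathbb{S}^2)$ directly by contradiction, whereas the paper argues inside $\mathcal{S}^{Diff}(\mathbb{C}P^3\times\mathbb{S}^2)$ via the smooth surgery exact sequence (using $L_9(e)=0$ so that the normal invariant determines the class) and then records the vanishing of the inertia group afterwards as Corollary \ref{inertiagroupcp3s2}.
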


\begin{proof}
Let $[(N,g)]$ be an element of $\mathcal{C}(\mathbb{C}P^3\times\mathbb{S}^2).$ Then, by Lemma \ref{psicp3sk} {(ii)}, we have $\eta^{Diff}(g)$ is either $0$ or $(f_{\mathbb{C}P^3\times\mathbb{S}^2})^*([\epsilon]).$ 

If $\eta^{Diff}(g)$ is trivial, then from the smooth surgery exact sequence for $\mathbb{C}P^3\times\mathbb{S}^2,$ the pairs $(N,g)$ and $(\mathbb{C}P^3\times\mathbb{S}^2, id)$ represent the same element in $\mathcal{S}^{Diff}(\mathbb{C}P^3\times\mathbb{S}^2).$

If $\eta^{Diff}(g)=(f_{\mathbb{C}P^3\times\mathbb{S}^2})^*([\epsilon]),$ then from Lemma \ref{selfhomotopycp3s2} we see that for any $f\in \mathcal{E}(\mathbb{C}P^3\times\mathbb{S}^2),$ the normal invariant $\eta^{Diff}(f\circ g)\neq 0.$ Hence again by the surgery exact sequence for $\mathbb{C}P^3\times\mathbb{S}^2,$ the element $[(N,g)] =[(\mathbb{C}P^3\times\mathbb{S}^2\#\Sigma, h_{\Sigma})]$ in $\mathcal{S}^{Diff}(\mathbb{C}P^3\times\mathbb{S}^2),$ where $\Sigma$ is the exotic $8$-sphere and $h_{\Sigma}: (\mathbb{C}P^3\times\mathbb{S}^2)\#\Sigma\to \mathbb{C}P^3\times\mathbb{S}^2$ is the canonical homeomorphism.

Therefore any closed, oriented, smooth manifold homeomorphic to $\mathbb{C}P^3\times\mathbb{S}^2$ is oriented diffeomorphic to either $\mathbb{C}P^3\times\mathbb{S}^2$ or $(\mathbb{C}P^3\times\mathbb{S}^2)\#\Sigma.$
\end{proof}
As a direct consequence of Theorem \ref{classificationcp3s2}, we have
\begin{cor}\label{inertiagroupcp3s2}
    The inertia group of $\mathbb{C}P^3\times\mathbb{S}^2$ is zero.
\end{cor}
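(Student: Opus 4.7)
The plan is to deduce the corollary directly from the classification result Theorem \ref{classificationcp3s2}. Recall that $\Theta_8 \cong \mathbb{Z}/2$, generated by a single exotic $8$-sphere $\Sigma$. So to show $I(\mathbb{C}P^3 \times \mathbb{S}^2) = 0$, it suffices to verify that this unique nontrivial element does not belong to the inertia group, i.e.\ that $\mathbb{C}P^3 \times \mathbb{S}^2$ and $(\mathbb{C}P^3 \times \mathbb{S}^2)\# \Sigma$ are not orientation-preservingly diffeomorphic.

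The key input is Theorem \ref{classificationcp3s2}, which asserts that every closed oriented smooth manifold homeomorphic to $\mathbb{C}P^3 \times \mathbb{S}^2$ is oriented diffeomorphic to \emph{exactly one} of $\mathbb{C}P^3 \times \mathbb{S}^2$ or $(\mathbb{C}P^3 \times \mathbb{S}^2)\#\Sigma$. In particular, the two listed manifolds are mutually non-diffeomorphic. Hence the canonical homeomorphism $h_{\Sigma}: (\mathbb{C}P^3 \times \mathbb{S}^2)\# \Sigma \to \mathbb{C}P^3 \times \mathbb{S}^2$ cannot be replaced by an orientation-preserving diffeomorphism, proving $\Sigma \notin I(\mathbb{C}P^3 \times \mathbb{S}^2)$.

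Since $\Sigma$ generates $\Theta_8$ and is not in the inertia group, the subgroup $I(\mathbb{C}P^3 \times \mathbb{S}^2) \subseteq \Theta_8 \cong \mathbb{Z}/2$ must be trivial. The only potential obstacle would be if the classification in Theorem \ref{classificationcp3s2} merely enumerated representatives without asserting they are distinct; however, the proof of that theorem distinguishes the two cases via their normal invariants (trivial versus $(f_{\mathbb{C}P^3 \times \mathbb{S}^2})^*([\epsilon])$, using Proposition \ref{selfhomotopycp3s2} to rule out any self-homotopy equivalence carrying one to the other), so the two classes are genuinely distinct in $\mathcal{S}^{Diff}(\mathbb{C}P^3 \times \mathbb{S}^2)$ and hence a fortiori under orientation-preserving diffeomorphism. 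This completes the argument.
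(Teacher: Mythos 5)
Your argument is correct and is exactly the paper's route: the paper states this corollary as a direct consequence of Theorem \ref{classificationcp3s2}, using that $\Theta_8\cong\mathbb{Z}/2$ and that the two listed manifolds are mutually non-diffeomorphic (the distinctness being guaranteed in the theorem's proof via the normal invariant and Proposition \ref{selfhomotopycp3s2}). Nothing further is needed.
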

We now turn to the classification of smooth structures on manifolds homeomorphic to $\mathbb{C}P^3\times\mathbb{S}^3.$
\begin{lemma}\label{schultzresult}
\noindent
\begin{itemize}
    \item[(1)]  There exists a self-homotopy equivalence $f_{\epsilon\eta}: \mathbb{C}P^3\times\mathbb{S}^3\to \mathbb{C}P^3\times\mathbb{S}^3$ such that $\eta^{Diff}(f_{\epsilon\eta})= (f_{\mathbb{C}P^3\times\mathbb{S}^3})^* ([\epsilon\eta]).$
   \item[(2)] Let $f$ be a self-homotopy equivalence of $\mathbb{C}P^3\times\mathbb{S}^3$ that is induced by an element of $\pi_3(F_{\mathbb{S}^1}(\mathbb{C}^4))$ such that its normal invariant is non-trivial. Then $\eta^{Diff}(f)$ has odd filtration.
\end{itemize}
\end{lemma}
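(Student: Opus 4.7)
The plan is to parallel the proof of Proposition~\ref{selfhomotopycp3s2}, now for the product $\mathbb{C}P^3\times\mathbb{S}^3$. The key input from \cite{RS87} should provide: for $\alpha\in\pi_3(F_{\mathbb{S}^1}(\mathbb{C}^4))$, the normal invariant of the induced self-equivalence $f_\alpha$ of $\mathbb{C}P^3\times\mathbb{S}^3$ is the $j_*$-image in $[\mathbb{C}P^3\times\mathbb{S}^3,G/O]$ of a composition $\Sigma^3\mathbb{C}P^3_+\xrightarrow{\tilde t}\mathbb{S}^r\xrightarrow{\alpha}\mathbb{S}^0$, where $\tilde t$ is the appropriately suspended Umkehr map and $j\colon G\to G/O$ is the canonical map. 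Both parts of the lemma then reduce to a cell-by-cell analysis of the composition $\alpha\circ\tilde t$.

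For part (1), I would localize at $2$, since $\epsilon\eta\in\pi_9^s$ is $2$-primary, and use the cofiber sequence $\mathbb{S}^2\hookrightarrow\mathbb{C}P^3\to\mathbb{S}^4\vee\mathbb{S}^6$ from \eqref{longexactCP3new} to express $\tilde t$ in terms of maps out of the cells of $\Sigma^3\mathbb{C}P^3_+$. I would then choose $\alpha\in\pi_3(F_{\mathbb{S}^1}(\mathbb{C}^4))$ so that the top-cell contribution of $\alpha\circ\tilde t$ realizes $\epsilon\eta$: concretely, this is a Toda-bracket identification in $\pi_*^s$, following the same template that produced $\nu^2=\langle\eta,\nu,\eta\rangle$ in Proposition~\ref{selfhomotopycp3s2} but now detecting $\epsilon$ via $\epsilon\in\langle\nu,\eta,\nu\rangle$ (or a related secondary operation). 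A verification that the lower-cell contributions vanish or can be absorbed by varying $\alpha$ within $\pi_3(F_{\mathbb{S}^1}(\mathbb{C}^4))$ completes the identification. Since the output must land in $\psi_*\bigl([\mathbb{C}P^3\times\mathbb{S}^3,Top/O]\bigr)\cong\mathbb{Z}/2\oplus\mathbb{Z}/2$ described in Lemma~\ref{psicp3sk}(iii), the Toda computation then selects the $\epsilon\eta$-generator rather than the $\mu$-generator.

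For part (2), rather than computing $\eta^{Diff}(f)$ explicitly, I would exploit the observation that the stable cells of $\Sigma^3\mathbb{C}P^3_+$ sit in dimensions $3,5,7,9$, all of which are odd. Consequently, the image of any composition $\alpha\circ\tilde t$ under $j_*$ is detected only by the odd-dimensional cells of $\mathbb{C}P^3\times\mathbb{S}^3$ (the cells of dimensions $3,5,7,9$ coming from $\Sigma^3 M^{(\mathrm{even})}$), and the split sequence \eqref{note2.3} with $Y=G/O$ makes this detection precise. Any nontrivial $j_*$-image must therefore have odd Atiyah--Hirzebruch/skeletal filtration, which is the desired conclusion.

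The main obstacle will be the Toda-bracket identification and indeterminacy control in part (1): confirming that the realized generator in Lemma~\ref{psicp3sk}(iii) is $(f_{\mathbb{C}P^3\times\mathbb{S}^3})^*[\epsilon\eta]$ rather than $(f_{\mathbb{C}P^3\times\mathbb{S}^3})^*[\mu]$ is delicate and will likely require a secondary-operation argument or comparison with Schultz's tables. Part (2) should then follow from the cell-counting argument once the Umkehr-map description is in place.
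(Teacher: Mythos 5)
The paper's entire proof of this lemma is a citation: both statements are taken verbatim from Schultz \cite{RS87} (Proposition 3.5 there), which already contains the normal-invariant computations for self-equivalences of $\mathbb{C}P^3\times\mathbb{S}^3$ induced by $\pi_3(F_{\mathbb{S}^1}(\mathbb{C}^4))$. Your plan to re-derive the lemma by the Umkehr-map/Toda-bracket method of Proposition~\ref{selfhomotopycp3s2} is a reasonable template, but as written it does not prove part (1). The whole content of (1) is the exact identification of the realized normal invariant as $(f_{\mathbb{C}P^3\times\mathbb{S}^3})^*[\epsilon\eta]$ rather than $(f_{\mathbb{C}P^3\times\mathbb{S}^3})^*[\mu]$ (or a sum involving $[\mu]$), and you explicitly defer this step to ``a secondary-operation argument or comparison with Schultz's tables'' --- that is, the decisive step is left open, and the fallback is precisely the citation the paper uses. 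The auxiliary constraint you invoke cannot close the gap: normal invariants of homotopy equivalences have no a priori reason to lie in $\psi_*\bigl([\mathbb{C}P^3\times\mathbb{S}^3,Top/O]\bigr)$, and even if they did, Lemma~\ref{psicp3sk}(iii) says that \emph{both} $(f_{\mathbb{C}P^3\times\mathbb{S}^3})^*[\epsilon\eta]$ and $(f_{\mathbb{C}P^3\times\mathbb{S}^3})^*[\mu]$ lie in that image, so it selects nothing. Note also that in this case the only nontrivial stable class available in the sphere summand is $\eta^2\in\pi_2^s\subset\pi_3(F_{\mathbb{S}^1}(\mathbb{C}^4))$, so the bracket governing the top cell would be of the form $\langle 2\nu,\nu,\eta^2\rangle$ (with lower-cell contributions $\langle\eta,\nu,\eta^2\rangle$, etc.), not $\langle\nu,\eta,\nu\rangle$; its value and indeterminacy would have to be computed explicitly, which is exactly what is missing.

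For part (2) your mechanism is essentially the right one: the normal invariant of such an $f$ is pulled back from $\Sigma^3\mathbb{C}P^3_+$, whose cells sit in dimensions $3,5,7,9$, and since $\pi_3(G/O)=0$ a nontrivial such class is detected in odd skeletal degree. However, the Umkehr-composition description you rely on is only justified (by analogy with the facts quoted from \cite{SBRKAS}) for the $\pi_2^s$-summand of $\pi_3(F_{\mathbb{S}^1}(\mathbb{C}^4))\cong\pi_3^s(\Sigma\mathbb{C}P^\infty)\oplus\pi_2^s$; to cover every element of $\pi_3(F_{\mathbb{S}^1}(\mathbb{C}^4))$, including the other summand, you need the actual content of \cite{RS87} (or an equivariant transfer argument replacing it). So the proposal is an outline whose key inputs are ultimately supplied by the reference the paper cites rather than by the argument itself, and part (1) in particular remains unproved as stated.
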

\begin{proof}
    The results in {(1)} and {(2)} are a consequence of \cite[Proposition 3.5]{RS87}.
\end{proof}

Utilizing the above lemma and applying the same techniques as in \cite[Proposition 10.1 and 10.5]{bel}, we derive the following results.
\begin{lemma}\label{codimensionresult} 
\noindent
\begin{itemize}
   \item[(a)]  Let $f \in \mathcal{E}(\mathbb{C}P^3\times\mathbb{S}^3)$ arising from an element of $\pi_3(F_{\mathbb{S}^1}(\mathbb{C}^4))\cong \pi_3(F_{\mathbb{S}^1}).$ Then $\eta^{Diff}(f)=0$ if and only if $f$ is homotopic to a diffeomorphism.

    \item[(b)]  Let $f \in \mathcal{E}(\mathbb{C}P^3\times\mathbb{S}^3)$ coming from an element of $[\mathbb{C}P^3, SG_4].$ If the normal invariant of $f$ is non-trivial, then it has even filtration.
\end{itemize}
\end{lemma}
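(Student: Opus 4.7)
The plan is to adapt the arguments of [bel, Propositions 10.1 and 10.5], which handle analogous statements for lower-dimensional complex projective spaces, to the present setting of $\mathbb{C}P^3\times\mathbb{S}^3$. The key inputs are the explicit geometric origin of the self-equivalences under consideration together with Fact~\ref{normal invariant of SG} and Lemma~\ref{schultzresult}.

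For part (a), the implication ``$f$ homotopic to a diffeomorphism $\Rightarrow \eta^{Diff}(f)=0$'' is immediate. For the converse, I would identify each element of $\pi_3(F_{\mathbb{S}^1}(\mathbb{C}^4))\cong\pi_3(F_{\mathbb{S}^1})$ with an $\mathbb{S}^1$-equivariant self-map of $\mathbb{S}^7\subset\mathbb{C}^4$ which descends to a self-map of $\mathbb{C}P^3=\mathbb{S}^7/\mathbb{S}^1$, and then pair it with the identity on $\mathbb{S}^3$ to obtain $f\in \mathcal{E}(\mathbb{C}P^3\times\mathbb{S}^3)$. The normal invariant $\eta^{Diff}(f)$ is then computable via the Umkehr map, in complete parallel to the computation performed in the proof of Proposition~\ref{selfhomotopycp3s2}. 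Lemma~\ref{schultzresult}(1) already pins down a generator of the image as $(f_{\mathbb{C}P^3\times\mathbb{S}^3})^*[\epsilon\eta]$, so the kernel of $\eta^{Diff}$ restricted to $\pi_3(F_{\mathbb{S}^1}(\mathbb{C}^4))$ consists precisely of those self-equivalences that lift to $\mathbb{S}^1$-equivariant diffeomorphisms of $\mathbb{S}^7$ and therefore descend to honest diffeomorphisms of $\mathbb{C}P^3\times\mathbb{S}^3$.

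For part (b), Fact~\ref{normal invariant of SG} places $\eta^{Diff}(f)$ inside the image of $[\mathbb{C}P^3,G/O]$ along the projection $\mathbb{C}P^3\times\mathbb{S}^3\to\mathbb{C}P^3$, viewed as a summand of $[\mathbb{C}P^3\times\mathbb{S}^3,G/O]$ via the splitting \eqref{note2.3}. Because $\mathbb{C}P^3$ has cells only in the even dimensions $0,2,4,6$, the skeletal filtration of $[\mathbb{C}P^3,G/O]$ is concentrated entirely in even filtration. Pulling this back to $\mathbb{C}P^3\times\mathbb{S}^3$ preserves the property, so a nontrivial $\eta^{Diff}(f)$ necessarily lies in even filtration with respect to the product CW structure. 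Part (b) then follows, and is in fact just a direct transcription of the ``only even cells of $\mathbb{C}P^3$ contribute'' principle.

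The main obstacle is the converse in (a): upgrading the statement ``$f$ is concordant to a diffeomorphism'' (which is automatic from the surgery exact sequence once $\eta^{Diff}(f)=0$) to ``$f$ is homotopic to a diffeomorphism''. I would attack this via the long exact sequence of the fibration $\mathrm{Diff}^{\mathbb{S}^1}(\mathbb{S}^7)\to F_{\mathbb{S}^1}(\mathbb{C}^4)\to \cdots$ in low degrees and by checking surjectivity of $\pi_3(\mathrm{Diff}^{\mathbb{S}^1}(\mathbb{S}^7))\to \ker(\eta^{Diff})$ using the explicit Toda-bracket description of the normal invariant combined with Lemma~\ref{schultzresult}(1).
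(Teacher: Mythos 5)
Your overall route is the same as the paper's: the paper disposes of this lemma by invoking Lemma \ref{schultzresult} together with the techniques of \cite[Propositions 10.1 and 10.5]{bel}, and your part (b) — Fact \ref{normal invariant of SG} places $\eta^{Diff}(f)$ in the image of $[\mathbb{C}P^3,G/O]$ inside $[\mathbb{C}P^3\times\mathbb{S}^3,G/O]$ via the splitting \eqref{note2.3}, and since $\mathbb{C}P^3$ has cells only in even dimensions any nontrivial such class has even filtration — is exactly the intended argument. The forward implication in (a) is likewise immediate, as you say.

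The genuine gap is the converse in (a), which you yourself label ``the main obstacle'' and then only gesture at. With $\eta^{Diff}(f)=0$, the surgery exact sequence only shows that $(\mathbb{C}P^3\times\mathbb{S}^3,f)$ agrees with $(\mathbb{C}P^3\times\mathbb{S}^3,\mathrm{id})$ up to the action of $L_{10}(e)$, i.e.\ up to connected sum with the Kervaire sphere in $bP_{10}$; you cannot discharge this by quoting $bP_{10}\cap I(\mathbb{C}P^3\times\mathbb{S}^3)=\emptyset$, since that statement (Lemma \ref{bp10}) is proved \emph{later} from Lemma \ref{dichotomy}, which itself depends on the present lemma — the argument must exploit the specific origin of $f$ in $\pi_3(F_{\mathbb{S}^1}(\mathbb{C}^4))$, which is precisely what the cited argument of \cite{bel} does (comparing with the classes realized by linear, hence diffeomorphic, representatives coming from $\pi_3(U(4))$). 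Your proposed mechanism does not supply this: there is no fibration of the form $\mathrm{Diff}^{\mathbb{S}^1}(\mathbb{S}^7)\to F_{\mathbb{S}^1}(\mathbb{C}^4)\to\cdots$ to feed into a long exact sequence, and the claim that $\ker(\eta^{Diff})$ restricted to $\pi_3(F_{\mathbb{S}^1}(\mathbb{C}^4))$ ``consists precisely of those self-equivalences that lift to $\mathbb{S}^1$-equivariant diffeomorphisms of $\mathbb{S}^7$'' is a restatement of the goal rather than an argument for it (and is stronger than needed, since $f$ only has to be homotopic to \emph{some} diffeomorphism). Moreover, Lemma \ref{schultzresult}(1) merely exhibits one self-equivalence with normal invariant $(f_{\mathbb{C}P^3\times\mathbb{S}^3})^*([\epsilon\eta])$; it does not compute the image of $\eta^{Diff}$ on this subgroup, so the identification of the kernel you build on is also unestablished. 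As written, part (a) remains unproven.
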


\begin{lemma}\label{dichotomy}
    Any self-homotopy equivalence $f$ of $\mathbb{C}P^3\times\mathbb{S}^3$ is homotopic to a diffeomorphism if and only if its normal invariant is zero, i.e., $\eta^{Diff}(f)=0.$ 
\end{lemma}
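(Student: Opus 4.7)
The ``only if'' direction is immediate from the standard fact that diffeomorphisms have vanishing normal invariant, so the plan focuses on the ``if'' direction. Given $f \in \mathcal{E}(\mathbb{C}P^3 \times \mathbb{S}^3)$ with $\eta^{Diff}(f) = 0$, the idea is to decompose $f$ into two controlled pieces --- one essentially acting on the $\mathbb{S}^3$ factor and one on the $\mathbb{C}P^3$ factor --- and apply Lemmas~\ref{schultzresult} and \ref{codimensionresult} to each piece individually.

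First I would establish, in parallel to Lemma~\ref{diagonalizability}, that every self-homotopy equivalence of $\mathbb{C}P^3 \times \mathbb{S}^3$ is diagonalizable. This step is even cleaner than the $\mathbb{C}P^3 \times \mathbb{S}^2$ case: because $H^2(\mathbb{C}P^3 \times \mathbb{S}^3; \mathbb{Z}) \cong \mathbb{Z}$ and $H^3 \cong \mathbb{Z}$ are each generated by a single class, $f^*$ must act by $\pm 1$ on each, which directly forces $p_{\mathbb{C}P^3} \circ f \circ i_{\mathbb{C}P^3}$ and $p_{\mathbb{S}^3} \circ f \circ i_{\mathbb{S}^3}$ to be self-homotopy equivalences. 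Fact~\ref{self homotopy equivalence of product}(a) then gives a factorization $f = f_1 \cdot f_2$ with $f_1 \in \mathcal{E}_{\mathbb{C}P^3}(\mathbb{C}P^3 \times \mathbb{S}^3)$ and $f_2 \in \mathcal{E}_{\mathbb{S}^3}(\mathbb{C}P^3 \times \mathbb{S}^3)$. Since $\mathcal{E}(\mathbb{S}^3) \cong \mathcal{E}(\mathbb{C}P^3) \cong \mathbb{Z}/2$ are both realized by diffeomorphisms, I would precompose $f$ with the relevant diffeomorphisms (which does not affect either the hypothesis or the desired conclusion) to reduce, via Fact~\ref{self homotopy equivalence of product}(b), to the case where $f_1$ arises from $[\mathbb{C}P^3, SG_4]$ and $f_2$ arises from $\pi_3(F_{\mathbb{S}^1}(\mathbb{C}^4))$.

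The heart of the argument is a filtration separation. Lemma~\ref{psicp3sk}(iii) identifies the realizable normal invariants inside $[\mathbb{C}P^3 \times \mathbb{S}^3, G/O]$ as $\mathbb{Z}/2\{(f_{\mathbb{C}P^3 \times \mathbb{S}^3})^*([\epsilon\eta])\} \oplus \mathbb{Z}/2\{(f_{\mathbb{C}P^3 \times \mathbb{S}^3})^*([\mu])\}$. The composition formula for normal invariants expresses $\eta^{Diff}(f)$ as a sum of contributions from $f_1$ and $f_2$ (the pullback corrections are controllable because each $f_i$ is the identity on one of the two factors). By Lemma~\ref{codimensionresult}(b), the contribution $\eta^{Diff}(f_1)$ lies in the even-filtration summand, while by Lemma~\ref{schultzresult}(2), $\eta^{Diff}(f_2)$ lies in the odd-filtration summand. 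Verifying that these two summands meet only at zero, the hypothesis $\eta^{Diff}(f) = 0$ splits into the two independent conditions $\eta^{Diff}(f_1) = 0$ and $\eta^{Diff}(f_2) = 0$.

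Finally, Lemma~\ref{codimensionresult}(a) immediately yields that $f_2$ is homotopic to a diffeomorphism. For $f_1$, I would repeat the analogous argument of \cite[Proposition 10.1]{bel}: the vanishing normal invariant allows the classifying map in $[\mathbb{C}P^3, SG_4]$ to lift along $SO(4) \to SG_4$, so $f_1$ is realized by a bundle automorphism of the trivial $\mathbb{S}^3$-bundle over $\mathbb{C}P^3$, hence is homotopic to a diffeomorphism. Composing the two diffeomorphisms then gives the desired conclusion for $f$. The main obstacle I anticipate is the filtration separation in the third paragraph --- pinning down precisely which of $(f_{\mathbb{C}P^3 \times \mathbb{S}^3})^*([\epsilon\eta])$ and $(f_{\mathbb{C}P^3 \times \mathbb{S}^3})^*([\mu])$ corresponds to each filtration, and verifying that the two contributions from Lemmas~\ref{codimensionresult}(b) and \ref{schultzresult}(2) truly have trivial intersection inside $\mathrm{Im}(\psi_*)$; once this bookkeeping is in place, the remaining steps are mechanical consequences of the preceding lemmas.
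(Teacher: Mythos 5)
Your proposal is correct and is essentially the paper's own argument, which the paper compresses into a citation of \cite[Propositions 5.1 and 5.2]{bel} together with Lemmas \ref{schultzresult} and \ref{codimensionresult}: diagonalize every self-equivalence (the paper gets this from \cite[Proposition 2.1]{PP99} since $\pi_3(\mathbb{C}P^3)=0$, though your cohomological argument works too), split off the pieces coming from $[\mathbb{C}P^3, SG_4]$ and $\pi_3(F_{\mathbb{S}^1}(\mathbb{C}^4))$, use the even/odd filtration statements of Lemma \ref{codimensionresult}(b) and Lemma \ref{schultzresult}(2) to conclude that $\eta^{Diff}(f)=0$ forces each contribution to vanish separately, and finish with Lemma \ref{codimensionresult}(a) and the corresponding statement for the $SG_4$-piece. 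The one point to treat with care is your last step: vanishing of the normal invariant only kills the stable image in $[\mathbb{C}P^3, G/O]$, and an unstable lift along $SO(4)\to SG_4$ over the $6$-dimensional complex $\mathbb{C}P^3$ does not follow automatically from this (the comparison $SG_4/SO(4)\to G/O$ is an equivalence only in a range below $6$), so for that piece you should lean on \cite[Propositions 5.1--5.2 and 10.1--10.2]{bel}, exactly as the paper does, rather than the naive obstruction-theoretic lifting.
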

\begin{proof}
This proof follows from \cite[Proposition 5.1 and 5.2]{bel}, together with Lemma \ref{schultzresult} and \ref{codimensionresult}. 
\end{proof}
\begin{lemma}\label{bp10htopyinertia}
     $bP_{10}\cap I_h(\mathbb{C}P^3\times\mathbb{S}^3)=\emptyset.$ 
\end{lemma}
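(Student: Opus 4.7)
By Fact~\ref{identity element description}, $\Sigma\in I_h(M)$ for $M=\mathbb{C}P^3\times\mathbb{S}^3$ is equivalent to the existence of $y\in[M,\Omega(G/Top)]$ with $\omega_*(y)=(f_M)^*(\Sigma)$ and $\Theta^{Top}(y)=0$ in $L_{10}(e)=\mathbb{Z}/2$. The plan is, supposing some nontrivial $\Sigma\in bP_{10}\cap I_h(M)$ exists, to construct a canonical preimage $\tau$ of $\Sigma$ on which the Kervaire obstruction equals $1$, and then to rule out the corrective term by showing that $\Theta^{Top}$ vanishes on $\ker\omega_*$.

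\emph{Canonical preimage via naturality.} The surgery exact sequence for $\mathbb{S}^9$ identifies $\omega_*:\pi_{10}(G/Top)\hookrightarrow\Theta_9$ as the inclusion of $bP_{10}\cong\mathbb{Z}/2$. Let $\bar\tau\in\pi_{10}(G/Top)$ be its generator, so that $\omega_*(\bar\tau)=\Sigma$, and set $\tau=(f_M)^*(\bar\tau)$. Naturality of $\omega_*$ gives $\omega_*(\tau)=(f_M)^*(\Sigma)$, and naturality of the topological surgery obstruction under the degree-one collapse $f_M:M\to\mathbb{S}^9$ gives $\Theta^{Top}_M(\tau)=\Theta^{Top}_{\mathbb{S}^9}(\bar\tau)=1$. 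Writing $y=\tau+z$ with $z\in\ker\omega_*$, the requirement $\Theta^{Top}(y)=0$ becomes $\Theta^{Top}(z)=1$, and it suffices to rule this out.

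\emph{Vanishing on $\ker\omega_*$.} Both factors of $M$ are spin: $w(\mathbb{C}P^3)=(1+x)^4\equiv 1\pmod 2$ because $x^4=0$ in $H^*(\mathbb{C}P^3;\mathbb{Z}/2)$, and $w(\mathbb{S}^3)=1$. Hence $w(M)=1$ and every Wu class $V_i(\Sigma M)$ with $i\geq 1$ vanishes, so the Morgan--Sullivan characteristic-class formula for the simply-connected Kervaire obstruction in dimension $10$ reduces to $\Theta^{Top}(z)=\langle(\Sigma z)^*(k_{10}),[\Sigma M]\rangle$, with $k_{10}\in H^{10}(G/Top;\mathbb{Z}/2)$ the Kervaire class. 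Under the Postnikov description of $[M,\Omega(G/Top)]_{(2)}$ this pairing is the top-cell projection to $H^9(M;\mathbb{Z}/2)\cong\mathbb{Z}/2$, which via the stable splittings of Corollary~\ref{concorMtimesSk} and Proposition~\ref{CP3computation}(iii) coincides with the $bP_{10}$-component of $\omega_*$. Consequently $z\in\ker\omega_*$ forces $\Theta^{Top}(z)=0$, so $\Theta^{Top}(y)=1\neq 0$, a contradiction.

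\emph{Main obstacle.} The technical heart is verifying that the top-cell pairing $z\mapsto\langle(\Sigma z)^*(k_{10}),[\Sigma M]\rangle$ really detects nothing other than the $bP_{10}$-component of $\omega_*(z)$. This requires checking that no secondary $k$-invariant of $\Omega(G/Top)$ produces a stray contribution to $H^{10}(\Sigma M;\mathbb{Z}/2)$ from cells of $M$ below the top. Since the mod $2$ cohomology of $M$ is supported in degrees $\{0,2,3,4,5,6,7,9\}$ with multiplicative generators $x\in H^2(\mathbb{C}P^3)$ and $y\in H^3(\mathbb{S}^3)$ satisfying $Sq^ix=0$ for $i\notin\{0,2\}$ and $Sq^iy=0$ for $i\geq 1$, a direct Cartan-formula check confirms that no nontrivial Steenrod operation lifts a class in $H^{\leq 8}(\Sigma M;\mathbb{Z}/2)$ to a nonzero class in the top group $H^{10}(\Sigma M;\mathbb{Z}/2)$, which completes the required identification.
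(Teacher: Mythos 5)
Your reduction is sound as far as it goes: by Fact~\ref{identity element description} membership of $\Sigma\in bP_{10}$ in $I_h(\mathbb{C}P^3\times\mathbb{S}^3)$ means $(f_M)^*[\Sigma]=\omega_*(y)$ with $\Theta^{Top}(y)=0$, your class $\tau=(f_M)^*(\bar\tau)$ does satisfy $\omega_*(\tau)=(f_M)^*[\Sigma]$ and $\Theta^{Top}(\tau)=1$ (though "naturality of the surgery obstruction under a degree-one map" is not automatic and should itself be justified, e.g.\ by the characteristic-class formula on $M\times\mathbb{S}^1$, where only the top Kervaire class contributes), and everything then hinges on showing $\Theta^{Top}$ vanishes on $\ker\omega_*$. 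But $\ker\omega_*$ is exactly the image of $\phi_*\colon[\Sigma(\mathbb{C}P^3\times\mathbb{S}^3),G/O]\to[\Sigma(\mathbb{C}P^3\times\mathbb{S}^3),G/Top]$, so this vanishing is precisely the statement that the smooth surgery obstruction $\Theta^{Diff}\colon[\Sigma(\mathbb{C}P^3\times\mathbb{S}^3),G/O]\to L_{10}(e)$ is zero --- i.e.\ the whole content of the lemma. At that point you assert that the pairing $z\mapsto\langle z^*(k_{10}),[\,\cdot\,]\rangle$ "coincides with the $bP_{10}$-component of $\omega_*$" via the splittings of Corollary~\ref{concorMtimesSk} and Proposition~\ref{CP3computation}; this is not proved, it is not even canonically formulated (a "$bP_{10}$-component" of $\Theta_9$ requires a choice of splitting), and it is literally equivalent to the vanishing you need: an element $z=\phi_*(g)$ supported away from the top cell has $\omega_*(z)=0$, and whether $\langle z^*(k_{10}),[\,\cdot\,]\rangle$ can nevertheless be nonzero is exactly the open question. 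Your closing Cartan-formula check rules out only primary Steenrod operations carrying classes of $H^{\le 8}(\Sigma M;\mathbb{Z}/2)$ into the top degree, which is not the issue; the danger comes from classes in $[\Sigma^4\mathbb{C}P^2,G/O]$ (which is nonzero, e.g.\ it contains $[\epsilon]$) extended over $\Sigma^4\mathbb{C}P^3$, whose pairing with $k_{10}$ is governed by the homotopy type of $G/O$ and the surgery-obstruction formula, not by $Sq^i$ on $H^*(M;\mathbb{Z}/2)$. Note also that your own construction of $\tau$ shows the analogous vanishing fails for $G/Top$, so some genuinely smooth input is indispensable, and your argument never supplies one.

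This missing input is exactly what the paper's proof provides: it passes to $\mathbb{C}P^3\times\mathbb{S}^3\times\mathbb{S}^1$ using Shaneson's isomorphism $L_{10}(e)\cong L_{10}(\mathbb{Z})$ and injectivity of $p^*$, and then invokes Brumfiel's characteristic-class formula for the \emph{smooth} Kervaire obstruction, in which the only possible term is $\upsilon_4^2\cdot(p^*f)^*(\mathcal{K}_2)$; since $\upsilon_4(\mathbb{C}P^3\times\mathbb{S}^3\times\mathbb{S}^1)=0$, the obstruction map vanishes identically. (Your Wu-class computation is the right observation, but it must be carried out on the closed manifold $M\times\mathbb{S}^1$ --- $\Sigma M$ is not a manifold and has no Wu classes or fundamental class --- and it must be combined with a formula valid for $G/O$ in which the top Kervaire term is absent; without citing or reproving such a statement, the key step of your proposal remains a restatement of the lemma rather than a proof of it.)
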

\begin{proof}
From the smooth surgery exact sequence of $\mathbb{C}P^3\times\mathbb{S}^3,$ it is enough to show that the surgery obstruction map $\Theta^{Diff}: [\Sigma(\mathbb CP^3\times \mathbb S^3), G/O]\to L_{10}(e)$ is zero map. This map fits into the following commutative diagram
\begin{center}
    \begin{tikzcd}
        {[\Sigma(\mathbb{C}P^3\times\mathbb{S}^3), G/O]}\arrow[r,"\Theta^{Diff}"]\arrow[d,"p^*"']&{L_{10}(e)}\arrow[r,"\omega^{Diff}"]\arrow[d,"\cong"]&{\mathcal{S}^{Diff}(\mathbb{C}P^3\times\mathbb{S}^3)}\\
        {[\mathbb{C}P^3\times\mathbb{S}^3\times\mathbb{S}^1, G/O]}\arrow[r,"\bar{\Theta}^{Diff}"']&{L_{10}(\mathbb{Z})}
    \end{tikzcd}
\end{center}
 where the map $p^*:[\Sigma (\mathbb{C}P^3\times\mathbb{S}^3), G/O]\to [\mathbb{C}P^3\times\mathbb{S}^3\times\mathbb{S}^1, G/O]$ is injective by \eqref{note2.3}, and the isomorphism $L_{10}(e)\to L_{10}(\mathbb{Z})$ is established in \cite{JS69}. Now for any $f:\Sigma(\mathbb{C}P^3\times\mathbb{S}^3)\to G/O,$ the surgery obstruction is given by \cite{gw} as
 \begin{align*} 
       \Theta^{Diff}(f)=\Theta^{Diff}(p^*(f))& = \langle \upsilon_{4}^2(\mathbb{C}P^3\times\mathbb{S}^3 \times\mathbb{S}^1) (p^*(f))^*(\mathcal{K}_2), \;[\mathbb{C}P^3\times\mathbb{S}^3\times\mathbb{S}^1]\rangle=0,
    \end{align*}
where the $4$-th Wu class $\upsilon_4(\mathbb{C}P^3\times\mathbb{S}^3 \times\mathbb{S}^1)$ of the manifold $\mathbb{C}P^3\times\mathbb{S}^3 \times\mathbb{S}^1$ equals to $0$ \cite{MilSta} and $\mathcal{K}_2 \in H^2(G/O;\mathbb{Z}/2)$ is a suitable class. This completes the proof.
\end{proof}

\begin{lemma}\label{bp10}
    $bP_{10}\cap I(\mathbb{C}P^3\times\mathbb{S}^3)=\emptyset.$ 
\end{lemma}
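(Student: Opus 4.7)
The plan is to upgrade Lemma \ref{bp10htopyinertia} from the homotopy inertia group to the full inertia group by using Lemma \ref{dichotomy} as the bridge. Set $N=\mathbb{C}P^3\times\mathbb{S}^3$ and suppose, for contradiction, that some $\Sigma\in bP_{10}\cap I(N)$ is non-trivial. Fix an orientation-preserving diffeomorphism $\phi:N\#\Sigma\to N$ and form the self-homotopy equivalence $\beta:=h_\Sigma\circ\phi^{-1}\in\mathcal{E}(N)$. The goal is to show $\eta^{Diff}(\beta)=0$; then Lemma \ref{dichotomy} produces a diffeomorphism $\gamma:N\to N$ with $\gamma\simeq\beta$, so that $\gamma\circ\phi:N\#\Sigma\to N$ is a diffeomorphism homotopic to $\beta\circ\phi=h_\Sigma$, which would place $\Sigma$ in $I_h(N)$, contradicting Lemma \ref{bp10htopyinertia}.

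The key step is the normal-invariant computation. Using $\phi$ as a witness, the class $[(N\#\Sigma),h_\Sigma]$ in $\mathcal{S}^{Diff}(N)$ equals $[N,\beta]$, so $\eta^{Diff}(\beta)$ is the normal invariant of the pair $(N\#\Sigma,h_\Sigma)$. This pair represents an element of $\mathcal{C}(N)\cong[N,Top/O]$ given by $(f_N)^*([\Sigma])$, where $[\Sigma]\in\Theta_9=[\mathbb{S}^9,Top/O]$ and $f_N:N\to\mathbb{S}^9$ is the degree-one collapse. Under the forgetful map $\mathcal{C}(N)\to\mathcal{S}^{Diff}(N)\xrightarrow{\eta^{Diff}}[N,G/O]$, which is induced by $\psi:Top/O\to G/O$, the image becomes $(f_N)^*\psi_*([\Sigma])$.

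Now $\Sigma\in bP_{10}$: by the smooth surgery exact sequence for $\mathbb{S}^9$,
\[
L_{10}(e)\to\Theta_9\xrightarrow{\psi_*}\pi_9(G/O)\to L_9(e),
\]
the subgroup $bP_{10}$ is exactly the image of $L_{10}(e)\to\Theta_9$, hence lies in the kernel of $\psi_*$. So $\psi_*([\Sigma])=0$ in $\pi_9(G/O)$, giving $\eta^{Diff}(\beta)=0$ and closing the argument via Lemma \ref{dichotomy} and Lemma \ref{bp10htopyinertia}.

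The main (modest) obstacle is the bookkeeping identifying $\eta^{Diff}(\beta)$ with the normal invariant of $(N\#\Sigma,h_\Sigma)$ and tracking how the canonical map $\mathcal{C}(N)\to[N,G/O]$ is induced by $\psi$; once this identification is made, everything else is formal, reducing the statement for $I(N)$ to the already-proved statement for $I_h(N)$.
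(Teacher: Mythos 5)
Your proposal is correct and follows essentially the same route as the paper: both form the self-homotopy equivalence obtained from a putative diffeomorphism $\phi$ and the canonical homeomorphism $h_{\Sigma}$, show its normal invariant vanishes because $\Sigma\in bP_{10}$ (so $\psi_*[\Sigma]=0$), invoke Lemma \ref{dichotomy} to get a diffeomorphism homotopic to $h_{\Sigma}$, and contradict Lemma \ref{bp10htopyinertia}. Your only deviation is cosmetic: you justify $\eta^{Diff}(h_{\Sigma})=0$ explicitly via the surgery exact sequence for $\mathbb{S}^9$, a step the paper leaves implicit in its composition-formula computation.
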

\begin{proof}   
    Suppose $[\Sigma]$ is the non-trivial element of $bP_{10}$ contained within the inertia group $I(\mathbb{C}P^3\times\mathbb{S}^3),$ and let $h_{\Sigma}: (\mathbb CP^3\times \mathbb S^3)\# \Sigma \to \mathbb CP^3\times \mathbb S^3$ be the canonical homeomorphism corresponding to $\Sigma$. Then there exists an (oriented) diffeomorphism $\phi : (\mathbb CP^3\times \mathbb S^3)\# \Sigma \to \mathbb CP^3\times \mathbb S^3.$ Then $\phi \circ h_{\Sigma}^{-1} \in \mathcal{E}(\mathbb{C}P^3\times\mathbb{S}^3).$ Now using the composition formula for normal invariant, we get
    \begin{align*}
        \eta^{Diff}(\phi\circ h_{\Sigma}^{-1})=& \eta^{Diff}(\phi)+ (\phi^{-1})^* \eta^{Diff}(h_{\Sigma})=0.
    \end{align*}
    Therefore, it follows from Lemma \ref{dichotomy} that $h_{\Sigma}$ is homotopic to a diffeomorphism, a contradiction to the Lemma \ref{bp10htopyinertia}. 
\end{proof}

\begin{thm} 
    The homotopy inertia group $I_h(\mathbb{C}P^3\times\mathbb{S}^3)$ is zero.
\end{thm}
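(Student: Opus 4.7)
The plan is to show that any $\Sigma \in I_h(\mathbb{C}P^3 \times \mathbb{S}^3)$ must lie in $bP_{10}$, and then invoke Lemma \ref{bp10} to conclude that $\Sigma$ is the standard sphere. First I would unpack the definition of the homotopy inertia group: $\Sigma \in I_h(\mathbb{C}P^3 \times \mathbb{S}^3)$ means that $[(\mathbb{C}P^3 \times \mathbb{S}^3) \# \Sigma, h_\Sigma] = [\mathbb{C}P^3 \times \mathbb{S}^3, \mathrm{id}]$ in the smooth structure set $\mathcal{S}^{Diff}(\mathbb{C}P^3 \times \mathbb{S}^3)$. Applying the smooth normal invariant map $\eta^{Diff}$ to both sides and using the standard identity $\eta^{Diff}([N \# \Sigma, h_\Sigma]) = (f_N)^*(\psi_*([\Sigma]))$, where $\psi_* \colon \Theta_9 \to \pi_9(G/O)$ is the forgetful map with kernel $bP_{10}$, yields the equation $(f_{\mathbb{C}P^3 \times \mathbb{S}^3})^*(\psi_*([\Sigma])) = 0$ in $[\mathbb{C}P^3 \times \mathbb{S}^3, G/O]$.

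The crux is to verify that the restriction of the map $(f_{\mathbb{C}P^3 \times \mathbb{S}^3})^* \colon \pi_9(G/O) \to [\mathbb{C}P^3 \times \mathbb{S}^3, G/O]$ to $\psi_*(\Theta_9)$ is injective. The degree-one map $f_{\mathbb{C}P^3 \times \mathbb{S}^3}$ factors through $\mathbb{C}P^3 \wedge \mathbb{S}^3 \simeq \Sigma^3 \mathbb{C}P^3$, and from the long exact sequence computation used in the proof of Lemma \ref{psicp3sk}(iii), the induced map $(\Sigma^3 f_{\mathbb{C}P^3})^* \colon \pi_9(G/O) \to [\Sigma^3 \mathbb{C}P^3, G/O]$ is already an isomorphism (owing to $[\Sigma^3 \mathbb{C}P^2, G/O] = 0$ and the vanishing of $(\Sigma^4 \pi)^*$). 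Combined with the split short exact sequence \eqref{note2.3} that exhibits $[\Sigma^3 \mathbb{C}P^3, G/O]$ as a direct summand of $[\mathbb{C}P^3 \times \mathbb{S}^3, G/O]$, this gives the required injectivity.

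Having established the injectivity, we conclude that $\psi_*([\Sigma]) = 0$, so $[\Sigma] \in \ker(\psi_*) = bP_{10}$. Since $I_h(\mathbb{C}P^3 \times \mathbb{S}^3) \subseteq I(\mathbb{C}P^3 \times \mathbb{S}^3)$, and Lemma \ref{bp10} asserts that $bP_{10} \cap I(\mathbb{C}P^3 \times \mathbb{S}^3) = \emptyset$, it follows that $\Sigma$ must be standard, thereby proving $I_h(\mathbb{C}P^3 \times \mathbb{S}^3) = 0$. The main obstacle is assembling the injectivity claim cleanly, but this reduces to computations already carried out in Lemma \ref{psicp3sk} together with the standard formula for the normal invariant of the canonical homeomorphism from a connected sum with an exotic sphere.
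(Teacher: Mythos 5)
Your proof is correct and follows essentially the same route as the paper: both arguments use the injectivity of $(f_{\mathbb{C}P^3\times\mathbb{S}^3})^*$ on $\pi_9(G/O)$ from Lemma \ref{psicp3sk}~(iii), via the compatibility of normal invariants with the $\Theta_9$-action, to force any element of $I_h(\mathbb{C}P^3\times\mathbb{S}^3)$ into $bP_{10}$, and then rule out $bP_{10}$ by the earlier lemmas. The only cosmetic difference is that you invoke Lemma \ref{bp10} together with $I_h\subseteq I$, whereas the paper appeals directly to Lemma \ref{bp10htopyinertia}; both are available at this point, so the argument stands.
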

\begin{proof}
Since $bP_{10}\cap I_h(\mathbb{C}P^3\times \mathbb{S}^3)=\emptyset,$ we show that the restriction map $(f_{\mathbb{C}P^3\times\mathbb{S}^3})^*: \Theta_9\setminus {bP_{10}}\to \mathcal{S}^{Diff}(\mathbb{C}P^3\times\mathbb{S}^3)$ is injective. The map $(f_{\mathbb CP^3\times\mathbb S^3})^*: \Theta_9\to \mathcal{S}^{Diff}(\mathbb CP^3\times\mathbb S^3)$ fits into the following commutative diagram whose rows correspond to the smooth surgery exact sequences for $\mathbb S^9$ and $\mathbb CP^3\times\mathbb S^3,$ respectively.
\begin{center}
    \begin{tikzcd}
        {L_{10}(e)}\arrow[r]\arrow[d,-, double equal sign distance, double] & {\Theta_9}\arrow[r,"\eta^{Diff}"]\arrow[d,"(f_{\mathbb CP^3\times\mathbb S^3})^*"'] & {\pi_9(G/O)}\arrow[r]\arrow[d,"(f_{\mathbb CP^3\times \mathbb S^3})^*"]& {0}\\
        {L_{10}(e)}\arrow[r]&{\mathcal{S}^{Diff}(\mathbb CP^3\times\mathbb S^3)}\arrow[r,"\eta^{Diff}"']& {[\mathbb CP^3\times\mathbb S^3, G/O]}\arrow[r]&{0}
    \end{tikzcd}
\end{center}
  We know that the restriction map $\eta^{Diff}:\Theta_9\setminus {bP_{10}}\to \pi_9(G/O)$ is injective. Hence the injectivity of $(f_{\mathbb CP^3\times\mathbb S^3})^*:\Theta_9\setminus {bP_{10}}\to \mathcal{S}^{Diff}(\mathbb CP^3\times\mathbb S^3)$ follows from the above commutative diagram, as  $(f_{\mathbb CP^3\times\mathbb S^3})^*: \pi_9(G/O)\to [\mathbb CP^3\times\mathbb S^3, G/O]$ is injective, observed in Lemma \ref{psicp3sk} {(iii)}.
\end{proof}

\begin{theorem}\label{inertiagroupCP3S3}
    The inertia group of $\mathbb{C}P^3\times\mathbb{S}^3$ is zero.
\end{theorem}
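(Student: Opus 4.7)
The proof will proceed by contradiction, combining Lemma~\ref{bp10} ($bP_{10}\cap I(\mathbb{C}P^3\times\mathbb{S}^3)=\emptyset$) with the immediately preceding theorem ($I_h(\mathbb{C}P^3\times\mathbb{S}^3)=0$) and the dichotomy Lemma~\ref{dichotomy}. Suppose $\Sigma\in I(\mathbb{C}P^3\times\mathbb{S}^3)$ is nonzero. By Lemma~\ref{bp10}, $\Sigma\notin bP_{10}$, so $\eta^{Diff}(\Sigma)\neq 0$ in $\pi_9(G/O)$. Choose an orientation-preserving diffeomorphism $\phi:(\mathbb{C}P^3\times\mathbb{S}^3)\#\Sigma\to\mathbb{C}P^3\times\mathbb{S}^3$, and form the self-homotopy equivalence $g:=\phi\circ h_\Sigma^{-1}\in\mathcal{E}(\mathbb{C}P^3\times\mathbb{S}^3)$, which is in fact a self-homeomorphism.

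Applying the composition formula for normal invariants in the same form used in Lemma~\ref{bp10}, together with $\eta^{Diff}(\phi)=0$ (as $\phi$ is a diffeomorphism), yields $\eta^{Diff}(g)=\pm(f_{\mathbb{C}P^3\times\mathbb{S}^3})^*\eta^{Diff}(\Sigma)$. The map $(f_{\mathbb{C}P^3\times\mathbb{S}^3})^*:\pi_9(G/O)\to[\mathbb{C}P^3\times\mathbb{S}^3,G/O]$ is injective (as noted in the proof of Lemma~\ref{psicp3sk}(iii)), so $\eta^{Diff}(g)\neq 0$. By Lemma~\ref{dichotomy}, $g$ is therefore not homotopic to any self-diffeomorphism of $\mathbb{C}P^3\times\mathbb{S}^3$.

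The final step is to derive the contradiction. Via $\phi$, the class $(f_{\mathbb{C}P^3\times\mathbb{S}^3})^*\Sigma\in\mathcal{S}^{Diff}(\mathbb{C}P^3\times\mathbb{S}^3)$ coincides with $[(\mathbb{C}P^3\times\mathbb{S}^3,g)]$. Using the injectivity of $(f_{\mathbb{C}P^3\times\mathbb{S}^3})^*:\Theta_9\setminus bP_{10}\hookrightarrow\mathcal{S}^{Diff}(\mathbb{C}P^3\times\mathbb{S}^3)$ from the proof of the preceding theorem, together with the free $L_{10}(\mathbb{Z})$-torsor structure on the fibers of $\eta^{Diff}:\mathcal{S}^{Diff}(\mathbb{C}P^3\times\mathbb{S}^3)\to[\mathbb{C}P^3\times\mathbb{S}^3,G/O]$ provided by Lemma~\ref{bp10htopyinertia}, I would compare $[(\mathbb{C}P^3\times\mathbb{S}^3,g)]$ against an explicit Schultz--Rigdon self-homotopy equivalence with the same normal invariant (for example $f_{\epsilon\eta}$ from Lemma~\ref{schultzresult}(1) when $\eta^{Diff}(\Sigma)=[\epsilon\eta]$). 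Tracking the resulting $L_{10}(\mathbb{Z})$-torsor indices, and invoking the filtration decomposition of self-homotopy equivalences into contributions from $\pi_3(F_{\mathbb{S}^1}(\mathbb{C}^4))$ (odd filtration, Lemma~\ref{schultzresult}(2)) and from $[\mathbb{C}P^3,SG_4]$ (even filtration, Lemma~\ref{codimensionresult}(b)), forces $\Sigma=0$.

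The main obstacle, and the genuine content of the proof, is precisely this last fiber-chasing step: ruling out the existence of a nonzero $\Sigma\notin bP_{10}$ requires a delicate comparison of the $L_{10}(\mathbb{Z})$-torsor indices of the various structure-set representatives. The payoff is that combining this with $I_h=0$ collapses the potential gap between $I$ and $I_h$, yielding $I(\mathbb{C}P^3\times\mathbb{S}^3)=0$ as asserted.
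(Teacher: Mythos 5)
Your setup (Lemma \ref{bp10} to force $\Sigma\notin bP_{10}$, the composition formula, and Lemma \ref{dichotomy}) is fine, but the proof stops exactly where you yourself locate "the genuine content," and what you sketch there is not an argument. After Lemma \ref{dichotomy} you only know that $g=\phi\circ h_{\Sigma}^{-1}$ is a self-homeomorphism with nonzero normal invariant which is not homotopic to a diffeomorphism; that is not a contradiction, since such maps do exist in completely analogous situations (Lemma \ref{betamap} produces a self-homeomorphism of $\mathbb{C}P^3\times\mathbb{S}^4$ with nontrivial normal invariant). A contradiction can only come from controlling which classes in the summand $[\Sigma^3\mathbb{C}P^3,G/O]\subset[\mathbb{C}P^3\times\mathbb{S}^3,G/O]$ are realized as normal invariants of elements of $\mathcal{E}(\mathbb{C}P^3\times\mathbb{S}^3)$, and your sketch never establishes this. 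Moreover, in the one case you discuss, $\eta^{Diff}(\Sigma)=(f_{\mathbb{C}P^3\times\mathbb{S}^3})^*[\epsilon\eta]$, the existence of $f_{\epsilon\eta}$ with the same normal invariant works against an easy contradiction rather than for it: it gives $\eta^{Diff}(f_{\epsilon\eta}\circ h_{\Sigma})=0$, so the structure $[( (\mathbb{C}P^3\times\mathbb{S}^3)\#\Sigma,\,f_{\epsilon\eta}\circ h_{\Sigma})]$ lies in the fiber of $\eta^{Diff}$ over $0$, and one must still decide whether it equals $[(\mathbb{C}P^3\times\mathbb{S}^3,\mathrm{id})]$ or its $bP_{10}$-translate. "Tracking $L_{10}(\mathbb{Z})$-torsor indices" does not decide this, and Lemma \ref{bp10htopyinertia} (vanishing of the surgery obstruction on $[\Sigma(\mathbb{C}P^3\times\mathbb{S}^3),G/O]$) gives freeness of the $L_{10}(e)$-action only at the base structure, not the blanket torsor statement on all fibers that you invoke. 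The paper's actual resolution of this case is to conclude $[\Sigma\#(\Sigma')^{-1}]\in I(\mathbb{C}P^3\times\mathbb{S}^3)$ for some $\Sigma'\in bP_{10}$ and then feed that back into Lemma \ref{bp10}.

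Separately, the cases $\eta^{Diff}(\Sigma)=(f_{\mathbb{C}P^3\times\mathbb{S}^3})^*[\mu]$ and $(f_{\mathbb{C}P^3\times\mathbb{S}^3})^*([\mu]+[\epsilon\eta])$ are not touched by your comparison scheme at all, since there is no self-equivalence with matching normal invariant to compare with; what is needed there is a nonexistence statement: no $g\in\mathcal{E}(\mathbb{C}P^3\times\mathbb{S}^3)$ has a normal invariant with nontrivial $[\mu]$-component in $[\Sigma^3\mathbb{C}P^3,G/O]$, whence $\eta^{Diff}(g\circ h_{\Sigma})\neq 0$ for every $g$ and $\Sigma\notin I$. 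Proving that requires the decomposition of $\mathcal{E}(\mathbb{C}P^3\times\mathbb{S}^3)$ which you never perform: $\pi_3(\mathbb{C}P^3)=0$ gives diagonalizability, Fact \ref{self homotopy equivalence of product} reduces to contributions from $\mathcal{E}(\mathbb{C}P^3)$ and $\mathcal{E}(\mathbb{S}^3)$ (represented by diffeomorphisms), from $[\mathbb{C}P^3,SG_4]$ (normal invariants confined to the $[\mathbb{C}P^3,G/O]$-summand by Fact \ref{normal invariant of SG}), and from $\pi_3(F_{\mathbb{S}^1}(\mathbb{C}^4))$ (contributing at most $(f_{\mathbb{C}P^3\times\mathbb{S}^3})^*[\epsilon\eta]$, by Lemma \ref{schultzresult} and Lemma \ref{codimensionresult}). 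You cite the filtration lemmas but never run this decomposition or draw these conclusions, so the central step of the theorem is missing from your proposal.
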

\begin{proof}
Since $\pi_3(\mathbb{C}P^3) = 0$, according to \cite[Proposition 2.1]{PP99}, any self-homotopy equivalence of $\mathbb{C}P^3 \times \mathbb{S}^3$ can be diagonalized. Therefore, to study the normal invariant of $\mathcal{E}(\mathbb{C}P^3 \times \mathbb{S}^3)$, we focus on the normal invariant of the self-homotopy equivalences of $\mathbb{C}P^3 \times \mathbb{S}^3$, induced from $\mathcal{E}(\mathbb{S}^3)$, $\mathcal{E}(\mathbb{C}P^3)$, $[\mathbb{C}P^3, E_1(\mathbb{S}^3)]$, and $\pi_3(E_1(\mathbb{C}P^3))$, by Fact \ref{self homotopy equivalence of product} (b). 
Since any self-homotopy equivalence $g$ of $\mathbb CP^3\times \mathbb S^3$ induced from $\mathcal{E}(\mathbb S^3)$ or $\mathcal{E}(\mathbb CP^3)$ are represented by diffeomorphism, $\eta^{Diff}(g)=0.$

Let $[(\mathbb{C}P^3\times\mathbb{S}^3\#\Sigma_{\mu}, h_{\Sigma_{\mu}})]\in \mathcal{C}(\mathbb{C}P^3\times\mathbb{S}^3),$ where $[\Sigma_{\mu}]\in \Theta_9$ corresponding to the generator $\mu\in \pi_9^s$ and $h_{\Sigma_{\mu}}: (\mathbb CP^3\times\mathbb S^3)\# \Sigma_{\mu}\to \mathbb CP^3\times \mathbb S^3$ is the canonical homeomorphism. Then from Proposition \ref{CP3computation} and Lemma \ref{psicp3sk} {(iii)}, we have $\eta^{Diff}(h_{\Sigma_{\mu}})= (f_{\mathbb CP^3\times \mathbb S^3})^*([\mu]).$ Now considering the above decomposition of $\mathcal{E}(\mathbb CP^3\times \mathbb S^3)$ and combining Fact \ref{normal invariant of SG} and Lemma \ref{schultzresult} {(1)}, we have $\eta^{Diff}(g\circ h_{\Sigma_{\mu}})\neq 0$ for all $g \in \mathcal{E}(\mathbb CP^3\times \mathbb S^3).$ Therefore $\Sigma_{\mu}\notin I(\mathbb CP^3\times \mathbb S^3).$

Let $[(\mathbb{C}P^3\times\mathbb{S}^3\#\Sigma_{\epsilon\eta}, h_{\Sigma_{\epsilon\eta}})]$ be an element of $\mathcal{C}(\mathbb{C}P^3\times\mathbb{S}^3),$ where $[\Sigma_{\epsilon\eta}]$ is the exotic $9$-sphere associated to the generator $\epsilon\eta$ of $\pi_9^s$ and $h_{\Sigma_{\epsilon\eta}}: \mathbb CP^3\times \mathbb S^3\#\Sigma_{\epsilon\eta}\to \mathbb CP^3\times \mathbb S^3$ is the canonical homeomorphism. Then $\eta^{Diff}(h_{\Sigma_{\epsilon\eta}})= (f_{\mathbb CP^3\times \mathbb S^3})^* ([\epsilon\eta]).$ Now using Lemma \ref{schultzresult} {(1)} in the composition formula for normal invariants, we get $\eta^{Diff}(f_{\epsilon\eta}\circ h_{\Sigma_{\epsilon\eta}})=0.$  This implies that  $(\mathbb CP^3\times \mathbb S^3\#\Sigma_{\epsilon\eta}, f_{\epsilon\eta}\circ h_{\Sigma_{\epsilon\eta}})$ is equivalent to $(\mathbb CP^3\times\mathbb S^3\# \Sigma, h_{\Sigma})$ in $\mathcal{S}^{Diff}(\mathbb CP^3\times \mathbb S^3)$ for some $[\Sigma] \in bP_{10}.$  Therefore $[\Sigma_{\epsilon\eta}\# \Sigma ^{-1}] \in I(\mathbb CP^3\times \mathbb S^3).$ Now if $[\Sigma_{\epsilon\eta}]\in I(\mathbb{C}P^3\times\mathbb{S}^3),$ then $[\Sigma]^{-1}\in I(\mathbb{C}P^3\times\mathbb{S}^3),$ which contradicts Lemma \ref{bp10}.

Therefore, based on the above calculation and Lemma \ref{bp10}, $I(\mathbb{C}P^3\times\mathbb{S}^3)$ is zero.
\end{proof}

\begin{rmk}
  The exotic sphere corresponding to the Adams element $\mu \in \pi_9^s$ does not belong to the inertia group of $\mathbb{C}P^3\times\mathbb{S}^3$ follows directly from \cite[Lemma 9.1]{Kawakubo69}.  
\end{rmk}

Using Theorem \ref{inertiagroupCP3S3}, we now classify all smooth manifolds homeomorphic to $\mathbb{C}P^3\times\mathbb{S}^3,$ up to diffeomorphism.

\begin{thm}\label{ClassificationCP3S3}
     Let $N$ be a closed oriented smooth manifold homeomorphic to $\mathbb{C}P^3\times\mathbb{S}^3,$ then it is (oriented) diffeomorphic to exactly one of the manifolds $\mathbb{C}P^3\times\mathbb{S}^3, (\mathbb{C}P^3\times\mathbb{S}^3)\# \Sigma$ for some $[\Sigma]\in \Theta_9.$   
\end{thm}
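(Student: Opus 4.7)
The plan is to mirror the strategy employed in the proof of Theorem \ref{classificationcp3s2}, combining the smooth surgery exact sequence with the explicit computation of the image of $\psi_*$ on normal invariants, and to conclude uniqueness from the vanishing of the inertia group proved in Theorem \ref{inertiagroupCP3S3}.

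First, let $N$ be a closed, oriented, smooth manifold together with an orientation-preserving homeomorphism $g : N \to M$, where $M := \mathbb{C}P^3 \times \mathbb{S}^3$. This datum yields $[(N,g)] \in \mathcal{C}(M)$, whose image in $\mathcal{S}^{Diff}(M)$ has normal invariant $\eta^{Diff}(g)$ necessarily lying in the image of $\psi_* : [M, Top/O] \to [M, G/O]$. By Lemma \ref{psicp3sk} (iii), this image is the subgroup of order four generated by $(f_{M})^*([\epsilon\eta])$ and $(f_{M})^*([\mu])$. As exploited in the proof of Theorem \ref{inertiagroupCP3S3}, each generator is realized as $\eta^{Diff}(h_{\Sigma})$ for the exotic sphere $[\Sigma] \in \Theta_9$ corresponding to $\epsilon\eta$ or $\mu$. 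Consequently, one can always find $[\Sigma] \in \Theta_9$ with $\eta^{Diff}(g) = \eta^{Diff}(h_{\Sigma})$.

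Second, from the smooth surgery exact sequence
\[
L_{10}(e) \xrightarrow{\omega^{Diff}} \mathcal{S}^{Diff}(M) \xrightarrow{\eta^{Diff}} [M, G/O],
\]
equality of normal invariants shows that $(N,g)$ and $((M\#\Sigma), h_{\Sigma})$ differ by the Wall action of $L_{10}(e) \cong \mathbb{Z}/2$. Since Lemma \ref{bp10htopyinertia} gives that the surgery obstruction $\Theta^{Diff} : [\Sigma M, G/O] \to L_{10}(e)$ is zero, the map $\omega^{Diff}$ is injective, and its generator acts by connect-sum with the Kervaire sphere $\Sigma_K \in bP_{10} \subseteq \Theta_9$. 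Hence $N$ is orientation-preservingly diffeomorphic to $M \# \Sigma'$ for $\Sigma' \in \{\Sigma,\, \Sigma\#\Sigma_K\} \subseteq \Theta_9$, establishing existence.

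Third, for uniqueness, two manifolds $M\#\Sigma_1$ and $M\#\Sigma_2$ are orientation-preservingly diffeomorphic if and only if $[\Sigma_1 \# \Sigma_2^{-1}] \in I(M)$. Since $I(M) = 0$ by Theorem \ref{inertiagroupCP3S3}, distinct classes in $\Theta_9$ produce pairwise non-diffeomorphic manifolds of the form $M\#\Sigma$. The main technical input is thus Theorem \ref{inertiagroupCP3S3}; the surgery-theoretic identification is routine once Lemmas \ref{psicp3sk} (iii) and \ref{bp10htopyinertia} are available, the only subtle point being the bookkeeping of the Kervaire action to absorb $\Sigma_K$ into $\Theta_9$ on the right-hand side.
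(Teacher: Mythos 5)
Your proposal is correct and follows essentially the same route as the paper: it uses Lemma \ref{psicp3sk} (iii) to identify the possible normal invariants and realize them by connected sums with homotopy $9$-spheres, the smooth surgery exact sequence (with the $L_{10}(e)$-ambiguity absorbed into $bP_{10}\subseteq\Theta_9$, exactly as in the paper's case analysis) for existence, and the vanishing of $I(\mathbb{C}P^3\times\mathbb{S}^3)$ from Theorem \ref{inertiagroupCP3S3} for uniqueness. The only cosmetic difference is that you phrase the $bP_{10}$-bookkeeping via the Kervaire-sphere action where the paper splits into the cases $\eta^{Diff}(g)=0$ and $\eta^{Diff}(g)\neq 0$; the substance is the same.
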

\begin{proof}
Let $[(N,g)]$ be an element of $ \mathcal{C}(\mathbb{C}P^3\times\mathbb{S}^3).$ 

 Suppose $\eta^{Diff}(g)$ is trivial. Then, from the smooth surgery exact sequence for $\mathbb{C}P^3\times\mathbb{S}^3,$ it follows that $(N,g)$ is equivalent to $(\mathbb{C}P^3\times\mathbb{S}^3\#\Sigma,h_{\Sigma})$ in $\mathcal{S}^{Diff}(\mathbb{C}P^3\times\mathbb{S}^3)$ for some $[\Sigma] \in bP_{10}.$ 

 Suppose $\eta^{Diff}(g)$ is non-trivial. Then from Lemma \ref{psicp3sk} {(iii)}, we note that the pairs $(N,g)$ and $(\mathbb{C}P^3\times\mathbb{S}^3\#\Sigma,h_{\Sigma})$ are equivalent in $\mathcal{S}^{Diff}(\mathbb{C}P^3\times\mathbb{S}^3),$ where $[\Sigma] \in \Theta_9\setminus bP_{10}.$ Now by Theorem \ref{inertiagroupCP3S3}, $N$ is not diffeomorphic to $\mathbb{C}P^3\times\mathbb{S}^3.$ 

 Let $[(\mathbb{C}P^3\times\mathbb{S}^3\#\Sigma_1, h_{\Sigma_1})]$ and $[(\mathbb{C}P^3\times\mathbb{S}^3\#\Sigma_2, h_{\Sigma_2})]$ be two elements of $\mathcal{C}(\mathbb{C}P^3\times\mathbb{S}^3),$ corresponding to non-diffeomorphic elements $[\Sigma_1], [\Sigma_2] \in \Theta_9\subset \mathcal{C}(\mathbb{C}P^3\times\mathbb{S}^3).$ If $(\mathbb{C}P^3\times\mathbb{S}^3\#\Sigma_1, h_{\Sigma_1})$ and $(\mathbb{C}P^3\times\mathbb{S}^3\#\Sigma_2, h_{\Sigma_2})$ represent same element in $\mathcal{S}^{Diff}(\mathbb{C}P^3\times\mathbb{S}^3),$ then $[\Sigma_1\#\Sigma_2]^{-1} \in I(\mathbb{C}P^3\times\mathbb{S}^3).$ Therefore $\Sigma_1$ is diffeomorphic to $\Sigma_2$ by Theorem \ref{inertiagroupCP3S3}, a contradiction. Hence any two elements of $\mathcal{C}(\mathbb{C}P^3\times\mathbb{S}^3)$ arising from $\Theta_9$ are pairwise non-diffeomorphic.

This completes the proof.
\end{proof} 


Before examining the action of self-homotopy equivalences of $\mathbb{C}P^3\times\mathbb{S}^4$ on $\mathcal{C}(\mathbb{C}P^3\times\mathbb{S}^4),$ we first establish the existence of a tangential self-homotopy equivalence of $\mathbb{C}P^3\times\mathbb{S}^4$ with nontrivial normal invariant in $\pi_{10}(G/O)_{(3)}\subset [\mathbb{C}P^3\times\mathbb{S}^4, G/O].$  
\begin{lemma}
    There exists a tangential homotopy equivalence $f:\mathbb{C}P^3\times\mathbb{S}^4\to \mathbb{C}P^3\times\mathbb{S}^4$ such that its normal invariant is given by $\eta^{Diff}(f)= (f_{\mathbb{C}P^3\times\mathbb{S}^4})^*([\beta_1])\neq 0,$ where $\beta_1$ is the generator of $(\pi_{10}^s)_{(3)}$ and $f_{\mathbb{C}P^3\times\mathbb{S}^4}:\mathbb{C}P^3\times\mathbb{S}^4\to \mathbb{S}^{10}$ is the degree one collapse map. 
\end{lemma}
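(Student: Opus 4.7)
The plan is to adapt the fiber-preserving construction used in Proposition \ref{selfhomotopycp3s2} (which came from $\pi_2(F_{\mathbb{S}^1})$ at the prime $2$) to the prime $3$, starting from the generator $\alpha_1\in (\pi_3^s)_{(3)}$ now viewed as an element of $\pi_4(F_{\mathbb{S}^1})_{(3)}$. By the Rothenberg–Schultz / James construction \cite{RS87, JBRS74}, this element produces a self-homotopy equivalence $f_{\alpha_1}\colon \mathbb{C}P^3\times\mathbb{S}^4\to \mathbb{C}P^3\times\mathbb{S}^4$ whose normal invariant is, up to the canonical map $j\colon G\to G/O$, represented by the stable composition
\[
\Sigma^4\mathbb{C}P^3_+ \xrightarrow{\;\Sigma^3 t\;} \mathbb{S}^3 \xrightarrow{\;\alpha_1\;} \mathbb{S}^0,
\]
where $t$ denotes the Umkehr map. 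Since $\alpha_1$ is $3$-primary, everything below is analyzed $3$-locally.

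The next step is to compute this composition cellularly. At the prime $3$ we have the stable splitting
\[
(\mathbb{C}P^3)_{(3)} \simeq (\mathbb{S}^2\cup_{\alpha_1}e^6) \vee \mathbb{S}^4,
\]
because $\eta$ vanishes $3$-locally and the top-cell attaching map of $\mathbb{C}P^3$ is $\alpha_1$ at the prime $3$. Using $(\pi_k^s)_{(3)}=0$ for $k=1,4,6,8$, one gets $\{\Sigma^4\mathbb{C}P^3_+,\mathbb{S}^0\}_{(3)} \cong (\pi_{10}^s)_{(3)}= \mathbb{Z}/3\{\beta_1\}$, the only nontrivial contribution coming from the summand $\mathbb{S}^6\cup_{\alpha_1}e^{10}$. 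Standard properties of the transfer imply that $\Sigma^3 t$ restricted to the bottom cell $\mathbb{S}^6$ of this summand is a unit multiple of $\alpha_1\in(\pi_3^s)_{(3)}$; since $\alpha_1\circ\alpha_1 = 0$ in $(\pi_6^s)_{(3)}$, the composition $\alpha_1\circ\Sigma^3 t$ vanishes on the $6$-cell and extends over the $10$-cell by the Toda bracket $\langle\alpha_1,\alpha_1,\alpha_1\rangle$, which is classically known to equal $\beta_1$ in $(\pi_{10}^s)_{(3)}$ (see \cite{toda, ravenel}). Modulo indeterminacy (which is $3\beta_1=0$), this identifies the normal invariant as a unit multiple of $(f_{\mathbb{C}P^3\times\mathbb{S}^4})^*([\beta_1])$ under the splitting of $[\mathbb{C}P^3\times\mathbb{S}^4,G/O]$ coming from Corollary \ref{concorMtimesSk}; after composing with an appropriate self-diffeomorphism of $\mathbb{S}^4$ (changing $f_{\alpha_1}$ by a diffeomorphism), we may assume the multiple is $1$. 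Nontriviality then follows because $\pi_{10}(BO)_{(3)}=0$ forces $j_*\colon (\pi_{10}^s)_{(3)}\to \pi_{10}(G/O)_{(3)}$ to be injective, so $[\beta_1]\neq 0$.

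Finally, tangentiality is automatic: the image of $\eta^{Diff}(f_{\alpha_1})$ in $[\mathbb{C}P^3\times\mathbb{S}^4,BO]$ factors through $\pi_{10}(BO)_{(3)}=0$, so $f_{\alpha_1}^*\tau \cong \tau$ stably. The principal obstacle is the cellular identification in the middle step: verifying both that $\Sigma^3 t$ restricts to a unit times $\alpha_1$ on the $\mathbb{S}^6$-cell (which requires pinning down the Umkehr map from the framing data of $\mathbb{C}P^3$) and that the resulting Toda bracket $\langle \alpha_1,\alpha_1,\alpha_1\rangle$ is evaluated with a non-zero coefficient, independent of its (trivial) indeterminacy at the prime $3$.
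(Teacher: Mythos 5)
Your proposal is correct and follows essentially the same route as the paper: the self-homotopy equivalence is induced by $\alpha_1\in(\pi_3^s)_{(3)}\subset\pi_4(F_{\mathbb{S}^1}(\mathbb{C}^4))$, its normal invariant is identified with $j_*(\alpha_1\circ\Sigma^3 t)$, the contributions below the top cell vanish $3$-locally, and the top cell is detected by the Toda bracket $\langle\alpha_1,\alpha_1,\alpha_1\rangle=\beta_1$. The only minor differences are organizational: the paper deduces tangentiality by citing \cite[Fact 5.2 (b)]{SBRKAS} (all equivalences arising from $\pi_4(F_{\mathbb{S}^1}(\mathbb{C}^4))$ are tangential) rather than arguing through the image in $[\mathbb{C}P^3\times\mathbb{S}^4,BO]$, and it checks triviality of the restriction on $\Sigma^4\mathbb{C}P^1$ and $\Sigma^4\mathbb{C}P^2$ directly instead of invoking the $3$-local stable splitting of $\mathbb{C}P^3$.
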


\begin{proof}
    
   By \cite[Fact 5.2 {(d)} and {(e)} ]{SBRKAS}, we have $\pi_4(F_{\mathbb{S}^1}(\mathbb{C}^4))\cong \pi_k^s(\Sigma \mathbb{C}P^{\infty})\oplus \pi_3^s.$ From \cite[Fact 5.2 {(b)}]{SBRKAS}, any self-homotopy equivalence of $\mathbb{C}P^3\times\mathbb{S}^4$ induced from an element of $\pi_4(F_{\mathbb{S}^1}(\mathbb{C}^4))$ is tangential. 
   
   Let $f:\mathbb{C}P^3\times\mathbb{S}^4\to \mathbb{C}P^3\times\mathbb{S}^4$ be a tangential self-homotopy equivalence arising from the generator $\alpha_1$ of $(\pi_3^s)_{(3)}.$ Referring to Fact \cite[Fact 5.2 {(f)}]{SBRKAS}, its normal invariant is the image of the composition  $$\Sigma^4 \mathbb{C}P^3_{+}\xrightarrow{\Sigma^3 t}\mathbb{S}^3\xrightarrow{\alpha_1}\mathbb{S}^0$$ in $[\Sigma^4\mathbb{C}P^3, G/O]\subset [\mathbb{C}P^3\times\mathbb{S}^4, G/O]$ under $j: G\to G/O.$
   
   We note that the restriction of $\alpha_1\circ \Sigma^3 t$ to both $\Sigma^4 \mathbb{C}P^1$ and $\Sigma^4\mathbb{C}P^2$ is trivial. Therefore, it suffices to compute the Toda bracket $\langle\alpha_1, \alpha_1, \alpha_1 \rangle$ to determine whether $\Sigma^3 t\circ\alpha_1 : \Sigma^4 \mathbb{C}P^3\to \mathbb{S}^0$ is trivial. By \cite{SO72}, $\langle\alpha_1, \alpha_1, \alpha_1 \rangle =\beta_1,$ which projects nontrivially to $[\beta_1] \in \pi_{10}(G/O).$ Thus, $\eta^{Diff}(f) = (f_{\mathbb{C}P^3\times\mathbb{S}^4})^*([\beta_1]),$ since the map $(f_{\mathbb{C}P^3\times\mathbb{S}^4})^*: \pi_{10}(G/O)\to [\Sigma^4\mathbb{C}P^3, G/O]$ is injective.
\end{proof}
\begin{lemma}\label{betamap}
    There is a self-homeomorphism of $\mathbb{C}P^3\times\mathbb{S}^4$ with normal invariant given by $(f_{\mathbb{C}P^3\times\mathbb{S}^4})^*([\beta_1]),$ where $\beta_1$ represents the $3$-component of $\pi_{10}^s.$  
\end{lemma}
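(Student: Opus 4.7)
The plan is to upgrade the tangential self-homotopy equivalence $f$ produced in the previous lemma to a self-homeomorphism by checking that its \emph{topological} normal invariant vanishes, and then invoking the topological surgery exact sequence for the simply connected $10$-manifold $\mathbb{C}P^3\times\mathbb{S}^4$.

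First, I recall the fiber sequence $Top/O\xrightarrow{\psi}G/O\xrightarrow{\phi}G/Top$, which induces the exact sequence
$$[\mathbb{C}P^3\times\mathbb{S}^4,Top/O]\xrightarrow{\psi_*}[\mathbb{C}P^3\times\mathbb{S}^4,G/O]\xrightarrow{\phi_*}[\mathbb{C}P^3\times\mathbb{S}^4,G/Top].$$
By Lemma \ref{psicp3sk}(iv), the class $(f_{\mathbb{C}P^3\times\mathbb{S}^4})^*[\beta_1]$ belongs to the image of $\psi_*$, hence lies in the kernel of $\phi_*$. Since the topological normal invariant of $f$ is obtained from $\eta^{Diff}(f)$ by postcomposition with $\phi$, it follows that $\eta^{Top}(f)=0$.

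Next, I appeal to the topological surgery exact sequence
$$L_{11}(\pi_1(\mathbb{C}P^3\times\mathbb{S}^4))\to\mathcal{S}^{Top}(\mathbb{C}P^3\times\mathbb{S}^4)\xrightarrow{\eta^{Top}}[\mathbb{C}P^3\times\mathbb{S}^4,G/Top].$$
Because $\mathbb{C}P^3\times\mathbb{S}^4$ is simply connected and $11\equiv 3\pmod 4$, one has $L_{11}(e)=0$. Exactness then forces the structure class $[f]$ with $\eta^{Top}(f)=0$ to coincide with the basepoint $[\mathit{id}]$, i.e.\ $f$ is homotopic to a self-homeomorphism $\tilde{f}$ of $\mathbb{C}P^3\times\mathbb{S}^4$. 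Since the smooth normal invariant is a homotopy invariant, $\eta^{Diff}(\tilde{f})=\eta^{Diff}(f)=(f_{\mathbb{C}P^3\times\mathbb{S}^4})^*([\beta_1])$, producing the desired self-homeomorphism.

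The main delicate point I anticipate is the identification of a self-homotopy equivalence $f:M\to M$ with an element $[f]\in\mathcal{S}^{Top}(M)$ and the compatibility of this identification with the two normal invariants $\eta^{Diff}$ and $\eta^{Top}$ under the comparison map $\phi:G/O\to G/Top$. Once this bookkeeping is in place, the argument reduces to two inputs already available: the vanishing $L_{11}(e)=0$, and the explicit description of $\mathrm{Im}(\psi_*)$ given by Lemma \ref{psicp3sk}(iv).
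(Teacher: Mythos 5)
Your argument is correct, but it takes a somewhat different route from the paper: you run the comparison in the topological surgery exact sequence, while the paper stays in the smooth one. You note that $\eta^{Diff}(f)=(f_{\mathbb{C}P^3\times\mathbb{S}^4})^*([\beta_1])$ lies in the image of $\psi_*$ by Lemma \ref{psicp3sk} (iv), hence in the kernel of $\phi_*$, so the topological normal invariant of the structure $(\mathbb{C}P^3\times\mathbb{S}^4,f)$ vanishes; since $L_{11}(e)=0$, the map $\eta^{Top}$ is injective on $\mathcal{S}^{Top}(\mathbb{C}P^3\times\mathbb{S}^4)$, so $f$ is homotopic to a self-homeomorphism, whose smooth normal invariant is still the prescribed class because normal invariants depend only on the homotopy class. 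The paper instead uses Lemma \ref{psicp3sk} (iv) together with Proposition \ref{CP3computation} to produce an exotic $10$-sphere $\Sigma_{\beta_1}$ whose canonical homeomorphism $h_{\Sigma_{\beta_1}}$ has the same smooth normal invariant as $f$; then, since $L_{11}(e)=0$ in the smooth surgery exact sequence, there is a diffeomorphism $\mathcal{H}$ with $f\circ\mathcal{H}\simeq h_{\Sigma_{\beta_1}}$, so that $f\simeq h_{\Sigma_{\beta_1}}\circ\mathcal{H}^{-1}$ is explicitly a homeomorphism. Your version is a bit more economical (no need to identify $\Sigma_{\beta_1}$), at the cost of invoking topological surgery in dimension $10$ and the compatibility $\eta^{Top}=\phi_*\circ\eta^{Diff}$, which you correctly single out as the point needing care; the paper's version keeps the whole argument in the smooth category and, as a by-product, exhibits the homeomorphism concretely as the canonical homeomorphism of $(\mathbb{C}P^3\times\mathbb{S}^4)\#\Sigma_{\beta_1}$ composed with a diffeomorphism, thereby identifying which concordance class realizes the normal invariant. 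Both proofs ultimately rest on the same two inputs, Lemma \ref{psicp3sk} (iv) and the vanishing $L_{11}(e)=0$, so your proposal is a valid alternative.
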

\begin{proof}
    By the previous lemma, we have a self-homotopy equivalence $f:\mathbb{C}P^3\times\mathbb{S}^4\to \mathbb{C}P^3\times\mathbb{S}^4$ whose normal invariant is given by $(f_{\mathbb{C}P^3\times\mathbb{S}^4})^* ([\beta_1]).$ By Lemma \ref{psicp3sk} {(iv)} and Proposition \ref{CP3computation}, there exists an exotic $10$-sphere $[\Sigma_{\beta_1}]$ in $\Theta_{10}$ such that the normal invariant of the canonical homeomorphism $h_{\Sigma_{\beta_1}}:(\mathbb{C}P^3\times\mathbb{S}^4) \# \Sigma_{\beta_1} \to \mathbb{C}P^3\times\mathbb{S}^4$ is the same as that of $f.$ Hence, by the smooth surgery exact sequence of $\mathbb{C}P^3\times\mathbb{S}^4,$ there exists a diffeomorphism $\mathcal{H}: (\mathbb{C}P^3\times\mathbb{S}^4)\# \Sigma_{\beta_1}\to \mathbb{C}P^3\times\mathbb{S}^4$ such that $f\circ\mathcal{H} \simeq h_{\Sigma_{\beta_1}}.$ This implies that $f\simeq h_{\Sigma_{\beta_1}}\circ \mathcal{H}^{-1},$ showing that $f$ is a self-homeomorphism of $\mathbb{C}P^3\times\mathbb{S}^4.$ This completes the proof.
\end{proof}
For notational convenience, we denote the self-homeomorphism mentioned in the above lemma as $f_{\alpha_1}.$ 

\begin{lemma}\label{numap}
    There exists a self-homeomorphism $f_{2\nu}: \mathbb{C}P^3\times\mathbb{S}^4\to \mathbb{C}P^3\times\mathbb{S}^4$ such that $\eta^{Diff}(f_{2\nu})$ is nontrivial.

    Further, $\eta^{Diff}(f_{2\nu})\in \mathit{Im} \left([\Sigma^4 \mathbb{C}P^2,Top/O]\xrightarrow{\psi_*} [\Sigma^4 \mathbb{C}P^2,G/O]\right) \subset [\mathbb{C}P^3\times\mathbb{S}^4,G/O].$
\end{lemma}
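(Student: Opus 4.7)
The proof should follow the same three-step pattern as Lemma \ref{betamap}: first construct a tangential self-homotopy equivalence of $\mathbb{C}P^3\times\mathbb{S}^4$ whose normal invariant is the desired element; then promote it to a self-homeomorphism using the surgery exact sequence together with an appropriately chosen exotic $10$-sphere.

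For the construction step, I would use the splitting $\pi_4(F_{\mathbb{S}^1}(\mathbb{C}^4))\cong \pi_4^s(\Sigma\mathbb{C}P^\infty)\oplus \pi_3^s$ from \cite[Fact 5.2 (d), (e)]{SBRKAS} and take the element $2\nu\in (\pi_3^s)_{(2)}$. By \cite[Fact 5.2 (b), (f)]{SBRKAS} this produces a tangential self-homotopy equivalence $\tilde f_{2\nu}$ whose normal invariant equals the image under $j\colon G\to G/O$ of the composite
\[
\Sigma^4\mathbb{C}P^3_+\xrightarrow{\Sigma^3 t}\mathbb{S}^3\xrightarrow{2\nu}\mathbb{S}^0
\]
in $[\Sigma^4\mathbb{C}P^3,G/O]\subset [\mathbb{C}P^3\times\mathbb{S}^4,G/O]$.

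Next I would evaluate this composite stably at the prime $2$, following the style of the calculation of $\eta\circ\Sigma t$ in the proof of Proposition \ref{selfhomotopycp3s2}. Using the cofiber sequence \eqref{longexactCP3new}, the restriction to $\Sigma^4\mathbb{S}^2$ vanishes because $2\nu\circ\eta=\nu\eta\cdot 2=0$, so the composite factors stably through $\Sigma^4 q\colon \Sigma^4\mathbb{C}P^3\to \mathbb{S}^8\vee\mathbb{S}^{10}$. On the $\mathbb{S}^{10}$-summand the map is determined by $\langle 2\nu,\eta,2\nu\rangle\subset\pi_{10}^s$, whose image in $\pi_{10}(G/O)$ is swallowed by the indeterminacy. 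On the $\mathbb{S}^8$-summand, the Toda bracket $\langle 2\nu,\eta,\nu\rangle$ contains $\epsilon$ (this is the key identification, obtained from Toda's tables). Modulo the image of $\psi_*$ coming from $\bar\nu$, this shows that $\eta^{Diff}(\tilde f_{2\nu})=(\Sigma^4 Pr\circ \Sigma^4 q)^*[\epsilon]$, which is nontrivial and, by \cite[Theorem 5.4]{SBRKAS}, lies in the image of $\psi_*\colon [\Sigma^4\mathbb{C}P^2,Top/O]\to [\Sigma^4\mathbb{C}P^2,G/O]$.

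Finally, by Lemma \ref{psicp3sk}(iv) and Proposition \ref{CP3computation}(iii), there is an exotic $10$-sphere $\Sigma_{2\nu}\in\Theta_{10}$ such that $\eta^{Diff}(h_{\Sigma_{2\nu}})=\eta^{Diff}(\tilde f_{2\nu})$. The smooth surgery exact sequence for $\mathbb{C}P^3\times\mathbb{S}^4$ then yields a diffeomorphism $\mathcal{H}\colon (\mathbb{C}P^3\times\mathbb{S}^4)\#\Sigma_{2\nu}\to \mathbb{C}P^3\times\mathbb{S}^4$ with $\tilde f_{2\nu}\circ\mathcal{H}\simeq h_{\Sigma_{2\nu}}$, and we set $f_{2\nu}:=h_{\Sigma_{2\nu}}\circ \mathcal{H}^{-1}$, which is manifestly a self-homeomorphism homotopic to $\tilde f_{2\nu}$. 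The main obstacle will be pinning down the Toda bracket $\langle 2\nu,\eta,\nu\rangle$ and carefully handling its indeterminacy so that the normal invariant is identified precisely with $(\Sigma^4 Pr\circ \Sigma^4 q)^*[\epsilon]$ rather than an unwanted class; once this is in hand the remainder of the argument is formal, parallel to Lemma \ref{betamap}.
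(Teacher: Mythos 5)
Your overall architecture is exactly the paper's: take the element $2\nu\in(\pi_3^s)_{(2)}\subset\pi_4(F_{\mathbb{S}^1}(\mathbb{C}^4))$, identify the normal invariant of the resulting tangential self-homotopy equivalence with $j_*\bigl(2\nu\circ\Sigma^3 t\bigr)$, evaluate this cell by cell via Toda brackets, and then promote to a self-homeomorphism exactly as in Lemma \ref{betamap}. However, the central computations are misidentified. The middle entry of the relevant brackets must be $\nu$, the restriction of the $S^1$-transfer to the bottom cell (this is what makes the bottom-cell restriction vanish: $2\nu\circ\nu=2\nu^2=0$, not ``$2\nu\circ\eta$'' as you write, which is not even a restriction to $\mathbb{S}^6$). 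The correct brackets are $\langle 2\nu,\nu,\eta\rangle=\pm\langle\eta,\nu,2\nu\rangle$ for the $\mathbb{S}^8$-summand, which contains $\{\bar\nu,\epsilon\}$ and maps to the nontrivial class $[\epsilon]\in\pi_8(G/O)$ (this, not ``modulo the image of $\psi_*$ coming from $\bar\nu$,'' is why either representative gives the same answer), and $\langle 2\nu,\nu,2\nu\rangle\subset\pi_{10}^s$ for the $\mathbb{S}^{10}$-summand. Your bracket $\langle 2\nu,\eta,2\nu\rangle$ cannot lie in $\pi_{10}^s$ at all (its total degree is $8$), which signals that the identification has gone wrong.

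The genuine gap is the $\mathbb{S}^{10}$-summand. Saying its contribution is ``swallowed by the indeterminacy'' proves nothing: the lemma asserts that $\eta^{Diff}(f_{2\nu})$ equals $(\Sigma^4 q)^*\circ(\Sigma^4 Pr)^*[\epsilon]$, i.e.\ has no component in $\pi_{10}(G/O)\subset[\Sigma^4\mathbb{C}P^3,G/O]$, and this precise form is what is used later (in Theorem \ref{inertiacp3s4} and Theorem \ref{classificationCP3S4}) and is what places the normal invariant in $\mathrm{Im}\bigl(\psi_*:[\Sigma^4\mathbb{C}P^2,Top/O]\to[\Sigma^4\mathbb{C}P^2,G/O]\bigr)$. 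The paper closes this by actually proving $\langle 2\nu,\nu,2\nu\rangle=0$, using juggling properties of Toda brackets together with the fact that $\eta^2\mu\neq 0$. Without an argument of this kind your computation only determines the normal invariant up to an unknown element of $\pi_{10}(G/O)$, and the second assertion of the lemma does not follow; as you yourself note, pinning down this bracket is the main missing step.
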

\begin{proof}
We aim to construct tangential self-homotopy equivalence $f_{2\nu}: \mathbb{C}P^3\times\mathbb{S}^4\to \mathbb{C}P^3\times\mathbb{S}^4,$ whose normal invarinat is non-trivial and belongs to the image of $\psi_*:[\Sigma^4\mathbb{C}P^2, Top/O]\to [\Sigma^4\mathbb{C}P^2,G/O].$ Once this is established, we follow argument similar to Lemma \ref{betamap} to show that $f_{2\nu}: \mathbb{C}P^3\times\mathbb{S}^4\to \mathbb{C}P^3\times\mathbb{S}^4$ is a self-homeomorphism.

 We consider the tangential self-homotopy equivalence $f_{2\nu}: \mathbb{C}P^3\times\mathbb{S}^4\to \mathbb{C}P^3\times\mathbb{S}^4,$ induced from the generator $2\nu \in (\pi_3^s)_{(2)} \subset \pi_4(F_{\mathbb{S}^1}).$ By Fact \cite[Fact 5.2 {(f)}]{SBRKAS}, the normal invariant of $f_{2\nu}$ is the image of the composition $\Sigma^4\mathbb{C}P^3_{+}\xrightarrow{\Sigma^3 t}\mathbb{S}^3\xrightarrow{2\nu}\mathbb{S}^0$ under the canonical homomorphisms $[\Sigma^4 \mathbb{C}P^3, G]\xrightarrow{j_*} [\Sigma^4\mathbb{C}P^3, G/O]\xrightarrow{p^*} [\mathbb{C}P^3\times\mathbb{S}^4, G/O].$ We observe that the restriction of $2\nu\circ \Sigma^3 t$ on $\Sigma^4\mathbb{C}P^1$ is $2\nu^2=0,$  while its restriction on $\Sigma^4\mathbb{C}P^2$ is the toda bracket $\langle\eta, \nu, 2\nu\rangle.$ But $\{\epsilon, \bar{\nu}\} \in \langle \eta, \nu, 2\nu\rangle$ \cite{toda,ravenel, DCCN20} and is mapped nontrivially to $[\epsilon]\in \pi_8(G/O).$ Therfore $\eta^{Diff}(f_{2\nu})$ is nontrivial and there exists $\mathcal{T}': \mathbb{S}^{10}\vee\mathbb{S}^8\to \mathbb{S}^0$ such that the following diagram commutes
    \begin{center}
        \begin{tikzcd}
             {\s^7\vee\s^9} \arrow[r,"{(\eta, 2\nu)}"]& {\s^6} \arrow[dr,"\nu"']\arrow[hookrightarrow,"\Sigma^2 \tilde{\tilde{i}}"]{r} & {\Sigma^4\mathbb{C}P^3} \arrow[r,"\Sigma^2 q"] \arrow[d,"\Sigma t"'] & \s^8\vee\s^{10}\arrow[r,"{(\eta, 2\nu)}"]\arrow[ddl,dashed,"\mathcal{T}'"]& {\s^7}\\
             && {\s^3}\arrow[d,"2\nu"']\\
             && {\s^0}
        \end{tikzcd}
    \end{center}
where $\mathcal{T}'\mid_{\mathbb{S}^{10}}:\mathbb{S}^{10}\to \mathbb{S}^0$ is determined by the Toda bracket $\langle2\nu,\nu,2\nu\rangle.$  Using the properties of Toda bracket and the fact $\eta^2\circ\mu\neq 0,$ we get $\langle2\nu,\nu,2\nu\rangle=0.$ Hence $\eta^{Diff}(f_{2\nu})=(\Sigma^4 q)^*\circ (\Sigma^4 Pr)^*([\epsilon]).$       
    \end{proof}

\begin{thm}\label{inertiacp3s4}
    The inertia group of $\mathbb{C}P^3\times\mathbb{S}^4$ is $\mathbb{Z}/3.$
\end{thm}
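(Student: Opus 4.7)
The plan is to prove the two inclusions $\mathbb{Z}/3 \subseteq I(\mathbb{C}P^3\times\mathbb{S}^4)$ and $I(\mathbb{C}P^3\times\mathbb{S}^4) \subseteq \mathbb{Z}/3$ using the decomposition $\Theta_{10} \cong \mathbb{Z}/2\{[\Sigma_\mu]\}\oplus \mathbb{Z}/3\{[\Sigma_{\beta_1}]\}$ together with Proposition \ref{CP3computation}(iii), Lemma \ref{psicp3sk}(iv), and the explicit self-homeomorphisms $f_{\alpha_1}$ and $f_{2\nu}$ from Lemmas \ref{betamap} and \ref{numap}.

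For the lower bound, I would take $[\Sigma_{\beta_1}]$ to be the generator of the $3$-primary summand of $\Theta_{10}$. By Lemma \ref{psicp3sk}(iv), the canonical homeomorphism $h_{\Sigma_{\beta_1}}:(\mathbb{C}P^3\times\mathbb{S}^4)\#\Sigma_{\beta_1}\to \mathbb{C}P^3\times\mathbb{S}^4$ has normal invariant $(f_{\mathbb{C}P^3\times\mathbb{S}^4})^*[\beta_1]$, which equals $\eta^{Diff}(f_{\alpha_1})$ by Lemma \ref{betamap}. The composition formula then gives $\eta^{Diff}(f_{\alpha_1}^{-1}\circ h_{\Sigma_{\beta_1}}) = 0$, and the smooth surgery exact sequence of $\mathbb{C}P^3\times\mathbb{S}^4$ together with $bP_{11}=0$ produces an orientation preserving diffeomorphism $(\mathbb{C}P^3\times\mathbb{S}^4)\#\Sigma_{\beta_1}\cong \mathbb{C}P^3\times\mathbb{S}^4$. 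Hence $\mathbb{Z}/3\subseteq I(\mathbb{C}P^3\times\mathbb{S}^4)$.

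For the upper bound, since $I(\mathbb{C}P^3\times\mathbb{S}^4)$ is a subgroup of $\Theta_{10}\cong \mathbb{Z}/6$, the previous step forces $I\in\{\mathbb{Z}/3,\Theta_{10}\}$, so it suffices to show $[\Sigma_\mu]\notin I(\mathbb{C}P^3\times\mathbb{S}^4)$. A hypothetical diffeomorphism $\phi:(\mathbb{C}P^3\times\mathbb{S}^4)\#\Sigma_\mu\to \mathbb{C}P^3\times\mathbb{S}^4$ would give $f=\phi\circ h_{\Sigma_\mu}^{-1}\in \mathcal{E}(\mathbb{C}P^3\times\mathbb{S}^4)$ with $\eta^{Diff}(f)=(f_{\mathbb{C}P^3\times\mathbb{S}^4})^*[\mu]$, so I must rule out such an $f$. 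Following the template of Theorem \ref{inertiagroupCP3S3}, I would first establish diagonalizability of every element of $\mathcal{E}(\mathbb{C}P^3\times\mathbb{S}^4)$ by the cup-product/matrix method of Lemma \ref{diagonalizability} applied to $H^*(\mathbb{C}P^3\times\mathbb{S}^4;\mathbb{Z})=\mathbb{Z}[x]/(x^4)\otimes \mathbb{Z}[y]/(y^2)$ with $|y|=4$. By Fact \ref{self homotopy equivalence of product}, $f$ then decomposes into pieces from $\mathcal{E}(\mathbb{S}^4)$, $\mathcal{E}(\mathbb{C}P^3)$, $[\mathbb{C}P^3, SG_5]$ and $\pi_4(F_{\mathbb{S}^1}(\mathbb{C}^4))$. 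The first two are realized by diffeomorphisms; by Fact \ref{normal invariant of SG}, the third contributes only to the $[\mathbb{C}P^3, G/O]$ summand of $[\mathbb{C}P^3\times\mathbb{S}^4, G/O]$, which is complementary (under the splitting \eqref{note2.3}) to the $[\Sigma^4\mathbb{C}P^3, G/O]$ summand containing $(f_{\mathbb{C}P^3\times\mathbb{S}^4})^*[\mu]$.

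The main obstacle is the tangential contribution from $\pi_4(F_{\mathbb{S}^1}(\mathbb{C}^4))\cong \pi_4^s(\Sigma\mathbb{C}P^\infty)\oplus \pi_3^s$, whose normal invariants take the form $j_*(\alpha\circ\Sigma^3 t)$ for $\alpha\in\pi_3^s=\mathbb{Z}/24$. I must verify that the $2$-local top-cell component of $\alpha\circ\Sigma^3 t:\Sigma^4\mathbb{C}P^3\to \mathbb{S}^0$ never reaches the class $\mu\in\pi_{10}^s$. Extending the cell-by-cell Toda bracket computation of Lemma \ref{numap} one cell higher, the top-cell contribution of $\nu\circ\Sigma^3 t$ is controlled by $\langle\nu,\nu,\eta\rangle$ and $\langle\nu,\eta,2\nu\rangle$, and that of $2\nu\circ\Sigma^3 t$ by $\langle 2\nu,\nu,2\nu\rangle=0$; using \cite{toda, ravenel}, these brackets produce classes of the form $\bar\nu$, $\epsilon$, or $\nu^2\eta^2$ (modulo the indeterminacy from lower cells, which themselves involve only $\epsilon$, $\bar\nu$) but never the class $\mu$. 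Consequently $(f_{\mathbb{C}P^3\times\mathbb{S}^4})^*[\mu]$ is not realized as the normal invariant of any self-homotopy equivalence of $\mathbb{C}P^3\times\mathbb{S}^4$, giving $[\Sigma_\mu]\notin I(\mathbb{C}P^3\times\mathbb{S}^4)$ and hence $I(\mathbb{C}P^3\times\mathbb{S}^4)=\mathbb{Z}/3$.
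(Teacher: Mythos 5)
Your lower bound $\mathbb{Z}/3\subseteq I(\mathbb{C}P^3\times\mathbb{S}^4)$ is essentially the paper's argument: act on $h_{\Sigma_{\beta_1}}$ by the self-homeomorphism $f_{\alpha_1}$ of Lemma \ref{betamap} and use $L_{11}(e)=0$. One small caution: the composition formula gives $\eta^{Diff}(f_{\alpha_1}^{-1}\circ h_{\Sigma_{\beta_1}})=\eta^{Diff}(f_{\alpha_1}^{-1})+(f_{\alpha_1})^*\eta^{Diff}(h_{\Sigma_{\beta_1}})$, and the pullback can flip the sign of the $3$-torsion class, so the outcome is $0$ or $2(f_{\mathbb{C}P^3\times\mathbb{S}^4})^*[\beta_1]$; in the second case you conclude $(\mathbb{C}P^3\times\mathbb{S}^4)\#\Sigma_{\beta_1}\cong(\mathbb{C}P^3\times\mathbb{S}^4)\#\Sigma_{2\beta_1}$ and hence $[\Sigma_{\beta_1}]\in I$, which is exactly the $\pm$ case split the paper performs. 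Also note the order-two class is $\eta\mu\in\pi_{10}^s$ (the element $\mu$ lies in $\pi_9^s$).

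The genuine gap is in your upper bound. You try to show directly that no $f\in\mathcal{E}(\mathbb{C}P^3\times\mathbb{S}^4)$ has $\eta^{Diff}(f)=(f_{\mathbb{C}P^3\times\mathbb{S}^4})^*[\eta\mu]$ by listing normal invariants of generators, but the verification as written fails on three counts. First, the Toda brackets you invoke, $\langle\nu,\nu,\eta\rangle$ and $\langle\nu,\eta,2\nu\rangle$, lie in the $8$-stem and therefore only control the $8$-cell; the top cell of $\Sigma^4\mathbb{C}P^3$ is $10$-dimensional, so the relevant brackets are $10$-stem brackets of the type $\langle\alpha,\nu,2\nu\rangle$ (as in Lemma \ref{numap}), and their indeterminacy contains $\eta\circ\pi_9^s\ni\eta\mu$ -- precisely the class you must exclude -- so "never the class $\eta\mu$" needs an actual computation, not an analogy. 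Second, you only treat the $\pi_3^s$ summand of $\pi_4(F_{\mathbb{S}^1}(\mathbb{C}^4))\cong\pi_4^s(\Sigma\mathbb{C}P^\infty)\oplus\pi_3^s$; the $\pi_4^s(\Sigma\mathbb{C}P^\infty)$ summand also produces tangential self-equivalences whose normal invariants you never bound (compare how Lemma \ref{pi6fs1} has to handle exactly this summand in the $\mathbb{S}^6$ case). Third, even with correct generator-by-generator data you must exclude the exact equality $\eta^{Diff}(f)=(f_{\mathbb{C}P^3\times\mathbb{S}^4})^*[\eta\mu]$ for arbitrary compositions, where lower-cell contributions (e.g. the $[\nu^2]$ term on the bottom cell coming from $\nu\in\pi_3^s$, or the $[\epsilon]$ term on the $8$-cell) can cancel between factors; checking generators alone does not close this. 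The paper avoids this enumeration entirely: since $\mathbb{C}P^3\times\mathbb{S}^4$ is spin, \cite[Lemma 9.1]{Kawakubo69} shows $I(\mathbb{C}P^3\times\mathbb{S}^4)$ contains only homotopy spheres bounding spin manifolds, and Brumfiel's splitting $\Theta_{10}\cong bspin_{10}\oplus\mathbb{Z}/2$ \cite{GB69} then rules out the $2$-component, giving $I(\mathbb{C}P^3\times\mathbb{S}^4)\cong\mathbb{Z}/3$. You should either import that argument or substantially strengthen your homotopy-theoretic enumeration before the upper bound can be considered proved.
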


\begin{proof}
    Let $[\Sigma_{\beta_1}] \in \Theta_{10}$ be an exotic sphere of order $3$ and $h_{\beta_1}: (\mathbb{C}P^3\times\mathbb{S}^4)\# \Sigma_{\beta_1}\to \mathbb{C}P^3\times\mathbb{S}^4$ be the standard homeomorphism. Since $\pi_4(\mathbb{C}P^3)$ is trivial, it follows from \cite[Proposition 2.1]{PP99} and Fact \ref{self homotopy equivalence of product} that it is enough to examine the action of the self-homotopy equivalence of $\mathbb{C}P^3\times\mathbb{S}^4$ arising from $\mathcal{E}(\mathbb{C}P^3), \mathcal{E}(\mathbb{S}^4), [\mathbb{C}P^3, E_1(\mathbb{S}^4)],$ and $\pi_4(E_1(\mathbb{C}P^3)).$ Moreover, since any self-homotopy equivalence of $\mathbb{C}P^3\times\mathbb{S}^4$ coming from $\mathcal{E}(\mathbb{C}P^3), \mathcal{E}(\mathbb{S}^4)$ represents a diffeomorphism, the normal invariant of an element of $\mathcal{E}(\mathbb{C}P^3\times\mathbb{S}^4)$ coming from $[\mathbb{C}P^3, E_1(\mathbb{S}^4)]$ belongs to $[\mathbb{C}P^3, G/O]$ by Fact \ref{normal invariant of SG} and $\eta^{Diff}(h_{\beta_1})\in [\Sigma^4\mathbb{C}P^3, G/O],$ we need to study the action of self-homeomorphism $f_{\alpha_1},$ appearing in Lemma \ref{betamap} on $h_{\beta_1}.$ Now by using the composition formula for normal invariant, we get

\begin{align*}
&\eta^{Diff}\left([(\mathbb{C}P^3\times\mathbb{S}^4)\#\Sigma_{\beta_1},f_{\alpha_1}\circ h_{\beta_1}]\right)\\   
&=\eta^{Diff}\left([\mathbb{C}P^3\times\mathbb{S}^4,f_{\alpha_1}]\right)+ 
(f_{\alpha_1}^{-1})^*\eta^{Diff}\left([(\mathbb{C}P^3\times\mathbb{S}^4)\#\Sigma_{\beta_1},h_{\beta_1}]\right)\\
&=\eta^{Diff}\left([\mathbb{C}P^3\times\mathbb{S}^4,f_{\alpha_1}]\right)+ (f_{\alpha_1}^{-1})^*\circ (f_{\mathbb{C}P^3\times\mathbb{S}^4})^* \left([\Sigma_{\beta_1}]\right)\\
&= (f_{\mathbb{C}P^3\times\mathbb{S}^4})^*[\beta_1]\pm (f_{\mathbb{C}P^3\times\mathbb{S}^4})^*[\beta_1] ,
\end{align*}

If $\eta^{Diff}\left([(\mathbb{C}P^3\times\mathbb{S}^4)\#\Sigma_{\beta_1},f_{\alpha_1}\circ h_{\beta_1}]\right)=0,$ then by the smooth surgery exact sequence $(\mathbb{C}P^3\times\mathbb{S}^4\#\Sigma_{\beta_1}, f_{\alpha_1}\circ h_{\Sigma_{\beta_1}})$ is equivalent to $(\mathbb{C}P^3\times \mathbb{S}^4, id)$ in $\mathcal{S}^{Diff}(\mathbb{C}P^3\times\mathbb{S}^4).$ 

If $\eta^{Diff}\left([(\mathbb{C}P^3\times\mathbb{S}^4)\#\Sigma_{\beta_1},f_{\alpha_1}\circ h_{\beta_1}]\right)\neq0,$ then again by surgery exact sequence for $\mathbb{C}P^3\times\mathbb{S}^4,$ the element $(\mathbb{C}P^3\times\mathbb{S}^4\#\Sigma_{\beta_1}, f_{\alpha_1}\circ h_{\Sigma_{\beta_1}})$ is equivalent to $(\mathbb{C}P^3\times\mathbb{S}^4\#\Sigma_{2\beta_1}, h_{\Sigma_{2\beta_1}})$ in $\mathcal{S}^{Diff}(\mathbb{C}P^3\times\mathbb{S}^4).$ This implies that $[\Sigma_{2\beta_1}\# \Sigma_{\beta_1}^{-1}]\in I(\mathbb{C}P^3\times\mathbb{S}^4).$ But $[\Sigma_{2\beta_1}\# \Sigma_{\beta_1}^{-1}]=[\Sigma_{\beta_1}].$ 

Therefore, $(\Theta_{10})_{(3)} \subseteq I(\mathbb{C}P^3\times\mathbb{S}^4).$ 

    From \cite[Lemma 9.1]{Kawakubo69}, we observe that $I(\mathbb{C}P^3\times\mathbb{S}^4)$ does not contain any homotopy sphere that does not bound a spin manifold. Since $\Theta_{10}\cong \mathit{bspin}_{10}\oplus\mathbb{Z}/2$ \cite{GB69}, $2$-component of $\Theta_{10}$ is not contained in $I(\mathbb{C}P^3\times\mathbb{S}^4).$

   Hence $I(\mathbb{C}P^3\times\mathbb{S}^4)\cong \mathbb{Z}/3.$
\end{proof}

\begin{thm}\label{classificationCP3S4}
    Let $N$ be a closed, smooth, oriented manifold homeomorphic to $\mathbb{C}P^3\times\mathbb{S}^4.$ Then it is (oriented) diffeomorphic to either $\mathbb{C}P^3\times\mathbb{S}^4$ or $(\mathbb{C}P^3\times\mathbb{S}^4)\# \Sigma_{\eta\mu},$ where $[\Sigma_{\eta\mu}]\in \Theta_{10}$ is the exotic sphere of order $2.$
\end{thm}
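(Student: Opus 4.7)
The argument follows the template of Theorems \ref{classificationcp3s2} and \ref{ClassificationCP3S3}, with all the hard inputs already in hand. Let $[(N,g)]\in\mathcal{C}(\mathbb{C}P^3\times\mathbb{S}^4)$. By Lemma \ref{psicp3sk}(iv), the normal invariant $\eta^{Diff}(g)=\psi_*([(N,g)])$ lies in the subgroup
\[
\mathbb{Z}/2\{(\Sigma^4 Pr\circ\Sigma^4 q)^*[\epsilon]\}\;\oplus\;\mathbb{Z}/2\{(f_{\mathbb{C}P^3\times\mathbb{S}^4})^*[\mu]\}\;\oplus\;\mathbb{Z}/3\{(f_{\mathbb{C}P^3\times\mathbb{S}^4})^*[\beta_1]\}
\]
of $[\mathbb{C}P^3\times\mathbb{S}^4,G/O]$, so I can write $\eta^{Diff}(g)=a\,(\Sigma^4 Pr\circ\Sigma^4 q)^*[\epsilon]+b\,(f_{\mathbb{C}P^3\times\mathbb{S}^4})^*[\mu]+c\,(f_{\mathbb{C}P^3\times\mathbb{S}^4})^*[\beta_1]$ with $a,b\in\mathbb{Z}/2$ and $c\in\mathbb{Z}/3$.

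The first step is to eliminate the $[\epsilon]$- and $[\beta_1]$-components of $\eta^{Diff}(g)$ using the self-homeomorphisms $f_{2\nu}$ of Lemma \ref{numap} and $f_{\alpha_1}$ of Lemma \ref{betamap}, whose normal invariants realize exactly these two summands. Applying the composition formula $\eta^{Diff}(h\circ g)=\eta^{Diff}(h)+(h^{-1})^*\eta^{Diff}(g)$ iteratively with suitable powers of $h=f_{2\nu}$ and $h=f_{\alpha_1}$, and using (as in the proof of Theorem \ref{inertiacp3s4}) that the pullback actions $(f_{\alpha_1}^{-1})^*$ and $(f_{2\nu}^{-1})^*$ preserve each summand of $\mathrm{Im}(\psi_*)$ up to sign, one produces a homeomorphism $g'\colon N\to \mathbb{C}P^3\times\mathbb{S}^4$ with $\eta^{Diff}(g')\in\mathbb{Z}/2\{(f_{\mathbb{C}P^3\times\mathbb{S}^4})^*[\mu]\}$.

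The second step is the surgery-exact-sequence dichotomy. Since $bP_{11}=0$ and $L_{11}(e)=0$, the normal-invariant map $\mathcal{S}^{Diff}(\mathbb{C}P^3\times\mathbb{S}^4)\to [\mathbb{C}P^3\times\mathbb{S}^4,G/O]$ is injective. If $\eta^{Diff}(g')=0$, then $(N,g')$ is equivalent to $(\mathbb{C}P^3\times\mathbb{S}^4,\mathrm{id})$ in $\mathcal{S}^{Diff}$, so $N$ is (oriented) diffeomorphic to $\mathbb{C}P^3\times\mathbb{S}^4$; if instead $\eta^{Diff}(g')=(f_{\mathbb{C}P^3\times\mathbb{S}^4})^*[\mu]=\eta^{Diff}(h_{\Sigma_{\eta\mu}})$, then $(N,g')$ is equivalent to $((\mathbb{C}P^3\times\mathbb{S}^4)\#\Sigma_{\eta\mu}, h_{\Sigma_{\eta\mu}})$, so $N$ is diffeomorphic to $(\mathbb{C}P^3\times\mathbb{S}^4)\#\Sigma_{\eta\mu}$.

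Finally, to see that these two candidates are genuinely distinct, I would invoke Theorem \ref{inertiacp3s4}: since $I(\mathbb{C}P^3\times\mathbb{S}^4)=\mathbb{Z}/3$ is generated by $\Sigma_{\beta_1}$ and $[\Sigma_{\eta\mu}]$ is the order-$2$ generator of $\Theta_{10}\cong\mathbb{Z}/6$, the exotic sphere $\Sigma_{\eta\mu}$ does not lie in the inertia group, so $(\mathbb{C}P^3\times\mathbb{S}^4)\#\Sigma_{\eta\mu}$ is not orientation-preserving diffeomorphic to $\mathbb{C}P^3\times\mathbb{S}^4$. The main technical obstacle I anticipate is the careful bookkeeping in the first step: one must verify that $(f_{\alpha_1}^{-1})^*$ and $(f_{2\nu}^{-1})^*$ stabilize each summand of $\mathrm{Im}(\psi_*)$ (possibly up to a sign), which amounts to checking $f_{\mathbb{C}P^3\times\mathbb{S}^4}\circ f_{\alpha_1}^{-1}\simeq\pm f_{\mathbb{C}P^3\times\mathbb{S}^4}$ and an analogous statement for $\Sigma^4 Pr\circ\Sigma^4 q$; the $[\beta_1]$-case was essentially carried out inside the proof of Theorem \ref{inertiacp3s4}, and the $[\epsilon]$-case should yield to the same degree and cofiber-sequence arguments used to construct $f_{2\nu}$ in Lemma \ref{numap}.
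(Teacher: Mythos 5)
Your proposal is correct and follows essentially the same route as the paper: identify the possible normal invariants via Lemma \ref{psicp3sk}(iv), use the self-homeomorphisms $f_{2\nu}$ (Lemma \ref{numap}) and $f_{\alpha_1}$ (Lemma \ref{betamap}) together with the composition formula to remove the $[\epsilon]$- and $[\beta_1]$-components, then apply the surgery exact sequence (with $L_{11}(e)=0$) and Theorem \ref{inertiacp3s4} to separate the two remaining classes. The only difference is presentational—you treat a general element $a[\epsilon]+b[\eta\mu]+c[\beta_1]$ uniformly while the paper argues case by case—and the sign/bookkeeping issue you flag for $(f_{2\nu}^{-1})^*$ and $(f_{\alpha_1}^{-1})^*$ is handled in the paper at the same level of detail (a $\pm$ sign, harmless since $[\epsilon]$ and $[\eta\mu]$ have order $2$ and the $\beta_1$-part lies in the inertia group).
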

\begin{proof}
 Let $[(N,g)]$ be an element of $\mathcal{C}(\mathbb{C}P^3\times\mathbb{S}^4).$

   \underline{\textbf{Case-1}} Let $\eta^{Diff}(g)=0.$ Then by the smooth surgery exact sequence for $\mathbb{C}P^3\times\mathbb{S}^4,$ we have that $(N,g)$ is equivalent to $(\mathbb{C}P^3\times\mathbb{S}^4, id)$ in $\mathcal{S}^{Diff}(\mathbb{C}P^3\times\mathbb{S}^4).$

   \underline{\textbf{Case-2}} Let $\eta^{Diff}(g)$ be nontrivial.
   \begin{itemize}
       \item[(i)] Let $[(N,g)]$ be an element of $\mathcal{C}(\mathbb{C}P^3\times\mathbb{S}^4)$ coming from $[\Sigma^4\mathbb{C}P^2,Top/O]\subset [\Sigma^4 \mathbb{C}P^3,Top/O].$ Then, from Lemma \ref{psicp3sk} {(iv)}, the normal invariant of $[(N,g)]$ is $(\Sigma^4 q)^*\circ (\Sigma^4 Pr)^*[\epsilon].$ Now we study the action of the self-homeomorphism $f_{2\nu}$ from Lemma \ref{numap} on $[(N,g)]$ by computing the normal invariant as follows
    \begin{align*}
        \eta^{Diff}(f_{2\nu}\circ g)= & \eta(f_{2\nu})+ (f_{2\nu}^{-1})^* \eta^{Diff}(g)\\
         = & (\Sigma^4 q)^*\circ (\Sigma^4 Pr)^*[\epsilon]\pm (\Sigma^4 q)^*\circ (\Sigma^4 Pr)^*[\epsilon]=0,
    \end{align*} Therefore, by Case 1, $N$ is (oriented) diffeomorphic to $\mathbb{C}P^3\times\mathbb{S}^4.$
    \item[(ii)] Let $[(N,g)]\in \mathcal{C}(\mathbb{C}P^3\times\mathbb{S}^4)$ such that its normal invariant is $(f_{\mathbb{C}P^3\times\mathbb{S}^4})^*[\eta\mu] + (\Sigma^4 q)^*\circ(\Sigma^4 Pr)^*[\epsilon].$ Then 
    \begin{align*}
        \eta^{Diff}(f_{2\nu}\circ g)= & \eta^{Diff}(f_{2\nu})+ (f_{2\nu}^{-1})^* \eta^{Diff}(g)\\
        = & (\Sigma^4 q)^*\circ(\Sigma^4 Pr)^*[\epsilon] \pm \left ((f_{\mathbb{C}P^3\times\mathbb{S}^4})^*[\eta\mu] + (\Sigma^4 q)^*\circ(\Sigma^4 Pr)^*[\epsilon]\right)\\
        =& \pm (f_{\mathbb{C}P^3\times\mathbb{S}^4})^*[\eta\mu]
    \end{align*}
    Then by using Theorem \ref{inertiacp3s4}, we conclude that $N$ is (oriented) diffeomorphic to $(\mathbb{C}P^3\times\mathbb{S}^4)\# \Sigma_{\eta\mu}.$ 
    \item[(iii)] Let $[(N,g)]\in \mathcal{C}(\mathbb{C}P^3\times\mathbb{S}^4)$ such that $\eta^{Diff}(g)= (f_{\mathbb{C}P^3\times\mathbb{S}^4})^*[\beta_1] + (\Sigma^4 q)^*\circ(\Sigma^4 Pr)^*[\epsilon].$ Now 

    \begin{align*}
        \eta^{Diff}(f_{\alpha_1}\circ g)=& \eta^{Diff}(f_{\alpha_1})+(f_{\alpha_1}^{-1})^*\eta^{Diff}(g)\\
        = & (f_{\mathbb{C}P^3\times\mathbb{S}^4})^*[\beta_1]- \left((f_{\mathbb{C}P^3\times\mathbb{S}^4})^*[\beta_1]+ (\Sigma^4 q)^*\circ(\Sigma^4 Pr)^*[\epsilon]\right)= (\Sigma^4 q)^*\circ(\Sigma^4 Pr)^*[\epsilon].
    \end{align*}
    Hence, $N$ is oriented diffeomorphic to $\mathbb{C}P^3\times\mathbb{S}^4$ by Case 2.
\end{itemize}
 Combining the above cases with Theorem \ref{inertiacp3s4} yields the result.
\end{proof}
First, we examine the homotopy inertia group of $\mathbb{C}P^3\times\mathbb{S}^5$ and use it to determine the inertia group of $\mathbb{C}P^3\times\mathbb{S}^5.$
\begin{thm} \label{homotopyinertiacp3s5}
    The homotopy inertia group $I_h(\mathbb{C}P^3\times\mathbb{S}^5)$ is $\mathbb{Z}/{62}.$
\end{thm}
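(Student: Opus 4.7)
The plan is to reduce the computation of $I_h(\mathbb{C}P^3\times\mathbb{S}^5)$ to determining the image of the smooth surgery obstruction map, in the spirit of Lemma~\ref{identity element}. Write $M:=\mathbb{C}P^3\times\mathbb{S}^5$ and let $f_M:M\to\mathbb{S}^{11}$ be the degree-one collapse. By Lemma~\ref{psicp3sk}(v), $\psi_*:[M,Top/O]\to[M,G/O]$ is trivial, so $\eta^{Diff}(h_\Sigma)=0$ for every $\Sigma\in\Theta_{11}$. The simply connected smooth surgery exact sequence
\[
[\Sigma M,G/O]\xrightarrow{\Theta^{Diff}_M}L_{12}(\mathbb{Z})\xrightarrow{\omega^{Diff}}\mathcal{S}^{Diff}(M)\xrightarrow{\eta^{Diff}}[M,G/O]\to 0
\]
then places $[(M\#\Sigma,h_\Sigma)]$ in the $L_{12}(\mathbb{Z})$-orbit of $(M,\mathrm{id})$. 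Naturality of $\omega^{Diff}$ under $f_M$, combined with the identifications $bP_{12}=\Theta_{11}=\mathbb{Z}/992$, identifies the action of $n\in L_{12}(\mathbb{Z})=\mathbb{Z}$ on $(M,\mathrm{id})$ as connected sum with $\Sigma_n$, where $\Sigma_n$ is the image of $n$ under $\mathbb{Z}\twoheadrightarrow\mathbb{Z}/992$. Thus $\Sigma_n\in I_h(M)$ iff $n\in\operatorname{Im}(\Theta^{Diff}_M)$, and
\[
I_h(\mathbb{C}P^3\times\mathbb{S}^5)\cong\bigl(\operatorname{Im}(\Theta^{Diff}_M)+992\mathbb{Z}\bigr)/992\mathbb{Z}\subset\Theta_{11}.
\]
It therefore suffices to prove $\operatorname{Im}(\Theta^{Diff}_M)=16\mathbb{Z}$, since $992/16=62$.

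I would compute this image using the stable wedge decomposition $\Sigma M\simeq\Sigma\mathbb{C}P^3\vee\mathbb{S}^6\vee\Sigma^6\mathbb{C}P^3$. The summand $[\Sigma\mathbb{C}P^3,G/O]$ vanishes since $\pi_k(G/O)=0$ for $k=3,5,7$, and the $\pi_6(G/O)$-summand makes no contribution to the 12-dimensional signature (a Sullivan--Wall check using $L_3(\mathbb{C}P^3)=0$). Hence $\operatorname{Im}(\Theta^{Diff}_M)=\operatorname{Im}(\Theta^{Diff}|_{[\Sigma^6\mathbb{C}P^3,G/O]})$. Filtering by the cofiber $\Sigma^6\mathbb{C}P^2\hookrightarrow\Sigma^6\mathbb{C}P^3\twoheadrightarrow\mathbb{S}^{12}$, the top cell contributes exactly $\operatorname{Im}(\Theta^{Diff}_{\mathbb{S}^{11}})=992\mathbb{Z}$ by naturality along $f_M$. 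The additional contribution from $[\Sigma^6\mathbb{C}P^2,G/O]$ is analyzed by the Sullivan--Wall formula: with $p_1(\mathbb{C}P^3)=4x^2$ and $p_2(\mathbb{C}P^3)=6x^4=0$, so that $L_1(\mathbb{C}P^3)=\tfrac{4}{3}x^2$, a class $\bar f$ whose 8-dimensional Sullivan component is a generator of the $\mathbb{Z}$-summand of $\pi_8(G/O)$ pairs with $L_1(M)$ to produce a surgery obstruction whose 2-adic valuation I expect to be exactly $4$, and whose 31-local image together with $992\mathbb{Z}_{(31)}=31\mathbb{Z}_{(31)}$ saturates $\mathbb{Z}_{(31)}$. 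Assembling these local data will yield $\operatorname{Im}(\Theta^{Diff}_M)=16\mathbb{Z}$, and hence $I_h(\mathbb{C}P^3\times\mathbb{S}^5)\cong\mathbb{Z}/62$.

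The main obstacle is executing the Sullivan--Wall formula precisely. One must track the normalization of the universal class $\mathcal{L}\in H^*(G/O;\mathbb{Z}_{(2)})$, whose components differ from the naive $l_{4k}$ by factors of $8$; only after correctly accounting for these factors can one certify that the 2-adic valuation of the middle-cell contribution equals exactly $4$ rather than a smaller number. A potentially cleaner route is to bypass the universal-class computation by constructing a concrete normal map $W\to M\times I$ rel $\partial$ whose signature equals $128=16\cdot 8$---for instance via a plumbing construction combining the $E_8$-manifold with a tangential twist encoded by $L_1(\mathbb{C}P^3)$---which realizes $16\in\operatorname{Im}(\Theta^{Diff}_M)$ by construction, together with an upper bound on $\operatorname{Im}(\Theta^{Diff}_M)$ derived from the 2-local structure of $[\Sigma M,G/Top]$.
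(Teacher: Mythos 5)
Your reduction is sound and is essentially the same as the paper's: since $\psi_*:[\mathbb{C}P^3\times\mathbb{S}^5,Top/O]\to[\mathbb{C}P^3\times\mathbb{S}^5,G/O]$ is trivial, every $(M\#\Sigma,h_\Sigma)$ lies in the $L_{12}$-orbit of $(M,\mathrm{id})$, and $I_h(\mathbb{C}P^3\times\mathbb{S}^5)$ is the image of $\operatorname{Im}(\Theta^{Diff})$ in $bP_{12}\cong\mathbb{Z}/992$, so everything hinges on showing $\operatorname{Im}(\Theta^{Diff})=16\mathbb{Z}$. But that is exactly the step you do not carry out: you write that the $2$-adic valuation of the middle-cell contribution is one you ``expect'' to be $4$, that assembling the local data ``will yield'' $16\mathbb{Z}$, and you yourself flag the normalization of the Sullivan classes as the unresolved obstacle, offering an alternative plumbing construction that is likewise only sketched. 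Since the entire content of the theorem is the integer $62=992/16$, and a valuation of $3$ or $5$ at the prime $2$ would change the answer, this is a genuine gap rather than a routine verification left to the reader.

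There is also a structural weakness in the route you propose for closing it. The characteristic-class formula with the universal class $\mathcal{L}$ computes surgery obstructions of $G/Top$- (or $G/PL$-) normal invariants; for $\Theta^{Diff}$ on $[\Sigma M,G/O]$ what one actually needs is which bundle data can occur, i.e.\ the image of $[\Sigma(\mathbb{C}P^3\times\mathbb{S}^5),G/O]\to[\Sigma(\mathbb{C}P^3\times\mathbb{S}^5),BSO]\cong\mathbb{Z}\oplus\mathbb{Z}$ together with the admissible Pontrjagin classes of its generators. This is how the paper proceeds: after passing to $\mathbb{C}P^3\times\mathbb{S}^5\times\mathbb{S}^1$ via the Shaneson isomorphism $L_{12}(e)\cong L_{12}(\mathbb{Z})$ and injectivity of $p^*$, it uses Brumfiel's formula $\Theta^{Diff}(f)=\tfrac18\langle L(M\times\mathbb{S}^1)(1-L(\xi)),[M\times\mathbb{S}^1]\rangle$ and the $KO$-theoretic integrality results of Fujii and Tanaka to pin down $p(\xi_1)=1+240\cdot12\,yz+504\cdot40\,yz^3$ and $p(\xi_2)=1+504\cdot240\,yz^3$, obtaining $\Theta^{Diff}=-240m+992n=2^4(-15m+62n)$ and hence image exactly $16\mathbb{Z}$. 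The factor $12$ (rather than the $6$ one would guess from $\pi_8(BSO)$ alone) reflects the extension over $\Sigma^6\mathbb{C}P^2$ and is precisely the kind of integrality input your sketch defers; without it your claimed valuation $4$ is unsubstantiated. So the proposal is a correct plan with the decisive computation missing.
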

\begin{proof}
Since the homotopy inertia group of $\mathbb{C}P^3\times\mathbb{S}^5$ is equal to $L_{12}(e)/{Im(\Theta^{diff})},$ it suffices to determine the image of the surgery obstruction map $\Theta^{Diff}: [\Sigma(\mathbb{C}P^3\times\mathbb{S}^5), G/O]\to L_{12}(e).$ Now consider the following commutative diagram
\begin{center}
    \begin{tikzcd}
        {[\Sigma(\mathbb{C}P^3\times\mathbb{S}^5), G/O]}\arrow[r,"\Theta^{Diff}"]\arrow[d,"p^*"']&{L_{12}(e)}\arrow[r,"\omega^{Diff}"]\arrow[d,"\cong"]&{\mathcal{S}^{Diff}(\mathbb{C}P^3\times\mathbb{S}^5)}\\
        {[\mathbb{C}P^3\times\mathbb{S}^5\times\mathbb{S}^1, G/O]}\arrow[r,"\Theta^{Diff}"']&{L_{12}(\mathbb{Z})}
    \end{tikzcd}
\end{center}
where $p^*$ is injective by \eqref{note2.3}, the rows are surgery obstruction maps for $\mathbb{C}P^3\times\mathbb{S}^5,~ \mathbb{C}P^3\times\mathbb{S}^5\times\mathbb{S}^1$ respectively and the isomorphism $L_{12}(e)\to L_{12}(\mathbb{Z})$ follows from \cite{JS69}. From the above commutative diagram, it follows that $\Theta^{Diff}(f)= \Theta^{Diff}(p^*(f))$ for any $f\in [\Sigma(\mathbb{C}P^3\times\mathbb{S}^5), G/O].$ According to \cite{gw}, $\Theta^{Diff}(f)$ is given by 
\begin{equation*} 
    \Theta^{Diff}(f)= \frac{1}{8}\langle L(\mathbb{C}P^3\times\mathbb{S}^5\times\mathbb{S}^1)(1-L(\xi)), [\mathbb{C}P^3\times\mathbb{S}^5\times\mathbb{S}^1]\rangle,
\end{equation*} where $\xi$ denotes the image of $f$ under the canonical map $i: G/O\to BSO,$ and $L(\xi)$ is the $L$-genus of $\xi.$

    We note that the image of $i_*:[\Sigma(\mathbb{C}P^3\times\mathbb{S}^5), G/O]\to[\Sigma(\mathbb{C}P^3\times\mathbb{S}^5), BSO]$ is $\mathbb{Z}\oplus \mathbb{Z}.$ Let $\xi_1$ and $\xi_2$ be generators of this image. Now using \cite[Theorem 2 {(vi)}]{Fuji}, \cite[Lemma 2.1 (2)]{Tanaka} and \cite[Corollary 15.5]{MilSta}, we find that the total Pontrjagin classes of $\xi_1$ and $\xi_2$ are given by  $p(\xi_1)=1+p_1(\xi_1)+p_2(\xi_1)+p_3(\xi_1)=1+ 0+ 240. 12. yz+ 504. 40. yz^3$ and $p(\xi_2)=1+p_1(\xi_2)+p_2(\xi_2)+p_3(\xi_2)= 1+ 0+ 0+ 504. 240.yz^3,$ where $y\in H^6(\mathbb{S}^6;\mathbb{Z})$ and $z\in H^2(\mathbb{C}P^3;\mathbb{Z})$ are generators. Thus, if $\xi=m\xi_1+n\xi_2,$ then
      \begin{align*}
        \Theta^{Diff}(f)= & -\frac{1}{8}\langle(1+\frac{4}{3}z^2)\left(\frac{7.240.12. m}{45} yz + \frac{62.504.40 (m-6n)}{945} yz^3\right),[\mathbb{C}P^3\times\mathbb{S}^5\times\mathbb{S}^1]\rangle\\
        = & -\frac{1}{8}\left[1920 m-7936n\right]\\
        =& -240 m+992n = 2^4 (-15m+ 62n).
    \end{align*}
    Hence $I_h(\mathbb{C}P^3\times\mathbb{S}^5)=\mathbb{Z}/{62}\subset \Theta_{11}.$
\end{proof}

\begin{lemma}\label{dichotomycp3s5}
    Let $f\in \mathcal{E}(\mathbb{C}P^3\times\mathbb{S}^5).$ Then $f$ is homotopic to a diffeomorphism if and only if $\eta^{Diff}(f)=0.$ 
\end{lemma}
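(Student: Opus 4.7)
The ``only if'' direction is immediate from the composition formula for normal invariants. For the converse, we follow the strategy of Lemma \ref{dichotomy}. Using the fibration $\mathbb{S}^1\to\mathbb{S}^7\to\mathbb{C}P^3$ we see that $\pi_5(\mathbb{C}P^3)=0$, so \cite[Proposition 2.1]{PP99} ensures every $f\in\mathcal{E}(\mathbb{C}P^3\times\mathbb{S}^5)$ is diagonalizable. Hence, by Fact \ref{self homotopy equivalence of product} and the description of $\pi_5(F_{\mathbb{S}^1}(\mathbb{C}^4))$ from \cite[Fact 5.2]{SBRKAS}, $f$ decomposes into factors drawn from $\mathcal{E}(\mathbb{C}P^3)$, $\mathcal{E}(\mathbb{S}^5)$, $[\mathbb{C}P^3,SG_6]$, and $\pi_5(F_{\mathbb{S}^1}(\mathbb{C}^4))$.

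The first two families of factors are represented by diffeomorphisms and contribute trivially to the normal invariant. By Fact \ref{normal invariant of SG}, the contribution from $[\mathbb{C}P^3,SG_6]$ lies in $[\mathbb{C}P^3,G/O]\subset [\mathbb{C}P^3\times\mathbb{S}^5,G/O]$. The contribution from $\pi_5(F_{\mathbb{S}^1}(\mathbb{C}^4))$ is identified via the Umkehr construction of \cite[Fact 5.2]{SBRKAS} with the image in $G/O$ of a suitable composition out of $\Sigma^5\mathbb{C}P^3_{+}$, and so lies in $[\Sigma^5\mathbb{C}P^3,G/O]$. Because $\pi_5(G/O)=0$, the $G/O$-analogue of Corollary \ref{concorMtimesSk} splits $[\mathbb{C}P^3\times\mathbb{S}^5,G/O]$ as $[\mathbb{C}P^3,G/O]\oplus[\Sigma^5\mathbb{C}P^3,G/O]$, so the two contributions land in complementary summands. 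The hypothesis $\eta^{Diff}(f)=0$ therefore forces each to vanish separately.

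It remains to show that each piece with trivial normal invariant is itself homotopic to a diffeomorphism. For the factor coming from $\pi_5(F_{\mathbb{S}^1}(\mathbb{C}^4))$, this is the analogue of Lemma \ref{codimensionresult}(a) and is obtained by the codimension surgery argument of \cite[Proposition 10.1]{bel}, adapted to $k=5$. For the factor from $[\mathbb{C}P^3,SG_6]$ one invokes the analogue of \cite[Propositions 5.1 and 5.2]{bel} after establishing an analogue of Lemma \ref{codimensionresult}(b) for $SG_6$. The main obstacle is carrying out the Toda bracket calculations that pin down the image of $\pi_5(F_{\mathbb{S}^1}(\mathbb{C}^4))$ in $[\Sigma^5\mathbb{C}P^3,G/O]$ and certify that its filtration is disjoint from that of the $[\mathbb{C}P^3,SG_6]$-contributions; this task is simplified by $\pi_4^s=0$, which eliminates the $\pi_{k-1}^s$ summand that would otherwise appear in the splitting of $\pi_5(F_{\mathbb{S}^1}(\mathbb{C}^4))$. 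Composing the resulting diffeomorphisms produces one homotopic to $f$, completing the proof.
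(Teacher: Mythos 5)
Your reduction (diagonalizability via \cite[Proposition 2.1]{PP99}, splitting off the factors from $\mathcal{E}(\mathbb{C}P^3)$, $\mathcal{E}(\mathbb{S}^5)$, $[\mathbb{C}P^3,SG_6]$ and $\pi_5(E_1(\mathbb{C}P^3))$) matches the paper, but the decisive step is left as a gap. For the factor arising from $\pi_5(E_1(\mathbb{C}P^3))$ (your $\pi_5(F_{\mathbb{S}^1}(\mathbb{C}^4))$) you reduce to the claim ``trivial normal invariant implies homotopic to a diffeomorphism'' and then assert it as ``the analogue of Lemma \ref{codimensionresult}(a), obtained by the codimension surgery argument of \cite[Proposition 10.1]{bel} adapted to $k=5$,'' while explicitly flagging the required Toda bracket computations as an unresolved ``main obstacle.'' That is exactly the content a proof must supply, and it is not automatic that the $k=3$ argument transfers. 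The point is sharpened by the fact that $[\Sigma^5\mathbb{C}P^3,G/O]=0$ (this is how the paper proves Lemma \ref{psicp3sk}(v)), so the normal invariant of this factor vanishes for free and the entire burden of the lemma for this factor is precisely the implication you have not proved. The paper avoids all of this with a different observation: $\pi_5(E_1(\mathbb{C}P^3))\cong\pi_5(U(3))$, so every self-homotopy equivalence of $\mathbb{C}P^3\times\mathbb{S}^5$ induced from this group is homotopic to a diffeomorphism outright, with no normal-invariant, filtration, or Toda-bracket analysis needed.

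For the factor from $[\mathbb{C}P^3,SG_6]$ your proposed route through analogues of \cite[Propositions 5.1 and 5.2]{bel} and an analogue of Lemma \ref{codimensionresult}(b), together with a filtration-disjointness argument, is both unnecessary and not carried out: \cite[Proposition 10.2]{bel} already states directly that a self-homotopy equivalence arising from $[\mathbb{C}P^3,SG_6]$ with vanishing normal invariant is homotopic to a diffeomorphism, and this single citation is what the paper uses. So while your overall skeleton is the right one, the proposal as written does not close either of the two substantive steps, and the missing step for the $\pi_5(E_1(\mathbb{C}P^3))$ factor is a genuine gap rather than a routine adaptation.
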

\begin{proof}
    Note that any $f\in \mathcal{E}(\mathbb{C}P^3\times\mathbb{S}^5)$ is diagonalizable by \cite[Proposition 2.1]{PP99}. Since any self-homotopy equivalence of $\mathbb{C}P^3\times\mathbb{S}^5$ coming from $\mathcal{E}(\mathbb{C}P^3)$ or $\mathcal{E}(\mathbb{S}^5)$ represent diffeomorphism, it follows from \cite[Theorem 2.5 and Proposition 2.3 {(d)}]{PP99} that it suffices to consider the self-homotopy equivalence induced from $\pi_{5}(E_1(\mathbb{C}P^3))$ and $[\mathbb{C}P^3, E_1(\mathbb{S}^5)]\cong [\mathbb{C}P^3, SG_6].$ As $\pi_5(E_1(\mathbb{C}P^3))\cong \pi_5(U(3)),$ any element of $\mathcal{E}(\mathbb{C}P^3\times\mathbb{S}^5)$ that comes from $\pi_5(E_1(\mathbb{C}P^3))$ is homotopic to diffeomorphism. Moreover, by \cite[Proposition 10.2]{bel} self-homotopy equivalence of $\mathbb{C}P^3\times\mathbb{S}^5$ arising from $[\mathbb{C}P^3, SG_6]$ is homotopic to a diffeomorphism if its normal invariant is zero.
\end{proof}

\begin{thm}\label{inertiacp3s5}
    The inertia group of $\mathbb{C}P^3\times\mathbb{S}^5$ is $\mathbb{Z}/{62}.$
\end{thm}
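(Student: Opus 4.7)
The plan is to show that $I(\mathbb{C}P^3\times\mathbb{S}^5)$ coincides with the homotopy inertia group $I_h(\mathbb{C}P^3\times\mathbb{S}^5) = \mathbb{Z}/62$ established in Theorem \ref{homotopyinertiacp3s5}. The inclusion $I_h \subseteq I$ should follow from the surgery-theoretic interpretation of $I_h$ used in the proof of Theorem \ref{homotopyinertiacp3s5}: if $\Sigma \in I_h$ then the class $[(M\#\Sigma, h_\Sigma)]$ equals $[(M, \mathrm{id})]$ in $\mathcal{S}^{Diff}(M)$, which produces a diffeomorphism $M\#\Sigma \to M$ (in fact one homotopic to $h_\Sigma$), so $\Sigma \in I$.

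For the reverse inclusion $I \subseteq I_h$, I would take $\Sigma \in I$, fix an orientation-preserving diffeomorphism $\psi: M\#\Sigma \to M$, and form the self-homotopy equivalence $f := h_\Sigma \circ \psi^{-1} \in \mathcal{E}(\mathbb{C}P^3\times\mathbb{S}^5)$. Using the composition formula for normal invariants of homotopy equivalences (in the form used in the proof of Theorem \ref{inertiagroupCP3S3}),
\[
\eta^{Diff}(f) \;=\; \eta^{Diff}(h_\Sigma) + (h_\Sigma^{-1})^* \eta^{Diff}(\psi^{-1}),
\]
the second summand vanishes because $\psi^{-1}$ is a diffeomorphism. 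For the first summand, $\eta^{Diff}(h_\Sigma)$ equals the image of $(f_M)^*[\Sigma] \in [\mathbb{C}P^3\times\mathbb{S}^5, Top/O]$ under $\psi_*: [\mathbb{C}P^3\times\mathbb{S}^5, Top/O] \to [\mathbb{C}P^3\times\mathbb{S}^5, G/O]$, which is trivial by Lemma \ref{psicp3sk}(v). Hence $\eta^{Diff}(f) = 0$, and Lemma \ref{dichotomycp3s5} then yields a diffeomorphism $\chi: M \to M$ homotopic to $f$. Consequently $h_\Sigma \simeq \chi \circ \psi$, and $\chi \circ \psi: M\#\Sigma \to M$ is a diffeomorphism homotopic to $h_\Sigma$, showing that $[(M\#\Sigma,h_\Sigma)] = [(M,\mathrm{id})]$ in $\mathcal{S}^{Diff}(M)$ and therefore $\Sigma \in I_h$.

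Combining the two inclusions gives $I(\mathbb{C}P^3\times\mathbb{S}^5) = I_h(\mathbb{C}P^3\times\mathbb{S}^5) = \mathbb{Z}/62$. The main technical hurdle will be the careful application of the composition formula for normal invariants and the identification of $\eta^{Diff}(h_\Sigma)$ with $\psi_*\!\big((f_M)^*[\Sigma]\big)$; once these are secured, the vanishing of $\psi_*$ from Lemma \ref{psicp3sk}(v) together with the normal-invariant dichotomy of Lemma \ref{dichotomycp3s5} delivers the conclusion without further calculation.
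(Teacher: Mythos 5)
Your proposal is correct and follows essentially the same route as the paper: the paper also proves $I\subseteq I_h$ by noting $\eta^{Diff}(h_\Sigma)=0$ (via Lemma \ref{psicp3sk}(v)), applying the composition formula to the self-equivalence built from $h_\Sigma$ and a diffeomorphism, invoking the dichotomy Lemma \ref{dichotomycp3s5}, and then quoting Theorem \ref{homotopyinertiacp3s5} for $I_h=\mathbb{Z}/62$. The only cosmetic differences are that you spell out the standard inclusion $I_h\subseteq I$ and the identification $\eta^{Diff}(h_\Sigma)=\psi_*\big((f_M)^*[\Sigma]\big)$, which the paper leaves implicit.
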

\begin{proof}
    Let $[\Sigma]\in I(\mathbb{C}P^3\times\mathbb{S}^5)$ and $h_{\Sigma}: (\mathbb{C}P^3\times\mathbb{S}^5)\#\Sigma\to\mathbb{C}P^3\times\mathbb{S}^5$ be the canonical homeomorphism. Then $\eta^{Diff}(h_{\Sigma})=0$ and there exists a (oriented) diffeomorphism $g: (\mathbb{C}P^3\times\mathbb{S}^5)\#\Sigma\to \mathbb{C}P^3\times\mathbb{S}^5.$ Hence $g\circ h_{\Sigma}^{-1}\in \mathcal{E}(\mathbb{C}P^3\times\mathbb{S}^5).$ Now by composition formula of normal invariant, we have $$\eta^{Diff}(g\circ h_{\Sigma}^{-1})=\eta^{Diff}(g)+(g^{-1})^*\eta^{Diff}(h_{\Sigma})=0.$$
    Therefore, by Lemma \ref{dichotomycp3s5}, $g\circ h_{\Sigma}^{-1}$ is homotopic to a diffeomorphism, implying that $[\Sigma]\in I_h(\mathbb{C}P^3\times\mathbb{S}^5).$ Consequently, we have from Theorem \ref{homotopyinertiacp3s5} that $I(\mathbb{C}P^3\times\mathbb{S}^5)= I_h(\mathbb{C}P^3\times\mathbb{S}^5)=\mathbb{Z}/{62}.$
\end{proof}

\begin{thm}\label{classificationcp3s5}
    Let $N$ be a closed, smooth, oriented manifold homeomorphic to $\mathbb{C}P^3\times\mathbb{S}^5.$ Then it is (oriented) diffeomorphic to one of the manifolds $\mathbb{C}P^3\times\mathbb{S}^5$ or $(\mathbb{C}P^3\times\mathbb{S}^5)\# \Sigma,$ where $[\Sigma]\in \mathbb{Z}/{2^4}\subset \Theta_{11}$ is an exotic $11$-sphere.  
\end{thm}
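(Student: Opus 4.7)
The plan is to combine the triviality of the map $\psi_*:[\mathbb{C}P^3\times\mathbb{S}^5,Top/O]\to[\mathbb{C}P^3\times\mathbb{S}^5,G/O]$ from Lemma \ref{psicp3sk}(v) with the inertia group computation $I(\mathbb{C}P^3\times\mathbb{S}^5)\cong\mathbb{Z}/{62}$ from Theorem \ref{inertiacp3s5}, passing through the smooth surgery exact sequence for $\mathbb{C}P^3\times\mathbb{S}^5$.

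First, for any closed, oriented, smooth manifold $N$ homeomorphic to $\mathbb{C}P^3\times\mathbb{S}^5$, I choose a homeomorphism $g:N\to\mathbb{C}P^3\times\mathbb{S}^5$ and regard $[(N,g)]$ as an element of $\mathcal{C}(\mathbb{C}P^3\times\mathbb{S}^5)\cong[\mathbb{C}P^3\times\mathbb{S}^5,Top/O]$.  By Lemma \ref{psicp3sk}(v) the map $\psi_*$ is trivial, so the smooth normal invariant $\eta^{Diff}(g)=\psi_*([(N,g)])$ vanishes.

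Next, by exactness of the smooth surgery exact sequence
\[
L_{12}(e)\xrightarrow{\omega^{Diff}}\mathcal{S}^{Diff}(\mathbb{C}P^3\times\mathbb{S}^5)\xrightarrow{\eta^{Diff}}[\mathbb{C}P^3\times\mathbb{S}^5,G/O],
\]
$(N,g)$ lies in the image of $\omega^{Diff}$, and therefore $(N,g)$ is equivalent in $\mathcal{S}^{Diff}$ to $(\mathbb{C}P^3\times\mathbb{S}^5\#\Sigma, h_\Sigma)$ for some $\Sigma$ in the image of the composition $L_{12}(e)\to\Theta_{11}$, namely some $\Sigma\in bP_{12}$.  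In dimension eleven $bP_{12}=\Theta_{11}\cong\mathbb{Z}/{992}$ (equivalently $\mathrm{coker}(J_{11})=0$), so every $[\Sigma]\in\Theta_{11}$ is realized in this way, and $N$ is orientation-preservingly diffeomorphic to $\mathbb{C}P^3\times\mathbb{S}^5\#\Sigma$ for some $[\Sigma]\in\Theta_{11}$.

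Finally, I invoke Theorem \ref{inertiacp3s5}: two manifolds $\mathbb{C}P^3\times\mathbb{S}^5\#\Sigma_1$ and $\mathbb{C}P^3\times\mathbb{S}^5\#\Sigma_2$ are oriented diffeomorphic if and only if $[\Sigma_1]-[\Sigma_2]\in I(\mathbb{C}P^3\times\mathbb{S}^5)\cong\mathbb{Z}/{62}$, so the set of diffeomorphism classes is parametrized by the quotient $\Theta_{11}/I\cong\mathbb{Z}/{992}\big/\mathbb{Z}/{62}\cong\mathbb{Z}/{16}=\mathbb{Z}/{2^4}$, and choosing $16$ coset representatives in $\Theta_{11}$ yields the enumeration in the statement.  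The bulk of the work has already been absorbed into Lemma \ref{psicp3sk}(v) and Theorem \ref{inertiacp3s5}; the only non-formal ingredient remaining is the identification of the image of $\omega^{Diff}$ with the full orbit $\{(\mathbb{C}P^3\times\mathbb{S}^5\#\Sigma,h_\Sigma):\Sigma\in\Theta_{11}\}$, which rests on the equality $bP_{12}=\Theta_{11}$ in this dimension.
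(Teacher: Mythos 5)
Your proposal is correct and follows essentially the same route as the paper's proof: triviality of the smooth normal invariant via Lemma \ref{psicp3sk} (v), the smooth surgery exact sequence together with $bP_{12}=\Theta_{11}\cong\mathbb{Z}/{992}$ to reduce $N$ to a connected sum $(\mathbb{C}P^3\times\mathbb{S}^5)\#\Sigma$ with $[\Sigma]\in\Theta_{11}$, and Theorem \ref{inertiacp3s5} to cut the classes down by the inertia group $\mathbb{Z}/{62}$, giving $\Theta_{11}/I(\mathbb{C}P^3\times\mathbb{S}^5)\cong\mathbb{Z}/{2^4}$. The only difference is presentational: you count diffeomorphism classes via this quotient and then pick coset representatives, whereas the paper verifies directly that its chosen representatives in $\mathbb{Z}/{2^4}\subset\Theta_{11}$ are pairwise non-diffeomorphic; both arguments rest on exactly the same ingredients.
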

\begin{proof}
    Let $[(N,g)]$ be an element of $\mathcal{C}(\mathbb{C}P^3\times\mathbb{S}^5).$ Then it follows from Corollary \ref{concorMtimesSk}, Proposition \ref{CP3computation} {(iii)} and Lemma \ref{psicp3sk} that $\eta^{Diff}(g)$ is trivial. Now the smooth surgery exact sequence implies that $N$ is (oriented) diffeomorphic to $(\mathbb{C}P^3\times\mathbb{S}^5)\#\Sigma^{11},$ for some $[\Sigma^{11}] \in bP_{12}.$ Hence, by Lemma \ref{inertiacp3s5}, $N$ is (oriented) diffeomorphic to either $\mathbb{C}P^3\times\mathbb{S}^5$ or $(\mathbb{C}P^3\times\mathbb{S}^5)\#\Sigma,$ where $[\Sigma]\in \mathbb{Z}/{2^4}\subset \Theta_{11}.$
    
    Let $[\Sigma_1]$ and $[\Sigma_2]$ be two distinct elements of $\mathbb{Z}/{2^4}\subset \Theta_{11}.$ Then if $(\mathbb{C}P^3\times\mathbb{S}^5)\#\Sigma_1$ is diffeomorphic to $(\mathbb{C}P^3\times\mathbb{S}^5)\#\Sigma_2,~ [\Sigma_1\#\Sigma_2^{-1}]\in I(\mathbb{C}P^3\times\mathbb{S}^5).$ Hence $\mid [\Sigma_1\#\Sigma_2^{-1}]\mid$ divides $62.$ Since $[\Sigma_1], [\Sigma_2] \in \mathbb{Z}/{2^4},$ the order of $[\Sigma_1\#\Sigma_2^{-1}]$ is zero. Thus $(\mathbb{C}P^3\times\mathbb{S}^5)\#\Sigma_1$ cannot be diffeomorphic to $(\mathbb{C}P^3\times\mathbb{S}^5)\#\Sigma_2.$ 

\end{proof}
Next, we determine the diffeomorphism classification of all closed, oriented, smooth manifolds homeomorphic to $\mathbb{C}P^3\times\mathbb{S}^6.$
\begin{lemma}\label{pi6fs1}
    Let $f: \mathbb{C}P^3\times\mathbb{S}^6\to \mathbb{C}P^3\times\mathbb{S}^6$ be a self-homotopy equivalence induced by an element of $\pi_6(F_{\mathbb{S}^1}(\mathbb{C}^4)).$ If the normal invariant $\eta^{Diff}(f)$ is nontrivial, then $\eta^{Diff}(f) \in \pi_6(G/O) \subset [\mathbb{C}P^3\times\mathbb{S}^6, G/O].$
\end{lemma}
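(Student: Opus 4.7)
The strategy closely parallels the proofs of Lemma \ref{betamap} and Lemma \ref{numap}. By \cite[Fact 5.2 {(d),(e),(f)}]{SBRKAS}, $\pi_6(F_{\mathbb{S}^1}(\mathbb{C}^4))$ admits a decomposition of the form $\pi_6^s(\Sigma \mathbb{C}P^\infty)\oplus \pi_5^s$, and any self-homotopy equivalence $f$ arising from this group has normal invariant equal to the image under the canonical composite
\begin{equation*}
j_* \colon \{\Sigma^6 \mathbb{C}P^3_+,\,\mathbb{S}^0\} \longrightarrow [\Sigma^6 \mathbb{C}P^3_+,\, G/O] \longrightarrow [\mathbb{C}P^3 \times \mathbb{S}^6,\, G/O]
\end{equation*}
of a stable map obtained by composing the suspended Umkehr map $\Sigma^5 t$ of the Hopf fibration $\mathbb{S}^7\to \mathbb{C}P^3$ with a chosen stable class. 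The first observation is that $\pi_5^s = 0$, so the $\pi_5^s$-summand contributes trivially; only the $\pi_6^s(\Sigma\mathbb{C}P^\infty)$-summand can produce a nontrivial $\eta^{Diff}(f)$.

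Using the splitting \eqref{note2.3} along $G/O$, I decompose
\begin{equation*}
[\mathbb{C}P^3 \times \mathbb{S}^6,\, G/O] \;\cong\; [\mathbb{C}P^3,\, G/O] \,\oplus\, \pi_6(G/O) \,\oplus\, [\Sigma^6 \mathbb{C}P^3,\, G/O],
\end{equation*}
so that proving the claim reduces to showing that the first and third summand components of $\eta^{Diff}(f)$ vanish. Since $f \in \mathcal{E}_{\mathbb{S}^6}(\mathbb{C}P^3\times \mathbb{S}^6)$ commutes with the projection to $\mathbb{S}^6$, its restriction to the slice $\mathbb{C}P^3 \times \{*\}$ is the identity, which immediately forces the $[\mathbb{C}P^3, G/O]$-component to be zero.

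To eliminate the $[\Sigma^6 \mathbb{C}P^3, G/O]$-component, I will restrict the stable map defining $\eta^{Diff}(f)$ cell by cell using the stable cofiber sequence $\mathbb{S}^2 \hookrightarrow \mathbb{C}P^3 \to \mathbb{S}^4 \vee \mathbb{S}^6$ of \eqref{longexactCP3new} (with attaching map $\eta + 2\nu + \alpha_1$). The resulting restrictions live in $\pi_8^s$, $\pi_{10}^s$, and $\pi_{12}^s$, and are detected by specific Toda brackets built out of the generators $\eta$, $\nu$, $\alpha_1$, and the class from $\pi_6^s(\Sigma\mathbb{C}P^\infty)$. The principal obstacle is to show that each of these Toda brackets lies in the image of the stable $J$-homomorphism, hence is annihilated by $j_* \colon \pi_*^s \to \pi_*(G/O)$; this is expected to follow by standard Toda-bracket computations from \cite{toda,ravenel} together with the indeterminacy afforded by $\pi_5^s = 0$. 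Once these vanishings are established, the only surviving piece of $\eta^{Diff}(f)$ sits in the $\pi_6(G/O)$ summand, proving the lemma.
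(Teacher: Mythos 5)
Your proposal has a genuine gap at its central step. The formula you invoke from \cite[Fact 5.2 (f)]{SBRKAS} --- normal invariant $=$ image of $\Sigma^6\mathbb{C}P^3_+\xrightarrow{\Sigma^5 t}\mathbb{S}^5\xrightarrow{x}\mathbb{S}^0$ --- computes normal invariants only for self-equivalences coming from the sphere summand $\pi_5^s\subset\pi_6(F_{\mathbb{S}^1}(\mathbb{C}^4))$, and as you yourself note $\pi_5^s=0$. The element that actually produces a nontrivial $\eta^{Diff}(f)$ lives in the other summand $\pi_6^s(\Sigma\mathbb{C}P^\infty)$, for which no Umkehr-composition formula of this shape is available in the paper, so your cell-by-cell reduction to Toda brackets in $\eta$, $\nu$, $\alpha_1$ is not grounded in a correct expression for $\eta^{Diff}(f)$. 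Moreover, even granting such a reduction, the decisive claim --- that each resulting bracket lies in the image of $J$ and hence dies under $j_*$ --- is only asserted as ``expected to follow''; it is exactly the content one would have to prove, and nothing in the proposal carries it out. (Your elimination of the $[\mathbb{C}P^3,G/O]$-component by restricting to a slice is also only heuristic; in the paper it comes from the factorization of $\eta^{Diff}(f)$ through the relative structure set and the injection $p^*:[\Sigma^6\mathbb{C}P^3_+,G/O]\to[\mathbb{C}P^3\times\mathbb{S}^6,G/O]$.)

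The paper's argument is much simpler and purely algebraic, and you missed the fact that makes it work: $\pi_6(F_{\mathbb{S}^1}(\mathbb{C}^4))\cong\pi_6^s(\Sigma\mathbb{C}P^\infty)\cong\mathbb{Z}/2$. The normal invariant of $f$ factors as $p^*\circ\eta^{Diff}_{\rel}\circ\Psi$ through the relative structure set $\mathcal{S}^{Diff}_6(\mathbb{C}P^3)$, where $\Psi$ and $\eta^{Diff}_{\rel}$ are group homomorphisms into $[\Sigma^6\mathbb{C}P^3_+,G/O]\cong\mathbb{Z}/3\oplus\mathbb{Z}\oplus\mathbb{Z}\oplus\pi_6(G/O)$. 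Since the complement $[\Sigma^6\mathbb{C}P^3,G/O]\cong\mathbb{Z}/3\oplus\mathbb{Z}\oplus\mathbb{Z}$ has no $2$-torsion, the image of a homomorphism from $\mathbb{Z}/2$ is forced into $\pi_6(G/O)$, and no Toda-bracket computation is needed. If you want to salvage your approach, you would have to either supply the correct transfer formula for the $\pi_6^s(\Sigma\mathbb{C}P^\infty)$-summand and actually evaluate the resulting brackets, or replace that entire step by the order/torsion argument above.
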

\begin{proof}
    Let $f\in \mathcal{E}(\mathbb{C}P^3\times\mathbb{S}^6)$ induced from $t\in \pi_6(F_{\mathbb{S}^1}(\mathbb{C}^4)).$ Consider the following commutative diagram 
     \begin{center}
        \begin{tikzcd}
            &{\pi_6(F_{\mathbb{S}^1}(\C^4))}\arrow[dl,"\Psi"']\arrow[dr,"\eta_{rel}^{Diff}\circ \Psi"]\\
            {\mathcal{S}_6^{Diff}(\C P^3)}\arrow[d,"\Gamma"']\arrow[rr,"\eta_{rel}^{Diff}"]&&{[\Sigma^6 \mathbb{C}P^3_{+}, G/O]\cong \z/3\oplus\z\oplus\z\oplus\pi_6(G/O)}\arrow[d,"p^*"]\\
            {\mathcal{S}^{diff}(\C P^3\times\s^6)}\arrow[rr,"\eta^{Diff}"']&&{[\C P^3\times\s^6, G/O]}
        \end{tikzcd}
    \end{center}
 where $\mathcal{S}_6^{Diff}(\mathbb{C}P^3)$ denotes the relative structure set $ \mathcal{S}^{Diff}(\mathbb{C}P^3\times\mathbb{D}^6 ~\mathrm{rel}~ \mathbb{C}P^3\times\mathbb{S}^5)$ \cite{Luck_surgery}, $\Gamma: \mathcal{S}_6^{Diff}(\mathbb{C}P^3)\to \mathcal{S}^{Diff}(\mathbb{C}P^3\times\mathbb{S}^6)$ is the canonical mapping obtained by extension via diffeomorphism, $p^* :[\Sigma^6 \mathbb{C}P^3_{+}, G/O]\to [\mathbb{C}P^3\times\mathbb{S}^6, G/O]$ induced from the quotient map $p: \mathbb{C}P^3\times \mathbb{S}^6 \to \Sigma^6 \mathbb{C}P^3_{+} $ is injective from \eqref{note2.3}, and $[\Sigma^6 \mathbb{C}P^3, G/O]\cong \mathbb{Z}/3\oplus\mathbb{Z}\oplus\mathbb{Z},$ derived from the long exact sequence induced from \eqref{longexactCP3new} along $G/O.$ Given $\eta^{Diff}(f)$ is nontrivial, $\eta_{rel}^{Diff}(f)$ is also nontrivial from the above commutative diagram. Since $\pi_6(F_{\mathbb{S}^1}(\mathbb C^4))\cong \pi_6^s(\Sigma \mathbb{C}P^{\infty})\cong \mathbb Z/2$ \cite{JBRS74, muk2} and both $\Psi: \pi_6(F_{\mathbb{S}^1}(\mathbb{C}^4))\to \mathcal{S}_6^{Diff}(\mathbb{C}P^3),~ \eta_{rel}^{Diff}: \mathcal{S}_6^{Diff}(\mathbb{C}P^3)\to [\Sigma^6\mathbb{C}P_{+}^3, G/O]$ are group homomorphisms, $\eta_{rel}^{Diff}\circ\Psi(t) \in \pi_6(G/O) \subset [\Sigma^6 \mathbb{C}P^3_{+}, G/O],$ and hence $\eta^{Diff}(f)\in \pi_6(G/O) \subset [\mathbb{C}P^3\times\mathbb{S}^6, G/O].$ This completes the proof.
\end{proof}

Let $h:\widetilde{N}\to \mathbb{C}P^3\times\mathbb{S}^6$ be a homeomorphism such that $\eta^{Diff}(h)$ is nontrivial and belongs to $\pi_{10}(G/O)_{(3)}\subset [\mathbb{C}P^3\times\mathbb{S}^6,G/O],$ where $\widetilde{N}$ is a closed, oriented, smooth $12$-dimensional manifold.  
 
\begin{theorem}\label{classificationCP3S6}
    Let $N$ be a closed, oriented, smooth manifold homeomorphic to $\mathbb{C}P^3\times\mathbb{S}^6.$ Then it is diffeomorphic to exactly one of the manifolds $\mathbb CP^3\times \mathbb S^6, ~\widetilde{N}.$
\end{theorem}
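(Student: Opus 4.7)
The plan is to combine the decomposition of $\mathcal{C}(\mathbb{C}P^3\times\mathbb{S}^6)$ provided by Corollary~\ref{concorMtimesSk} and Proposition~\ref{CP3computation}(ii) with the smooth surgery exact sequence, and then use an explicit self-diffeomorphism to reduce three concordance classes to the two oriented diffeomorphism classes stated. Since $L_{13}(e)=0$, the normal invariant
\[
\eta^{Diff}\colon \mathcal{S}^{Diff}(\mathbb{C}P^3\times\mathbb{S}^6)\hookrightarrow [\mathbb{C}P^3\times\mathbb{S}^6, G/O]
\]
is injective, and by Lemma~\ref{psicp3sk}(vi) its image is contained in the cyclic group of order three generated by $\alpha := (\Sigma^6 Pr\circ\Sigma^6 q)^*[\beta_1]$. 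Hence for any $[(N,g)]\in\mathcal{C}(\mathbb{C}P^3\times\mathbb{S}^6)$ we have $\eta^{Diff}(g)\in\{0,\alpha,-\alpha\}$: if $\eta^{Diff}(g)=0$ then $N$ is (oriented) diffeomorphic to $\mathbb{C}P^3\times\mathbb{S}^6$, and if $\eta^{Diff}(g)\neq 0$ then $(N,g)$ realises the manifold $\widetilde{N}$ fixed in the paragraph preceding the theorem.

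To identify the concordance classes with normal invariants $\alpha$ and $-\alpha$ in $\mathcal{S}(\mathbb{C}P^3\times\mathbb{S}^6)$, I would produce an orientation-preserving self-diffeomorphism $\phi$ of $\mathbb{C}P^3\times\mathbb{S}^6$ whose pre-composition action sends $\alpha$ to $-\alpha$. The natural candidate is $\phi := c\times a$, where $c\colon\mathbb{C}P^3\to\mathbb{C}P^3$ is complex conjugation and $a\colon\mathbb{S}^6\to\mathbb{S}^6$ is the antipodal map. Both are orientation-reversing on their respective factors (since $c^*x^3 = -x^3$ in $H^6(\mathbb{C}P^3;\mathbb{Z})$ and $\deg a = -1$), so their product preserves the orientation of the 12-dimensional manifold. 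The class $\alpha$ factors through the smash quotient $\mathbb{C}P^3\times\mathbb{S}^6\to\mathbb{C}P^3\wedge\mathbb{S}^6\simeq\Sigma^6\mathbb{C}P^3$, so it suffices to compute $(c\wedge a)^*$ on the piece detected by $\Sigma^6 Pr\circ\Sigma^6 q$. On the stable wedge splitting $\mathbb{C}P^3/\mathbb{C}P^1\simeq\mathbb{S}^4\vee\mathbb{S}^6$ the induced $\bar c$ has degrees $(+1,-1)$ (from $c^*x^2=x^2$ and $c^*x^3=-x^3$); smashing with $a$ multiplies each by $-1$, producing degrees $(-1,+1)$ on $\mathbb{S}^{10}\vee\mathbb{S}^{12}$. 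Since $\Sigma^6 Pr$ projects onto the $\mathbb{S}^{10}$-summand and annihilates $\mathbb{S}^{12}$, this forces $\phi^*\alpha = -\alpha$.

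The composition formula for normal invariants then gives $\eta^{Diff}(\phi\circ g) = \eta^{Diff}(\phi) + (\phi^{-1})^*\eta^{Diff}(g) = -\eta^{Diff}(g)$, so two pairs $(N_1,g_1)$ and $(N_2,g_2)$ with normal invariants $\alpha$ and $-\alpha$ yield the same element of $\mathcal{S}(\mathbb{C}P^3\times\mathbb{S}^6)$ once we pre-compose one of them with $\phi$; combined with the trivial class, $\mathcal{S}(\mathbb{C}P^3\times\mathbb{S}^6)$ consists of exactly the two classes $[\mathbb{C}P^3\times\mathbb{S}^6]$ and $[\widetilde{N}]$. The main obstacle is the sign analysis of $\phi^*$ on the stable cohomotopy class $\alpha$: one must track carefully how complex conjugation acts on the successive quotients of the filtration $\mathbb{C}P^1\subset\mathbb{C}P^2\subset\mathbb{C}P^3$ and correctly combine this with the antipode's degree contribution on the suspension factor. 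Note that Lemma~\ref{pi6fs1} shows that the self-homotopy equivalences coming from $\pi_6(F_{\mathbb{S}^1}(\mathbb{C}^4))$ have normal invariants concentrated in $\pi_6(G/O)$ and thus cannot distinguish the $3$-primary classes $\alpha$ and $-\alpha$, so the explicit diffeomorphism $\phi = c\times a$ is essential to the identification.
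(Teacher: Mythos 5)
Your reduction to the three possible normal invariants $\{0,\alpha,-\alpha\}$ and your identification of the two nonzero classes are essentially sound, and your diffeomorphism $\phi=c\times a$ is a genuine variant of the paper's choice: the paper uses $id_{\mathbb{C}P^3}\times(-id_{\mathbb{S}^6})$, which reverses orientation of the product, whereas your $\phi$ is orientation-preserving, so your version even yields an orientation-preserving identification of the two nonzero classes. Your sign computation $\phi^{*}\alpha=-\alpha$ does go through; the only point you gloss over is that the induced self-map of $\mathbb{S}^{10}\vee\mathbb{S}^{12}$ need not be strictly diagonal (there can be an $\eta^{2}$-component $\mathbb{S}^{12}\to\mathbb{S}^{10}$), but this is harmless because $[\beta_1]\circ\eta^{2}=0$, being killed by both $2$ and $3$.

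The genuine gap is the ``exactly one'' clause: you never show that a manifold $N$ with $\eta^{Diff}(g)=\pm\alpha$ is \emph{not} diffeomorphic to $\mathbb{C}P^3\times\mathbb{S}^6$, i.e.\ that $\widetilde{N}$ and $\mathbb{C}P^3\times\mathbb{S}^6$ are distinct. A nonzero normal invariant only separates elements of $\mathcal{S}^{Diff}(\mathbb{C}P^3\times\mathbb{S}^6)$; the diffeomorphism classification is the quotient by the action of $\mathcal{E}(\mathbb{C}P^3\times\mathbb{S}^6)$, so one must rule out a self-homotopy equivalence $f$ with $\eta^{Diff}(f\circ g)=0$, equivalently one whose normal invariant cancels the nonzero $3$-primary class in $[\Sigma^6\mathbb{C}P^3,G/O]$. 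This is where most of the paper's proof lives: since every self-homotopy equivalence of $\mathbb{C}P^3\times\mathbb{S}^6$ is diagonalizable, Fact \ref{self homotopy equivalence of product} reduces one to equivalences coming from $\mathcal{E}(\mathbb{C}P^3)$ and $\mathcal{E}(\mathbb{S}^6)$ (represented by diffeomorphisms, so contributing nothing), from $[\mathbb{C}P^3,SG_7]$ (normal invariant in $[\mathbb{C}P^3,G/O]$ by Fact \ref{normal invariant of SG}), and from $\pi_6(F_{\mathbb{S}^1}(\mathbb{C}^4))$ (normal invariant in $\pi_6(G/O)$ by Lemma \ref{pi6fs1}); none of these meet the summand $[\Sigma^6\mathbb{C}P^3,G/O]$, whence $\eta^{Diff}(f\circ g)\neq 0$ and $N\not\cong\mathbb{C}P^3\times\mathbb{S}^6$. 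You invoke Lemma \ref{pi6fs1} only to argue that such equivalences cannot interchange $\alpha$ and $-\alpha$, which is a different point; without the full analysis of $\mathcal{E}(\mathbb{C}P^3\times\mathbb{S}^6)$ the statement that $N$ is diffeomorphic to \emph{exactly} one of $\mathbb{C}P^3\times\mathbb{S}^6$, $\widetilde{N}$ is not established.
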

\begin{proof}
    Let $[(N,g)]$ be an element of $\mathcal{C}(\mathbb{C}P^3\times\mathbb{S}^6).$ If $\eta^{Diff}(g)$ is trivial, then the surgery exact sequence for $\mathbb{C}P^3\times\mathbb{S}^6$ implies that $(N, g)$ and $(\mathbb{C}P^3\times\mathbb{S}^6, Id)$ represent the same element in $\mathcal{S}^{Diff}(\mathbb{C}P^3\times\mathbb{S}^6).$ 

    Let $\eta^{Diff}(g)$ be nontrivial. Then from Proposition \ref{CP3computation} {(ii)} and Lemma \ref{psicp3sk} {(vi)}, $\eta^{Diff}(g)\in (\pi_{10}(G/O))_{(3)}\subset [\mathbb{C}P^3\times\mathbb{S}^6, G/O].$ Since $\pi_6(\mathbb{C}P^3)=0,$ it follows from \cite[Proposition 2.1]{PP99} and Fact \ref{self homotopy equivalence of product} {(b)} that we need to analyze the action of self-homotopy equivalence of $\mathbb{C}P^3\times\mathbb{S}^6$ arising from $\mathcal{E}(\mathbb{S}^6),\mathcal{E}(\mathbb{C}P^3),[\mathbb{C}P^3, E_1(\mathbb{S}^6)]$ and $\pi_6(E_1(\mathbb{C}P^3))$ separately. We observe that any self-homotopy equivalence induced from $\mathcal{E}(\mathbb{S}^6)$ and $\mathcal{E}(\mathbb{C}P^3)$ is represented by a diffeomorphism. Let $f\in\mathcal{E}(\mathbb{C}P^3\times\mathbb{S}^6)$ induced from an element of $\mathcal{E}(\mathbb{S}^6)$ (respectively $\mathcal{E}(\mathbb{C}P^3)$). Then by the composition formula of normal invariant, we have $\eta^{Diff}(f\circ g)=(f^{-1})^*\eta^{Diff}(g)\neq 0.$

    Let $f\in \mathcal{E}(\mathbb{C}P^3\times\mathbb{S}^6)$ coming from an element of $\pi_6(E_1(\mathbb{C}P^3))\cong \pi_6(F_{\mathbb{S}^1}(\mathbb C^4))$ (respectively, from $[\mathbb{C}P^3, E_1(\mathbb{S}^6)]\cong [\mathbb{C}P^3, SG_7]$). Then $\eta^{Diff}(f\circ g)$ is non-zero because $\eta^{Diff}(f)\in \pi_6(G/O)\subset [\mathbb{C}P^3\times\mathbb{S}^6, G/O]$ by Lemma \ref{pi6fs1} (respectively $\eta^{Diff}(f)\in [\mathbb{C}P^3, G/O]\subset [\mathbb{C}P^3\times\mathbb{S}^6, G/O]$ by Fact \ref{normal invariant of SG}), while $\eta^{Diff}(g)\in [\Sigma^6\mathbb{C}P^3, G/O]\subset [\mathbb{C}P^3\times\mathbb{S}^6, G/O]$ as per Lemma \ref{psicp3sk} {(vi)}. Hence $N$ is not diffeomorphic to $\mathbb{C}P^3\times\mathbb{S}^6$ from the smooth surgery exact sequence of $\mathbb{C}P^3\times\mathbb{S}^6$. 

    Let $[(N_1, g_1)]$ and $[(N_2, g_2)]$ be two elements of $\mathcal{C}(\mathbb{C}P^3\times\mathbb{S}^6)$ such that $\eta^{Diff}(g_2)=- \eta^{Diff}(g_1)\neq 0.$ Let $f=id_{\mathbb{C}P^3}\times (- id_{\mathbb{S}^6}): \mathbb{C}P^3\times\mathbb{S}^6\to \mathbb{C}P^3\times\mathbb{S}^6$ be a homotopy self-equivalence induced from $-id_{\mathbb{S}^6} \in \mathcal{E}(\mathbb{S}^6).$ We claim that $(f^{-1})^*\eta^{Diff}(g_i)=- \eta^{Diff}(g_i)$ for $i=1$ and $2.$ The map $f: \mathbb{C}P^3\times\mathbb{S}^6\to \mathbb{C}P^3\times\mathbb{S}^6$ induces the map $(-id_{\mathbb{S}^6})\wedge id_{\mathbb{C}P^3}: \mathbb{S}^6\wedge \mathbb{C}P^3\to \mathbb{S}^6\wedge\mathbb{C}P^3$ such that the following diagram commutes:
   
    \begin{center}
        \begin{tikzcd}
    {\mathbb{C}P^3\times\s^6}\arrow[r,"p"]\arrow[d,"f=id_{\mathbb{C}P^3}\times (-id_{\s^6})"']&{\s^6\wedge \mathbb{C}P^3}\arrow[d,"(-id_{\mathbb{S}^6})\wedge id_{\mathbb{C}P^3}"]\\
            {\mathbb{C}P^3\times\s^6}\arrow[r,"p"']&{\s^6\wedge\mathbb{C}P^3}
        \end{tikzcd}
    \end{center}
Since $\eta^{Diff}(g_i)\in [\Sigma^6\mathbb{C}P^3, G/O],$ it follows from the above diagram that $(f^{-1})^*\eta^{Diff}(g_i)= ((-id_{\mathbb{S}^6}\wedge id_{\mathbb{C}P^3})^{-1})^* \eta^{Diff}(g_i).$ Moreover, as $\eta^{Diff}(g_i)\in \mathbb{Z}/3 \subset [\Sigma^6\mathbb{C}P^3, G/O]$ and $\psi_*: [\Sigma^6\mathbb{C}P^3, Top/O]\cong \mathbb{Z}/3 \to [\Sigma^6 \mathbb{C}P^3, G/O]$ is injective, it suffices to show that $((-id_{\mathbb{S}^6}\wedge id_{\mathbb{C}P^3})^{-1})^*: [\Sigma^6\mathbb{C}P^3, Top/O]\to [\Sigma^6\mathbb{C}P^3, Top/O]$ takes the generator $\bar{1}$ of $[\Sigma^6\mathbb{C}P^3, Top/O]$ to the generator $-\bar{1}$ of $[\Sigma^6\mathbb{C}P^3, Top/O].$ 

We note that $(-id_{\mathbb{S}^6})\wedge id_{\mathbb{C}P^3}: \mathbb{S}^6\wedge \mathbb{C}P^3\to \mathbb{S}^6\wedge \mathbb{C}P^3$ fits into the following commutative diagram:
\begin{equation}\label{wedge diagram}
    \begin{tikzcd}[column sep=3.2em]
        {\s^6\wedge \mathbb{C}P^3}\arrow[r,"id_{\s^6}\wedge q"]\arrow[d,"(- id_{\s^6})\wedge id_{\mathbb{C}P^3}"']& {\s^6\wedge (\s^4\vee \s^6)}\arrow[r,"id_{\s^6}\wedge Pr"]\arrow[d,"(-id_{\s^6})\wedge (id_{\s^4\vee\s^6})"]&{\s^6\wedge \s^4}\arrow[d,"(-id_{\s^6})\wedge id_{\s^4}"]\\
         {\s^6\wedge \mathbb{C}P^3}\arrow[r,"id_{\s^6}\wedge q"']& {\s^6\wedge (\s^4\vee \s^6)}\arrow[r,"id_{\s^6}\wedge Pr"'] &{\s^6\wedge \s^4}
    \end{tikzcd}
\end{equation}
where the map $(-id_{\mathbb{S}^6})\wedge id_{\mathbb{S}^4}: \mathbb{S}^6\wedge \mathbb{S}^4\to \mathbb{S}^6\wedge\mathbb{S}^4$ is homotopic to the map $-id_{\mathbb{S}^{10}}: \mathbb{S}^{10}\to \mathbb{S}^{10}$ for degree reason. Therefore the diagram \eqref{wedge diagram} gives rise to the subsequent commutative diagram along $Top/O.$
\begin{center}
    \begin{tikzcd}
        {(\pi_{10}(Top/O))_{(3)}}\arrow[rrrr,"(id_{\s^6}\wedge q)_{(3)}^*\circ (id_{\s^6} \wedge Pr)_{(3)}^*"]\arrow[d,"(- id_{\s^{10}})_{(3)}^*"']&&&&{[\s^6\wedge \mathbb{C}P^3, Top/O]_{(3)}}\arrow[d,"(- id_{\s^6}\wedge id_{\mathbb{C}P^3})_{(3)}^*"]\\
         {(\pi_{10}(Top/O))_{(3)}}\arrow[rrrr,"(id_{\s^6}\wedge q)_{(3)}^*\circ (id_{\s^6} \wedge Pr)_{(3)}^*"']&&&&{[\s^6\wedge \mathbb{C}P^3,Top/O]_{(3)}}
    \end{tikzcd}
\end{center}
The map $(- id_{\mathbb{S}^{10}})_{(3)}^* = (- id_{\mathbb{S}^6}\wedge id_{\mathbb{S}^4})_{(3)}^*: \pi_{10}(Top/O)_{(3)}\to \pi_{10}(Top/O)_{(3)}$ sends the generator $\Bar{1}$ of $\pi_{10}(Top/O)_{(3)}$ to the generator $-\Bar{1}$ in $\pi_{10}(Top/O)_{(3)}.$ Consequently, from the above diagram, it is evident that  $(- id_{\mathbb{S}^6}\wedge id_{\mathbb{C}P^3})_{(3)}^*: [\Sigma^6 \mathbb{C}P^3, Top/O]_{(3)}\to [\Sigma^6 \mathbb{C}P^3, Top/O]_{(3)}$ takes the generator $\Bar{1}$ of $[\Sigma^6 \mathbb{C}P^3, Top/O]_{(3)}$ to the generator $-\Bar{1}$ in $[\Sigma^6 \mathbb{C}P^3, Top/O]_{(3)}.$ Therefore $$\eta^{Diff}(f\circ g_1)= (f^{-1})^* \eta^{Diff}(g_1)=-\eta^{Diff}(g_1)=\eta^{Diff}(g_2)$$ and $$\eta^{Diff}(f\circ g_2)= (f^{-1})^* \eta^{Diff}(g_2)=-\eta^{Diff}(g_2)=\eta^{Diff}(g_1).$$ Hence $N_1$ is diffeomorphic to $N_2.$ We take $\widetilde{N}$ to be either $N_1$ or $N_2.$ This completes the proof. 
\end{proof}


We establish the existence of a self-homotopy equivalence of $\mathbb{C}P^3\times\mathbb{S}^7$ with a nontrivial normal invariant, which is essential for the classification.
\begin{lemma}\label{nu2map}
    There exists a self-homotopy equivalence $f_{\nu^2}:\mathbb{C}P^3\times\mathbb{S}^7\to \mathbb{C}P^3\times\mathbb{S}^7$ with nontrivial normal invariant. In particular, the normal invariant of 
$f_{\nu^2}$ equals to $[\nu^3],$ where $[\nu^3]$ represents the image of $\nu^3 \in \pi_9^s$ in $\pi_9(G/O).$ 
\end{lemma}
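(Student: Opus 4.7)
The plan is to construct $f_{\nu^2}$ as the tangential self-homotopy equivalence induced from the generator $\nu^2 \in \pi_6^s$, which sits in $\pi_7(F_{\mathbb{S}^1}(\mathbb{C}^4)) \cong \pi_7^s(\Sigma\mathbb{C}P^\infty) \oplus \pi_6^s$ via the splitting from \cite[Fact 5.2 (d), (e)]{SBRKAS}. By \cite[Fact 5.2 (b)]{SBRKAS}, $f_{\nu^2}$ is tangential; by \cite[Fact 5.2 (f)]{SBRKAS}, its normal invariant equals the image under the canonical map $j: G \to G/O$ of the stable composition
$$
\Sigma^7 \mathbb{C}P^3_+ \xrightarrow{\Sigma^6 t} \mathbb{S}^6 \xrightarrow{\nu^2} \mathbb{S}^0,
$$
where $t$ is the Umkehr map of \cite{JBRS74}.

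Next, I would analyze this composition on the stable skeleta of $\mathbb{C}P^3$. On the bottom cell $\Sigma^7\mathbb{C}P^1 = \mathbb{S}^9 \hookrightarrow \Sigma^7\mathbb{C}P^3$, the Umkehr map $\Sigma^6 t$ restricts to the Hopf element $\nu$, so the composition becomes $\nu^2 \circ \nu = \nu^3 \in \pi_9^s$, whose image in $\pi_9(G/O)$ is nonzero. Since $\eta \nu = 0$ and $\pi_*^s$ is graded-commutative, the relation $\nu^3 \eta = \eta \nu^3 = 0$ holds, so this class extends over the $11$-cell of $\Sigma^7\mathbb{C}P^2$ attached by $\eta$, yielding a well-defined element of $\{\Sigma^7 \mathbb{C}P^2, \mathbb{S}^0\}$ whose bottom-cell component is $\nu^3$.

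To identify the full normal invariant with $[\nu^3]$, I would invoke Proposition \ref{CP3computation}(ii), which gives $[\Sigma^7\mathbb{C}P^3, Top/O] \cong [\Sigma^7\mathbb{C}P^2, Top/O]$, so the normal invariant is fully determined by its restriction to $\Sigma^7\mathbb{C}P^2$. Combining this with Lemma \ref{psicp3sk}(vii), which identifies the image of $\psi_*$ on $[\Sigma^7\mathbb{C}P^2, Top/O]$ with $\mathbb{Z}/2\{[\nu^3]\}$, and with the nontriviality established above, the only possibility is $\eta^{Diff}(f_{\nu^2}) = [\nu^3]$.

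The main obstacle is the verification that the restriction of the Umkehr map to $\Sigma^7\mathbb{C}P^1$ indeed yields the Hopf class $\nu$ up to a unit, together with the handling of possible Toda-bracket contributions---such as $\langle \nu^2, \eta, 2\nu\rangle$ or $\langle \nu^2, \eta, \alpha_1\rangle$---that could appear in the extension over the $13$-cell of $\Sigma^7\mathbb{C}P^3$. Fortunately, because the target group $[\Sigma^7\mathbb{C}P^2, Top/O]$ is already $\mathbb{Z}/2$-generated by $[\nu^3]$, nontriviality of the bottom-cell component suffices to pin down the answer and allows the more delicate bracket computations to be bypassed.
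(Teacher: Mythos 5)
Your construction and the bottom‑cell computation coincide with the paper's: you take $f_{\nu^2}$ induced from the nontrivial element of $\pi_6^s\subset\pi_7(F_{\mathbb{S}^1}(\mathbb{C}^4))$, identify $\eta^{Diff}(f_{\nu^2})$ with $j_*(\nu^2\circ\Sigma^6 t)$, and observe that the restriction to $\Sigma^7\mathbb{C}P^1=\mathbb{S}^9$ is $\nu^3$, nonzero in $\pi_9(G/O)$. The gap is in your final identification step. The normal invariant is an element of $[\Sigma^7\mathbb{C}P^3,G/O]\subset[\mathbb{C}P^3\times\mathbb{S}^7,G/O]$, and neither fact you invoke controls such an element: Proposition \ref{CP3computation}(ii) is an isomorphism $[\Sigma^7\mathbb{C}P^3,Top/O]\cong[\Sigma^7\mathbb{C}P^2,Top/O]$, which does not by itself give injectivity of the restriction $[\Sigma^7\mathbb{C}P^3,G/O]\to[\Sigma^7\mathbb{C}P^2,G/O]$; and Lemma \ref{psicp3sk}(vii) only describes the image of $\psi_*\colon[-,Top/O]\to[-,G/O]$, while there is no a priori reason that the normal invariant of a mere self-homotopy equivalence lies in that image (that is a feature of classes coming from $Top/O$, e.g.\ of homeomorphisms, not of arbitrary elements of $\mathcal{E}(\mathbb{C}P^3\times\mathbb{S}^7)$). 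So, as written, ``the only possibility is $[\nu^3]$'' does not follow from the statements you cite.

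What you actually need are the corresponding $G/O$-level facts: first, $[\Sigma^7\mathbb{C}P^2,G/O]\cong\mathbb{Z}/2\{[\nu^3]\}$, which the paper establishes inside the proof of Lemma \ref{psicp3sk}(vii) via the exact sequence $0\to[\Sigma^7\mathbb{C}P^2,G/O]\to\pi_9(G/O)\xrightarrow{\eta^*}\pi_{10}(G/O)$; and second, injectivity of $[\Sigma^7\mathbb{C}P^3,G/O]\to[\Sigma^7\mathbb{C}P^2,G/O]$, whose only possible kernel is the image of $\pi_{13}(G/O)\cong\mathbb{Z}/3$ and which vanishes because $\alpha_1^*:\pi_{10}(G/O)_{(3)}\to\pi_{13}(G/O)_{(3)}$ is onto (as $\beta_1\alpha_1\neq0$). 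The paper sidesteps both points by staying in $G$: since $[\Sigma^7\mathbb{C}P^3,G]=[\Sigma^7\mathbb{C}P^2,G]$, it suffices to compute the image of $j_*:[\Sigma^7\mathbb{C}P^2,G]\to[\Sigma^7\mathbb{C}P^2,G/O]$, which is $\mathbb{Z}/2\{[\nu^3]\}$; combined with your (correct) bottom-cell nontriviality this pins down $\eta^{Diff}(f_{\nu^2})=[\nu^3]$. With either repair your argument goes through; without it the last step is unjustified, even though the conclusion and the overall strategy match the paper.
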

\begin{proof}
Let $f_{\nu^2}:\mathbb{C}P^3\times\mathbb{S}^7\to \mathbb{C}P^3\times\mathbb{S}^7$ be a self-homotopy equivalence induced by the nontrivial element of $\pi_6^s\subset \pi_7(F_{\mathbb{S}^1}).$ Now, \cite[Fact 5.2 {(f)}]{SBRKAS} implies that its normal invariant is the image of $\Sigma^7 \mathbb{C}P^3_{+}\xrightarrow{\Sigma^6 t}\mathbb{S}^6\xrightarrow{\nu^2}\mathbb{S}^0$ inside $[\Sigma^7\mathbb{C}P_{+}^3,G/O]\subset [\mathbb{C}P^3\times\mathbb{S}^7,G/O].$ Since $(\nu^2\circ \Sigma^6 t)\vert_{\mathbb{S}^9}=\nu^3,$ the normal invariant $\eta^{Diff}(f_{\nu^2})\vert_{\Sigma^7 \mathbb{C}P^1}$ is the image of $\nu^3\in \pi_9^s$ in $\pi_9(G/O).$ We note from the following commutative diagram  
 \begin{center}
     \begin{tikzcd}[column sep=1em]
         {\pi_{10}^s}\arrow[r,"\eta^*"]&{\pi_{11}^s}\arrow[r,"(f_{\mathbb{C}P^2})^*"]\arrow[d,"j_*"']&{[\Sigma^7\mathbb{C}P^2, G]}\arrow[r]\arrow[d,"j_*"]&{\pi_9^s}\arrow[r,"\eta^*"]\arrow[d,"j_*"]&{\pi_{10}^s}\arrow[d,"j_*","\cong"']\\
         &{0}\arrow[r]&{[\Sigma^7\mathbb{C}P^2, G/O]\cong \mathbb{Z}/2\{[\nu^3]\}}\arrow[r]&{\pi_9(G/O)}\arrow[r,"\eta^*"]&{\pi_{10}(G/O)}
     \end{tikzcd}
 \end{center}
that the image of $j_*:[\Sigma^7\mathbb{C}P^2, G]\to [\Sigma^7\mathbb{C}P^2, G/O]$ is $\mathbb{Z}/2\{[\nu^3]\}.$ Therefore, $\eta^{Diff}(f_{\nu^2})$ restricted to $\Sigma^7 \mathbb{C}P^2$ is equal to $[\nu^3].$ Hence, $\eta^{Diff}(f_{\nu^2})$ is nontrivial and is generated by $[\nu^3],$ as $[\Sigma^7\mathbb{C}P^3, G]=[\Sigma^7\mathbb{C}P^2, G].$  
\end{proof}
\begin{prop}\label{zero normal invariant implies diffeo}
    Let $[(N,g)] \in \mathcal{C}(\mathbb{C}P^3\times\mathbb{S}^7)$ such that $\eta^{Diff}(g)=0.$ Then $N$ is (oriented) diffeomorphic to $\mathbb{C}P^3\times\mathbb{S}^7.$ 
\end{prop}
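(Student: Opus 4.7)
The plan is to invoke the smooth surgery exact sequence for $M := \mathbb{C}P^3 \times \mathbb{S}^7$ and analyze the image of the Wall realization map $\omega^{Diff}$ at the level of underlying smooth manifolds. Since $M$ is simply connected of dimension $13 \geq 5$, the relevant portion of the sequence is
\[
L_{14}(\mathbb{Z}) \xrightarrow{\omega^{Diff}} \mathcal{S}^{Diff}(M) \xrightarrow{\eta^{Diff}} [M, G/O],
\]
with $L_{14}(\mathbb{Z}) \cong \mathbb{Z}/2$ generated by the Arf (Kervaire) invariant. The hypothesis $\eta^{Diff}(g) = 0$ places the image of $[(N, g)]$ in $\mathcal{S}^{Diff}(M)$ inside $\mathrm{Im}(\omega^{Diff})$, so the problem reduces to showing that every class in this image is represented by a pair whose underlying manifold is diffeomorphic to $M$.

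Next I would describe $\omega^{Diff}(\sigma)$ for the generator $\sigma \in L_{14}(\mathbb{Z})$ via Wall realization. In the simply connected $L_{4k+2}$-setting, Wall realization yields $\omega^{Diff}(\sigma) = [(M \# \Sigma_K, h_{\Sigma_K})]$, where $\Sigma_K \in bP_{14} \subset \Theta_{13}$ is the boundary of the standard Kervaire plumbing in dimension $14$ and $h_{\Sigma_K}$ is the canonical homeomorphism. The key classical input is then $bP_{14} = 0$: by the Kervaire-Milnor exact sequence, $bP_{4k+2}$ vanishes whenever the Kervaire invariant homomorphism $\pi_{4k+2}^s \to \mathbb{Z}/2$ is surjective, and for $k = 3$ this is realized by $\sigma^2 \in \pi_{14}^s$ (dimension $14$ being one of the classical Kervaire-invariant dimensions). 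Hence $\Sigma_K$ is diffeomorphic to the standard sphere $\mathbb{S}^{13}$, so $M \# \Sigma_K \cong M$ as oriented smooth manifolds.

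Combining these observations, every element of $\mathrm{Im}(\omega^{Diff})$ is represented by a pair whose underlying manifold is diffeomorphic to $M$. Thus $N$ is orientation-preservingly diffeomorphic to $\mathbb{C}P^3 \times \mathbb{S}^7$, completing the proof. The principal obstacle is the explicit identification of $\omega^{Diff}(\sigma)$ as connect-sum by the Kervaire sphere together with the appeal to the classical vanishing $bP_{14} = 0$; both inputs are well established but must be invoked carefully in the smooth category.
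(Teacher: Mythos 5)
Your proposal is correct and follows essentially the same route as the paper: use the smooth surgery exact sequence to conclude that $N$ is diffeomorphic to $(\mathbb{C}P^3\times\mathbb{S}^7)\#\Sigma$ for some $[\Sigma]\in bP_{14}$, and then invoke $bP_{14}=0$ to conclude $N\cong \mathbb{C}P^3\times\mathbb{S}^7$. The only difference is cosmetic: you justify $bP_{14}=0$ directly via the Kervaire-invariant-one element $\sigma^2\in\pi_{14}^s$, while the paper simply cites the literature for this vanishing.
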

\begin{proof}
    As $\eta^{Diff}(g)=0$, it follows from the smooth surgery exact sequence of $\mathbb{C}P^3\times\mathbb{S}^7$ that the manifold $N$ is diffeomorphic to $\mathbb{C}P^3\times\mathbb{S}^7\# \Sigma$, for some $[\Sigma] \in bP_{14}.$ Since $bP_{14}=0$ \cite{hillhopkinsravenel}, $\Sigma$ is diffeomorphic to the standard sphere $\mathbb{S}^{13},$ and hence $N$ is oriented diffeomorphic to $\mathbb{C}P^3\times\mathbb{S}^7.$ 
\end{proof}

\begin{theorem}\label{classificationCP3S7}
    If a closed, oriented, smooth manifold $N$ is homeomorphic to $\mathbb{C}P^3\times\mathbb{S}^7,$ then $N$ is (oriented) diffeomorphic to $\mathbb{C}P^3\times\mathbb{S}^7.$
\end{theorem}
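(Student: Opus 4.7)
The plan is to take an arbitrary concordance class $[(N, g)] \in \mathcal{C}(\mathbb{C}P^3\times\mathbb{S}^7)$ and show that the smoothing $(N, g)$ represents the trivial element in $\mathcal{S}^{Diff}(\mathbb{C}P^3\times\mathbb{S}^7)$. Under the Kirby-Siebenmann identification $\mathcal{C}(\mathbb{C}P^3\times\mathbb{S}^7) \cong [\mathbb{C}P^3\times\mathbb{S}^7, Top/O]$, the normal invariant $\eta^{Diff}(g)$ is the image of $[(N,g)]$ under the natural map $\psi_*: [\mathbb{C}P^3\times\mathbb{S}^7, Top/O] \to [\mathbb{C}P^3\times\mathbb{S}^7, G/O]$. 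By Lemma \ref{psicp3sk}(vii) this image is precisely $\mathbb{Z}/2\{[\nu^3]\}$, so $\eta^{Diff}(g)$ is either $0$ or $[\nu^3]$.

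If $\eta^{Diff}(g) = 0$, Proposition \ref{zero normal invariant implies diffeo} (which rests on $bP_{14}=0$ from \cite{hillhopkinsravenel}) immediately yields that $N$ is orientation-preservingly diffeomorphic to $\mathbb{C}P^3\times\mathbb{S}^7$.

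Suppose instead that $\eta^{Diff}(g) = [\nu^3]$. The strategy is to kill the normal invariant by post-composing with the self-homotopy equivalence $f_{\nu^2}: \mathbb{C}P^3\times\mathbb{S}^7 \to \mathbb{C}P^3\times\mathbb{S}^7$ supplied by Lemma \ref{nu2map}, which satisfies $\eta^{Diff}(f_{\nu^2}) = [\nu^3]$. Applying the composition formula for normal invariants gives
\[
\eta^{Diff}(f_{\nu^2} \circ g) = \eta^{Diff}(f_{\nu^2}) + (f_{\nu^2}^{-1})^* \eta^{Diff}(g) = [\nu^3] + (f_{\nu^2}^{-1})^*[\nu^3].
\]
Since $[\nu^3]$ has order two, once we verify that $(f_{\nu^2}^{-1})^*$ fixes the class $[\nu^3]$, the right-hand side vanishes, and then the zero-normal-invariant case (applied to $[(N, f_{\nu^2}\circ g)]$) completes the argument.

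The main obstacle is therefore the verification that $(f_{\nu^2}^{-1})^*[\nu^3] = [\nu^3]$. I expect this to follow from a naturality argument: the class $[\nu^3]$ is pulled back from $\pi_9(G/O)$ along the composite $\mathbb{C}P^3\times\mathbb{S}^7 \twoheadrightarrow \Sigma^7\mathbb{C}P^3 \twoheadrightarrow \Sigma^7\mathbb{C}P^2 \to \mathbb{S}^9$, so one can analyze the induced action of $f_{\nu^2}$ on the appropriate cofiber sequence using its construction from $\pi_7(F_{\mathbb{S}^1})$ and the fact that such a map is an isomorphism on cohomology in low degrees. Alternatively, since $\mathrm{Im}(\psi_*) \cong \mathbb{Z}/2$ is the entire subgroup of normal invariants realizable by smoothings, and both $[(N,g)]$ and $[(N, f_{\nu^2} \circ g)]$ give elements of this image after identifying with elements of $\mathcal{C}$ via adjusting by a diffeomorphism, the pullback $(f_{\nu^2}^{-1})^*$ must restrict to an automorphism of this $\mathbb{Z}/2$ subgroup and hence act as the identity on $[\nu^3]$. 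With this verification in place, the nonzero case is resolved and the theorem follows.
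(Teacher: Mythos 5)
Your proposal is correct and follows essentially the same route as the paper: split on whether $\eta^{Diff}(g)$ is $0$ or $[\nu^3]$ via Lemma \ref{psicp3sk}(vii), use Proposition \ref{zero normal invariant implies diffeo} (resting on $bP_{14}=0$) in the trivial case, and kill the nontrivial invariant by composing with $f_{\nu^2}$ from Lemma \ref{nu2map} and the composition formula. The only difference is that you pause to justify $(f_{\nu^2}^{-1})^*[\nu^3]=[\nu^3]$, which the paper absorbs into the sign ambiguity "$\pm[\nu^3]$" together with the fact that $[\nu^3]$ has order two; your naturality/automorphism-of-the-$\mathbb{Z}/2$-image argument settles that point adequately.
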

\begin{proof}
Let $[(N,g)]$ be an element of $\mathcal{C}(\mathbb{C}P^3\times\mathbb{S}^7).$ If $\eta^{Diff}(g)$ is trivial, then from Proposition \ref{zero normal invariant implies diffeo} $N$ is oriented diffeomorphic to $\mathbb{C}P^3\times\mathbb{S}^7.$

 Let $[(N,g)] \in \mathcal{C}(\mathbb{C}P^3\times\mathbb{S}^7)$ be such that $\eta^{Diff}(g)$ is nontrivial. Then by Lemma \ref{psicp3sk} {(vii)} and Proposition \ref{CP3computation} {(ii)},  $\eta^{diff}(g)$ is $[\nu^3].$ Now using Lemma \ref{nu2map} and composition formula for normal invariant, we obtain
\begin{align*}
        \eta^{Diff}(f_{\nu^2}\circ g)= & \eta(f_{\nu^2})+ (f_{\nu^2}^{-1})^* \eta^{Diff}(g)\\
         = & [\nu^3]\pm [\nu^3]=0,
    \end{align*}
Therefore by Proposition \ref{zero normal invariant implies diffeo}, $N$ is (oriented) diffeomorphic to $\mathbb{C}P^3\times\mathbb{S}^7.$
\end{proof}


\end{document}